\documentclass[11pt,leqno]{article}
\usepackage{amsmath,amsthm,amsfonts,amssymb}
\usepackage[colorlinks=true, pdfstartview=FitV, linkcolor=blue, citecolor=red, urlcolor=blue]{hyperref}
\hypersetup{breaklinks=true}

\usepackage{geometry}
\geometry{hmargin=2cm,vmargin=3cm}

\newcommand{\N}{{\mathbb N}}
\newcommand{\Z}{{\mathbb Z}}
\newcommand{\R}{{\mathbb R}}
\newcommand{\C}{{\mathbb C}}

\newcommand{\A}{{\mathbb A}}
\newcommand{\E}{{\mathbb E}}
\newcommand{\F}{{\mathbb F}}
\newcommand{\B}{{\mathbb B}}
\newcommand{\G}{{\mathbb G}}
\newcommand{\PP}{{\mathbb P}}

\newcommand{\eps}{{\varepsilon}}
\newcommand{\tauetabar}{{(\underline{\tau},\underline{\eta})}}
\newcommand{\utau}{\underline{\tau}}
\newcommand{\ueta}{\underline{\eta}}

\newcommand\cU{{\mathcal U}}
\newcommand\cL{{\mathcal L}}

\newcommand\cB{{\mathcal B}}

\newcommand\cV{{\mathcal V}}

\newcommand\cC{{\mathcal C}}

\newcommand\cN{{\mathcal N}}

\newcommand\cF{{\mathcal F}}

\newcommand\cM{{\mathcal M}}
\newcommand\cT{{\mathcal T}}

\newcommand{\uxi}{\underline{\xi}}
\newcommand{\uomega}{\underline{\omega}}

\newcommand\adots{\mathinner{\mkern2mu\raise1pt\hbox{.}
\mkern3mu\raise4pt\hbox{.}\mkern1mu\raise7pt\hbox{.}}}

\newtheorem{theo}{Theorem}[section]
\newtheorem{prop}[theo]{Proposition}
\newtheorem{cor}[theo]{Corollary}
\newtheorem{lem}[theo]{Lemma}
\newtheorem{defn}[theo]{Definition}

\newtheorem{rem}[theo]{Remark}

\newtheorem{assumption}[theo]{Assumption}

\numberwithin{equation}{section}

\title{The Mach stem equation and amplification\\
in strongly nonlinear geometric optics}

\author{Jean-Fran\c{c}ois {\sc Coulombel}\thanks{CNRS and Universit\'e de Nantes, Laboratoire de math\'ematiques 
Jean Leray (UMR CNRS 6629), 2 rue de la Houssini\`ere, BP 92208, 44322 Nantes Cedex 3, France. Email: 
{\tt jean-francois.coulombel@univ-nantes.fr}. Research of J.-F. C. was supported by ANR project BoND, 
ANR-13-BS01-0009-01.} $\,$ \& 
Mark {\sc Williams}\thanks{University of North Carolina, Mathematics Department, CB 3250, Phillips Hall, 
Chapel Hill, NC 27599. USA. Email: {\tt williams@email.unc.edu}. Research of M.W. was partially supported by 
NSF grants number DMS-0701201 and DMS-1001616.}}

\begin{document}

\maketitle

\begin{abstract}
We study highly oscillating solutions to a class of weakly well-posed hyperbolic initial boundary value problems. 
Weak well-posedness is associated with an amplification phenomenon of oscillating waves on the boundary. 
In the previous works \cite{CGW3,CW5}, we have rigorously justified a {\sl weakly nonlinear} regime for 
{\sl semilinear} problems. In that case, the forcing term on the boundary has amplitude $O(\eps^2)$ and 
oscillates at a frequency $O(1/\eps)$. The corresponding exact solution, which has been shown to exist on 
a time interval that is independent of $\eps \in (0,1]$, has amplitude $O(\eps)$. In this paper, we deal with 
the exact same scaling, namely $O(\eps^2)$ forcing term on the boundary and $O(\eps)$ solution, for 
{\sl quasilinear} problems. In analogy with \cite{CGM1}, this corresponds to a {\sl strongly nonlinear} regime, 
and our main result proves solvability for the corresponding WKB cascade of equations, which yields 
existence of approximate solutions on a time interval that is independent of $\eps \in (0,1]$. Existence 
of exact solutions close to approximate ones is a stability issue which, as shown in \cite{CGM1}, highly 
depends on the hyperbolic system and on the boundary conditions; we do not address that question here.

This work encompasses previous formal expansions in the case of weakly stable shock waves \cite{MR} 
and two-dimensional compressible vortex sheets \cite{AM}. In particular, we prove well-posedness for 
the leading amplitude equation (the ``Mach stem equation") of \cite{MR} and generalize its derivation 
to a large class of hyperbolic boundary value problems and to periodic forcing terms. The latter case is 
solved under a crucial nonresonant assumption and a small divisor condition.
\end{abstract}

\bigskip
\tableofcontents

\newpage
\section{Introduction}
\label{intro}

\subsection{General presentation}

\emph{\quad} This article is devoted to the analysis of high frequency solutions to quasilinear hyperbolic 
initial boundary value problems. Up to now, the rigorous construction of such solutions is known in only a 
few situations and highly depends on the well-posedness properties of the boundary value problem one 
considers. In the case where the so-called {\sl uniform Kreiss-Lopatinskii condition} is satisfied, the existence 
of {\sl exact} oscillating solutions on a fixed time interval has been proved by one of the authors in \cite{W1}, 
see also \cite{williams1} for semilinear problems. The asymptotic behavior of exact solutions as the wavelength 
tends to zero is described in \cite{CGW1} for wavetrains and in \cite{CW4} for pulses. The main difference 
between the two problems is that in the wavetrains case, resonances can occur between a combination of 
three phases, giving rise to integro-differential terms in the equation that governs the leading amplitude of 
the solution\footnote{This is not specific to the boundary conditions and is also true for the Cauchy problem 
in the whole space, see, e.g., \cite{HMR,JMR1}.}. Resonances do not occur at the leading 
order\footnote{Interactions between pulses associated with different phases need to be considered only 
when dealing with the construction of correctors to the leading amplitude.} for pulses, which makes the 
leading amplitude equation easier to deal with in that case.

In this article, we pursue our study of {\sl weakly well-posed} problems and consider situations where 
the uniform Kreiss-Lopatinskii condition breaks down. Let us recall that in that case, high frequency 
oscillations can be amplified when reflected on the boundary. As far as we know, this phenomenon 
was first identified by Majda and his collaborators, see for instance \cite{MR,AM,MA} in connection 
with the formation of specific wave patterns in compressible fluid dynamics. Asymptotic expansions 
in the spirit of \cite{AM} are also performed in the recent work \cite{wangyu}. In various situations 
(depending on the scaling of the source terms and on the number of phases), these authors managed 
to derive an equation that governs the leading amplitude of the solution. Solving the leading amplitude 
equation in \cite{MR} and constructing exact and/or approximate oscillating solutions was left open. As 
far as we know, the rigorous justification of such expansions has not been considered in the literature 
so far. The present article follows previous works where we have given a rigorous justification of the 
amplification phenomenon: first for {\sl linear} problems in \cite{CG}, and then for {\sl semilinear} 
problems in \cite{CGW3,CW5}. These previous works considered either linear problems, or a weakly 
nonlinear regime of oscillating solutions for which the existence and asymptotic behavior of {\sl exact} 
oscillating solutions can be studied on a fixed time interval.

The regime considered in \cite{MR,AM}, and that we shall also consider in this article, goes beyond 
the one considered in \cite{CGW3,CW5}. In analogy with \cite{CGM1}, this regime will be referred to 
as that of {\sl strong oscillations}. We extend the analysis of \cite{MR,AM} to a general framework, not 
restricted to the system of gas dynamics, and explain why the problem of vortex sheets considered 
in \cite{AM} and the analogous one in \cite{wangyu} yield a much simpler equation than the problem 
of shock waves in \cite{MR}. We also clarify the causality arguments used in \cite{MR,AM} to discard 
some of the terms in the (formal) asymptotic expansion of the highly oscillating solution. We need 
however to make a crucial assumption in order to analyze this asymptotic expansion, namely we 
need to assume that no resonance occurs between the phases. This is no major concern for pulses 
because interactions are not visible at the leading order, and this may be the reason why this aspect 
was not mentioned in \cite{MR}. Resonances can have far worse consequences when dealing with 
wavetrains, and what saves the day in \cite{AM,wangyu} is that there are too few phases to allow for 
resonances. This explains why the amplitude equation in \cite{AM} and the corresponding one in 
\cite{wangyu} reduce to the standard Burgers equation. When the system admits at least three phases 
(two incoming and one outgoing), and even in the absence of resonances, the amplification phenomenon 
gives rise, as in \cite{MR}, to integro-differential terms in the equation for the amplitude that determines 
the trace of the leading profile. We refer to the latter equation as ``the Mach stem equation", and show 
how it arises more generally in weakly stable (WR class\footnote{The WR class is described in Assumption 
\ref{assumption3} below.}) hyperbolic boundary problems with a strongly nonlinear scaling, both in the 
wavetrain setting, where the equation we derive appears to be completely new, and in the pulse setting, 
where the equation coincides with the one derived in \cite{MR}.

Our main results  establish the well-posedness of the Mach stem equation in both settings, and then use 
those solvability results to construct {\sl approximate} highly oscillating solutions on a {\sl fixed} time interval 
to the underlying hyperbolic boundary value problems. In the wavetrain case we are able to construct 
approximate solutions of arbitrarily high order under a crucial nonresonant condition and a small divisor 
condition; in the pulse setting we construct approximate solutions up to the point at which further ``correctors" 
are too large to be regarded as correctors.

In Appendix \ref{appB} we compute the formal large period limit of the Mach stem equation for wavetrains, 
and find a surprising discrepancy (described further below) between that limit and the Mach stem equation 
for pulses derived in \cite{MR}.

The construction of {\sl exact} oscillating solutions close to approximate ones is a stability issue that is far 
from obvious in such a strong scaling. We refer to \cite{CGM1} for indications on possible instability issues 
and postpone the stability problem in our context to a future work. In any case, it is very likely that no general 
answer can be given and that stability vs instability of the family of approximate solutions will depend on the 
system and/or on the boundary conditions, see for instance \cite{CGM2} for further results in this direction.

The precise Mach stem formation mechanism described in \cite{MR} for reacting shock fronts comes from 
wave breaking (blow-up) in the "Mach stem equation". The numerical simulations in \cite{MR2} suggest that 
the latter equation displays a similar blow-up phenomenon as the corresponding one for the Burgers equation. 
Our results show that, in a precise functional setting, the Mach stem equation is a {\sl semilinear} perturbation 
of the Burgers equation, which might suggest that the hint in \cite{MR2} is true. However, the semilinear 
perturbation in the Mach stem equation takes the form of a bilinear Fourier multiplier which makes the 
rigorous justification of a blow-up result difficult. We also postpone this rigorous justification to a future 
work.

\subsection*{Notation}

Throughout this article, we let ${\mathcal M}_{n,N}({\mathbb K})$ denote the set of $n \times N$ matrices with 
entries in ${\mathbb K} = \R \text{ or }\C$, and we use the notation ${\mathcal M}_N({\mathbb K})$ when $n=N$. 
We let $I$ denote the identity matrix, without mentioning the dimension. The norm of a (column) vector $X \in \C^N$ 
is $|X| := (X^* \, X)^{1/2}$, where the row vector $X^*$ denotes the conjugate transpose of $X$. If $X,Y$ are two 
vectors in $\C^N$, we let $X \cdot Y$ denote the quantity $\sum_j X_j \, Y_j$, which coincides with the usual 
scalar product in $\R^N$ when $X$ and $Y$ are real. We often use Einstein's summation convention in 
order to make some expressions easier to read.

The letter $C$ always denotes a positive constant that may vary from line to line or within the same line. 
Dependance of the constant $C$ on various parameters is made precise throughout the text. The sign 
$\lesssim$ means $\le$ up to a multiplicative constant.

\subsection{The equations and main assumptions}

In the space domain $\R^d_+ := \{ x=(y,x_d) \in \R^{d-1} \times \R \, : \, x_d>0 \}$, we consider the quasilinear 
evolution problem with oscillating source term:
\begin{equation}
\label{0}
\begin{cases}
\partial_t u_\eps +\sum^{d}_{j=1} A_j (u_\eps) \, \partial_j u_\eps =0 \, ,& t \le T\, , \, x \in \R^d_+ \, ,\\
b(u_\eps |_{x_d=0}) =\eps^2 \, G \left( t,y,\dfrac{\varphi_0(t,y)}{\eps} \right) \, ,& t \le T\, , \, y \in \R^{d-1} \, ,\\
u_\eps,G |_{t<0} =0 \, ,
\end{cases}
\end{equation}
where the $A_j$'s belong to ${\mathcal M}_N(\R)$ and depend in a ${\mathcal C}^\infty$ way on $u$ in 
a neighborhood of $0$ in $\R^N$, $b$ is a ${\mathcal C}^\infty$ mapping from a neighborhood of $0$ in 
$\R^N$ to $\R^p$ (the integer $p$ is made precise below), and the source term $G$ is valued in $\R^p$. 
It is also assumed that $b(0)=0$, so that the solution starts from the rest state $0$ in negative times and 
is ignited by the small oscillating source term $\eps^2 \, G$ on the boundary in positive times. The two main 
underlying questions of nonlinear geometric optics are:
\begin{enumerate}
   \item Proving existence of solutions to \eqref{0} on a fixed time interval (the time $T>0$ should be 
   independent of the wavelength $\eps \in (0,1]$).
   
   \item Studying the asymptotic behavior of the sequence $u_\eps$ as $\eps$ tends to zero. If we let 
   $u^{\rm app}_\eps$ denote an approximate solution on $[0,T']$, $T' \le T$, constructed by the methods 
   of nonlinear geometric optics (that is, solving eikonal equations for phases and suitable transport 
   equations for profiles), how well does $u^{\rm app}_\eps$ approximate $u_\eps$ for $\eps$ small ? 
   For example, is it true that
\begin{equation*}
\lim_{\eps\to 0} \, \, \| u_\eps-u^{\rm app}_\eps \|_{L^\infty ([0,T'] \times \R^d_+)} \to 0 \, ?
\end{equation*}
\end{enumerate}

The above questions are dealt with in a different way according to the functional setting chosen for the 
source term $G$ in \eqref{0}. More precisely, we distinguish between:
\begin{itemize}
   \item Wavetrains, for which $G$ is a function defined on $(-\infty,T_0] \times \R^{d-1} \times \R$ that is 
   $\Theta$-periodic with respect to its last argument (denoted $\theta_0$ later on).
   
   \item Pulses, for which $G$ is a function defined on $(-\infty,T_0] \times \R^{d-1} \times \R$ that has 
   at least polynomial decay at infinity with respect to its last argument.
\end{itemize}

The answer to the above two questions highly depends on the well-posedness of the linearized system at 
the origin:
\begin{equation}
\label{1}
\begin{cases}
\partial_t v +\sum^{d}_{j=1} A_j (0) \, \partial_j v =f \, ,& t \le T\, , \, x \in \R^d_+ \, ,\\
{\rm d}b(0) \cdot v|_{x_d=0} =g \, ,& t \le T\, , \, y \in \R^{d-1} \, ,\\
v,f,g |_{t<0} =0 \, .
\end{cases}
\end{equation}
The first main assumption for the linearized problem \eqref{1} deals with hyperbolicity.

\begin{assumption}[Hyperbolicity with constant multiplicity]
\label{assumption1}
There exist an integer $q \ge 1$, some real functions $\lambda_1,\dots,\lambda_q$ that are analytic on $\R^d
\setminus \{ 0 \}$ and homogeneous of degree $1$, and there exist some positive integers $\nu_1,\dots,\nu_q$
such that:
\begin{equation*}
\forall \, \xi=(\xi_1,\dots,\xi_d) \in \R^d \setminus \{ 0 \} \, ,\quad
\det \Big[ \tau \, I+\sum_{j=1}^d \xi_j \, A_j(0) \Big] =\prod_{k=1}^q \big( \tau+\lambda_k(\xi) \big)^{\nu_k} \, .
\end{equation*}
Moreover the eigenvalues $\lambda_1(\xi),\dots,\lambda_q(\xi)$ are semi-simple (their algebraic multiplicity
equals their geometric multiplicity) and satisfy $\lambda_1(\xi)<\dots<\lambda_q(\xi)$ for all $\xi \in \R^d
\setminus \{ 0\}$.
\end{assumption}

\noindent For reasons that will be fully explained in Sections \ref{sect2} and \ref{sect4}, we make a technical 
complementary assumption.

\begin{assumption}[Strict hyperbolicity or conservative structure]
\label{assumption1'}
In Assumption \ref{assumption1}, either all integers $\nu_1,\dots,\nu_q$ equal $1$ (which means that the 
operator $\partial_t +\sum_j A_j(0) \, \partial_j$ is strictly hyperbolic), or $A_1(u),\dots,A_d(u)$ are Jacobian 
matrices of some flux functions $f_1,\dots,f_d$ that depend in a ${\mathcal C}^\infty$ way on $u$ in a 
neighborhood of $0$ in $\R^N$. In the latter case, Assumption \ref{assumption1} holds for all $u$ close 
to the origin, namely for an open neighborhood ${\mathbb O}$ of  $0\in\R^N$, there exist an integer $q \ge 1$, 
some real functions $\lambda_1,\dots,\lambda_q$ that are $C^\infty$ on ${\mathbb O} \times \R^d \setminus 
\{ 0\}$ and homogeneous of degree $1$ and analytic in $\xi$, and there exist some positive integers 
$\nu_1,\dots,\nu_q$ such that:
\begin{equation*}
\det \Big[ \tau \, I+\sum_{j=1}^d \xi_j \, A_j(u) \Big] =\prod_{k=1}^q \big( \tau+\lambda_k(u,\xi) \big)^{\nu_k} \, ,
\end{equation*}
for $u \in {\mathbb O}$, $\tau \in \R$ and $\xi=(\xi_1,\dots,\xi_d) \in \R^d \setminus \{ 0 \}$. Moreover the eigenvalues 
$\lambda_1(u,\xi),\dots,\lambda_q(u,\xi)$ are semi-simple and satisfy $\lambda_1(u,\xi)<\dots<\lambda_q(u,\xi)$ 
for all $u \in {\mathbb O}$, $\xi=(\xi_1,\dots,\xi_d) \in \R^d \setminus \{ 0 \}$.
\end{assumption}

\noindent For simplicity, we restrict our analysis to noncharacteristic boundaries and therefore make the following:

\begin{assumption}[Noncharacteristic boundary]
\label{assumption2}
The matrix $A_d(0)$ is invertible and the Jacobian matrix $B:={\rm d}b(0)$ has maximal rank, its rank $p$ being 
equal to the number of positive eigenvalues of $A_d(0)$ (counted with their multiplicity). Moreover, the integer 
$p$ satisfies $1 \le p \le N-1$.
\end{assumption}

Energy estimates for solutions to \eqref{1} are based on the normal mode analysis, see, e.g., \cite[chapter 4]{BS}. 
We let $\tau-i\, \gamma \in \C$ and $\eta \in \R^{d-1}$ denote the dual variables of $t$ and $y$ in the Laplace 
and Fourier transform, and we introduce the symbol
\begin{equation*}
{\mathcal A}(\zeta):= -i \, A_d(0)^{-1} \left( (\tau-i\gamma) \, I +\sum_{j=1}^{d-1} \eta_j \, A_j(0) \right)
\, ,\quad \zeta:=(\tau-i\gamma,\eta) \in \C \times \R^{d-1} \, .
\end{equation*}
For future use, we also define the following sets of frequencies:
\begin{align*}
& \Xi := \Big\{ (\tau-i\gamma,\eta) \in \C \times \R^{d-1} \setminus (0,0) \, : \, \gamma \ge 0 \Big\} \, ,
& \Sigma := \Big\{ \zeta \in \Xi \, : \, \tau^2 +\gamma^2 +|\eta|^2 =1 \Big\} \, ,\\
& \Xi_0 := \Big\{ (\tau,\eta) \in \R \times \R^{d-1} \setminus (0,0) \Big\} = \Xi \cap \{ \gamma = 0 \} \, ,
& \Sigma_0 := \Sigma \cap \Xi_0 \, .
\end{align*}
Two key objects in our analysis are the hyperbolic region and the glancing set that are defined as follows.

\begin{defn}
\label{def1}
\begin{itemize}
 \item The hyperbolic region ${\mathcal H}$ is the set of all $(\tau,\eta) \in \Xi_0$ such that the matrix
       ${\mathcal A}(\tau,\eta)$ is diagonalizable with purely imaginary eigenvalues.

 \item Let ${\bf G}$ denote the set of all $(\tau,\xi) \in \R \times \R^d$ such that $\xi \neq 0$ and there exists
       an integer $k \in \{1,\dots,q\}$ satisfying
\begin{equation*}
\tau + \lambda_k(\xi) = \dfrac{\partial \lambda_k}{\partial \xi_d} (\xi) = 0 \, .
\end{equation*}
If $\pi ({\bf G})$ denotes the projection of ${\bf G}$ on the first $d$ coordinates (that is, $\pi (\tau,\xi) 
:=(\tau,\xi_1,\dots,\xi_{d-1})$ for all $(\tau,\xi)$), the glancing set ${\mathcal G}$ is ${\mathcal G} :=
\pi ({\bf G}) \subset \Xi_0$.
\end{itemize}
\end{defn}

\noindent We recall the following result that is due to Kreiss \cite{K} in the strictly hyperbolic case (when all 
integers $\nu_j$ in Assumption \ref{assumption1} equal $1$) and to M\'etivier \cite{Met} in our more general 
framework.

\begin{theo}[\cite{K,Met}]
\label{thm1}
Let Assumptions \ref{assumption1} and \ref{assumption2} be satisfied. Then for all $\zeta \in \Xi \setminus
\Xi_0$, the matrix ${\mathcal A}(\zeta)$ has no purely imaginary eigenvalue and its stable subspace $\E^s (\zeta)$
has dimension $p$. Furthermore, $\E^s$ defines an analytic vector bundle over $\Xi \setminus \Xi_0$ that can
be extended as a continuous vector bundle over $\Xi$.
\end{theo}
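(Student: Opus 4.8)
The plan is to split the statement into four parts --- (i) absence of purely imaginary eigenvalues of $\mathcal{A}(\zeta)$ for $\gamma>0$, (ii) $\dim \E^s(\zeta)=p$, (iii) analyticity of $\E^s$ over $\Xi\setminus\Xi_0$, and (iv) continuous extension up to $\Xi_0$ --- and to treat each in turn, the last being by far the hardest. For (i), suppose $\mathcal{A}(\zeta)\,X=i\,\omega\,X$ with $X\ne0$ and $\omega\in\R$; multiplying by $i\,A_d(0)$ turns this into $\big((\tau-i\gamma)\,I+\sum_{j=1}^{d-1}\eta_j\,A_j(0)+\omega\,A_d(0)\big)X=0$, hence $\det\big[(\tau-i\gamma)\,I+\sum_{j=1}^{d}\xi_j\,A_j(0)\big]=0$ with $\xi:=(\eta,\omega)\in\R^d$. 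If $\xi=0$ this forces $\tau-i\gamma=0$, impossible for $\gamma>0$; if $\xi\ne0$, the factorization in Assumption \ref{assumption1}, which is a polynomial identity and thus persists for complex values of its first slot, gives $(\tau-i\gamma)+\lambda_k(\xi)=0$ for some $k$, again impossible since $\lambda_k(\xi)$ is real while $\gamma>0$. Hence $\C^N=\E^s(\zeta)\oplus\E^u(\zeta)$, with $\E^{s},\E^{u}$ the sums of generalized eigenspaces associated with eigenvalues in $\{\re<0\}$ and $\{\re>0\}$.

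For (ii) and (iii), since the spectrum of $\mathcal{A}(\zeta)$ avoids $i\R$ when $\gamma>0$, one introduces the spectral projector $\pi^s(\zeta):=\frac{1}{2\pi i}\oint_{\Gamma}(z\,I-\mathcal{A}(\zeta))^{-1}\,dz$, with $\Gamma$ a contour in $\{\re z<0\}$ enclosing the stable eigenvalues and chosen locally uniformly in $\zeta$; then $\pi^s$ is analytic in $\zeta$, of locally constant rank, and has range $\E^s(\zeta)$. Since $\Xi\setminus\Xi_0$ is homeomorphic to $\R\times\R^{d-1}\times(0,\infty)$ via $(\tau,\eta,\gamma)\mapsto(\tau-i\gamma,\eta)$, it is connected, so $\rank\pi^s$ is constant on it. Evaluating at $\zeta=(-i\gamma,0)$, where $\mathcal{A}(\zeta)=-\gamma\,A_d(0)^{-1}$: by Assumption \ref{assumption1} the matrix $A_d(0)$ is diagonalizable with real nonzero eigenvalues, so the stable eigenvalues of $-\gamma\,A_d(0)^{-1}$ are exactly the $-\gamma/\mu$ with $\mu$ a positive eigenvalue of $A_d(0)$; by Assumption \ref{assumption2} there are $p$ of them, counted with multiplicity, whence $\rank\pi^s\equiv p$ and $\E^s$ is an analytic vector subbundle of $(\Xi\setminus\Xi_0)\times\C^N$ of rank $p$.

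Part (iv) is the crux, because at $\gamma=0$ the spectrum of $\mathcal{A}(\tau,\eta)$ may meet $i\R$ (on $\cH$ it lies entirely there) and the contour $\Gamma$ collapses. The plan is to invoke the block reduction of Kreiss and M\'etivier in its constant-multiplicity form: near any $\underline\zeta\in\Xi_0$ there is an invertible matrix $T(\zeta)$, holomorphic in $\tau-i\gamma$, such that $T(\zeta)^{-1}\mathcal{A}(\zeta)\,T(\zeta)$ is block-diagonal with each block of one of four types --- (a) $\re$ of the block $\ge c\,\gamma\,I$, (b) $\re$ of the block $\le-c\,\gamma\,I$ with $c>0$, (c) a block whose value at $\underline\zeta$ has a single purely imaginary, semisimple eigenvalue which for $\gamma>0$ moves off $i\R$ with a fixed sign of its real part (the non-glancing hyperbolic modes; semisimplicity of the $\lambda_k$ enters here), and (d) a glancing block in a companion-type normal form perturbing a nilpotent Jordan block, whose characteristic roots are convergent Puiseux series in the frequency increments. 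For types (a) and (b) the stable part is $\{0\}$ or everything and extends trivially; for type (c) the eigenvalue and its eigenprojector extend continuously to $\gamma=0$ by analytic perturbation of a semisimple eigenvalue, and the sign condition fixes how many branches enter $\{\re<0\}$; for type (d) one reads off from the normal form and the Puiseux expansions the number of roots with strictly negative real part for $\gamma>0$ and checks that the corresponding (generalized) eigenvectors have limits as $\gamma\to0^+$. Collecting the stable directions of all blocks and conjugating back by $T(\underline\zeta)$ yields a continuous local extension of $\E^s$ near $\underline\zeta$; these agree on overlaps since the stable subspace is unique for $\gamma>0$, so they patch to a continuous rank-$p$ bundle over all of $\Xi$.

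I expect the whole difficulty to sit in part (iv), and within it in the glancing blocks of type (d): first establishing the block-reduction normal form in the constant-multiplicity (not merely strictly hyperbolic) framework, and then carrying out the perturbative root count and the eigenvector limit that pin down $\E^s$ at glancing frequencies. The remaining ingredients --- the spectral-gap argument, the connectedness of $\Xi\setminus\Xi_0$, and the single explicit evaluation at $\zeta=(-i\gamma,0)$ --- are comparatively soft.
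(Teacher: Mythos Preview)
The paper does not prove this theorem at all: it is stated with the citation \cite{K,Met} and immediately used, with a further reference to \cite{Met} for the analyticity away from the glancing set. So there is no ``paper's own proof'' to compare against; the result is taken as background.

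That said, your outline is a faithful sketch of the standard Kreiss--M\'etivier argument. Parts (i)--(iii) are correct and complete as written (one tiny quibble: the fact that the eigenvalues of $A_d(0)$ are \emph{nonzero} comes from Assumption~\ref{assumption2}, not Assumption~\ref{assumption1}; you use both, so the argument is fine). Part (iv) is indeed where all the work lies, and you correctly identify the block-reduction strategy and the role of the glancing blocks; turning your description of the type-(d) analysis into an actual proof is essentially the content of \cite{Met}, which extends Kreiss' original strictly hyperbolic argument to the constant-multiplicity setting via the semisimplicity in Assumption~\ref{assumption1}. What you have written is a roadmap rather than a proof for that part, but since the paper itself defers entirely to the cited references, your level of detail already exceeds what the paper provides.
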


\noindent For all $(\tau,\eta) \in \Xi_0$, we let $\E^s(\tau,\eta)$ denote the continuous extension of $\E^s$ 
to the point $(\tau,\eta)$. Away from the glancing set ${\mathcal G} \subset \Xi_0$, $\E^s(\zeta)$ depends 
analytically on $\zeta$, see \cite{Met}. In particular, it follows from the analysis in \cite{Met}, see similar 
arguments in \cite{BRSZ,jfc2}, that the hyperbolic region ${\mathcal H}$ does not contain any glancing 
point, and $\E^s(\zeta)$ depends analytically on $\zeta$ in the neighborhood of any point of ${\mathcal H}$. 
We now make our weak stability condition precise (recall the notation $B:={\rm d}b(0)$).

\begin{assumption}[Weak Kreiss-Lopatinskii condition]
\label{assumption3}
\begin{itemize}
 \item For all $\zeta \in \Xi \setminus \Xi_0$, $\text{\rm Ker} B \cap \E^s (\zeta) = \{ 0\}$.

 \item The set $\Upsilon := \{ \zeta \in \Sigma_0 \, : \, \text{\rm Ker} B \cap \E^s (\zeta) \neq \{ 0\} \}$ is
       nonempty and included in the hyperbolic region ${\mathcal H}$.

 \item There exists a neighborhood ${\mathcal V}$ of $\Upsilon$ in $\Sigma$, a real valued ${\mathcal C}^\infty$
       function $\sigma$ defined on ${\mathcal V}$, a basis  $E_1(\zeta),\dots,E_p(\zeta)$ of $\E^s(\zeta)$ that
       is of class ${\mathcal C}^\infty$ with respect to $\zeta \in {\mathcal V}$, and a matrix $P(\zeta) \in
       \text{\rm GL}_p (\C)$ that is of class ${\mathcal C}^\infty$ with respect to $\zeta \in {\mathcal V}$,
       such that
\begin{equation*}
\forall \, \zeta \in {\mathcal V} \, ,\quad B \, \begin{pmatrix}
E_1(\zeta) & \dots & E_p(\zeta) \end{pmatrix}
= P(\zeta) \, \text{\rm diag } \big( \gamma +i\, \sigma (\zeta),1,\dots,1 \big) \, .
\end{equation*}
\end{itemize}
\end{assumption}

As explained in \cite{CG,CGW3,CW5}, Assumption \ref{assumption3} is a more convenient description of 
the so-called WR class of \cite{BRSZ}. Let us recall that this class consists of hyperbolic boundary value 
problems for which the uniform Kreiss-Lopatinskii condition breaks down "at first order" in the hyperbolic 
region\footnote{Let us also recall that the uniform Kreiss-Lopatinskii condition is satisfied when 
$\text{\rm Ker} B \cap \E^s (\zeta) = \{ 0\}$ for all $\zeta \in \Xi$, and not only for $\zeta \in \Xi \setminus \Xi_0$.}. 
This class is meaningful for nonlinear problems because it is stable by perturbations of the matrices $A_j(0)$ 
and of the boundary conditions $B$.

Our final assumption deals with the phase $\varphi_0$ occuring in \eqref{0}.

\begin{assumption}[Critical phase]
\label{assumption4}
The phase $\varphi_0$ in \eqref{0} is defined by
\begin{equation*}
\varphi_0 (t,y) :=\underline{\tau} \, t +\underline{\eta} \cdot y \, ,
\end{equation*}
with $\tauetabar \in \Upsilon$. In particular, there holds $\tauetabar \in {\mathcal H}$.
\end{assumption}

The shock waves problem considered in \cite{MR} enters the framework defined by Assumptions \ref{assumption1}, 
\ref{assumption1'}, \ref{assumption2}, \ref{assumption3} and \ref{assumption4} with the additional difficulty that the 
space domain has a free boundary. The vortex sheets problem considered in \cite{AM} and the analogous one in 
\cite{wangyu} violate Assumption \ref{assumption2} but these problems share all main features which we consider 
here. We restrict our analysis to fixed noncharacteristic boundaries mostly for convenience and simplicity of notation.

Our main results deal with the existence of {\sl approximate} solutions to \eqref{0}. This is the reason why we 
only make assumptions on the linearized problem at the origin \eqref{1}, and not on the full nonlinear problem 
\eqref{0}.

\subsection{Main result for wavetrains}

In Part \ref{part1}, we consider the nonlinear problem \eqref{0} with a source term $G$ that is $\Theta$-periodic 
with respect to its last argument $\theta_0$. As evidenced in several previous works, the asymptotic behavior of 
the solution $u_\eps$ to \eqref{0} is described in terms of the characteristic phases whose trace on the boundary 
equals $\varphi_0$. We thus consider the pairwise distinct roots (and all the roots are real) $\underline{\omega}_1,
\dots,\underline{\omega}_M$ to the dispersion relation
\begin{equation*}
\det \Big[ \underline{\tau} \, I+\sum_{j=1}^{d-1} \underline{\eta}_j \, A_j(0) +\omega \, A_d(0) \Big] = 0 \, .
\end{equation*}
To each $\underline{\omega}_m$ there corresponds a unique integer $k_m \in \{ 1,\dots,q\}$ such that
$\underline{\tau} + \lambda_{k_m} (\underline{\eta},\underline{\omega}_m)=0$. We can then define the following
real phases and their associated group velocity:
\begin{equation}
\label{phases}
\forall \, m =1,\dots,M \, ,\quad \varphi_m (t,x):= \varphi(t,y)+\underline{\omega}_m \, x_d \, ,\quad 
{\bf v}_m := \nabla \lambda_{k_m} (\underline{\eta},\underline{\omega}_m) \, .
\end{equation}
We let $\Phi :=(\varphi_1,\dots,\varphi_M)$ denote the collection of phases. Each group velocity ${\bf v}_m$ 
is either incoming or outgoing with respect to the space domain $\R^d_+$: the last coordinate of ${\bf v}_m$ 
is nonzero. This property holds because $\tauetabar$ does not belong to the glancing set ${\mathcal G}$.

\begin{defn}
\label{def2}
The phase $\varphi_m$ is incoming if the group velocity ${\bf v}_m$ is incoming ($\partial_{\xi_d} \lambda_{k_m}
(\underline{\eta},\underline{\omega}_m) >0$), and it is outgoing if the group velocity ${\bf v}_m$ is outgoing
($\partial_{\xi_d} \lambda_{k_m} (\underline{\eta},\underline{\omega}_m) <0$).
\end{defn}

\noindent In all what follows, we let ${\mathcal I}$ denote the set of indices $m \in \{ 1,\dots,M\}$ such 
that $\varphi_m$ is incoming, and ${\mathcal O}$ denote the set of indices $m \in \{ 1,\dots,M\}$ such 
that $\varphi_m$ is outgoing. Under Assumption \ref{assumption2}, both ${\mathcal I}$ and ${\mathcal O}$ 
are nonempty, as follows from \cite[Lemma 3.1]{CG} which we recall later on.

The proof of our main result for wavetrains, that is Theorem \ref{theowavetrains} below, heavily relies on 
the nonresonance assumption below. For later use, we introduce the following notation: if $0 \le k \le M$, 
we let $\Z^{M;k}$ denote the subset of all $\alpha \in \Z^M$ such that at most $k$ coordinates of $\alpha$ 
are nonzero. For instance $\Z^{M;1}$ is the union of the sets $\Z \, {\bf e}_m$, $m=1,\dots,M$, where 
$({\bf e}_1,\dots,{\bf e}_M)$ denotes the canonical basis of $\R^M$. We also introduce the notation:
\begin{equation}
\label{defopL}
L(\tau,\xi) :=\tau \, I +\sum_{j=1}^d \xi_j \, A_j(0) \, ,\quad L(\partial) :=\partial_t +\sum_{j=1}^d A_j(0) \, \partial_j \, .
\end{equation}
The nonresonance assumption reads as follows.

\begin{assumption}[Nonresonance and small divisor condition]
\label{assumption5}
The phases are nonresonant, that is for all $\alpha \in \Z^M \setminus \Z^{M;1}$, there holds 
$\det L({\rm d} (\alpha \cdot \Phi)) \neq 0$, where $\alpha \cdot \Phi := \alpha_m \, \varphi_m$.

Furthermore, there exists a constant $c>0$ and a real number $\gamma$ such that for all $\alpha \in 
\Z^M \setminus \Z^{M;1}$ that satisfies $\alpha_m =0$ for all $m \in {\mathcal O}$, there holds
\begin{equation*}
|\det L({\rm d} (\alpha \cdot \Phi))| \ge c \, |\alpha|^{-\gamma} \, .
\end{equation*}
\end{assumption}

Let us note that the small divisor condition is only required for $\alpha$ with nonzero components 
$\alpha_m$ which correspond to incoming phases. If there is only one incoming phase, then there is 
no such $\alpha$ with at least two nonzero coordinates, and we do not need any small divisor condition. 
The reason for this simplification will be explained in Sections \ref{sect2} and \ref{sect3}. Our main result 
reads as follows.

\begin{theo}
\label{theowavetrains}
Let Assumptions \ref{assumption1}, \ref{assumption1'}, \ref{assumption2}, \ref{assumption3}, \ref{assumption4}, 
\ref{assumption5} be satisfied, let $T_0>0$ and consider $G \in {\mathcal C}^\infty ((-\infty,T_0]_t; H^{+\infty} 
(\R^{d-1}_y \times (\R /(\Theta \, \Z))_{\theta_0}))$ that vanishes for $t<0$. Then there exists $0<T \le T_0$ 
and there exists a unique sequence of profiles $(\cU_n)_{n \ge 0}$ in ${\mathcal C}^\infty ((-\infty,T]; H^{+\infty} 
(\R^d_+ \times (\R /(\Theta \, \Z))^M))$ that satisfies the WKB cascade \eqref{BKWint}, \eqref{BKWbord} below, 
and $\cU_n|_{t<0}=0$ for all $n \in \N$. In particular, each profile $\cU_k$ has its $\theta$-spectrum included in 
the set
\begin{equation*}
\Z^M_{\mathcal I} := \big\{ \alpha \in \Z^M \, / \, \forall \, m \in {\mathcal O} \, , \, \alpha_m=0 \big\} \, ,
\end{equation*}
which means that no outgoing signal is generated at any order.

Furthermore, if for all integers $N_1, N_2 \ge 0$, we define the approximate solution
\begin{equation*}
u_\eps^{{\rm app},N_1,N_2}(t,x) := \sum_{n=0}^{N_1+N_2} \eps^{1+n} \, \cU_n \left( t,x,\dfrac{\Phi(t,x)}{\eps} \right) \, ,
\end{equation*}
then
\begin{equation*}
\begin{cases}
\partial_t u_\eps^{{\rm app},N_1,N_2} +{\displaystyle \sum^{d}_{j=1}} A_j (u_\eps^{{\rm app},N_1,N_2}) \, 
\partial_j u_\eps^{{\rm app},N_1,N_2} = O(\eps^{N_1+1}) \, ,& t \le T\, , \, x \in \R^d_+ \, ,\\
b(u_\eps^{{\rm app},N_1,N_2}|_{x_d=0}) =\eps^2 \, G \left( t,y,\dfrac{\varphi_0(t,y)}{\eps} \right) +O(\eps^{N_1+2}) \, ,& 
t \le T\, , \, y \in \R^{d-1} \, ,\\
u_\eps^{{\rm app},N_1,N_2}|_{t<0} =0 \, ,
\end{cases}
\end{equation*}
where the $O(\eps^{N_1+1})$ in the interior equation and $O(\eps^{N_1+2})$ in the boundary conditions are measured 
respectively in the ${\mathcal C} ((-\infty,T];H^{N_2}(\R^d_+)) \cap L^\infty((-\infty,T] \times \R^d_+)$ and ${\mathcal C} 
((-\infty,T];H^{N_2}(\R^{d-1})) \cap L^\infty((-\infty,T] \times \R^{d-1})$ norms.
\end{theo}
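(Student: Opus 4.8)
The plan is to set up the WKB cascade explicitly, reduce the solvability of each order to a fixed-point argument for a hyperbolic boundary value problem coupled with a transport-type "profile equation" on the boundary, and then close an induction on $n$. First I would plug the ansatz $u_\eps \sim \sum_{n\ge 0}\eps^{1+n}\,\cU_n(t,x,\Phi(t,x)/\eps)$ into \eqref{0}, expand $A_j(u_\eps)$ in powers of $\eps$, and collect equal powers of $\eps$. The $O(\eps)$ interior equation forces $L(\D_\theta\cdot)\cU_0=0$ componentwise in each phase, i.e.\ each Fourier mode $\alpha$ of $\cU_0$ lies in $\ker L({\rm d}(\alpha\cdot\Phi))$; by Assumption \ref{assumption1} this is nonzero only for $\alpha\in\Z^{M;1}$, so $\cU_0=\sum_{m=1}^M \sigma_m(t,x)\,r_m\,e^{im\theta_m}$-type polarized terms (plus the mean mode). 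At order $\eps^{n+1}$ one gets $L(\partial)\cU_{n-1}+L(\D_\theta\cdot)\cU_n+(\text{quadratic interaction terms built from }\cU_0,\dots,\cU_{n-1})=0$, and the nonresonance hypothesis (Assumption \ref{assumption5}, first part) guarantees that the operator $L(\D_\theta\cdot)$ is invertible on the non-polarized part of the $\theta$-spectrum, so the "elliptic" component of $\cU_n$ is determined algebraically, while the polarized component is governed by a solvability (Fredholm / mean-value) condition that produces the transport equations for the scalar amplitudes. This is the standard nonlinear-geometric-optics bookkeeping, adapted to the boundary; I expect the interior part to go through as in \cite{CGW1,W1}.

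Second, I would handle the boundary conditions. Expanding $b(u_\eps|_{x_d=0})=\eps^2 G$ in powers of $\eps$ gives, at order $\eps^2$, the equation $B\,\cU_0^*|_{x_d=0}=G(\cdot,\theta_0)$ in the appropriate Fourier modes, and at higher orders $B\,\cU_n^*|_{x_d=0}=(\text{known data from lower orders})$, where the key point is the \emph{amplification}: because $\tauetabar\in\Upsilon\subset\cH$ and Assumption \ref{assumption3} holds, the trace operator on $\E^s$ has the normal form $B(E_1\ \cdots\ E_p)=P\,\mathrm{diag}(\gamma+i\sigma,1,\dots,1)$, so at $\gamma=0$ the first component of $B$ restricted to the stable subspace vanishes. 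This is exactly the mechanism that (i) makes the problem only weakly well-posed and (ii) feeds the trace of $\cU_{n}$ into the data for $\cU_{n+1}$ with a \emph{loss of one power of $\eps$}, which is why the forcing is $O(\eps^2)$ and the solution $O(\eps)$ — the strongly nonlinear scaling. Concretely, the scalar amplitude on the boundary solves a Burgers-type equation (from the quadratic self-interaction of the polarized profile) with an \emph{additional integro-differential term} coming from the coupling of two incoming phases producing an outgoing contribution that must be cancelled on the boundary — this is the "Mach stem equation". I would isolate this equation, prove it is a semilinear perturbation of Burgers in a suitable $H^s$ scale (the perturbation being a bilinear Fourier multiplier bounded on $H^s$, whence local existence by energy estimates and a contraction), and thereby obtain $T>0$ and the leading amplitude; the small divisor condition (second part of Assumption \ref{assumption5}) is what makes the inversion of $L(\D_\theta\cdot)$ on the incoming sublattice $\Z^M_{\mathcal I}\setminus\Z^{M;1}$ bounded on $H^{+\infty}$ with only a fixed loss of derivatives, which is harmless since we work in $H^{+\infty}$.

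Third, the induction. Assume $\cU_0,\dots,\cU_{n-1}$ are known, smooth, supported in $t\ge0$, with $\theta$-spectrum in $\Z^M_{\mathcal I}$. The order-$(n+1)$ interior relation determines the non-polarized part of $\cU_n$ by the (bounded, small-divisor-controlled) inverse of $L(\D_\theta\cdot)$; its polarized part is a vector of scalar amplitudes satisfying \emph{linear} transport equations along the group velocities $\mathbf v_m$, with interior source terms from lower orders and boundary data obtained from the order-$(n+2)$ boundary relation via the normal form above. The causality/finite-speed-of-propagation argument (which the paper says it clarifies relative to \cite{MR,AM}) shows the outgoing modes are never excited: any outgoing amplitude satisfies a transport equation with zero initial data (support in $t\ge0$) and zero boundary data, hence vanishes — this propagates the spectral localization $\Z^M_{\mathcal I}$. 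Uniqueness is built into the construction since at every stage the relevant operators are invertible on their respective ranges. I would then verify the residual estimates for $u_\eps^{{\rm app},N_1,N_2}$ by a direct substitution and Taylor-remainder bound, using that finitely many profiles in $H^{+\infty}$ give $C^\infty$ bounds uniform in $\eps$, which yields the stated $O(\eps^{N_1+1})$ and $O(\eps^{N_1+2})$ in the $\mathcal C([H^{N_2}])\cap L^\infty$ norms.

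\textbf{Main obstacle.} The crux is the well-posedness of the Mach stem equation for wavetrains with the integro-differential (bilinear Fourier multiplier) term, and more generally making the whole cascade solvable on a \emph{common} time interval $T>0$: one must show that each scalar amplitude equation — a Burgers equation perturbed by a nonlocal bilinear term arising from the incoming–incoming $\to$ outgoing interaction being reflected back — is locally well-posed in a high-regularity space, with the boundary trace feeding forward consistently through the $\gamma+i\sigma$ normal form, and that the small-divisor bound keeps the formal series' coefficients in $H^{+\infty}$. Getting the functional framework right so that (a) the bilinear multiplier is bounded, (b) the boundary transport couples correctly with the interior transport along $\mathbf v_m$, and (c) the induction does not shrink $T$ at each step, is where the real work lies.
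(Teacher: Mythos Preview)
Your proposal is correct and follows essentially the same route as the paper: derive the WKB cascade, use nonresonance plus the small-divisor bound to invert $\cL(\partial_\theta)$ on the noncharacteristic incoming modes (the paper's Lemma \ref{lem4}), isolate the Mach stem equation on the boundary as a semilinear (bilinear Fourier multiplier) perturbation of Burgers and solve it by energy methods (the paper's Theorem \ref{propFper} and Theorem \ref{thmKinkModes}), then run an induction in which the corrector equations are linear so $T$ does not shrink. Two small imprecisions worth tightening: outgoing amplitudes vanish because the transport is \emph{outgoing} with zero past data, not because of ``zero boundary data'' (no boundary condition is imposed on them); and the paper relies specifically on the \emph{tame} form of the bilinear estimate so that the existence time for the nonlinear Mach stem equation depends only on a fixed low Sobolev norm, which is what guarantees $H^{+\infty}$ regularity on a common interval.
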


Of course, the approximate solutions provided by Theorem \ref{theowavetrains} become interesting only 
for $N_1 \ge 1$, that is, when the remainder $O(\eps^{N_1+2})$ on the boundary becomes smaller than 
the source term $\eps^2 \, G$.

The spectrum property in Theorem \ref{theowavetrains} is a {\sl rigorous} justification of the causality arguments 
used in \cite{AM,wangyu}. Theorem \ref{theowavetrains} will be proved in Part \ref{part1} of this article. In Section 
\ref{sect2}, we shall derive the so-called leading amplitude (Mach stem) equation from which the leading profile 
$\cU_0$ is constructed. Section \ref{sect3} is devoted to proving well-posedness for this evolution equation. As far 
as we know, the bilinear Fourier multiplier that we shall encounter in this equation had not appeared earlier in the 
geometric optics context and our main task is to prove a {\sl tame boundedness} estimate for this multiplier. Section 
\ref{sect4} is devoted to the construction of the correctors $\cU_n$, $n \ge 1$, and to completing the proof of 
Theorem \ref{theowavetrains}. We refer to Appendix \ref{appA} for a discussion of the two-dimensional isentropic 
Euler equations, with specific emphasis on Assumption \ref{assumption5}.

\subsection{Main result for pulses}

We keep the same notation \eqref{phases} for the phases, but now consider the nonlinear problem \eqref{0} with 
a source term $G$ that has "polynomial decay" with respect to its last argument $\theta_0$. This behavior is made 
precise by introducing the following weighted Sobolev spaces:
\begin{equation*}
\Gamma^k (\R^d):= \left\{ u \in L^2(\R^{d-1}_y \times \R_\theta) \, : \, \theta^\alpha \, \partial_{y,\theta}^\beta u \in 
L^2 (\R^d) \quad \text{\rm if }Ê\alpha +|\beta| \le k \right\} \, .
\end{equation*}
Our second main result reads as follows.

\begin{theo}
\label{theopulses}
Let Assumptions \ref{assumption1}, \ref{assumption1'}, \ref{assumption2}, \ref{assumption3}, \ref{assumption4} 
be satisfied. Let $k_0$ denote the smallest integer satisfying $k_0>(d+1)/2$, and let $K_0,K_1$ be two integers 
such that $K_0>8+(d+2)/2$, $K_1 -K_0 \ge 2\, k_0+2$. Let $T_0>0$ and consider
%$K_0>1+(d+1)/2$, 
\begin{equation*}
G \in \cap_{\ell=0}^{K_0-1} {\mathcal C}^\ell ((-\infty,T_0]_t; \Gamma^{K_1-\ell}(\R^d_{y,\theta})) \, ,
\end{equation*}
that vanishes for $t<0$. Then there exists $0<T \le T_0$ and there exist profiles $\cU_0,\cU_1,\cU_2 $ vanishing in 
$t<0$ that satisfy the WKB cascade \eqref{3p}, \eqref{4p} below. If we define the approximate solution
\begin{equation*}
u_\eps^{\rm app}(t,x) := \sum_{n=0}^2 \eps^{1+n} \, \cU_n \left( t,x,\dfrac{\varphi_0(t,y)}{\eps},\dfrac{x_d}{\eps} \right) \, ,
\end{equation*}
then
\begin{equation*}
\begin{cases}
\partial_t u_\eps^{\rm app} +{\displaystyle \sum^{d}_{j=1}} A_j (u_\eps^{\rm app}) \, \partial_j u_\eps^{\rm app} 
= O(\eps^3) \, ,& t \le T\, , \, x \in \R^d_+ \, ,\\
b(u_\eps^{\rm app}|_{x_d=0}) =\eps^2 \, G \left( t,y,\dfrac{\varphi_0(t,y)}{\eps} \right) +O(\eps^3) \, ,& 
t \le T\, , \, y \in \R^{d-1} \, ,\\
u_\eps^{\rm app}|_{t<0} =0 \, ,
\end{cases}
\end{equation*}
where the $O(\eps^3)$ in the interior equation and in the boundary conditions are measured respectively in the 
${\mathcal C} ((-\infty,T];L^2(\R^d_+)) \cap L^\infty((-\infty,T] \times \R^d_+)$ and ${\mathcal C} ((-\infty,T]; 
L^2(\R^{d-1})) \cap L^\infty((-\infty,T] \times \R^{d-1})$ norms.

The exact regularity and decay properties of the profiles are given in \eqref{a9}.
\end{theo}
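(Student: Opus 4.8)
The plan is to run a two-scale WKB construction with slow variables $(t,x) \in (-\infty,T] \times \R^d_+$ and two fast variables, the tangential phase $\theta_0 := \varphi_0(t,y)/\eps$ and the normal variable $\xi_d := x_d/\eps$, and then to close the remainder by a Taylor expansion. First I would substitute $u^{\rm app}_\eps = \eps\,\cU_0 + \eps^2\,\cU_1 + \eps^3\,\cU_2$, with $\cU_n = \cU_n(t,x,\theta_0,\xi_d)$, into \eqref{0}, expand $A_j$ and $b$ at the origin, and identify powers of $\eps$; this yields the WKB cascade \eqref{3p}, \eqref{4p}. The leading interior equation $L(\utau,\ueta,0)\,\partial_{\theta_0}\cU_0 + A_d(0)\,\partial_{\xi_d}\cU_0 = 0$ forces the $(\theta_0,\xi_d)$-Fourier spectrum of $\cU_0$ onto the rays dual to the phases $\varphi_m$, so that $\cU_0 = \sum_m \cU_{0,m}$, each $\cU_{0,m}$ being a scalar amplitude, function of $\theta_0 + \uomega_m \xi_d$, times a fixed eigenvector of $\ker L({\rm d}\varphi_m)$. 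Since the profiles vanish for $t<0$ and decay as $x_d \to +\infty$, tracing backwards along the outgoing group velocities $\mathbf v_m$ ($m \in \cO$) shows that those pieces carry no data and must vanish — this is the rigorous ``no outgoing signal'' property. The order-$\eps$ boundary relation is then $B\,\cU_0|_{x_d=0} = 0$ (the forcing being only $O(\eps^2)$); as $\cU_0|_{x_d=0}$ now lies in $\E^s(\utau,\ueta)$ and $\tauetabar \in \Upsilon$ makes the first diagonal entry in the factorization of Assumption \ref{assumption3} vanish, the trace of $\cU_0$ collapses to a single scalar profile, $\cU_0|_{x_d=0} = a(t,y,\theta_0)\,e$, with $e$ spanning $\ker B \cap \E^s(\utau,\ueta)$.

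Next I would solve the order-$\eps$ interior equations for $\cU_1$: their solvability (the forcing must lie in the range of the fast operator) produces transport equations along the $\mathbf v_m$ for the incoming amplitudes, with boundary data obtained by decomposing $e$ over the incoming eigenvectors; this reconstructs $\cU_0$ in the interior from its trace. The scalar $a$ is fixed only at the next step, by requiring the order-$\eps^2$ boundary condition $B\,\cU_1|_{x_d=0} + \tfrac12\,{\rm d}^2 b(0)(\cU_0|_{x_d=0},\cU_0|_{x_d=0}) = G$ to be solvable for $\cU_1$. Projecting this onto the degenerate direction, Taylor-expanding $\gamma + i\sigma$ to first order about $\tauetabar$ (so that this term becomes a first-order operator in the slow variables), and inserting the non-polarized part of $\cU_1|_{x_d=0}$ — obtained by applying a partial inverse of the fast operator to the quadratic self-interaction of $\cU_0$ — I arrive at the Mach stem equation for $a(t,y,\theta_0)$: a Burgers-type quasilinear part (from the resonant self-interactions together with ${\rm d}^2 b(0)$) plus a nonlocal bilinear Fourier multiplier $Q(a,a)$ (its symbol coming from the factors $1/\det L$ generated when the fast operator is inverted on combinations of two \emph{distinct} incoming phases) equated to the $e$-component of $G$. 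With a single incoming phase $Q$ is absent and one recovers the Burgers equation of \cite{AM,wangyu}; with at least two incoming phases one recovers the integro-differential equation of \cite{MR}.

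I would then establish local well-posedness of the Mach stem equation in the scale $\cap_\ell \cC^\ell((-\infty,T];\Gamma^{K_1-\ell})$ by the energy method, treating it as a semilinear perturbation of Burgers: a standard quasilinear a priori estimate for the Burgers part and a \emph{tame} $\Gamma^k$-boundedness estimate for $Q$ combine to close an iteration scheme and yield a solution on some $[0,T]$, $0 < T \le T_0$, with $T$ depending on a norm of $G$. The thresholds $K_0 > 8 + (d+2)/2$ and $K_1 - K_0 \ge 2k_0 + 2$ are exactly what absorbs the derivative losses in the tame estimates, the commutators with the transport and trace operators, and the Sobolev embeddings used for the $L^\infty$ bounds. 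With $\cU_0$ in hand on $[0,T]$, I would build $\cU_1$ then $\cU_2$ by solving their \emph{linear} WKB problems — the polarized part by transport equations with sources built from the already-known profiles, its trace by the order-$\eps^{n+1}$ boundary condition via the now invertible amplification operator; inverting that operator amounts to an integration in the fast variables, which consumes decay, which is why the profiles live in weighted spaces and why the construction stops after $\cU_2$ (the precise regularity and decay being those of \eqref{a9}). Finally, substituting $u^{\rm app}_\eps$ back into \eqref{0} and Taylor-expanding, the orders $\eps^0, \eps^1, \eps^2$ cancel by the cascade and the leftover is $O(\eps^3)$ in the stated $\cC((-\infty,T];L^2) \cap L^\infty$ norms, the $L^\infty$ part following from Sobolev embedding and the regularity in \eqref{a9}.

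The main obstacle is the tame $\Gamma^k$-boundedness of the bilinear Fourier multiplier $Q$: its symbol is homogeneous and smooth off a conical set of codimension $\ge 1$, so a Coifman--Meyer/Marcinkiewicz argument should give $L^2$-boundedness, but one must carry the estimate through the polynomial weights of $\Gamma^k$ without losing derivatives and check that the resonant cone contributes nothing genuinely singular. Here the pulse setting is markedly more forgiving than the wavetrain one, as there is no arithmetic small-divisor obstruction and hence no analogue of Assumption \ref{assumption5} is needed; the remaining bookkeeping — that each corrector's boundary problem is solvable in the available weighted class and that the cascade terminates cleanly at $\cU_2$ — is routine though delicate.
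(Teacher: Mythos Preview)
Your overall architecture is right and mirrors the paper's, but there are two genuine gaps in how you handle the nonlocal term $Q_{\rm pul}$, and one important ingredient you omit.

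First, you describe $Q$ as a ``bilinear Fourier multiplier'' whose symbol comes from $1/\det L$ and propose to bound it via Coifman--Meyer. That is the \emph{wavetrain} picture. In the pulse setting the partial inverse of the fast operator $\cL(\partial_{\theta_0},\partial_{\xi_d})$ is $R_\infty$, an integration over $[\xi_d,+\infty)$ in the stretched normal variable, and the resulting $Q_{\rm pul}[a,a]$ is the \emph{physical-space} integral operator \eqref{defQpul}, namely a sum of terms $\int_0^{+\infty} (\partial_\theta a)(\theta+\alpha X)\,a(\theta+\beta X)\,{\rm d}X$. If you compute its Fourier symbol (Lemma~\ref{lemFourier}) you get a principal value $\text{p.v.}\,1/(\alpha\eta+\beta(\xi-\eta))$ plus a delta contribution on the line $\alpha\eta+\beta(\xi-\eta)=0$: the symbol is \emph{not} smooth off a conical set, so Coifman--Meyer does not apply directly. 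The paper proves the key bound (Proposition~\ref{propFpul}) entirely in physical space by Cauchy--Schwarz on the integral formula, exploiting the identity $\theta=\delta_3(\theta+\delta_1 s)-\delta_1(\theta+\delta_3 s)$ to transfer the weights.

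Second, you claim a \emph{tame} $\Gamma^k$ estimate. The paper explicitly obtains only the non-tame bound $\|\F_{\rm pul}(\partial_\theta u,\partial_\theta v)\|_{\Gamma^k}\le C_k\|u\|_{\Gamma^k}\|v\|_{\Gamma^k}$ and remarks that this suffices precisely because only three profiles are built; tameness is neither proved nor needed here (contrast with the wavetrain case, Theorem~\ref{propFper}).

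Finally, you omit the \emph{moment-zero} property $\int_\R a\,{\rm d}\theta=0$ (and hence $\int_\R\sigma_m\,{\rm d}\theta_m=0$), which the paper obtains from the structure of the Mach stem equation (Corollaries~\ref{cor3}, \ref{cor4}). This is not cosmetic: when you integrate the first and fourth lines of \eqref{a6} to build $(I-E_P)\,\cU_1$, the integrals $\int_{\xi_d}^{+\infty}\sigma_k(\cdot,\theta_0+\uomega_m\xi_d+(\uomega_k-\uomega_m)s)\,{\rm d}s$ are bounded (and decay) only because the integrand has mean zero in its last argument. Without it $\cU_1$ would be merely bounded, $\cU_2$ unbounded, and the $O(\eps^3)$ remainder claim would fail. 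You should also make explicit that the vanishing of the outgoing component $\tau_2$ of $E_P\,\cU_1$ is \emph{deduced} from the assumed boundedness of $\cU_2$ (Step~4 of Paragraph~\ref{pconstruction}), not from a direct causality argument.
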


The approximate solution provided by Theorem \ref{theopulses} is constructed, as in \cite{MR}, according to the 
following line of thought: we expect that the exact solution $u_\eps$ to \eqref{0} admits an asymptotic expansion 
of the form
\begin{equation*}
u_\eps \sim \eps \, \sum_{k \ge 0} \eps^k \, \cU_k \left(t,x,\dfrac{\varphi_0(t,y)}{\eps},\dfrac{x_d}{\eps} \right) \, ,
\end{equation*}
that is either finite up to an order $K \ge 2$, or infinite. We plug this ansatz and try to identify each profile 
$\cU_k$. The corrector $\eps \, \cU_1$ is expected to be negligible with respect to $\cU_0$, and so on for 
higher indices. Hence the identification of the profiles is based on some {\sl boundedness assumption} for 
the correctors to the leading profile. Of course, such assumptions have to be verified a posteriori when 
constructing $\cU_0$, $\cU_1$ and so on. For instance, Theorem \ref{theowavetrains} is based on the 
assumption that one can decompose $u_\eps$ with profiles in $H^\infty$, and we give a rigorous construction 
of such profiles for which the corrector
\begin{equation*}
\eps^{1+n} \, \cU_n \left( t,x,\dfrac{\Phi(t,x)}{\eps} \right) \, ,
\end{equation*}
is indeed an $O(\eps^{1+n})$ in $L^\infty ((-\infty,T] \times \R^d_+)$.

In Sections \ref{sect6} and \ref{sect7}, we give a rigorous construction of the leading profile $\cU_0$ and of 
the first two correctors $\cU_1$, $\cU_2$ that satisfy all the boundedness and integrability properties on which 
the derivation of the leading amplitude equation relies. In particular in section \ref{sect6} we explain why, assuming 
that the first and second correctors $\cU_1$, $\cU_2$ satisfy some boundedness and integrability properties in 
the stretched variables $(t,x,\theta_0,\xi_d)$, the leading profile $\cU_0$ is necessarily determined by an amplitude 
equation that is entirely similar to the one in \cite{MR}. The analysis of Section \ref{sect6} clarifies some of the 
causality arguments used in \cite{MR}. This makes the arguments of \cite{MR} consistent, and one of our 
achievements is to prove in section \ref{sect7} local well-posedness for the leading amplitude equation derived 
in \cite{MR} (which we call the Mach stem equation).

However, the drawback of this approach is that, surprisingly, it is not consistent with the formal large period 
limit for wavetrains. More precisely, it seems rather reasonable to expect that the pulse problem is obtained 
by considering the analogous problem for wavetrains with a period $\Theta$ and by letting $\Theta$ tend to 
infinity. In particular, the reader can check that the leading amplitude equations derived in \cite{CW4}, resp. 
\cite{CW5}, for {\sl quasilinear uniformly stable} pulse problems, resp. {\sl semilinear weakly stable} pulse 
problems, coincide with the large period limit of the analogous equations obtained in \cite{CGW1}, resp. 
\cite{CGW3}, for wavetrains, even though the latter equations include interaction integrals to account for 
resonances. One could therefore adopt a different point of view and first derive the profile equations for 
pulses by considering the limit $\Theta \rightarrow +\infty$ for wavetrains, and then study the property of the 
corresponding approximate solution. Surprisingly, the two approaches do not give the same leading profile 
$\cU_0$, as we shall explain in Appendix \ref{appB}. It seems very difficult at this stage to decide which of 
the two approximate solutions should be the most ``physically relevant" since we do not have a {\sl nonlinear} 
stability result that would claim that the {\sl exact} solution $u_\eps$ to \eqref{0} is close to one of these two 
approximate solutions on a fixed time interval independent of $\eps$ small enough. The clarification of this 
surprising phenomenon is left to a future work.

\newpage
\part{Highly oscillating wavetrains}
\label{part1}

\section{Construction of approximate solutions: the leading amplitude}
\label{sect2}

\subsection{Some decompositions and notation}
\label{notation}

We recall here some useful results from \cite{CG} and introduce some notation. Recall that the matrix 
${\mathcal A}\tauetabar$ is diagonalizable with eigenvalues $i\, \underline{\omega}_m$, $m=1,\dots,M$.  
The eigenspace of ${\mathcal A}\tauetabar$ for $i\, \underline{\omega}_m$ coincides with the kernel 
of $L({\rm d}\varphi_m)$.

\begin{lem}[\cite{CG}]
\label{lem1}
The (extended) stable subspace $\E^s \tauetabar$ admits the decomposition
\begin{equation}
\label{decomposition1}
\E^s \tauetabar = \oplus_{m \in {\mathcal I}} \, \text{\rm Ker } L({\rm d} \varphi_m) \, ,
\end{equation}
and each vector space in the decomposition \eqref{decomposition1} is of real type (that is, it admits a basis 
of real vectors).
\end{lem}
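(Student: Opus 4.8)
The plan is to prove the two assertions of Lemma \ref{lem1} separately, using the normal mode structure near the hyperbolic point $\tauetabar$ together with a limiting argument from $\Xi\setminus\Xi_0$.

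First I would establish the algebraic decomposition \eqref{decomposition1}. Since $\tauetabar \in \mathcal{H}$ (Assumption \ref{assumption4}), the matrix $\mathcal{A}\tauetabar$ is diagonalizable with purely imaginary eigenvalues. As recalled in the text, its eigenvalues are exactly $i\,\underline{\omega}_m$, $m=1,\dots,M$, and the eigenspace for $i\,\underline{\omega}_m$ coincides with $\mathrm{Ker}\,L(\mathrm{d}\varphi_m)$. Hence $\C^N = \oplus_{m=1}^M \mathrm{Ker}\,L(\mathrm{d}\varphi_m)$. The content of \eqref{decomposition1} is therefore the identification of the (extended) stable subspace $\E^s\tauetabar$ with the sum over the incoming indices only. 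For $\zeta \in \Xi\setminus\Xi_0$ near $\tauetabar$, the stable subspace $\E^s(\zeta)$ is spanned by generalized eigenvectors associated with eigenvalues of $\mathcal{A}(\zeta)$ of negative real part; by Theorem \ref{thm1}, $\dim \E^s(\zeta)=p$, and this subspace extends continuously to $\tauetabar$. The key point is to track, as $\gamma \downarrow 0$, which eigenvalues $\mu_m(\zeta)$ of $\mathcal{A}(\zeta)$ bifurcating from $i\,\underline{\omega}_m$ have negative real part: a standard perturbation computation (differentiating the characteristic relation, exactly as in \cite{CG} or \cite[chapter 4]{BS}) shows that the sign of $\mathrm{Re}\,\mu_m$ for small $\gamma>0$ is governed by the sign of $\partial_{\xi_d}\lambda_{k_m}(\underline{\eta},\underline{\omega}_m)$, i.e. precisely the incoming/outgoing dichotomy of Definition \ref{def2}. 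Thus the eigenvalues coming from incoming phases enter the stable subspace and those from outgoing phases leave it, so by continuity $\E^s\tauetabar = \oplus_{m\in\mathcal{I}}\mathrm{Ker}\,L(\mathrm{d}\varphi_m)$. A dimension check ($\#\mathcal{I}$ copies summing, by Assumption \ref{assumption2} and the group-velocity count, to $p$) confirms consistency.

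Next I would prove the ``real type'' assertion. Each space $\mathrm{Ker}\,L(\mathrm{d}\varphi_m)$ is the kernel of the real matrix $L(\underline{\tau},\underline{\eta},\underline{\omega}_m) = \underline{\tau}I + \sum_{j=1}^{d-1}\underline{\eta}_j A_j(0) + \underline{\omega}_m A_d(0)$, because $\underline{\tau},\underline{\eta}$ are real (Assumption \ref{assumption4}) and $\underline{\omega}_m$ is a real root of the dispersion relation (the roots are all real, as stated before \eqref{phases}, since $\tauetabar$ lies in the hyperbolic region). The kernel of a real matrix is spanned by real vectors: one may take real and imaginary parts of any complex basis, or simply note that $\mathrm{Ker}$ of a real matrix acting on $\R^N$ has a real basis whose complexification is $\mathrm{Ker}$ on $\C^N$. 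This gives the claim with essentially no computation.

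The main obstacle is the perturbation argument in the first step: one must verify rigorously that each semisimple eigenvalue $i\,\underline{\omega}_m$ of $\mathcal{A}\tauetabar$ splits, for $\gamma>0$, into eigenvalues of $\mathcal{A}(\zeta)$ whose real parts have a definite sign dictated by the group velocity, and in particular that no such branch stays on the imaginary axis (this is where the hyperbolicity of $\tauetabar$, hence its being away from the glancing set $\mathcal{G}$, is essential — cf. Definition \ref{def1} and the remarks after Theorem \ref{thm1}). The cleanest route is to use the block reduction of $\mathcal{A}(\zeta)$ near $\tauetabar$ from \cite{Met,CG}, compute $\partial_\gamma$ of the relevant eigenvalue at $\gamma=0$ via the implicit function theorem applied to $\det L(\tau-i\gamma,\eta,i^{-1}\mu)=0$, and read off $\mathrm{Re}\,\partial_\gamma\mu_m|_{\gamma=0} = -1/\partial_{\xi_d}\lambda_{k_m}(\underline{\eta},\underline{\omega}_m)$ up to a positive factor. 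Since this is precisely \cite[Lemma 3.1]{CG} and the surrounding analysis, I would cite it rather than reproduce it, and devote the written proof mostly to assembling these ingredients and to the (elementary) dimension count and real-type verification.
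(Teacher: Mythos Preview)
Your proposal is correct and outlines precisely the standard argument from \cite{CG}. Note, however, that the paper itself does not prove this lemma: it is stated with the attribution \cite{CG} and used as a black box, so there is no ``paper's own proof'' to compare against. Your sketch of the perturbation argument (tracking the sign of $\mathrm{Re}\,\partial_\gamma\mu_m|_{\gamma=0}$ via the group velocity) and the trivial real-type verification is exactly the content of the cited reference.
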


\begin{lem}[\cite{CG}]
\label{lem2}
The following decompositions hold
\begin{equation}
\label{decomposition2}
\C^N = \oplus_{m=1}^M \, \text{\rm Ker } L({\rm d} \varphi_m) 
= \oplus_{m=1}^M \, A_d(0) \, \text{\rm Ker } L({\rm d} \varphi_m) \, ,
\end{equation}
and each vector space in the decompositions \eqref{decomposition2} is of real type.

We let $P_1,\dots,P_M$, resp. $Q_1,\dots,Q_M$, denote the projectors associated with the first, resp. second, 
decomposition in \eqref{decomposition2}. Then for all $m=1,\dots,M$, there holds $\text{\rm Im } L({\rm d} \varphi_m) 
= \text{\rm Ker } Q_m$.
\end{lem}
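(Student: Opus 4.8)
The plan is to establish Lemma \ref{lem2} as a consequence of the spectral structure of $\cA\tauetabar$, together with the hyperbolicity/noncharacteristic assumptions, and finally to identify the images of the operators $L(\mathrm d\varphi_m)$ with the complement of $\mathrm{Im}\, Q_m$.

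First I would recall that, since $\tauetabar \notin \cG$ and $\cA\tauetabar$ is diagonalizable with eigenvalues $i\,\underline\omega_m$ (this is exactly the statement quoted just before Lemma \ref{lem1}), we have the eigenspace decomposition $\C^N = \oplus_{m=1}^M \ker(\cA\tauetabar - i\,\underline\omega_m\, I)$. The key observation is that $v \in \ker(\cA\tauetabar - i\,\underline\omega_m\, I)$ iff $A_d(0)^{-1}\big((\utau) I + \sum_{j=1}^{d-1}\ueta_j A_j(0) + \underline\omega_m A_d(0)\big) v = 0$, and since $A_d(0)$ is invertible by Assumption \ref{assumption2}, this is equivalent to $L(\mathrm d\varphi_m) v = 0$, because $\mathrm d\varphi_m = (\utau, \ueta, \underline\omega_m)$. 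Hence the first decomposition in \eqref{decomposition2} is simply the eigenspace decomposition of $\cA\tauetabar$. The second decomposition follows by applying the isomorphism $A_d(0)$ to the first (an invertible linear map carries a direct sum decomposition into a direct sum decomposition), which gives $\C^N = \oplus_{m=1}^M A_d(0)\,\ker L(\mathrm d\varphi_m)$.

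For the ``real type'' assertion, I would argue that each $\ker L(\mathrm d\varphi_m)$ is invariant under complex conjugation: the matrices $A_j(0)$ are real and $\utau,\ueta,\underline\omega_m$ are real (the roots of the dispersion relation are real, as noted in the text), so $L(\mathrm d\varphi_m)$ is a real matrix, and hence its kernel is the complexification of the real kernel $\ker L(\mathrm d\varphi_m) \cap \R^N$; dimension count then shows this real subspace spans the complex one, giving a real basis. The same applies to $A_d(0)\,\ker L(\mathrm d\varphi_m)$ since $A_d(0)$ is real. Alternatively one can just cite \cite{CG} for this, as the authors do.

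Finally, for the identity $\mathrm{Im}\, L(\mathrm d\varphi_m) = \ker Q_m$: since $L(\mathrm d\varphi_m)$ is a real matrix with a semisimple eigenvalue structure at $\utau = -\lambda_{k_m}(\ueta,\underline\omega_m)$ — more precisely $\underline\omega_m$ is a semisimple root, so $\C^N = \ker L(\mathrm d\varphi_m) \oplus \mathrm{Im}\, L(\mathrm d\varphi_m)$ — we have $\dim \mathrm{Im}\, L(\mathrm d\varphi_m) = N - \dim\ker L(\mathrm d\varphi_m)$. The plan is to show $\mathrm{Im}\, L(\mathrm d\varphi_m) = \oplus_{m'\neq m} A_d(0)\,\ker L(\mathrm d\varphi_{m'})$, which is precisely $\ker Q_m$ by definition of the $Q$-projectors. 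One inclusion is a dimension count once the other is shown; for the nontrivial inclusion, write $L(\mathrm d\varphi_m) = A_d(0)\big(\cA\tauetabar/(-i) + \underline\omega_m I\big) \cdot(-i)$ up to scalars — more carefully, $L(\mathrm d\varphi_m) = i\, A_d(0)\,(\underline\omega_m I - \cA\tauetabar\cdot(1/i))$, so that $\mathrm{Im}\, L(\mathrm d\varphi_m) = A_d(0)\,\mathrm{Im}(\cA\tauetabar - i\underline\omega_m I)$, and since $\cA\tauetabar$ is diagonalizable, $\mathrm{Im}(\cA\tauetabar - i\underline\omega_m I) = \oplus_{m'\neq m}\ker(\cA\tauetabar - i\underline\omega_{m'}I) = \oplus_{m'\neq m}\ker L(\mathrm d\varphi_{m'})$; applying $A_d(0)$ gives exactly $\ker Q_m$. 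The main obstacle — really the only subtle point — is keeping straight the algebra relating $L(\mathrm d\varphi_m)$ to $\cA\tauetabar - i\underline\omega_m I$ and correctly tracking the factor of $A_d(0)$, which is what makes the image of $L(\mathrm d\varphi_m)$ match the \emph{second} decomposition's complement rather than the first; everything else is linear algebra over the eigenspace decomposition, and for the real-type claims one can defer to \cite{CG}.
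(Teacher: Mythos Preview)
Your argument is correct and is the natural one: identify $\ker L(\mathrm d\varphi_m)$ with the eigenspace of $\cA\tauetabar$ for $i\,\underline\omega_m$, use diagonalizability for the first decomposition, push forward by $A_d(0)$ for the second, and use the clean factorization $L(\mathrm d\varphi_m)=i\,A_d(0)\,(\cA\tauetabar - i\,\underline\omega_m\,I)$ to read off $\mathrm{Im}\,L(\mathrm d\varphi_m)=A_d(0)\bigoplus_{m'\neq m}\ker L(\mathrm d\varphi_{m'})=\ker Q_m$. Note that the paper itself gives no proof here --- it simply quotes the result from \cite{CG} --- so there is nothing to compare against in the present text; your derivation is essentially the standard one and would be what \cite{CG} contains.
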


\noindent Using Lemma \ref{lem2}, we may introduce the partial inverse $R_m$ of $L({\rm d}\varphi_m)$, 
which is uniquely determined by the relations
\begin{equation*}
\forall \, m=1,\dots,M \, ,\quad R_m \, L({\rm d}\varphi_m) =I-P_m \, ,\quad 
L({\rm d}\varphi_m) \, R_m=I-Q_m \, ,\quad P_m \, R_m=0 \, ,\quad R_m \, Q_m =0 \, .
\end{equation*}

When the system is strictly hyperbolic, which is the case considered in Sections \ref{sect2}, \ref{sect3} 
and most of Section \ref{sect4}, each vector space $\text{\rm Ker } L({\rm d} \varphi_m)$ is one-dimensional 
and $M=N$. The case of conservative hyperbolic systems with constant multiplicity will be dealt with in Paragraph 
\ref{sect5}. In the case of a strictly hyperbolic system, we choose, for all $m=1,\dots,N$, a real vector $r_m$ that 
spans $\text{\rm Ker } L({\rm d} \varphi_m)$. We also choose real row vectors $\ell_1,\dots,\ell_N$, that satisfy
\begin{equation*}
\forall \, m=1,\dots,N \, ,\quad \ell_m \, L({\rm d}\varphi_m) =0 \, ,
\end{equation*}
together with the normalization $\ell_m \, A_d(0) \, r_{m'} =\delta_{mm'}$. With this choice, the partial inverse 
$R_m$ and the projectors $P_m$, $Q_m$ are given by
\begin{equation*}
\forall \, X \in \C^N \, ,\quad R_m \, X=\sum_{m' \neq m} 
\dfrac{\ell_{m'} \, X}{\underline{\omega}_m -\underline{\omega}_{m'}} \, r_{m'} \, ,\quad 
P_m \, X = (\ell_m \, A_d(0) \, X) \, r_m \, ,\quad 
Q_m \, X = (\ell_m \, X) \, A_d(0) \, r_m \, .
\end{equation*}

According to Assumption \ref{assumption3}, $\text{\rm Ker} B \cap \E^s \tauetabar$ is one-dimensional and 
is therefore spanned by some vector $e=\sum_{m \in {\mathcal I}} e_m$, $e_m \in \text{\rm Span } r_m$ 
(here we have used Lemma \ref{lem1}). The vector space $B \, \E^s \tauetabar$ is $(p-1)$-dimensional 
and is of real type. We can therefore write it as the kernel of a real linear form
\begin{equation}
\label{defb}
B\, \E^s \tauetabar = \big\{ X \in \C^p \, , \, \underline{b} \, X = 0 \big\} \, ,
\end{equation}
for a suitable nonzero row vector $\underline{b}$. Eventually, we can introduce the partial inverse of the 
restriction of $B$ to the vector space $\E^s \tauetabar$. More precisely, we choose a supplementary vector 
space of Span $e$ in $\E^s \tauetabar$:
\begin{equation}
\label{decomposition3}
\E^s \tauetabar = \check{\E}^s \tauetabar \oplus \text{\rm Span } e \, .
\end{equation}
The matrix $B$ then induces an isomorphism from $\check{\E}^s \tauetabar$ to the hyperplane 
$B\, \E^s \tauetabar$.

\subsection{Strictly hyperbolic systems of three equations}
\label{sect2example}

For simplicity of notation, we first explain the derivation of the leading amplitude equation in the case of a 
$3 \times 3$ strictly hyperbolic system. We keep the notation introduced in the previous paragraph, and 
we make the following assumption.

\begin{assumption}
\label{assumption6}
The phases $\varphi_1,\varphi_3$ are incoming and $\varphi_2$ is outgoing.
\end{assumption}

\noindent Assumption \ref{assumption6} corresponds to the case $p=2$ in Assumption \ref{assumption2} 
(up to reordering the phases). The only other possibility that is compatible with Assumption \ref{assumption2} 
is $p=1$, and two phases are outgoing. This case would yield the standard Burgers equation for determining 
the leading amplitude (see Paragraph \ref{paragraphAM} below for a detailed discussion), so we focus on 
$p=2$ which incorporates the main new difficulty.

Let us now derive the WKB cascade for highly oscillating solutions to \eqref{0}. The solution $u_\eps$ to 
\eqref{0} is assumed to have an asymptotic expansion of the form
\begin{equation}
\label{BKW}
u_\eps \sim \eps \, \sum_{k \ge 0} \eps^k \, \cU_k \left(t,x,\dfrac{\Phi(t,x)}{\eps} \right) \, ,
\end{equation}
where we recall that $\Phi$ denotes the collection of phases $(\varphi_1,\varphi_2,\varphi_3)$, and the profiles 
$\cU_k$ are assumed to be $\Theta$-periodic with respect to each of their last three arguments $\theta_1, 
\theta_2, \theta_3$. Plugging the ansatz \eqref{BKW} in \eqref{0} and identifying powers of $\eps$, we obtain 
the following first three relations for the $\cU_k$'s (see Section \ref{sect4} for the complete set of relations up 
to any order):
\begin{equation}
\label{3}
\begin{split}
&{\rm (a)}\quad \cL(\partial_{\theta}) \, \cU_0 =0 \, ,\\
&{\rm (b)}\quad \cL(\partial_{\theta}) \, \cU_1+L(\partial) \, \cU_0 +\cM (\cU_0,\cU_0) =0 \, ,\\
&{\rm (c)}\quad \cL(\partial_{\theta}) \, \cU_2 +L(\partial) \, \cU_1 +\cM (\cU_0,\cU_1) +\cM (\cU_1,\cU_0) 
+\cN_1 (\cU_0,\cU_0) +\cN_2 (\cU_0,\cU_0,\cU_0) =0 \, ,\\
\end{split}
\end{equation}
where the differential operators $\cL,\cM,\cN_1,\cN_2$ are defined by:
\begin{equation}
\label{3a}
\begin{split}
&\cL(\partial_\theta) := L({\rm d}\varphi_m) \, \partial_{\theta_m} \, ,\\
&\cM (v,w) := \partial_j \varphi_m \, ({\rm d}A_j(0) \cdot v) \, \partial_{\theta_m} w \, ,\\
&\cN_1 (v,w) := ({\rm d}A_j(0) \cdot v) \, \partial_j w \, ,\\
&\cN_2 (v,v,w) := \dfrac{1}{2} \, \partial_j \varphi_m \, ({\rm d}^2A_j(0) \cdot (v,v)) \, \partial_{\theta_m} w \, .\\
\end{split}
\end{equation}
The equations \eqref{3} in the domain $(-\infty,T] \times \R^d_+ \times (\R /\Theta \, \Z)^3$ are supplemented 
with the boundary conditions obtained by plugging \eqref{BKW} in the boundary conditions of \eqref{0}, which 
yields (recall $B ={\rm d}b(0)$):
\begin{equation}\label{4}
\begin{split}
&{\rm (a)}\quad B\, \cU_0 =0 \, ,\\
&{\rm (b)}\quad B\, \cU_1 +\dfrac{1}{2} \, {\rm d}^2b(0) \cdot (\cU_0,\cU_0) =G(t,y,\theta_0) \, ,\\
\end{split}
\end{equation}
where functions on the left hand side of \eqref{4} are evaluated at $x_d=0$ and $\theta_1=\theta_2=\theta_3 
=\theta_0$. In order to get $u_\eps|_{t<0}=0$, as required in \eqref{0}, we also look for profiles ${\mathcal U}_k$ 
that vanish for $t<0$.

The derivation of the leading amplitude equation is split in several steps, which we decompose below in 
order to highlight the (slight) differences in Paragraph \ref{sect5} for the case of systems with constant multiplicity.
\bigskip

\noindent \underline{Step 1: ${\mathcal U}_0$ has mean zero.}

According to Assumption \ref{assumption5}, the phases $\varphi_m$ are nonresonant. Equation \eqref{3}(a) 
thus yields the polarization condition for the leading amplitude $\cU_0$. More precisely, we expand the 
amplitude $\cU_0$ in Fourier series with respect to the $\theta_m$'s, and \eqref{3}(a) shows that only the 
characteristic modes $\Z^{3,1}$ occur in $\cU_0$. More precisely, we can write
\begin{equation}
\label{5}
\cU_0(t,x,\theta_1,\theta_2,\theta_3) =\underline{\cU}_0(t,x) +\sum_{m=1}^3 \sigma_m(t,x,\theta_m) \, r_m \, ,
\end{equation}
with unknown scalar functions $\sigma_m$ depending on a single periodic variable $\theta_m$ and of 
mean zero with respect to this variable.

Let us now consider Equation \eqref{3}(b), and integrate with respect to $\theta_1,\theta_2,\theta_3$. 
Using the expression \eqref{5} of $\cU_0$, we obtain
\begin{equation}
\label{eq:moyenne1}
L(\partial) \, \underline{\cU}_0 =0 \, ,
\end{equation}
because the quadratic term $\cM (\cU_0,\cU_0)$ has zero mean with respect to $(\theta_1,\theta_2,\theta_3)$. 
Indeed, $\cM (\cU_0,\cU_0)$ splits as the sum of terms that have one of the following three forms :
\begin{equation*}
\star) \partial_{\theta_m} \sigma_m \, f_m(t,x)\, ,\quad 
\star) \, \sigma_m \, \partial_{\theta_m} \sigma_m \, \tilde{r}_m \, ,\quad 
\star) \, \sigma_{m_1} \, \partial_{\theta_{m_2}} \sigma_{m_2} \, \tilde{r}_{m_1 m_2} \, (m_1 \neq m_2) \, ,
\end{equation*}
where $\tilde{r}_m, \tilde{r}_{m_1 m_2}$ are constant vectors (whose precise expression is useless), and each 
of these terms has zero mean with respect to $(\theta_1,\theta_2,\theta_3)$. Equation \eqref{eq:moyenne1} is 
supplemented by the boundary condition obtained by integrating \eqref{4}(a), that is,
\begin{equation}
\label{eq:moyenne2}
B\, \underline{\cU}_0 |_{x_d=0}=0 \, .
\end{equation}
By the well-posedness result of \cite{C}, we get $\underline{\cU}_0 \equiv 0$. The goal is now to identify the 
amplitudes $\sigma_m$'s in \eqref{5}.
\bigskip

\noindent \underline{Step 2: ${\mathcal U}_0$ has no outgoing mode.}

We first start by showing $\sigma_2 \equiv 0$. We first integrate \eqref{3}(b) with respect to $(\theta_1,\theta_3)$ 
and apply the row vector $\ell_2$ (which amounts to applying $Q_2$), obtaining
\begin{equation*}
\ell_2 \, L(\partial) (\sigma_2 \, r_2) +\ell_2 \, \left( \dfrac{1}{\Theta^2} \, \int_0^\Theta \! \! \! \int_0^\Theta 
\cM (\cU_0,\cU_0) \, {\rm d}\theta_1 \, {\rm d}\theta_3 \right) =0 \, .
\end{equation*}
Since there is no resonance among the phases, integration of the quadratic term $\cM (\cU_0,\cU_0)$ with 
respect to $(\theta_1,\theta_3)$ only leaves the self-interaction term $\sigma_2 \, \partial_{\theta_2} \sigma_2$, 
and the classical Lax lemma \cite{L} for the linear part\footnote{In fact, $\ell_2 \, L(\partial) (\cdot \, r_2)$ equals 
$\ell_2 \, r_2$ times the transport operator $\partial_t +{\bf v}_2 \cdot \nabla_x$, and $\ell_2 \, r_2$ does not 
vanish.} $\ell_2 \, L(\partial) (\sigma_2 \, r_2)$ gives the scalar equation
\begin{equation*}
\partial_t \sigma_2 +{\bf v}_2 \cdot \nabla_x \sigma_2 +c_2 \, \sigma_2 \, \partial_{\theta_2} \sigma_2 =0 \, ,\quad 
c_2 :=\dfrac{\partial_j \varphi_2 \, \ell_2 \, ({\rm d}A_j(0) \cdot r_2) \, r_2}{\ell_2 \, r_2} \, .
\end{equation*}
Since ${\bf v}_2$ is outgoing and $\sigma_2$ vanishes for $t<0$, we obtain $\sigma_2 \equiv 0$.

The above derivation of the interior equation for $\sigma_2$ can be performed word for word for the other 
scalar amplitudes $\sigma_1,\sigma_3$, because $\cM (\cU_0,\cU_0)$ also has zero mean with respect to 
$(\theta_1,\theta_2)$ and $(\theta_2,\theta_3)$. We thus get
\begin{equation}
\label{eq:Burgersm}
\partial_t \sigma_m +{\bf v}_m \cdot \nabla_x \sigma_m +c_m \, \sigma_m \, \partial_{\theta_m} \sigma_m =0 \, ,\quad 
m=1,3 \, ,\quad c_m :=\dfrac{\partial_j \varphi_m \, \ell_m \, ({\rm d}A_j(0) \cdot r_m) \, r_m}{\ell_m \, r_m} \, ,
\end{equation}
but we now need to determine the trace of $\sigma_m$ on the boundary $\{ x_d=0 \}$.

Since only $\sigma_1, \sigma_3$ appear in the decomposition \eqref{5}, the leading amplitude $\cU_0$ takes 
values in the stable subspace $\E^s \tauetabar$ (Lemma \ref{lem1}), and the boundary condition \eqref{4}(a) 
yields
\begin{equation*}
\sigma_1 (t,y,0,\theta_0) \, r_1 =a(t,y,\theta_0) \, e_1 \, ,\quad \sigma_3 (t,y,0,\theta_0) \, r_3 =a(t,y,\theta_0) \, e_3 \, ,
\end{equation*}
for a single unknown scalar function $a$ of zero mean with respect to its last argument $\theta_0$ 
(recall that $e=e_1+e_3$ spans the vector space $\E^s \tauetabar \cap \text{\rm Ker } B$).
\bigskip

\noindent \underline{Step 3: ${\mathcal U}_1$ has no outgoing mode.}

The derivation of the equation that governs the evolution of $a$ comes from analyzing the equations for the 
first corrector $\cU_1$. Since \eqref{3}(c) is more intricate than the corresponding equation in \cite{CGW3}, 
the analysis is starting here to differ from what we did in our previous work \cite{CGW3}. The first corrector 
$\cU_1$ reads
\begin{equation*}
\cU_1(t,x,\theta_1,\theta_2,\theta_3) =\underline{\cU}_1 (t,x) +\sum_{m=1}^3 \cU_1^m (t,x,\theta_m) 
+\cU_1^{\rm nc}(t,x,\theta_1,\theta_2,\theta_3) \, ,
\end{equation*}
where $\underline{\cU}_1$ represents the mean value with respect to $(\theta_1,\theta_2,\theta_3)$, each 
$\cU_1^m$ incorporates the $\theta_m$-oscillations and has mean zero, and the spectrum of the "noncharacteristic" 
part $\cU_1^{\rm nc}$ is a subset of $\Z^3 \setminus \Z^{3,1}$ due to the nonresonant Assumption \ref{assumption5}. 
More precisely, $\cU_1^{\rm nc}$ is obtained by expanding \eqref{3}(b) in Fourier series and retaining only the 
noncharacteristic modes $\Z^3 \setminus \Z^{3,1}$. From the expression \eqref{5} of $\cU_0$ (recall 
$\underline{\cU}_0 \equiv 0$ and $\sigma_2 \equiv 0$), we get
\begin{equation}
\label{u1nc}
\cL(\partial_\theta) \, \cU_1^{\rm nc} =-\sigma_1 \, \partial_{\theta_3} \sigma_3 \, \partial_j \varphi_3 \, 
({\rm d}A_j(0) \cdot r_1) \, r_3 -\sigma_3 \, \partial_{\theta_1} \sigma_1 \, \partial_j \varphi_1 \, 
({\rm d}A_j(0) \cdot r_3) \, r_1 \, .
\end{equation}
In particular, the spectrum of $\cU_1^{\rm nc}$ is a subset of the integers $\alpha \in \Z^3$ that satisfy $\alpha_2=0$ 
and $\alpha_1 \, \alpha_3 \neq 0$, so $\cU_1^{\rm nc}$ has zero mean when integrated with respect to 
$(\theta_1,\theta_3)$.

Equation \eqref{3}(b) also shows that the component $\cU_1^2$ that carries the $\theta_2$-oscillations 
of $\cU_1$ satisfies $L({\rm d}\varphi_2) \, \partial_{\theta_2} \cU_1^2=0$, so that $\cU_1^2$ can be 
written as $\cU_1^2 =\tau_2(t,x,\theta_2) \, r_2$ for an unknown scalar function $\tau_2$ of zero mean 
with respect to $\theta_2$.

Let us now consider Equation \eqref{3}(c). Since $\cU_0$ only has oscillations in $\theta_1$ and $\theta_3$, 
and since there is no resonance among the phases, none of the terms $\cN_1 (\cU_0,\cU_0)$, $\cN_2 
(\cU_0,\cU_0,\cU_0)$ has oscillations in $\theta_2$ only. Looking also closely at each term in $\cM (\cU_0,\cU_1), 
\cM (\cU_1,\cU_0)$, we find that both expressions have zero mean with respect to $(\theta_1,\theta_3)$, 
because the only way to generate a $\theta_2$-oscillation would be to have a nonzero mode of the form 
$(\alpha_1,\alpha_2,0)$ or $(0,\alpha_2,\alpha_3)$ with $\alpha_2 \neq 0$ in $\cU_1^{\rm nc}$, but there 
is no such mode according to \eqref{u1nc}. We thus derive the outgoing transport equation
\begin{equation*}
\partial_t \tau_2 +{\bf v}_2 \cdot \nabla_x \tau_2 =0 \, ,
\end{equation*}
from which we get $\tau_2 \equiv 0$.
\bigskip

\noindent \underline{Step 4:  computation of the nonpolarized components of ${\mathcal U}_1^1, 
{\mathcal U}_1^3$, and compatibility condition.}

At this stage, we know that the first corrector $\cU_1$ reads
\begin{equation*}
\cU_1 =\cU_1(t,x,\theta_1,\theta_3) =\underline{\cU}_1 (t,x) +\cU_1^1 (t,x,\theta_1) +\cU_1^3 (t,x,\theta_3) 
+\cU_1^{\rm nc}(t,x,\theta_1,\theta_3) \, ,
\end{equation*}
with $\cU_1^{\rm nc}$ determined by \eqref{u1nc}. Moreover, the nonpolarized part of $\cU_1^{1,3}$ is 
obtained by considering Equation \eqref{3}(b) and retaining only the $\theta_{1,3}$ Fourier modes. We get
\begin{equation*}
L({\rm d}\varphi_m) \, \partial_{\theta_m} \cU_1^m =-L(\partial) \, (\sigma_m \, r_m) 
-\sigma_m \, \partial_{\theta_m} \sigma_m \, \partial_j \varphi_m \, ({\rm d}A_j(0) \cdot r_m) \, r_m \, ,\quad 
m=1,3 \, ,
\end{equation*}
so $(I-P_m) \, \cU_1^m$, $m=1,3$, is the only zero mean function that satisfies
\begin{equation}
\label{u113}
(I-P_m) \, \partial_{\theta_m} \cU_1^m =-R_m \, L(\partial) \, (\sigma_m \, r_m) 
-\sigma_m \, \partial_{\theta_m} \sigma_m \, \partial_j \varphi_m \, R_m \, ({\rm d}A_j(0) \cdot r_m) \, r_m \, ,\quad 
m=1,3 \, .
\end{equation}

We now consider the boundary condition \eqref{4}(b), which we rewrite equivalently as:
\begin{align*}
B \, \underline{\cU}_1|_{x_d=0} 
+B \, P_1 \, \cU_1^1|_{x_d=0,\theta_1=\theta_0} +B \, P_3 \, \cU_1^3|_{x_d=0,\theta_3=\theta_0} & \\
+B\, (I-P_1) \, \cU_1^1|_{x_d=0,\theta_1=\theta_0} +B \, (I-P_3) \, \cU_1^3|_{x_d=0,\theta_3=\theta_0} 
+B \, \cU_1^{\rm nc}|_{x_d=0,\theta_1=\theta_3=\theta_0} & \\
+\dfrac{1}{2} \, ({\rm d}^2b(0) \cdot (e,e)) \, a^2 &=G(t,y,\theta_0) \, .
\end{align*}
We differentiate the latter equation with respect to $\theta_0$ and apply the row vector $\underline{b}$, so 
that the first line vanishes. We are left with
\begin{align*}
\underline{b} \, B\, (I-P_1) \, (\partial_{\theta_1} \cU_1^1)|_{x_d=0,\theta_1=\theta_0} 
+\underline{b} \, B \, (I-P_3) \, (\partial_{\theta_3} \cU_1^3)|_{x_d=0,\theta_3=\theta_0} 
+\underline{b} \, B \, \partial_{\theta_0} (\cU_1^{\rm nc}|_{x_d=0,\theta_1=\theta_3=\theta_0}) & \\
+\dfrac{1}{2} \, \underline{b} \, ({\rm d}^2 b(0) \cdot (e,e)) \, \partial_{\theta_0} (a^2) & 
=\underline{b} \, \partial_{\theta_0} G \, .
\end{align*}
The first two terms in the first row are computed by using \eqref{u113}, and \cite[Proposition 3.5]{CG}. 
We get
\begin{equation}
\label{eqa1}
\upsilon \, \partial_{\theta_0} (a^2) -X_{\rm Lop} a 
+\underline{b} \, B \, \partial_{\theta_0} (\cU_1^{\rm nc}|_{x_d=0,\theta_1=\theta_3=\theta_0}) =\underline{b} \, 
\partial_{\theta_0} G \, ,
\end{equation}
where the constant $\upsilon$ and the vector field $X_{\rm Lop}$ are defined by
\begin{align}
\upsilon &:= \dfrac{1}{2} \, \underline{b} \, ({\rm d}^2 b(0) \cdot (e,e)) 
-\dfrac{1}{2} \, \underline{b} \, B \, R_1 \, \partial_j \varphi_1 \, ({\rm d}A_j(0) \cdot e_1) \, e_1 
-\dfrac{1}{2} \, \underline{b} \, B \, R_3 \, \partial_j \varphi_3 \, ({\rm d}A_j(0) \cdot e_3) \, e_3 \, ,\label{defupsilon}\\
X_{\rm Lop} &:=\underline{b} \, B \, (R_1 \, e_1 +R_3\, e_3) \, \partial_t 
+\sum_{j=1}^{d-1} \underline{b} \, B \, (R_1 \, A_j(0) \, e_1 +R_3\, A_j(0) \, e_3) \, \partial_j 
=\iota \, (\partial_\tau \sigma \tauetabar \, \partial_t +\partial_{\eta_j} \sigma \tauetabar \, 
\partial_j) \, ,\label{defXLop}
\end{align}
with $\iota$ a nonzero real constant, and $\sigma$ defined in Assumption \ref{assumption3}. It is also shown 
in \cite[Proposition 3.5]{CG} that the partial derivative $\partial_\tau \sigma \tauetabar$ does not vanish, so 
that, up to a nonzero constant, $X_{\rm Lop}=\partial_t +{\bf w} \cdot \nabla_y$ for some vector ${\bf w} \in 
\R^{d-1}$ (which represents the group velocity associated with the characteristic set of the Lopatinskii 
determinant).
\bigskip

\noindent \underline{Step 5: computation of ${\mathcal U}_1^{\rm nc}$ and conclusion.}

The final step in the analysis is to compute the derivative $\partial_{\theta_0} (\cU_1^{\rm nc} 
|_{x_d=0,\theta_1=\theta_3=\theta_0})$ arising in \eqref{eqa1} in terms of the amplitude $a$. 
Restricting \eqref{u1nc} to the boundary $\{ x_d=0\}$ gives
\begin{align*}
\cL(\partial_\theta) \, \cU_1^{\rm nc}|_{x_d=0} =&-a (t,y,\theta_1) \, (\partial_{\theta_0} a) (t,y,\theta_3) 
\, \partial_j \varphi_3 \, ({\rm d}A_j(0) \cdot e_1) \, e_3 \\
&-a (t,y,\theta_3) \, (\partial_{\theta_0} a) (t,y,\theta_1) \, \partial_j \varphi_1 \, ({\rm d}A_j(0) \cdot e_3) \, e_1 \, .
\end{align*}
Let us expand $a$ in Fourier series with respect to $\theta_0$ (recall that $a$ has mean zero):
\begin{equation*}
a(t,y,\theta_0) =\sum_{k \in \Z^*} a_k(t,y) \, {\rm e}^{2 \, i \, \pi \, k \, \theta_0/\Theta} \, .
\end{equation*}
Then the Fourier series of $\cU_1^{\rm nc}$ reads
\begin{equation*}
\cU_1^{\rm nc} (t,x,\theta_1,\theta_3) =\sum_{k_1,k_3 \in \Z^*} u_{k_1,k_3}(t,x) \, 
{\rm e}^{2 \, i \, \pi \, (k_1 \, \theta_1+k_3 \, \theta_3)/\Theta} \, ,
\end{equation*}
with
\begin{align}
u_{k_1,k_3}(t,y,0) =-L(k_1 \, {\rm d}\varphi_1+k_3 \, {\rm d}\varphi_3)^{-1} \, 
(k_1 \, E_{1,3}+k_3 \, E_{3,1}) \, ,\notag \\
E_{1,3} :=\partial_j \varphi_1 \, ({\rm d}A_j(0) \cdot e_3) \, e_1 \, ,\quad 
E_{3,1} :=\partial_j \varphi_3 \, ({\rm d}A_j(0) \cdot e_1) \, e_3 \, .\label{defE1E3}
\end{align}
Plugging the latter expression in \eqref{eqa1}, we end up with the evolution equation that governs 
the leading amplitude $a$ on the boundary:
\begin{equation}
\label{eqa2}
\upsilon \, \partial_{\theta_0} (a^2) -X_{\rm Lop} a +\partial_{\theta_0} Q_{\rm per}[a,a] =\underline{b} \, \partial_{\theta_0} G \, ,
\end{equation}
with
\begin{equation*}
Q_{\rm per}[a,\widetilde{a}] :=-\sum_{k \in \Z} \left( \sum_{\underset{k_1 \, k_3 \neq 0}{k_1+k_3=k,}} \underline{b} \, B \, 
L(k_1 \, {\rm d}\varphi_1+k_3 \, {\rm d}\varphi_3)^{-1} \, (k_1 \, E_{1,3}+k_3 \, E_{3,1}) \, a_{k_1} \, \widetilde{a}_{k_3} 
\right) \, {\rm e}^{2 \, i \, \pi \, k \, \theta_0/\Theta} \, .
\end{equation*}

Equation \eqref{eqa2} is a closed equation for the leading amplitude $a$ on the boundary. It involves the 
vector field $X_{\rm Lop}$ associated with a characteristic of the Lopatinskii determinant, a Burgers term 
$\partial_{\theta_0} a^2$ and a new quadratic nonlinearity $\partial_{\theta_0} Q_{\rm per}[a,a]$. The operator 
$Q_{\rm per}$ takes the form of a bilinear Fourier multiplier. Its above expression may be simplified a little 
bit by computing the decomposition of $L(k_1 \, {\rm d}\varphi_1+k_3 \, {\rm d}\varphi_3)^{-1}$ on the basis 
$r_1,r_2,r_3$, and by recalling the property $\underline{b} \, B \, r_1=\underline{b} \, B \, r_3=0$ (so only 
the component of $L(k_1 \, {\rm d}\varphi_1+k_3 \, {\rm d}\varphi_3)^{-1}$ on $r_2$ matters). We obtain:
\begin{equation}
\label{defQper}
Q_{\rm per}[a,\widetilde{a}] :=-\underline{b} \, B \, r_2 \, \sum_{k \in \Z} \left( 
\sum_{\underset{k_1 \, k_3 \neq 0}{k_1+k_3=k,}} 
\dfrac{k_1 \, \ell_2 \, E_{1,3}+k_3 \, \ell_2 \, E_{3,1}}{k_1 \, (\underline{\omega}_1-\underline{\omega}_2) 
+k_3 \, (\underline{\omega}_3-\underline{\omega}_2)} \, a_{k_1} \, \widetilde{a}_{k_3} \right) \, 
{\rm e}^{2 \, i \, \pi \, k \, \theta_0/\Theta} \, .
\end{equation}

Anticipating slightly our discussion in Section \ref{sect3}, well-posedness of \eqref{eqa2} will be linked to 
arithmetic properties of the phases $\varphi_m$, and this is one reason for the small divisor condition in 
Assumption \ref{assumption5}. This is in sharp contrast with the theory of {\sl weakly nonlinear} geometric 
optics for both the Cauchy problem (see \cite{HMR,JMR2,JMR1} and references therein) and for uniformly 
stable boundary value problems (see \cite{williams4,W1,CGW1}), where arithmetic properties of the phases 
do not enter the discussion on the leading profile for the high frequency limit.

\subsection{Extension to strictly hyperbolic systems of size $N$}

The above derivation of Equation \eqref{eqa2} can be extended without any difficulty to the case of a 
hyperbolic system of size $N$ provided that Assumption \ref{assumption5} is satisfied. In that case, the 
number of phases equals $N$.

Steps 1 and 2 of the above analysis extend almost word for word, to the price of changing some notation. 
Namely, the first relations of the WKB cascade \eqref{3}, \eqref{4} shows that the leading amplitude $\cU_0$ 
reads
\begin{equation*}
\cU_0(t,x,\theta_1,\dots,\theta_N) =\underline{\cU}_0(t,x) +\sum_{m=1}^N \sigma_m(t,x,\theta_m) \, r_m \, ,
\end{equation*}
with unknown scalar functions $\sigma_m$ depending on a single periodic variable $\theta_m$ and of 
mean zero with respect to this variable. The quadratic expression $\cM (\cU_0,\cU_0)$ still has zero 
mean with respect to $(\theta_1,\dots,\theta_N)$ so the nonoscillating part $\underline{\cU}_0$ satisfies 
\eqref{eq:moyenne1} and \eqref{eq:moyenne2}, and therefore vanishes. Furthermore, each function 
$\sigma_m$ satisfies the Burgers equation \eqref{eq:Burgersm}, which reduces the leading amplitude 
$\cU_0$ to
\begin{equation}
\label{decompU0}
\cU_0(t,x,\theta_1,\dots,\theta_N) =\sum_{m \in {\mathcal I}} \sigma_m(t,x,\theta_m) \, r_m \, ,
\end{equation}
where ${\mathcal I}$ denotes the set of incoming phases. The boundary condition \eqref{4}(a) 
then yields
\begin{equation*}
\forall \, m \in {\mathcal I} \, ,\quad \sigma_m (t,y,0,\theta_0) \, r_m =a(t,y,\theta_0) \, e_m \, ,
\end{equation*}
for a single unknown scalar function $a$ of zero mean with respect to its last argument $\theta_0$ 
($e=\sum_{m \in {\mathcal I}} e_m$ spans the vector space $\E^s \tauetabar \cap \text{\rm Ker } B$).

Step 3 of the above discussion is unchanged, showing that in the first corrector $\cU_1$, each profile 
$\cU_1^m$ vanishes when the index $m$ corresponds to an outgoing phase. The noncharacteristic part 
$\cU_1^{\rm nc}$ is obtained by using the relation
\begin{equation*}
\cL(\partial_\theta) \, \cU_1^{\rm nc} =-\sum_{\underset{m_1, m_2 \in {\mathcal I}}{m_1<m_2}} \sigma_{m_1} 
\, \partial_{\theta_{m_2}} \sigma_{m_2} \, \partial_j \varphi_{m_2} \, 
({\rm d}A_j(0) \cdot r_{m_1}) \, r_{m_2} +\sigma_{m_2} \, \partial_{\theta_{m_1}} \sigma_{m_1} \, 
\partial_j \varphi_{m_1} \, ({\rm d}A_j(0) \cdot r_{m_2}) \, r_{m_1} \, .
\end{equation*}
which is the analogue of \eqref{u1nc}.

Step 4 is also unchanged because $\cU_1$ has no outgoing mode, and when $m$ corresponds to an incoming 
phase, $(I-P_m) \, \partial_{\theta_m} \cU_1^m$ is given by \eqref{u113}. Eventually, the boundary condition 
\eqref{4}(b) gives the compatibility condition
\begin{equation}
\label{eqa3}
\upsilon \, \partial_{\theta_0} (a^2) -X_{\rm Lop} a +\partial_{\theta_0} Q_{\rm per}[a,a] 
=\underline{b} \, \partial_{\theta_0} G \, ,
\end{equation}
with
\begin{align}
\upsilon &:= \dfrac{1}{2} \, \underline{b} \, ({\rm d}^2 b(0) \cdot (e,e)) -\dfrac{1}{2} \, 
\sum_{m \in {\mathcal I}} \underline{b} \, B \, R_m \, \partial_j \varphi_m \, ({\rm d}A_j(0) \cdot e_m) \, e_m \, , 
\label{defupsilonstrict} \\
X_{\rm Lop} &:=\sum_{m \in {\mathcal I}} \underline{b} \, B \, R_m\, e_m \, \partial_t 
+\sum_{j=1}^{d-1} \sum_{m \in {\mathcal I}} \underline{b} \, B \, R_m \, A_j(0) \, e_m \, \partial_j 
=\iota \, (\partial_\tau \sigma \tauetabar \, \partial_t +\partial_{\eta_j} \sigma \tauetabar \, \partial_j) \, ,\notag
\end{align}
where $\iota$ is a nonzero real constant and the function $\sigma$ is defined in Assumption \ref{assumption3}. 
(Again, \cite[Proposition 3.5]{CG} shows that the partial derivative $\partial_\tau \sigma \tauetabar$ does not 
vanish.) The new expression of the bilinear Fourier multiplier $Q_{\rm per}$ reads:
\begin{multline}
\label{defQper'}
Q_{\rm per}[a,\widetilde{a}] :=-\sum_{m \in {\mathcal O}} \underline{b} \, B \, r_m \, 
\sum_{\underset{m_1, m_2 \in {\mathcal I}}{m_1<m_2}} \\
\sum_{k \in \Z} \left( \sum_{\underset{k_{m_1} \, k_{m_2} \neq 0}{k_{m_1}+k_{m_2}=k,}} 
\dfrac{k_{m_1} \, \ell_m \, E_{m_1,m_2}+k_{m_2} \, \ell_m \, E_{m_2,m_1}}{k_{m_1} \, 
(\underline{\omega}_{m_1}-\underline{\omega}_m) 
+k_{m_2} \, (\underline{\omega}_{m_2}-\underline{\omega}_m)} \, a_{k_{m_1}} \, \widetilde{a}_{k_{m_2}} \right) 
\, {\rm e}^{2 \, i \, \pi \, k \, \theta_0/\Theta} \, ,
\end{multline}
with
\begin{equation}
\label{defEm1m2}
\forall \, m_1,m_2 \in {\mathcal I}\, ,\quad 
E_{m_1,m_2} :=\partial_j \varphi_{m_1} \, ({\rm d}A_j(0) \cdot e_{m_2}) \, e_{m_1} \, .
\end{equation}
The definition \eqref{defQper'} reduces to \eqref{defQper} when $N=3$ and Assumption \ref{assumption6} is satisfied.

\subsection{The case with a single incoming phase}
\label{paragraphAM}

In this short paragraph, we explain why the computations in \cite{AM,wangyu} lead to the standard Burgers 
equation for determining the leading amplitude, and does not incorporate any quadratic nonlinearity of the 
form \eqref{defQper} we have found under Assumption \ref{assumption6}.

The vortex sheets problem considered in \cite{AM} and the analogous one in \cite{wangyu} differ from the 
framework that we consider here by the fact that the (free) boundary is characteristic. Nevertheless, one 
can reproduce a similar normal modes analysis for trying to detect violent or neutral instabilities. The 
{\sl two-dimensional supersonic} regime considered in \cite{AM} precludes violent instabilities, but a similar 
situation to the one encoded in Assumption \ref{assumption3} occurs\footnote{The reader will find in \cite{CS} 
a detailed analysis of the roots of the associated Lopatinskii determinant, showing that they are located in the 
hyperbolic region and simple.}. The situation in \cite{wangyu} for three dimensional steady flows is similar, 
and the corresponding Lopatinskii determinant is computed in \cite{wangyuJDE}.

The two-dimensional Euler equations form a system of three equations ($N=3$), but due to the characteristic 
boundary (the corresponding matrix $A_d(0)$ has a kernel of dimension $1$), the number of phases 
$\varphi_m$ on either side of the vortex sheet equals $2$. One of them is incoming, and the other is 
outgoing. In such a situation, there are too few incoming phases to create a nontrivial component 
$\cU_1^{\rm nc}$ for the first corrector $\cU_1$, so that the bilinear Fourier multiplier $Q_{\rm per}$ 
vanishes. Though our argument is somehow formal, the reader can follow the computations in \cite{AM} 
or in \cite{wangyu} and check that they follow the exact same procedure that we have described in our 
general framework.

\section{Analysis of the leading amplitude equation}
\label{sect3}

Our goal in this section is to prove a well-posedness result for the leading amplitude equation \eqref{eqa2}. 
Up to dividing by nonzero constants, and using the shorter notation $\theta$ instead of $\theta_0$, the equation 
takes the form
\begin{equation}
\label{eqampli1}
\partial_t a +{\bf w} \cdot \nabla_y a +c \, a \, \partial_\theta a +\mu \, \partial_\theta Q_{\rm per}[a,a] =g \, ,
\end{equation}
where ${\bf w}$ is a fixed vector in $\R^{d-1}$, $c,\mu$ are real constants, and $Q_{\rm per}$ is a bilinear Fourier 
multiplier with respect to the periodic variable $\theta$:
\begin{equation}
\label{defQper''}
Q_{\rm per}[a,a] :=\sum_{k \in \Z} \left( \sum_{\underset{k_1 \, k_3 \neq 0}{k_1+k_3=k,}} 
\dfrac{k_1 \, \ell_2 \, E_{1,3}+k_3 \, \ell_2 \, E_{3,1}}{k_1 \, (\underline{\omega}_1-\underline{\omega}_2) 
+k_3 \, (\underline{\omega}_3-\underline{\omega}_2)} \, a_{k_1} \, a_{k_3} \right) \, 
{\rm e}^{2 \, i \, \pi \, k \, \theta/\Theta} \, .
\end{equation}
The source term $g$ in \eqref{eqampli1} belongs to $H^{+\infty} ((-\infty,T_0]_t \times \R^{d-1}_y \times 
(\R /(\Theta \, \Z))_\theta)$, $T_0>0$, and vanishes for $t<0$. Furthermore, it has mean zero with respect 
to the variable $\theta$. Recall that in \eqref{defQper''}, $a_k$ denotes the $k$-th Fourier coefficient of $a$ 
with respect to $\theta$ (which is a function of $(t,y)$).

Recall that for strictly hyperbolic systems of size $N$, \eqref{defQper'} should be substituted for \eqref{defQper''} 
in the definition of $Q_{\rm per}$, while in the particular case $p=1$ (one single incoming phase), \eqref{eqampli1} 
reduces to the standard Burgers equation for which our main well-posedness result, Theorem \ref{thmKinkModes} 
below, is well-known. For simplicity, we thus encompass all cases by studying \eqref{eqampli1}, \eqref{defQper''} 
and leave to the reader the very minor modifications required for the general case.

\subsection{Preliminary reductions}

We first introduce the nonzero parameters:
\begin{equation*}
\delta_1 :=\dfrac{\underline{\omega}_1-\underline{\omega}_2}{\underline{\omega}_3-\underline{\omega}_1} \, ,\quad 
\delta_3 :=\dfrac{\underline{\omega}_3-\underline{\omega}_2}{\underline{\omega}_3-\underline{\omega}_1} \, ,
\end{equation*}
that satisfy $\delta_3=1+\delta_1$, and we observe that $Q_{\rm per}[a,a]$ in \eqref{defQper''} can be written as
\begin{equation*}
Q_{\rm per}[a,a]=-\dfrac{\Theta \, \ell_2 \, E_{1,3}}{2\, \pi \, (\underline{\omega}_3-\underline{\omega}_1)} \, 
\F_{\rm per}(\partial_\theta a,a) 
-\dfrac{\Theta \, \ell_2 \, E_{3,1}}{2\, \pi \, (\underline{\omega}_3-\underline{\omega}_1)} \, 
\F_{\rm per}(a,\partial_\theta a) \, ,
\end{equation*}
where the bilinear operator $\F_{\rm per}$ is defined by:
\begin{equation}
\label{defFper}
\F_{\rm per}(u,v) :=\sum_{k \in \Z} \left( \sum_{\underset{k_1 \, k_3 \neq 0}{k_1+k_3=k,}} 
\dfrac{i \, u_{k_1} \, v_{k_3}}{k_1 \, \delta_1 +k_3 \, \delta_3} \right) \, {\rm e}^{2 \, i \, \pi \, k \, \theta/\Theta} \, .
\end{equation}

The bilinear operator $\F_{\rm per}$ satisfies the following two properties:
\begin{align}
\text{\rm (Differentiation)} \quad &\partial_\theta (\F_{\rm per}(u,v)) =\F_{\rm per}(\partial_\theta u,v) 
+\F_{\rm per}(u,\partial_\theta v) \, ,\label{propFper1} \\
\text{\rm (Integration by parts)} \quad &\F_{\rm per}(u,\partial_\theta v) =-\dfrac{2\, \pi}{\Theta \, \delta_3} \, u\, v 
-\dfrac{\delta_1}{\delta_3} \, \F_{\rm per}(\partial_\theta u,v) \, ,\quad \text{\rm if } u_0=v_0=0 \, .\label{propFper2}
\end{align}
Using the properties \eqref{propFper1}, \eqref{propFper2}, we can rewrite equation \eqref{eqampli1} as
\begin{equation}
\label{eqampli2}
\partial_t a +{\bf w} \cdot \nabla_y a +c \, a \, \partial_\theta a +\mu \, \F_{\rm per}(\partial_\theta a,\partial_\theta a) 
=g \, ,
\end{equation}
with new (harmless) constants $c$, $\mu$ for which we keep the same notation. Our goal is to solve 
equation \eqref{eqampli2}, that is equivalent to \eqref{eqampli1}, by a standard fixed point argument. The 
main ingredient in the proof is to show that the nonlinear term $\F_{\rm per} (\partial_\theta a, \partial_\theta a)$ 
acts as a {\sl semilinear} term in the scale of Sobolev spaces.

\subsection{Tame boundedness of the bilinear operator $\F_{\rm per}$}

The operator $\F_{\rm per}$ is not symmetric but changing the roles of $\delta_1$ and $\delta_3$, the roles 
of the first and second argument of $\F_{\rm per}$ in the estimates below can be exchanged. This will be used 
at one point in the analysis below. We let $H^\nu :=H^\nu(\R^{d-1} \times (\R/\Theta \, \Z))$ denote the standard 
Sobolev space of index $\nu \in \N$. The norm is denoted $\| \cdot \|_{H^\nu}$. Functions are assumed to take 
real values. In the proof of Theorem \ref{propFper} below, we shall also make use of fractional Sobolev 
spaces on the torus $\R/\Theta \, \Z$ or on the whole space $\R^{d-1}$. These are defined by means of 
the Fourier transform, see, e.g., \cite{BS,chemin}. Our main boundedness result for the operator $\F_{\rm per}$ 
reads as follows.

\begin{theo}
\label{propFper}
There exists an integer $\nu_0>1+d/2$ such that, for all $\nu \ge \nu_0$, there exists a constant $C_\nu$ satisfying
\begin{equation}
\label{estimpropFper}
\forall \, u,v \in H^\nu \, ,\quad \| \F_{\rm per}(\partial_\theta u,\partial_\theta v) \|_{H^\nu} \le 
C_\nu \, \big( \| u \|_{H^{\nu_0}} \, \| v \|_{H^\nu} +\| u \|_{H^\nu} \, \| v \|_{H^{\nu_0}} \big) \, .
\end{equation}
\end{theo}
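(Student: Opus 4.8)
The plan is to prove the tame bilinear estimate by a Littlewood--Paley/paraproduct decomposition in the periodic variable $\theta$, treating the $y$-variables as inert parameters. The key point is that the Fourier multiplier $1/(k_1\,\delta_1+k_3\,\delta_3)$ appearing in $\F_{\rm per}$ behaves, on each dyadic interaction, like $1/\max(|k_1|,|k_3|)$ up to constants \emph{as long as $k_1\delta_1+k_3\delta_3$ stays away from $0$}, and the derivatives $\partial_\theta$ applied to $u$ and $v$ supply exactly the two powers of frequency needed to cancel this denominator. So morally $\F_{\rm per}(\partial_\theta u,\partial_\theta v)$ gains back the derivatives it loses and behaves like the ordinary product $uv$, which is where the classical tame product estimate $\|uv\|_{H^\nu}\lesssim \|u\|_{H^{\nu_0}}\|v\|_{H^\nu}+\|u\|_{H^\nu}\|v\|_{H^{\nu_0}}$ (valid for $\nu_0>d/2$, here $d-1+1 = d$ ambient dimensions, so $\nu_0>d/2$, and we take the slightly larger $\nu_0>1+d/2$ for safety) enters.

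\textbf{Steps.} First I would split the bilinear sum into three regions according to the relative sizes of $|k_1|$, $|k_3|$, $|k_1+k_3|$: the two ``low--high'' paraproduct regions ($|k_1|\ll|k_3|$ and $|k_3|\ll|k_1|$) and the ``high--high'' region ($|k_1|\sim|k_3|$). In the low--high region $|k_1|\ll|k_3|$, one has $|k_3|\sim|k_1+k_3|$ and $|k_1\delta_1+k_3\delta_3|\sim|k_3|$ (this is where I must use that $\delta_1,\delta_3$ are fixed nonzero reals with $\delta_3=1+\delta_1$, so that for $|k_1|\le \varepsilon_0|k_3|$ with $\varepsilon_0$ small depending only on $\delta_1$, the denominator cannot be small; the genuinely dangerous near-cancellations $k_1\delta_1+k_3\delta_3\approx 0$ can only happen in the high--high region). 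Thus on this region the multiplier times the two frequency factors $|k_1|\,|k_3|$ from $\partial_\theta u,\partial_\theta v$ is bounded by $|k_1|$, i.e. by one derivative \emph{landing on the low-frequency factor $u$}, which is harmless because $u$ is measured in $H^{\nu_0}$ and $\nu_0>1+d/2$ leaves room for that derivative; estimating via the standard Bony paraproduct bound then gives the first term on the right-hand side of \eqref{estimpropFper}. The symmetric region gives the second term (here one invokes the symmetry remark: swapping $\delta_1\leftrightarrow\delta_3$ swaps the roles of the two arguments). Finally, the high--high region is the delicate one: here $|k_1|\sim|k_3|=:\lambda$, the output frequency $|k|=|k_1+k_3|$ ranges over $[0,\lambda]$, and the denominator $|k_1\delta_1+k_3\delta_3|$ can be as small as $O(1)$ for resonant pairs. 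However, $\partial_\theta$ applied to both arguments supplies a factor $|k_1|\,|k_3|\sim\lambda^2$, so even after dividing by a denominator as small as $O(1)$ we have a factor $\lesssim\lambda^2$; but crucially we must \emph{sum over the output frequency $k$}, and for the $H^\nu$ norm of the output we must control $\sum_k\langle k\rangle^{2\nu}|(\text{coeff})_k|^2$. I would bound the contribution of the dyadic block $|k_1|\sim|k_3|\sim\lambda$ by putting $\langle k\rangle^\nu\le\langle\lambda\rangle^\nu$, using Cauchy--Schwarz in $k$ over the $O(\lambda)$ values with $|k|\le\lambda$, and absorbing the resulting powers of $\lambda$ together with the $\lambda^2$ from the two derivatives into the high-frequency $H^\nu$ factor at the cost of using $L^2\times L^\infty$-type control (via $\nu_0>1+d/2\Rightarrow H^{\nu_0-1}\hookrightarrow L^\infty$ in $d$ dimensions, so $\partial_\theta u\in L^\infty$ when $u\in H^{\nu_0}$) on the other factor; summing over dyadic $\lambda$ then converges because the surplus of $H^\nu$ regularity over the number of derivatives used ($\nu>\nu_0>1$) gives a geometric series.

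\textbf{Main obstacle.} The delicate step is the high--high interaction combined with the small divisors: naively $1/(k_1\delta_1+k_3\delta_3)$ is unbounded, so one cannot simply estimate the multiplier in $\ell^\infty$. The saving grace is that this multiplier is always accompanied by \emph{two} derivatives (one on each argument), so the effective symbol is $k_1 k_3/(k_1\delta_1+k_3\delta_3)$, which on the high--high region is $\lesssim \max(|k_1|,|k_3|)^{2}$ and, more usefully, is $\lesssim \max(|k_1|,|k_3|)$ \emph{times} $\max(|k_1|,|k_3|)/|k_1\delta_1+k_3\delta_3|$; the latter factor is not bounded pointwise, but since we are free to put both derivatives and the bad denominator onto the high-frequency factor and measure it in $H^\nu$ while measuring the other factor (with no derivative) in $H^{\nu_0}\hookrightarrow L^\infty$, and since $\nu-\nu_0\ge 1$ leaves a positive power of $\lambda$ to spare after paying $\lambda^{1}$ for a derivative, the sum over dyadic scales converges. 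Concretely I expect the proof to reduce, after the Littlewood--Paley bookkeeping, to: (i) the classical tame product estimate for the two paraproduct regions, and (ii) a direct Cauchy--Schwarz estimate on each high--high dyadic block with explicit powers of $\lambda$, summed geometrically. The choice $\nu_0>1+d/2$ (rather than just $>d/2$) is precisely what makes room for the one ``lost'' derivative on the low-frequency/$L^\infty$ factor in both regions. No properties of the phases beyond $\delta_1\ne 0$, $\delta_3\ne0$ (equivalently $\underline\omega_1\ne\underline\omega_2$, $\underline\omega_3\ne\underline\omega_2$, guaranteed since the $\underline\omega_m$ are pairwise distinct) are needed for \emph{boundedness}; the small divisor condition of Assumption \ref{assumption5} will only matter later, for the $\ell^2$-summability of the actual Fourier coefficients $a_k$ of the solution when one reconstructs $\cU_1^{\rm nc}$, not for the abstract estimate \eqref{estimpropFper} proved here.
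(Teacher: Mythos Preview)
There is a genuine gap in your high--high analysis, and it stems from your final claim that ``no properties of the phases beyond $\delta_1\neq 0$, $\delta_3\neq 0$ are needed.'' This is incorrect: the small divisor condition in Assumption~\ref{assumption5} is essential for the estimate~\eqref{estimpropFper}, not merely for later reconstructing $\cU_1^{\rm nc}$.

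Concretely, using $\delta_3-\delta_1=1$ one rewrites the denominator as $k_1\delta_1+k_3\delta_3=k\delta_3-k_1$ with $k=k_1+k_3$. Hence the size of the denominator is exactly the distance from $k\delta_3$ to the integer $k_1$. If $\delta_3$ is merely irrational (nonresonance) but not Diophantine, this distance can decay faster than any negative power of $|k|$ along a subsequence, so the effective symbol $k_1k_3/(k_1\delta_1+k_3\delta_3)$ is \emph{not} $\lesssim\max(|k_1|,|k_3|)^2$ on the high--high region as you assert; it is not even polynomially bounded. Your sentence ``the denominator $|k_1\delta_1+k_3\delta_3|$ can be as small as $O(1)$'' already presupposes a Diophantine bound with exponent $\gamma_0=0$, which is unjustified. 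Likewise, the proposal to ``put the bad denominator onto the high-frequency factor'' does not make sense: the denominator depends jointly on $k_1$ and $k_3$ and cannot be attributed to either factor alone.

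This is precisely why the paper's proof begins with Lemma~\ref{lem3}, which extracts from Assumption~\ref{assumption5} the quantitative bound $1/|k_1\delta_1+k_3\delta_3|\le C\min(|k_1|,|k_3|)^{\gamma_0}$, and why the paper's threshold is $\nu_0>\gamma_0+2+d/2$ rather than your $\nu_0>1+d/2$. With that lemma in hand, the paper bounds the full kernel by $|{\bf K}(k,\ell)|\lesssim|\ell|^{\gamma_0+2}$ (and symmetrically in $k-\ell$), then uses a two-region frequency partition $\chi_1+\chi_2$ and the bilinear $L^2$ lemma of Rauch--Reed type (Lemma~\ref{lemRR}) to close the estimate. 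Your paraproduct scheme for the low--high regions is fine in spirit and parallels the paper's $\chi_1/\chi_2$ split, but the high--high region cannot be handled without invoking the small divisor condition, and the choice of $\nu_0$ must reflect the Diophantine exponent~$\gamma_0$.
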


Estimate \eqref{estimpropFper} is tame because the integer $\nu_0$ is fixed and the right hand side of the 
inequality depends linearly on the norms $\| u \|_{H^\nu},\| v \|_{H^\nu}$. This will be used in the proof of 
Theorem \ref{thmKinkModes} below for propagating the regularity of the initial condition for \eqref{eqampli2} 
on a fixed time interval.

\begin{proof}
We first observe that, provided that $\F_{\rm per}(u,v)$ makes sense, then $\F_{\rm per}(u,v)$ takes real values. 
This simply follows from observing that
\begin{equation*}
(\F_{\rm per}(u,v))_{-k} =\overline{(\F_{\rm per}(u,v))_k} \, ,
\end{equation*}
provided that $u$ and $v$ take real values. We are now going to prove a convenient new formulation of Assumption 
\ref{assumption5}.

\begin{lem}
\label{lem3}
There exist a constant $C>0$ and a real number $\gamma_0 \ge 0$ such that
\begin{equation*}
\forall \, k_1,k_3 \in \Z \setminus \{ 0 \} \, ,\quad \dfrac{1}{|k_1 \, \delta_1 +k_3 \, \delta_3|} \le 
C \, \min (|k_1|^{\gamma_0},|k_3|^{\gamma_0}) \, .
\end{equation*}
\end{lem}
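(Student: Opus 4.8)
The plan is to derive Lemma \ref{lem3} as a direct arithmetic consequence of the small divisor condition in Assumption \ref{assumption5}, applied to the specific multi-index $\alpha$ supported on the two incoming indices $\{1,3\}$. Recall that in the $3\times 3$ case of Paragraph \ref{sect2example}, the incoming phases are $\varphi_1,\varphi_3$, so for $\alpha = k_1\,{\bf e}_1 + k_3\,{\bf e}_3$ with $k_1,k_3 \in \Z\setminus\{0\}$ we have $\alpha \in \Z^{M}\setminus\Z^{M;1}$ and $\alpha_m = 0$ for the outgoing index $m=2$. Hence Assumption \ref{assumption5} gives
\begin{equation*}
|\det L(k_1\,{\rm d}\varphi_1 + k_3\,{\rm d}\varphi_3)| \ge c\,|\alpha|^{-\gamma} = c\,(|k_1|+|k_3|)^{-\gamma}\, ,
\end{equation*}
after identifying $\alpha\cdot\Phi = k_1\,\varphi_1+k_3\,\varphi_3$ and its differential. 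The first step is therefore to relate this determinant to the scalar quantity $k_1\,\delta_1 + k_3\,\delta_3$ appearing in \eqref{defFper}.

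For the second step, I would compute $\det L(k_1\,{\rm d}\varphi_1 + k_3\,{\rm d}\varphi_3)$ explicitly. Writing ${\rm d}\varphi_m = (\underline\tau,\underline\eta,\underline\omega_m)$, the argument of $L$ has the form $L({\rm d}(\alpha\cdot\Phi))$ where the time/tangential components are $(k_1+k_3)(\underline\tau,\underline\eta)$ and only the normal component varies. Using the noncharacteristic Assumption \ref{assumption2} and the factorization of $\det L$ afforded by Assumption \ref{assumption1}, one sees that $\det L(k_1\,{\rm d}\varphi_1+k_3\,{\rm d}\varphi_3)$ is, up to the nonzero factor $\det A_d(0)$ and a power of $(k_1+k_3)$ reflecting homogeneity, a product over the characteristic roots; the factor that can degenerate is precisely the one corresponding to the root $\underline\omega_2$ (the outgoing phase), and it evaluates to a nonzero multiple of $k_1(\underline\omega_1-\underline\omega_2) + k_3(\underline\omega_3-\underline\omega_2)$, which after dividing through by $\underline\omega_3-\underline\omega_1$ is a nonzero multiple of $k_1\,\delta_1 + k_3\,\delta_3$. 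All other factors are bounded below by positive constants times powers of $|k_1|+|k_3|$ since the roots $\underline\omega_m$ are pairwise distinct. Combining with the lower bound from Assumption \ref{assumption5} yields $|k_1\,\delta_1 + k_3\,\delta_3|^{-1} \le C\,(|k_1|+|k_3|)^{\gamma_0}$ for some $\gamma_0 \ge 0$ depending on $\gamma$, $\nu_2$ and the spread of the $\underline\omega_m$'s.

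The third step is the cosmetic upgrade from the bound $(|k_1|+|k_3|)^{\gamma_0}$ to $\min(|k_1|^{\gamma_0},|k_3|^{\gamma_0})$. This does not follow from the homogeneous estimate alone: when $k_1$ and $k_3$ have the same sign, $|k_1\,\delta_1+k_3\,\delta_3|$ is automatically bounded below by a constant times $|k_1|+|k_3|$ (since $\delta_1$ and $\delta_3$ have the same sign — indeed $\delta_3 = 1+\delta_1$ — when... ), so in that regime $|k_1\,\delta_1+k_3\,\delta_3|^{-1} \le C \le C\min(|k_1|,|k_3|)^{\gamma_0}$ trivially for $\gamma_0\ge 0$. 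When $k_1$ and $k_3$ have opposite signs, say $|k_1|\le|k_3|$, one has $|k_1|+|k_3| \le 2|k_3|$, but more importantly one must argue that $|k_1\,\delta_1+k_3\,\delta_3|$ is not small merely because $|k_3|$ is large while $|k_1|$ is moderate — this is where replacing $|k_1|+|k_3|$ by the minimum is subtle. I would handle this by a case split on $|k_1\,\delta_1+k_3\,\delta_3|$: if it is $\ge 1$ the estimate is trivial; if it is $<1$, then $k_3\,\delta_3$ is within $1$ of $-k_1\,\delta_1$, forcing $|k_3| \le C|k_1|$, hence $|k_1|+|k_3| \le C|k_1| = C\min(|k_1|,|k_3|)$ (and symmetrically with the roles exchanged), at which point the homogeneous bound from Step 2 gives the claimed $\min$ form.

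The main obstacle I anticipate is Step 2: carefully carrying out the determinant computation to isolate the single degenerating factor and to check that all the nondegenerate factors are genuinely bounded below by a positive power of $(|k_1|+|k_3|)$, using only Assumptions \ref{assumption1}, \ref{assumption1'} and \ref{assumption2}, and being careful about the case $k_1+k_3=0$ where the time/tangential part of the frequency vanishes. In that degenerate case $\alpha\cdot\Phi$ has differential proportional to $(0,0,\underline\omega_1-\underline\omega_3)$, $\det L$ of it is a nonzero multiple of $(\underline\omega_1-\underline\omega_3)^N$ by Assumption \ref{assumption2}, and $k_1\,\delta_1+k_3\,\delta_3 = k_1(\delta_1-\delta_3) = -k_1$, so the bound $|k_1\,\delta_1+k_3\,\delta_3|^{-1} = |k_1|^{-1} \le 1 \le \min(|k_1|,|k_3|)^{\gamma_0}$ holds directly — so this sub-case is actually the easiest, but it needs to be mentioned so that the small-divisor hypothesis is invoked only where it is genuinely needed.
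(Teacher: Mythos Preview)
Your overall strategy is correct and Step~3 (the upgrade from $|(k_1,k_3)|^{\gamma_0}$ to $\min(|k_1|^{\gamma_0},|k_3|^{\gamma_0})$ via a large/small case split on $|k_1\delta_1+k_3\delta_3|$) is exactly the paper's argument, with threshold $|\delta_1|$ in place of your~$1$; once both inequalities $|k_3|\le C|k_1|$ and $|k_1|\le C|k_3|$ are established in the ``small'' case, the $\min$ bound follows.

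Your Step~2, however, is more roundabout than necessary and contains a slip. The factorization via $\det L(\tau,\xi)=\prod_m(\tau+\lambda_m(\xi))$ forces you to handle homogeneity in a frequency whose normal and tangential parts scale differently, and this is what generates your worry about $k_1+k_3=0$. The paper bypasses all of this: it simply observes that
\[
L\big({\rm d}(k_1\varphi_1+k_3\varphi_3)\big)\, r_2 \;=\; \big(k_1(\underline\omega_1-\underline\omega_2)+k_3(\underline\omega_3-\underline\omega_2)\big)\, A_d(0)\, r_2\,,
\]
so the quantity of interest is an eigenvalue of $A_d(0)^{-1}L$, and its reciprocal is bounded by $C\,\|L^{-1}\|$. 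The latter is then controlled by combining the lower bound on $|\det L|$ from Assumption~\ref{assumption5} with the trivial polynomial upper bound on the transpose of the comatrix. This requires no factorization of the determinant, no case $k_1+k_3=0$, and no discussion of the ``other factors''. Note also that in your version those other factors need to be bounded \emph{above} (not below) by a polynomial in $|(k_1,k_3)|$, since you are dividing the determinant lower bound by them --- this is a harmless slip, but worth correcting. If you do want an explicit factorization, the clean route is to use the basis $(r_1,r_2,r_3)$: one finds $A_d(0)^{-1}L\big({\rm d}(k_1\varphi_1+k_3\varphi_3)\big)\, r_j = \big(k_1(\underline\omega_1-\underline\omega_j)+k_3(\underline\omega_3-\underline\omega_j)\big)\, r_j$, giving the eigenvalues directly and avoiding the characteristic-polynomial detour entirely.
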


\begin{proof}[Proof of Lemma \ref{lem3}]
Since in our framework, we have ${\mathcal I}=\{ 1,3\}$ and ${\mathcal O} =\{ 2\}$, we can apply Assumption 
\ref{assumption5} to any $(k_1,0,k_3) \in \Z^3$ with $k_1 \, k_3 \neq 0$. We compute
\begin{equation*}
L({\rm d} (k_1 \, \varphi_1 +k_3 \, \varphi_3)) \, r_2 =(k_1 \, (\underline{\omega}_1 -\underline{\omega}_2) 
+k_3 \, (\underline{\omega}_3-\underline{\omega}_2)) \, A_d(0) \, r_2 \, ,
\end{equation*}
and the quantity $k_1 \, (\underline{\omega}_1 -\underline{\omega}_2) +k_3 \, (\underline{\omega}_3-\underline{\omega}_2)$ 
cannot vanish for otherwise there would be a nonzero vector in the kernel of $L({\rm d} (k_1 \, \varphi_1 +k_3 \, \varphi_3))$. 
We thus derive the bound
\begin{equation*}
\dfrac{1}{|k_1 \, (\underline{\omega}_1 -\underline{\omega}_2) +k_3 \, (\underline{\omega}_3-\underline{\omega}_2)|} 
\le C \, \| L({\rm d} (k_1 \, \varphi_1 +k_3 \, \varphi_3))^{-1} \| \, ,
\end{equation*}
for a suitable constant $C$ that does not depend on $k_1,k_3$. The norm of $L({\rm d} (k_1 \, \varphi_1 +k_3 \, 
\varphi_3))^{-1}$ is estimated by combining the lower bound given in Assumption \ref{assumption5} for the 
determinant, and an obvious polynomial bound for the transpose of the comatrix. We have thus shown that 
there exists a constant $C>0$ and a real parameter $\gamma_0$ (which can be chosen nonnegative without 
loss of generality), that do not depend on $k_1,k_3$, such that
\begin{equation*}
\dfrac{1}{|k_1 \, (\underline{\omega}_1 -\underline{\omega}_2) +k_3 \, (\underline{\omega}_3-\underline{\omega}_2)|} 
\le C \, |(k_1,k_3)|^{\gamma_0} \, .
\end{equation*}
Up to changing the constant $C$, we can rephrase this estimate in terms of the rescaled parameters 
$\delta_1,\delta_3$:
\begin{equation}
\label{petitsdiv}
\dfrac{1}{|k_1 \, \delta_1 +k_3 \, \delta_3|} \le C \, |(k_1,k_3)|^{\gamma_0} \, ,
\end{equation}
and it only remains to substitute the minimum of $|k_1|,|k_3|$ for the norm $|(k_1,k_3)|$ in the right hand side 
of \eqref{petitsdiv}.

There are two cases. Either $|k_1 \, \delta_1 +k_3 \, \delta_3|>|\delta_1|>0$, and in that case, it is sufficient 
to choose $C \ge 1/|\delta_1|$ (we use $\gamma_0 \ge 0$). Or $|k_1 \, \delta_1 +k_3 \, \delta_3| \le |\delta_1|$, 
and we have
\begin{equation*}
|k_3| \le \dfrac{1}{|\delta_3|} \, |k_1 \, \delta_1 +k_3 \, \delta_3| +\dfrac{1}{|\delta_3|} \, |k_1 \, \delta_1| 
\le 2 \, \dfrac{|\delta_1|}{|\delta_3|} \, |k_1| \, ,
\end{equation*}
because $k_1$ is nonzero. Up to choosing a new constant $C$, \eqref{petitsdiv} reduces to
\begin{equation*}
\dfrac{1}{|k_1 \, \delta_1 +k_3 \, \delta_3|} \le C \, |k_1|^{\gamma_0} \, ,
\end{equation*}
and we can prove the analogous estimate with $k_3$ instead of $k_1$ by the same arguments. This completes 
the proof of Lemma \ref{lem3}
\end{proof}

The proof of Theorem \ref{propFper} relies on the following straightforward extension of \cite[Lemma 1.2.2]{rauchreed}. 
The proof of Lemma \ref{lemRR} is exactly the same as that of \cite[Lemma 1.2.2]{rauchreed}, and is therefore omitted.

\begin{lem}
\label{lemRR}
Let ${\mathbb K} : \R^{d-1} \times \Z \times \R^{d-1} \times \Z \rightarrow \C$ be a locally integrable measurable 
function such that, either
\begin{equation*}
\sup_{(\xi,k) \in \R^{d-1} \times \Z} \, \int_{\R^{d-1}} \sum_{\ell \in \Z} \, |{\mathbb K}(\xi,k,\eta,\ell)|^2 \, {\rm d}\eta 
<+\infty \, ,
\end{equation*}
or
\begin{equation*}
\sup_{(\eta,\ell) \in \R^{d-1} \times \Z} \, \int_{\R^{d-1}} \sum_{\ell \in \Z} \, |{\mathbb K}(\xi,k,\eta,\ell)|^2 \, {\rm d}\xi 
<+\infty \, .
\end{equation*}
Then the map
\begin{equation*}
(f,g) \longmapsto \int_{\R^{d-1}} \sum_{\ell \in \Z} \, {\mathbb K}(\xi,k,\eta,\ell) \, f(\xi-\eta,k-\ell) \, g(\eta,\ell) 
\, {\rm d}\eta \, ,
\end{equation*}
is bounded on $L^2(\R^{d-1} \times \Z) \times L^2(\R^{d-1} \times \Z)$ with values in $L^2(\R^{d-1} \times \Z)$.
\end{lem}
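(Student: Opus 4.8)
The plan is to prove Lemma~\ref{lemRR} by a Schur-type argument, exactly as in \cite[Lemma 1.2.2]{rauchreed}: a single Cauchy--Schwarz inequality that separates the kernel $\mathbb{K}$ from the product of $f$ and $g$, followed by Tonelli's theorem and the translation invariance of Lebesgue measure on $\R^{d-1}$ times counting measure on $\Z$. Write $T(f,g)(\xi,k)$ for the bilinear expression in the statement. Since $\mathbb{K}$ is measurable and locally integrable, I would first establish the bound for $f,g$ ranging over a dense subclass (for instance trigonometric polynomials in the discrete variable with Schwartz coefficients in the continuous one), for which all the integrals below are visibly finite, and then extend by density; this also disposes of the a priori finiteness of $T(f,g)$.

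Assume first the hypothesis $M^2 := \sup_{(\xi,k)}\int_{\R^{d-1}}\sum_{\ell\in\Z}|\mathbb{K}(\xi,k,\eta,\ell)|^2\,d\eta < +\infty$. For a.e.\ $(\xi,k)$, Cauchy--Schwarz in the pair of variables $(\eta,\ell)$ (with measure $d\eta$ times counting measure), applied to the factorization $\mathbb{K}\cdot(fg)$ of the integrand, gives
\[
|T(f,g)(\xi,k)| \le \Big(\int_{\R^{d-1}}\sum_{\ell}|\mathbb{K}(\xi,k,\eta,\ell)|^2\,d\eta\Big)^{1/2}\Big(\int_{\R^{d-1}}\sum_{\ell}|f(\xi-\eta,k-\ell)|^2\,|g(\eta,\ell)|^2\,d\eta\Big)^{1/2},
\]
so that $|T(f,g)(\xi,k)|^2 \le M^2 \int_{\R^{d-1}}\sum_{\ell}|f(\xi-\eta,k-\ell)|^2\,|g(\eta,\ell)|^2\,d\eta$. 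Integrating this in $(\xi,k)$ and then using Tonelli to carry out the $(\xi,k)$-integration \emph{first}, the change of variables $(\xi,k)\mapsto(\xi-\eta,k-\ell)$ preserves the measure for each fixed $(\eta,\ell)$, so $\int_{\R^{d-1}}\sum_{k}|f(\xi-\eta,k-\ell)|^2\,d\xi = \|f\|_{L^2}^2$ and the remaining integral is $M^2\,\|f\|_{L^2}^2\int_{\R^{d-1}}\sum_{\ell}|g(\eta,\ell)|^2\,d\eta = M^2\,\|f\|_{L^2}^2\,\|g\|_{L^2}^2$. This is the desired estimate in the first case.

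Under the second hypothesis the output variables $(\xi,k)$ and the input variables $(\eta,\ell)$ play swapped roles, so I would argue by duality: pair $T(f,g)$ with a test function $h\in L^2(\R^{d-1}\times\Z)$ and bound $|\langle T(f,g),h\rangle|$ by a single Cauchy--Schwarz in all four variables $(\xi,k,\eta,\ell)$, this time grouping $|\mathbb{K}(\xi,k,\eta,\ell)|\,|g(\eta,\ell)|$ against $|f(\xi-\eta,k-\ell)|\,|h(\xi,k)|$. Tonelli applied to the first resulting factor (integrate in $(\xi,k)$ with $(\eta,\ell)$ fixed) yields the supremum from the second hypothesis times $\|g\|_{L^2}$; Tonelli applied to the second factor (integrate in $(\eta,\ell)$ with $(\xi,k)$ fixed), together with translation invariance, yields $\|f\|_{L^2}\|h\|_{L^2}$. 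Taking the supremum over $\|h\|_{L^2}=1$ gives $\|T(f,g)\|_{L^2}\lesssim \|f\|_{L^2}\|g\|_{L^2}$.

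I do not expect any genuine obstacle here: the whole content is the bookkeeping of which pair of factors to place on each side of the Cauchy--Schwarz inequality, so that the ``sup'' bound on $\mathbb{K}$ and the translation-invariance cancellation for $f$ (resp.\ for $g$) are applied to the correct variables — which is precisely the mechanism of \cite[Lemma 1.2.2]{rauchreed}, carried over unchanged from the fully continuous setting to the present mixed continuous/discrete one.
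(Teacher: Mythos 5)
Your proof is correct and is exactly the Schur/Cauchy--Schwarz argument of \cite[Lemma 1.2.2]{rauchreed} that the paper invokes and omits; the mixed continuous/discrete product measure plays no special role. The duality detour in the second case is harmless but unnecessary (grouping $|\mathbb{K}|\,|g|$ against $|f|$ in Cauchy--Schwarz and integrating in $(\xi,k)$ first gives the bound directly, symmetrically with the first case), as is the preliminary density reduction (running the same Cauchy--Schwarz with $|f|,|g|$ already shows the integral defining $T(f,g)(\xi,k)$ is absolutely convergent for a.e.\ $(\xi,k)$).
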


To prove boundedness of the bilinear operator $\F_{\rm per}(\partial_\theta \cdot,\partial_\theta \cdot)$, we shall 
apply Lemma \ref{lemRR} in the Fourier variables. More precisely, for functions $u,v$ in the Schwartz space 
${\mathcal S}(\R^{d-1} \times (\R/\Theta \, \Z))$, there holds\footnote{Here we use the notation $c_k$ for the 
$k$-th Fourier coefficient with respect to the variable $\theta$, and the "hat" notation for the Fourier transform 
with respect to the variable $y$.}:
\begin{equation*}
\widehat{c_k(\F_{\rm per}(\partial_\theta u,\partial_\theta v))} (\xi) ={\rm C}^{\rm st} \, 
\int_{\R^{d-1}} \sum_{\ell \in \Z, \ell \not \in \{ 0,k\}} \dfrac{(k-\ell) \, \ell}{(k-\ell) \, \delta_1 +\ell \, \delta_3} \, 
\widehat{c_{k-\ell}(u)}(\xi-\eta) \, \widehat{c_\ell(v)}(\eta) \, {\rm d}\eta \, .
\end{equation*}
Omitting from now on the constant multiplicative factor, we consider the symbol
\begin{equation*}
{\bf K} (k,\ell) :=\begin{cases}
(k-\ell) \, \ell/((k-\ell) \, \delta_1 +\ell \, \delta_3) &\text{\rm if } \ell \not \in \{ 0,k\} \, ,\\
0 &\text{\rm otherwise.}
\end{cases}
\end{equation*}
We wish to bound the $H^\nu$ norm:
\begin{equation*}
\int_{\R^{d-1}} \sum_{k \in \Z} \, \langle (\xi,k) \rangle^{2\, \nu} \, 
|\widehat{c_k(\F_{\rm per}(\partial_\theta u,\partial_\theta v))} (\xi) |^2 \, {\rm d}\xi \, ,
\end{equation*}
for $\nu \in \N$ large enough ($\langle \cdot \rangle$ stands as usual for the Japanese bracket).

Given the parameter $\gamma_0 \ge 0$ in Lemma \ref{lem3}, we fix an integer $\nu_0>\gamma_0+2+d/2$. 
We consider two functions $\chi_1,\chi_2$ on $\R^d \times \R^d$ such that $\chi_1+\chi_2 \equiv 1$, and
\begin{align*}
&\chi_1(\xi,k,\eta,\ell) =0 \quad \text{\rm if } \langle (\eta,\ell) \rangle \ge (2/3) \, \langle (\xi,k) \rangle \, ,\\
&\chi_2(\xi,k,\eta,\ell) =0 \quad \text{\rm if } \langle (\eta,\ell) \rangle \le (1/3) \, \langle (\xi,k) \rangle \, .
\end{align*}
We first consider the quantity
\begin{equation}
\label{tameterm1}
\int_{\R^{d-1}} \sum_{\ell \in \Z} \, \chi_1(\xi,k,\eta,\ell) \, \langle (\xi,k) \rangle^\nu \, {\bf K} (k,\ell) \, 
\widehat{c_{k-\ell}(u)}(\xi-\eta) \, \widehat{c_\ell(v)}(\eta) \, {\rm d}\eta \, ,
\end{equation}
which we rewrite as
\begin{equation*}
\int_{\R^{d-1}} \sum_{\ell \in \Z} \, 
\dfrac{\chi_1(\xi,k,\eta,\ell) \, \langle (\xi,k) \rangle^\nu \, {\bf K} (k,\ell)}{\langle (\xi-\eta,k-\ell) \rangle^\nu \, 
\langle (\eta,\ell) \rangle^{\nu_0}} \, \Big( \langle (\xi-\eta,k-\ell) \rangle^\nu \, \widehat{c_{k-\ell}(u)}(\xi-\eta) \Big) 
\, \Big( \langle (\eta,\ell) \rangle^{\nu_0} \, \widehat{c_\ell(v)}(\eta)\Big) \, {\rm d}\eta \, .
\end{equation*}
On the support of $\chi_1$, there holds
\begin{equation*}
\langle (\xi-\eta,k-\ell) \rangle \ge \langle (\xi,k) \rangle -\langle (\eta,\ell) \rangle \ge 
\dfrac{1}{3} \, \langle (\xi,k) \rangle \, ,
\end{equation*}
and therefore
\begin{equation*}
\int_{\R^{d-1}} \sum_{\ell \in \Z} \, \left| \dfrac{\chi_1(\xi,k,\eta,\ell) \, \langle (\xi,k) \rangle^\nu \, {\bf K} (k,\ell)}
{\langle (\xi-\eta,k-\ell) \rangle^\nu \, \langle (\eta,\ell) \rangle^{\nu_0}} \right|^2 \, {\rm d}\eta 
\le C \, \int_{\R^{d-1}} \sum_{\ell \in \Z} \, \dfrac{|{\bf K} (k,\ell)|^2}{\langle (\eta,\ell) \rangle^{2\, \nu_0}} \, 
{\rm d}\eta \, .
\end{equation*}
We now use Lemma \ref{lem3} to derive the bound
\begin{equation*}
\left| \dfrac{(k-\ell) \, \ell}{(k-\ell) \, \delta_1 +\ell \, \delta_3} \right| =\dfrac{1}{|\delta_1|} \, 
\left| \ell -\dfrac{\delta_3 \, \ell^2}{(k-\ell) \, \delta_1 +\ell \, \delta_3} \right| \le C \, |\ell|^{\gamma_0+2} \, ,
\end{equation*}
from which we get
\begin{equation*}
\sup_{(\xi,k) \in \R^{d-1} \times \Z} \, \int_{\R^{d-1}} \sum_{\ell \in \Z} \left| 
\dfrac{\chi_1(\xi,k,\eta,\ell) \, \langle (\xi,k) \rangle^\nu \, {\bf K} (k,\ell)}{\langle (\xi-\eta,k-\ell) \rangle^\nu \, 
\langle (\eta,\ell) \rangle^{\nu_0}} \right|^2 \, {\rm d}\eta 
\le C \, \int_{\R^{d-1}} \sum_{\ell \in \Z} \, \dfrac{|\ell |^{2\, (\gamma_0+2)}}{\langle (\eta,\ell) \rangle^{2\, \nu_0}} 
\, {\rm d}\eta <+\infty \, ,
\end{equation*}
thanks to our choice of $\nu_0$. Applying Lemma \ref{lemRR} to the quantity in \eqref{tameterm1}, we obtain
\begin{equation*}
\int_{\R^{d-1}} \sum_{k \in \Z} \, \left| 
\int_{\R^{d-1}} \sum_{\ell \in \Z} \chi_1(\xi,k,\eta,\ell) \, \langle (\xi,k) \rangle^\nu \, {\bf K} (k,\ell) \, 
\widehat{c_{k-\ell}(u)}(\xi-\eta) \, \widehat{c_\ell(v)}(\eta) \, {\rm d}\eta \right|^2 \, {\rm d}\xi \le C\, 
\| u \|_{H^\nu}^2 \, \| v \|_{H^{\nu_0}}^2 \, .
\end{equation*}
Similar arguments yield the bound
\begin{equation*}
\int_{\R^{d-1}} \sum_{k \in \Z} \, \left| 
\int_{\R^{d-1}} \sum_{\ell \in \Z} \chi_2(\xi,k,\eta,\ell) \, \langle (\xi,k) \rangle^\nu \, {\bf K} (k,\ell) \, 
\widehat{c_{k-\ell}(u)}(\xi-\eta) \, \widehat{c_\ell(v)}(\eta) \, {\rm d}\eta \right|^2 \, {\rm d}\xi \le C\, 
\| u \|_{H^{\nu_0}}^2 \, \| v \|_{H^\nu}^2 \, ,
\end{equation*}
and the combination of the two previous inequalities gives the expected estimate
\begin{equation*}
\int_{\R^{d-1}} \sum_{k \in \Z} \, \langle (\xi,k) \rangle^{2\, \nu} \, 
|\widehat{c_k(\F_{\rm per}(\partial_\theta u,\partial_\theta v))} (\xi) |^2 \, {\rm d}\xi \le 
C \, \big( \| u \|_{H^{\nu_0}} \, \| v \|_{H^\nu} +\| u \|_{H^\nu} \, \| v \|_{H^{\nu_0}} \big) \, .
\end{equation*}

\end{proof}

\subsection{The iteration scheme}

In view of the boundedness property proved in Theorem \ref{propFper}, Equation \eqref{eqampli2} is a 
semilinear perturbation of the Burgers equation (the transport term ${\bf w} \cdot \nabla_y$ is harmless), 
and it is absolutely not surprising that we can solve \eqref{eqampli2} by using the standard energy method 
with a fixed point iteration. This well-posedness result can be summarized in the following Theorem.

\begin{theo}
\label{thmKinkModes}
Let $\nu_0$ be defined as in Theorem \ref{propFper}, and let $\nu \ge \nu_0$. Then for all $R>0$, there exists 
a time $T>0$ such that for all data $a_0 \in H^\nu(\R^{d-1} \times (\R/\Theta \, \Z))$ satisfying $\| a_0 \|_{H^{\nu_0}} 
\le R$, there exists a unique solution $a \in {\mathcal C}([0,T];H^\nu)$ to the Cauchy problem:
\begin{equation*}
\begin{cases}
\partial_t a +{\bf w} \cdot \nabla_y a +c \, a \, \partial_\theta a +\mu \, \F_{\rm per} (\partial_\theta a,\partial_\theta a) =0 \, ,& \\
a|_{t=0} =a_0 \, . &
\end{cases}
\end{equation*}
In particular, if $a_0 \in H^{+\infty}(\R^{d-1} \times (\R/\Theta \, \Z))$, then $a \in {\mathcal C}([0,T];H^{+\infty})$ 
where the time $T>0$ only depends on $\| a_0 \|_{H^{\nu_0}}$.
\end{theo}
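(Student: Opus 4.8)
The plan is to solve the Cauchy problem for \eqref{eqampli2} by a standard energy-method fixed point argument, treating the nonlinearity $\mu \, \F_{\rm per}(\partial_\theta a,\partial_\theta a)$ as a semilinear perturbation of the Burgers term $c\, a\, \partial_\theta a$. First I would set up the iteration: given $a_0 \in H^\nu$ with $\| a_0 \|_{H^{\nu_0}} \le R$, define a sequence $(a^{(n)})_{n \ge 0}$ by $a^{(0)} \equiv a_0$ (or the solution of the free transport equation with data $a_0$), and let $a^{(n+1)}$ solve the linear equation
\begin{equation*}
\partial_t a^{(n+1)} +{\bf w} \cdot \nabla_y a^{(n+1)} +c \, a^{(n)} \, \partial_\theta a^{(n+1)}
=-\mu \, \F_{\rm per}(\partial_\theta a^{(n)},\partial_\theta a^{(n)}) \, ,\quad a^{(n+1)}|_{t=0}=a_0 \, .
\end{equation*}
This is a linear symmetric-hyperbolic transport equation in the variables $(t,y,\theta)$ with a smooth coefficient $a^{(n)}$ and a forcing term that, by Theorem \ref{propFper}, lies in $\cC([0,T];H^\nu)$ whenever $a^{(n)}$ does; hence each $a^{(n+1)}$ is well-defined in $\cC([0,T];H^\nu)$ by the classical linear theory.

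Next I would prove uniform bounds. Applying $\langle (\xi,k)\rangle^\nu$-weighted $L^2$ energy estimates (equivalently, commuting $\partial^\alpha$ with the equation and using Gagliardo--Nirenberg/Moser estimates for the Burgers term $c\, a^{(n)}\partial_\theta a^{(n+1)}$), one gets an inequality of the form
\begin{equation*}
\frac{d}{dt} \| a^{(n+1)} \|_{H^\nu}^2 \le C \, \| a^{(n)} \|_{H^{\nu_0}} \, \| a^{(n+1)} \|_{H^\nu}^2
+C \, \| \F_{\rm per}(\partial_\theta a^{(n)},\partial_\theta a^{(n)}) \|_{H^\nu} \, \| a^{(n+1)} \|_{H^\nu} \, ,
\end{equation*}
and the crucial point is that Theorem \ref{propFper} bounds the last $H^\nu$ norm by $C_\nu \| a^{(n)} \|_{H^{\nu_0}} \| a^{(n)} \|_{H^\nu}$, i.e. \emph{tamely} — linearly in the top norm with a coefficient controlled by the low norm $\nu_0 > 1+d/2$. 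Here $\nu_0$ must also be large enough for the Moser estimates on the Burgers term, so one takes $\nu_0$ to be the maximum of the index from Theorem \ref{propFper} and $1+d/2$ rounded up; since $\nu_0 > \gamma_0 + 2 + d/2 > 1 + d/2$ this is automatic. A Gronwall argument then shows that, for a time $T = T(R)$ small enough, the ball $\{ a \in \cC([0,T];H^\nu) : \sup_t \| a \|_{H^{\nu_0}} \le 2R, \ \sup_t \| a \|_{H^\nu} \le 2\| a_0 \|_{H^\nu}\}$ is invariant under the iteration map. The key structural feature is that $T$ depends only on $R = \| a_0 \|_{H^{\nu_0}}$, not on $\| a_0 \|_{H^\nu}$, which is exactly what the tameness of \eqref{estimpropFper} buys us and which gives the persistence-of-regularity statement at the end.

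Then I would prove contraction in a lower norm. Setting $w^{(n)} := a^{(n+1)} - a^{(n)}$, subtracting consecutive equations yields a linear transport equation for $w^{(n)}$ with source terms of the form $c\,(a^{(n)}-a^{(n-1)})\partial_\theta a^{(n)}$ and $\mu\,[\F_{\rm per}(\partial_\theta a^{(n)},\partial_\theta a^{(n)}) - \F_{\rm per}(\partial_\theta a^{(n-1)},\partial_\theta a^{(n-1)})]$; by bilinearity the latter is $\mu\,\F_{\rm per}(\partial_\theta w^{(n-1)},\partial_\theta a^{(n)}) + \mu\,\F_{\rm per}(\partial_\theta a^{(n-1)},\partial_\theta w^{(n-1)})$. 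Estimating in $H^{\nu_0}$ (one derivative below the top level, so that products stay in $H^{\nu_0}$) and again invoking Theorem \ref{propFper}, one obtains $\sup_t \| w^{(n)} \|_{H^{\nu_0}} \le \tfrac12 \sup_t \| w^{(n-1)} \|_{H^{\nu_0}}$ after possibly shrinking $T$; here one should apply \eqref{estimpropFper} with the roles of the two arguments of $\F_{\rm per}$ exchanged (allowed by the remark preceding Theorem \ref{propFper}) so that the derivative always falls on the bounded-in-$H^\nu$ factor. Thus $(a^{(n)})$ is Cauchy in $\cC([0,T];H^{\nu_0})$, and combined with the uniform $H^\nu$ bound and interpolation it converges in $\cC([0,T];H^{\nu'})$ for every $\nu' < \nu$; the limit $a$ lies in $L^\infty([0,T];H^\nu)$, solves \eqref{eqampli2}, and a standard Bona--Smith-type argument (or weak-lower-semicontinuity plus the equation) upgrades $L^\infty_t H^\nu$ to $\cC_t H^\nu$. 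Uniqueness follows from the same contraction estimate applied to two solutions. The $H^{+\infty}$ statement is then immediate: if $a_0 \in H^\nu$ for all $\nu$, the solution constructed at level $\nu_0$ extends (by uniqueness) to a solution in $\cC([0,T];H^\nu)$ for every $\nu$ on the \emph{same} interval $[0,T]$, since $T$ depends only on $\| a_0 \|_{H^{\nu_0}}$.

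The main obstacle is entirely packaged into Theorem \ref{propFper}: without the tame bilinear estimate for $\F_{\rm per}(\partial_\theta \cdot, \partial_\theta \cdot)$ one could not treat the bilinear Fourier multiplier as a lower-order semilinear term, and the whole scheme would collapse (one would lose a derivative at each iteration). Given that estimate, everything else is the routine hyperbolic energy method, and the only mild care needed is (i) choosing $\nu_0$ compatibly with both the multiplier estimate and the Moser estimates for the Burgers nonlinearity, and (ii) using the symmetry-in-arguments remark so that in every product the $\partial_\theta$ lands on a factor controlled in the top norm while the other factor is controlled in $H^{\nu_0}$.
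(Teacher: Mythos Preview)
Your approach is essentially the paper's: the same linearized iteration scheme, uniform $H^\nu$ bounds via the tame estimate of Theorem \ref{propFper}, contraction in a lower norm, interpolation, and recovery of $\cC_t H^\nu$ continuity. The one substantive difference is the norm in which you contract. You contract in $H^{\nu_0}$ and estimate the source term $c\, w^{(n-1)}\,\partial_\theta a^{(n)}$ using that $H^{\nu_0}$ is an algebra; this costs $\|a^{(n)}\|_{H^{\nu_0+1}}$, so your argument as written covers only $\nu\ge \nu_0+1$ (your parenthetical ``one derivative below the top level'' implicitly assumes this), leaving the endpoint $\nu=\nu_0$ of the stated theorem uncovered. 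The paper instead contracts in $L^2$: it rewrites $\F_{\rm per}(\partial_\theta r^n,\partial_\theta a^n)$ via the integration-by-parts identity \eqref{propFper2} to shift the $\partial_\theta$ off $r^n$, and then invokes a separate $L^2$ continuity estimate for $\F_{\rm per}$ (proved inside the argument, essentially $\|\F_{\rm per}(u,v)\|_{L^2}\lesssim \|u\|_{L^2}\|v\|_{H^{\nu_0-2}}$). With these two extra ingredients the Burgers source term needs only $\|\partial_\theta a^n\|_{L^\infty}\lesssim \|a^n\|_{H^{\nu_0}}$, so the contraction closes for all $\nu\ge\nu_0$.

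In short: your scheme is correct in spirit and matches the paper, but to reach the endpoint $\nu=\nu_0$ you should replace the $H^{\nu_0}$ contraction by the paper's $L^2$ contraction, which in turn requires the integration-by-parts identity \eqref{propFper2} and an $L^2$ bound for $\F_{\rm per}$ that does not follow from Theorem \ref{propFper} alone.
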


\begin{proof}
The proof follows the standard strategy for quasilinear symmetric systems, see for instance \cite[chapter 10]{BS} 
or \cite[chapter 16]{taylor}, and we solve the Cauchy problem by the iteration scheme
\begin{equation*}
\begin{cases}
\partial_t a^{n+1} +{\bf w} \cdot \nabla_y a^{n+1} +c \, a^n \, \partial_\theta a^{n+1} +\mu \, \F_{\rm per} 
(\partial_\theta a^n,\partial_\theta a^n) =0 \, ,& \\
a^{n+1}|_{t=0} =a_{0,n+1} \, , &
\end{cases}
\end{equation*}
where $(a_{0,n})$ is a sequence of, say, Schwartz functions that converges towards $a_0$ in $H^\nu$, and 
the scheme is initialized with the choice $a^0 \equiv a_{0,0}$. Given the radius $R$ for the ball in $H^{\nu_0}$, we 
can choose some time $T>0$, that only depends on $R$ and $\nu$, such that the sequence $(a^n)$ is bounded 
in ${\mathcal C}([0,T];H^\nu)$. The uniform bound in ${\mathcal C}([0,T];H^\nu)$ is proved by following the 
exact same ingredients as in the case of the Burgers equation. Contraction in ${\mathcal C}([0,T];L^2)$ is 
obtained by computing the equation for the difference $r^{n+1} :=a^{n+1}-a^n$, which reads
\begin{equation}
\label{eqreste}
\partial_t r^{n+1} +{\bf w} \cdot \nabla_y r^{n+1} +c \, a^n \, \partial_\theta r^{n+1} =-c \, r^n \, \partial_\theta a^n 
-\mu \, \F_{\rm per}(\partial_\theta r^n,\partial_\theta a^n) -\mu \, \F_{\rm per}(\partial_\theta a^{n-1},\partial_\theta r^n) \, . 
\end{equation}
The error terms on the right hand-side are written as
\begin{align*}
\F_{\rm per}(\partial_\theta r^n,\partial_\theta a^n) &=-\dfrac{2\, \pi}{\Theta \, \delta_1} \, r^n \, \partial_\theta a^n 
-\dfrac{\delta_3}{\delta_1} \, \F_{\rm per}(r^n,\partial^2_{\theta \theta} a^n) \, ,\\
\F_{\rm per}(\partial_\theta a^{n-1},\partial_\theta r^n) &=-\dfrac{2\, \pi}{\Theta \, \delta_3} \, r^n \, \partial_\theta a^{n-1} 
-\dfrac{\delta_1}{\delta_3} \, \F_{\rm per}(\partial^2_{\theta \theta} a^{n-1},r^n) \, ,
\end{align*}
where we have used \eqref{propFper2}.

The final ingredient in the proof is a continuity estimate of the form
\begin{equation}
\label{FpercontL2}
\| \F_{\rm per}(u,v) \|_{L^2} \le C \, \min \big( \| u \|_{H^{\nu_0-2}} \, \| v \|_{L^2}, \| u \|_{L^2} \, \| v \|_{H^{\nu_0-2}} \big) \, ,
\end{equation}
which we now prove for completeness. We apply the Fubini and Parseval-Bessel Theorems to obtain
\begin{equation*}
\| \F_{\rm per}(u,v) \|_{L^2}^2 =\Theta \, \int_{\R^{d-1}} \sum_{k \in \Z} \left| \sum_{\underset{k_1 \, k_3 \neq 0}{k_1+k_3=k,}} 
\dfrac{1}{k_1 \, \delta_1 +k_3 \, \delta_3} \, u_{k_1} \, v_{k_3} \right|^2 \, {\rm d}y \, ,
\end{equation*}
and then apply the $\ell^1 \star \ell^2 \rightarrow \ell^2$ continuity estimate to derive
\begin{equation*}
\| \F_{\rm per}(u,v) \|_{L^2}^2 \le C\, \int_{\R^{d-1}} \left( \sum_{k \in \Z} |k|^{\gamma_0}| \, |u_k| \right) \, 
\sum_{k \in \Z} |v_k|^2 \, {\rm d}y \, .
\end{equation*}
We then apply the Cauchy-Schwarz inequality and derive the estimate
\begin{equation*}
\| \F_{\rm per}(u,v) \|_{L^2}^2 \le C \, \Big( \sup_{y \in \R^{d-1}} \| u (y,\cdot) \|_{H^{\gamma_0+1}(\R/\Theta \, \Z)}^2 
\Big) \, \| v \|_{L^2}^2 \, ,
\end{equation*}
which yields \eqref{FpercontL2} because the integer $\nu_0$ in Theorem \ref{propFper} can be chosen larger than 
$(d-1)/2+\gamma_0+3$. (The "symmetric" estimate is obtained by exchanging the roles of $u$ and $v$.)

At this stage, we multiply Equation \eqref{eqreste} by $r^{n+1}$ and perform integration by parts to derive
\begin{equation*}
\sup_{t \in [0,T]} \| r^{n+1} \|_{L^2}^2 \le \| r^{n+1}|_{t=0} \|_{L^2}^2 +C_0 \, T \, \sup_{t \in [0,T]} \| r^{n+1} \|_{L^2}^2 
+C_0 \, T \, \sup_{t \in [0,T]} \| r^{n+1} \|_{L^2} \, \sup_{t \in [0,T]} \| r^n \|_{L^2} \, ,
\end{equation*}
where the constant $C_0$ is independent of $n$ and follows from the uniform bound for $\sup_{t \in [0,T]} \| a^n 
\|_{H^\nu}$. By classical interpolation arguments, $(a^n)$ converges towards $a$ weakly in $L^\infty([0,T];H^\nu)$ 
and strongly in ${\mathcal C}([0,T];H^{\nu'})$, $\nu'<\nu$. Continuity of $a$ with values in $H^\nu$ is recovered by 
the standard arguments, see, e.g., \cite[Proposition 1.4]{taylor}.

If $\nu>\nu_0$, it remains to show that the time $T$ only depends on the norm $\| a_0 \|_{H^{\nu_0}}$, and this is 
where the tame estimate of Theorem \ref{propFper} enters the game. More precisely, we follow the same strategy 
as in \cite[Corollary 1.6]{taylor}, and show that the $H^\nu$-norm of the solution $a$ satisfies a differential inequality 
of the form
\begin{equation*}
\dfrac{{\rm d} \| a(t) \|_{H^\nu}^2}{{\rm d}t} \le C_\nu \Big( \| a(t) \|_{H^{\nu_0}}^2 \Big) \, \| a(t) \|_{H^\nu}^2 \, ,
\end{equation*}
where $C_\nu$ is an increasing function of its argument. In particular, boundedness of $a(t)$ in $H^{\nu_0}$ on an 
interval $[0,T')$, $T'>0$, implies a unique extension of the solution $a \in {\mathcal C}([0,T');H^\nu)$ beyond the time 
$T'$, which means that the time $T$ of existence for $a$ only depends on $\| a_0 \|_{H^{\nu_0}}$.
\end{proof}

\subsection{Construction of the leading profile}

Theorem \ref{thmKinkModes} is the cornerstone of the construction of the leading profile $\cU_0$. Solvability 
of \eqref{eqa3} for $a$ is summarized in the following result. Recall that the smoothness assumption for $G$ 
was made in Theorem \ref{theowavetrains}.

\begin{cor}
\label{cor1}
There exists $T>0$, and $a \in {\mathcal C}^\infty ((-\infty,T];H^{+\infty}(\R^{d-1} \times (\R/\Theta \, \Z)))$ solution 
to \eqref{eqa3} with $a|_{t<0}=0$. Furthermore, $a$ has mean value zero with respect to the variable $\theta$.
\end{cor}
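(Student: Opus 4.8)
The plan is to reduce Corollary~\ref{cor1} to an application of Theorem~\ref{thmKinkModes} together with a standard propagation argument to get from the Cauchy problem on $[0,T]$ to the problem posed on $(-\infty,T]$ with vanishing past. First I would record that equation \eqref{eqa3} is, after dividing by nonzero constants as in the reduction leading to \eqref{eqampli2}, of the form $\partial_t a +\mathbf{w}\cdot\nabla_y a +c\,a\,\partial_\theta a +\mu\,\F_{\rm per}(\partial_\theta a,\partial_\theta a)=g$ with $g:=\text{(const)}\,\underline{b}\,\partial_{\theta_0}G/\text{(const)}$; by the smoothness hypothesis on $G$ in Theorem~\ref{theowavetrains} we have $g\in\mathcal C^\infty((-\infty,T_0];H^{+\infty})$, $g$ vanishes for $t<0$, and $\partial_{\theta_0}$ annihilates the mean, so $g$ has zero $\theta$-mean. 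The mean-zero property of the eventual solution then follows by integrating the equation over $\theta$: the Burgers term $\partial_\theta(a^2/2)$ and the term $\partial_\theta\F_{\rm per}(\cdot,\cdot)$ (rewrite $\F_{\rm per}(\partial_\theta a,\partial_\theta a)$ back as $\partial_\theta$ of something via \eqref{propFper1}) have zero mean, so the $\theta$-average $\bar a(t,y)$ solves the linear transport equation $\partial_t\bar a+\mathbf w\cdot\nabla_y\bar a=0$ with $\bar a|_{t<0}=0$, whence $\bar a\equiv 0$.

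Next I would obtain existence on a short time interval. Since $g\neq 0$, Theorem~\ref{thmKinkModes} does not apply verbatim, but its proof (the iteration scheme, the uniform $H^\nu$ bound via the tame estimate of Theorem~\ref{propFper}, and the $L^2$ contraction) goes through unchanged with an inhomogeneous term $g\in\mathcal C([0,T_0];H^\nu)$ added to the scheme: the extra term contributes $\|g\|_{H^\nu}$ to the energy inequality, which only affects the length of the existence interval, not the structure of the argument. Alternatively, and perhaps cleaner to state, one can absorb $g$ by a preliminary change of unknown: let $a_1$ solve the \emph{linear} problem $\partial_t a_1+\mathbf w\cdot\nabla_y a_1=g$, $a_1|_{t<0}=0$ (explicit, smooth, mean zero, and as regular as $g$ allows), and set $a=a_1+\tilde a$; then $\tilde a$ solves a semilinear equation of the same type with a smooth lower-order forcing built from $a_1$, to which the same iteration applies. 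Either way one produces $T\in(0,T_0]$ and a solution $a\in\mathcal C([0,T];H^\nu)$ for every finite $\nu\ge\nu_0$, with $T$ depending only on a fixed-order norm of the data/forcing; intersecting over $\nu$ and using the persistence-of-regularity argument at the end of the proof of Theorem~\ref{thmKinkModes} gives $a\in\mathcal C([0,T];H^{+\infty})$, and the $\mathcal C^\infty$-in-$t$ regularity follows by differentiating the equation in $t$ and bootstrapping, using that $g$ is $\mathcal C^\infty$ in $t$.

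To pass from $[0,T]$ to $(-\infty,T]$ with $a|_{t<0}=0$: because $g$ vanishes for $t<0$, the function that is $0$ for $t\le 0$ and equals the solution above for $t\in[0,T]$ solves \eqref{eqa3} on $(-\infty,T]$; the only point to check is that the two pieces glue to a $\mathcal C^\infty$ function across $t=0$, which holds because all $t$-derivatives of $a$ at $t=0^+$ vanish (they are determined recursively from the equation and from the vanishing of $g$ and its $t$-derivatives at $t=0$, exactly as in the finite-speed / Cauchy–Kovalevskaya-type compatibility argument used for \eqref{0}). Uniqueness on $(-\infty,T]$ is the $L^2$ contraction estimate from Theorem~\ref{thmKinkModes} applied to the difference of two solutions, which vanishes for $t<0$.

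The main obstacle is essentially bookkeeping rather than a genuine difficulty: one must be slightly careful that the regularity/decay budget on $G$ assumed in Theorem~\ref{theowavetrains} ($G\in\mathcal C^\infty$ in $t$ with values in $H^{+\infty}$) survives the operations $\underline b\,\partial_{\theta_0}G$ and the solution of the linear transport equation for $a_1$ — it does, comfortably — and that the time $T$ furnished here is compatible with (can be taken to be) the time $T$ in the statement of Theorem~\ref{theowavetrains}, which is fine since the later construction of correctors will only further shrink $T$. No new analytic input beyond Theorems~\ref{propFper} and \ref{thmKinkModes} is needed.
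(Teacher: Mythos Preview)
Your proposal is correct and follows essentially the same route as the paper: apply (the inhomogeneous version of) Theorem~\ref{thmKinkModes} with zero initial data and forcing $g$ built from $\underline b\,\partial_{\theta_0}G$, bootstrap the equation to upgrade $\mathcal C$-in-$t$ to $\mathcal C^\infty$-in-$t$, and integrate in $\theta$ to get a homogeneous linear transport equation for the mean $\underline a$, whence $\underline a\equiv 0$. You are simply more explicit than the paper about the gluing at $t=0$ and about handling the inhomogeneity (your alternative change-of-unknown $a=a_1+\tilde a$ is fine but unnecessary), and your parenthetical about rewriting $\F_{\rm per}(\partial_\theta a,\partial_\theta a)$ is slightly roundabout---it is cleaner to argue mean-zero directly from the equivalent form~\eqref{eqa3}, where every nonlinear term is manifestly a $\partial_{\theta_0}$-derivative.
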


\begin{proof}
Equation \eqref{eqa3} is easier to solve than the pure Cauchy problem in Theorem \ref{thmKinkModes} because 
we can apply Duhamel's formula starting from the initial condition $a_0=0$. From the assumption of Theorem 
\ref{theowavetrains}, we have $G \in {\mathcal C}^\infty ((-\infty,T_0]; H^{+\infty}(\R^{d-1} \times (\R/\Theta \, \Z)))$ 
with $T_0>0$ and $G|_{t<0}=0$, so we can find $0<T \le T_0$ and $a \in {\mathcal C}((-\infty,T];H^{+\infty}(\R^{d-1} 
\times (\R/\Theta \, \Z)))$ solution to \eqref{eqa3} with $a|_{t<0}=0$. Here the time $T$ depends on a fixed norm 
of $G$. Then Equation \eqref{eqa3} yields $a \in {\mathcal C}^\infty ((-\infty,T];H^{+\infty}(\R^{d-1} \times 
(\R/\Theta \, \Z)))$ by the standard bootstrap argument and smoothness of $G$.

For every fixed $y$, the mean value
\begin{equation*}
\underline{a}(t,y) :=\dfrac{1}{\Theta} \, \int_0^\Theta a(t,y,\theta) \, {\rm d}\theta \, ,
\end{equation*}
satisfies the homogeneous transport equation
\begin{equation*}
\partial_t \underline{a} +{\bf w} \cdot \nabla_y \underline{a} =0 \, ,
\end{equation*}
with zero initial condition, and therefore vanishes.
\end{proof}

\noindent After constructing $a$ on the boundary, we can achieve the construction of the leading profile $\cU_0$ 
in the whole domain.

\begin{cor}
\label{cor2}
Up to retricting $T>0$ in Corollary \ref{cor1}, for all $m \in {\mathcal I}$, there exists a unique solution $\sigma_m 
\in {\mathcal C}^\infty ((-\infty,T]; H^{+\infty}(\R^d_+ \times (\R/\Theta \, \Z)))$ to \eqref{eq:Burgersm} with 
$\sigma_m|_{t<0}=0$ and $\sigma_m|_{x_d=0} =\mathfrak{e}_m \, a$, where the real number $\mathfrak{e}_m$ is 
defined by $e_m =\mathfrak{e}_m \, r_m$. Furthermore, each $\sigma_m$ has mean value zero with respect to the 
variable $\theta_m$.
\end{cor}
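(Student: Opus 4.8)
The plan is to solve, for each incoming index $m \in \mathcal{I}$, the scalar semilinear transport (Burgers-type) equation \eqref{eq:Burgersm} in the half-space $\R^d_+$ with the boundary datum $\sigma_m|_{x_d=0} = \mathfrak{e}_m \, a$ produced by Corollary \ref{cor1} and the zero initial datum $\sigma_m|_{t<0}=0$. First I would observe that, because $m$ is an incoming phase, the group velocity $\mathbf{v}_m = \nabla \lambda_{k_m}(\underline{\eta},\underline{\omega}_m)$ satisfies $\partial_{\xi_d}\lambda_{k_m}(\underline{\eta},\underline{\omega}_m) > 0$, so the characteristics of the transport operator $\partial_t + \mathbf{v}_m \cdot \nabla_x$ enter the domain $\{x_d > 0\}$ through the boundary $\{x_d=0\}$; hence the Cauchy data on $\{t<0\}$ together with the boundary data on $\{x_d=0,\ t\le T\}$ determine the solution uniquely, and \eqref{eq:Burgersm} is a noncharacteristic, maximally dissipative (indeed, scalar) initial boundary value problem. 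This is the standard geometric-optics transport equation solved, e.g., in \cite{CGW3,CW5}, and the novelty here is only the coupling to the Mach-stem amplitude $a$ through the boundary condition.

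The key steps, in order: (i) Rewrite \eqref{eq:Burgersm} in the conservative form $\partial_t \sigma_m + \mathbf{v}_m\cdot\nabla_x\sigma_m + \tfrac{c_m}{2}\partial_{\theta_m}(\sigma_m^2)=0$, treating $\theta_m$ as a parameter (the equation is local in $\theta_m$ and the nonlinearity $\sigma_m\partial_{\theta_m}\sigma_m$ is a $\theta_m$-derivative of a quadratic term), so that for fixed $\theta_m$ it is a Burgers-type equation in $(t,x)$ with a tangential nonlinearity; (ii) solve it by the standard Picard iteration / energy method for scalar noncharacteristic hyperbolic IBVPs, exactly as in \cite[chapter 10]{BS}, using the boundary datum $\mathfrak{e}_m\, a \in \mathcal{C}^\infty((-\infty,T];H^{+\infty}(\R^{d-1}\times(\R/\Theta\Z)))$ and the zero Cauchy datum, obtaining a local-in-time solution $\sigma_m \in \mathcal{C}((-\infty,T'];H^{+\infty}(\R^d_+\times(\R/\Theta\Z)^M))$ with $T' \le T$ possibly smaller, depending only on a fixed Sobolev norm of $a$ (uniformly in $\theta_m$); (iii) upgrade regularity in time by the usual bootstrap, differentiating the equation and using the smoothness of $a$; (iv) obtain uniqueness from the $L^2$ energy estimate for the difference of two solutions with the same data (the boundary term has the favorable sign because $\mathbf{v}_m$ is incoming); (v) since $a$ has mean zero in $\theta$ (Corollary \ref{cor1}), integrating \eqref{eq:Burgersm} over $\theta_m \in \R/\Theta\Z$ kills the $\partial_{\theta_m}(\sigma_m^2)$ term and shows that the mean $\underline{\sigma}_m(t,x) := \tfrac1\Theta\int_0^\Theta \sigma_m\, d\theta_m$ solves the homogeneous transport equation $\partial_t\underline{\sigma}_m + \mathbf{v}_m\cdot\nabla_x\underline{\sigma}_m = 0$ with zero Cauchy datum and zero boundary trace, hence $\underline{\sigma}_m \equiv 0$; (vi) finally, replace $T$ by $\min_{m\in\mathcal{I}} T'_m$ so that all the $\sigma_m$ exist on a common time interval, which is the asserted restriction of $T$.

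The main obstacle is ensuring that the time of existence $T'$ can be taken \emph{independent of $\eps$} and of the transverse variable $\theta_m$, i.e.\ uniform in the periodic parameter; this is where one must be careful to run the energy estimate for \eqref{eq:Burgersm} with constants depending only on a fixed $H^{s_0}$-norm (for $s_0 > 1 + d/2$, accounting now for the extra variable $\theta_m$) of the data $a$, exactly the tame mechanism already exploited in the proof of Theorem \ref{thmKinkModes}. Apart from this uniformity bookkeeping, the argument is entirely routine: it is the scalar, noncharacteristic, incoming-transport special case of the quasilinear IBVP theory, and no small-divisor or nonresonance input is needed here (those were already consumed in deriving \eqref{eqa3} and solving for $a$).
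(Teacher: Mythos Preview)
Your proposal is correct and follows the same approach as the paper: both recognize \eqref{eq:Burgersm} as a scalar quasilinear noncharacteristic boundary value problem with incoming characteristics (hence strictly dissipative Dirichlet conditions), and both invoke the classical well-posedness theory in \cite{BS}; your treatment of the mean-zero property by integrating the conservative form over $\theta_m$ is exactly right and in fact spells out what the paper leaves implicit.

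Two minor cosmetic points: (1) in step (i) you should not say ``treating $\theta_m$ as a parameter'', since the Burgers nonlinearity $c_m\,\sigma_m\,\partial_{\theta_m}\sigma_m$ genuinely couples the $\theta_m$-modes --- the correct framing, which you effectively adopt anyway, is to solve the quasilinear IBVP in the full set of variables $(t,x,\theta_m)$ on $\R^d_+\times(\R/\Theta\Z)$; (2) there is no $\eps$ in this problem (the profiles $\sigma_m$ are $\eps$-independent), so the ``independent of $\eps$'' concern in your final paragraph is moot.
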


The result follows from solving the boundary value problem for the Burgers equation \eqref{eq:Burgersm} with 
prescribed Dirichlet boundary condition on $\{ x_d=0\}$. This is a (very!) particular case of a quasilinear hyperbolic 
system with strictly dissipative boundary conditions for which well-posedness follows from the classical theory, see, 
e.g., \cite{BS}.

The leading profile $\cU_0$ is then given by \eqref{decompU0} and belongs to ${\mathcal C}^\infty ((-\infty,T]; 
H^{+\infty}(\R^d_+ \times (\R/\Theta \, \Z)^N))$. Furthermore, its spectrum with respect to the periodic variables 
$(\theta_1,\dots,\theta_N)$ is included in the set
\begin{equation}
\label{defZNI}
\Z^N_{\mathcal I} := \big\{ \alpha \in \Z^N \, / \, \forall \, m \in {\mathcal O} \, , \, \alpha_m=0 \big\} \, ,
\end{equation}
as claimed in Theorem \ref{theowavetrains}.

\section{Proof of Theorem \ref{theowavetrains}}
\label{sect4}

\subsection{The WKB cascade}

In this paragraph, we give a more detailed version of \eqref{3}-\eqref{4}. We plug again the ansatz 
\eqref{BKW} in \eqref{0} and derive the set of equations \eqref{BKWint}-\eqref{BKWbord} below. We 
recall that the operators $\cL,\cM$ are defined in \eqref{3a}, while $L(\partial)$ is defined in \eqref{defopL}. 
Then the WKB cascade in the interior reads:
\begin{equation}
\label{BKWint}
\begin{split}
&{\rm (a)}\quad \cL(\partial_{\theta}) \, \cU_0 =0 \, ,\\
&{\rm (b)}\quad \cL(\partial_{\theta}) \, \cU_1+L(\partial) \, \cU_0 +\cM (\cU_0,\cU_0) =0 \, ,\\
&{\rm (c)}\quad \cL(\partial_{\theta}) \, \cU_{k+2} +L(\partial) \, \cU_{k+1} +\cM (\cU_0,\cU_{k+1}) 
+\cM (\cU_{k+1},\cU_0) +\F_k =0 \, ,\quad k \ge 0 \, ,\\
\end{split}
\end{equation}
with
\begin{align}
\forall \, k \ge 0 \, ,\quad \F_k :=& \, \partial_j \varphi_m \, \left( \sum_{\ell=2}^{k+2} \dfrac{1}{\ell !} \, 
\sum_{\kappa_1+\cdots+\kappa_\ell=k+2-\ell} {\rm d}^\ell A_j(0) \cdot (\cU_{\kappa_1},\dots,\cU_{\kappa_\ell}) \right) \, 
\partial_{\theta_m} \cU_0 \notag \\
&+\sum_{\ell=1}^{k+1} \A_j^{k+2-\ell} \, \partial_j \cU_{\ell-1} +\partial_j \varphi_m \, \sum_{\ell=1}^k 
\A_j^{k+2-\ell} \, \partial_{\theta_m} \cU_\ell \, ,\label{defFk} \\
\forall \, \nu \ge 1 \, ,\quad \A_j^\nu :=& \sum_{\ell=1}^\nu \dfrac{1}{\ell !} \, 
\sum_{\kappa_1+\cdots+\kappa_\ell=\nu-\ell} {\rm d}^\ell A_j(0) \cdot (\cU_{\kappa_1},\dots,\cU_{\kappa_\ell}) \, ,\notag
\end{align}
Observe that \eqref{BKWint}(c) coincides with \eqref{3}(c) for $k=0$. Furthermore, each matrix $\A_j^\nu$, 
$\nu \ge 1$, only depends on $\cU_0,\dots,\cU_{\nu-1}$, and therefore each source term $\F_k$, $k \ge 0$, 
only depends on $\cU_0,\dots,\cU_k$.

The set of boundary conditions for \eqref{BKWint} reads (recall $B ={\rm d}b(0)$):
\begin{equation}
\label{BKWbord}
\begin{split}
&{\rm (a)}\quad B\, \cU_0 =0 \, ,\\
&{\rm (b)}\quad B\, \cU_1 +\dfrac{1}{2} \, {\rm d}^2b(0) \cdot (\cU_0,\cU_0) =G(t,y,\theta_0) \, ,\\
&{\rm (c)}\quad B\, \cU_{k+2} +{\rm d}^2b(0) \cdot (\cU_0,\cU_{k+1}) +\G_k =0 \, ,\quad k \ge 0 \, ,
\end{split}
\end{equation}
with
\begin{equation}
\label{defGk}
\forall \, k \ge 0 \, ,\quad \G_k :=\sum_{\ell=3}^{k+3} \dfrac{1}{\ell !} \, \sum_{\kappa_1+\cdots+\kappa_\ell=k+3-\ell} 
{\rm d}^\ell b(0) \cdot (\cU_{\kappa_1},\dots,\cU_{\kappa_\ell}) \, .
\end{equation}
In \eqref{BKWbord} and \eqref{defGk}, all functions on the left hand side are evaluated at $x_d=0, \theta_1=\cdots=\theta_N 
=\theta_0$. The source term $\G_k$ in \eqref{BKWbord}(c) only depends on $\cU_0,\dots,\cU_k$.

We are looking for a sequence of profiles $(\cU_k)_{k \in \N}$ that satisfies \eqref{BKWint}-\eqref{BKWbord}, and 
$\cU_k|_{t<0}=0$ for all $k$.

\subsection{Construction of correctors}
\label{correctors-per}

Some notation will be useful in the arguments below. For any function $f$ that depends on $(t,x,\theta_1,\dots,\theta_N)$, 
with $\Theta$-periodicity with respect to each $\theta_m$, we decompose $f$ as
\begin{equation*}
f=\underline{f}(t,x) +\sum_{m=1}^N f^m (t,x,\theta_m) +f^{\rm nc} (t,x,\theta_1,\dots,\theta_N) \, ,
\end{equation*}
where $\underline{f}$ stands for the mean value of $f$ on the torus $(\R/\Theta \, \Z)^N$, each $f^m$ incorporates 
the $\theta_m$-modes of $f$ (in particular, the spectrum of $f^m$ is included in $\Z^{N;1}$ and $f_m$ has mean zero 
with respect to $\theta_m$), and the spectrum of $f^{\rm nc}$ is included in $\Z^N \setminus \Z^{N;1}$. Here, the 
spectrum only refers to the Fourier decomposition of $f$ with respect to $(\theta_1,\dots,\theta_N)$. The mappings 
$f \mapsto f^m$ and $f \mapsto f^{\rm nc}$ are continuous on ${\mathcal C}^\infty ((-\infty,T]; H^{+\infty}(\R^d_+ \times 
(\R/\Theta \, \Z)^N))$. Furthermore, if the spectrum of $f$ is included in $\Z^N_{{\mathcal I}}$, then $f^{\rm nc}$ belongs 
to the space of profiles $\PP^{\rm nc}$ defined in Lemma \ref{lem4} below.

The following observation is well-known in the theory of geometric optics, see for instance \cite{JMR2,williams4}, 
and relies on Assumption \ref{assumption5}.

\begin{lem}
\label{lem4}
The operator $\cL(\partial_{\theta})$ is a bounded isomorphism from $\PP^{\rm nc}$ into itself, where
\begin{equation*}
\PP^{\rm nc} := \Big\{ f \in {\mathcal C}^\infty ((-\infty,T]; H^{+\infty}(\R^d_+ \times (\R/\Theta \, \Z)^N)) \, / \, 
\text{\rm Spectrum } (f) \subset \Z^N_{{\mathcal I}} \setminus \Z^{N;1} \Big\} \, .
\end{equation*}
\end{lem}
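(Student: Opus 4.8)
The plan is to show that $\cL(\partial_\theta)$ acts diagonally in the Fourier basis on the torus $(\R/\Theta\,\Z)^N$ and that on each relevant mode the matrix $L(\mathrm{d}(\alpha\cdot\Phi))$ is invertible, so that formally $\cL(\partial_\theta)^{-1}$ is given mode by mode by $\big(L(\mathrm{d}(\alpha\cdot\Phi))\big)^{-1}$; the only real work is checking that this formal inverse preserves the space $\PP^{\rm nc}$, i.e.\ that it is bounded on $H^{+\infty}$ in the periodic variables uniformly enough to also respect the ${\mathcal C}^\infty$-in-$t$ regularity. First I would write, for a profile $f$ with Fourier expansion $f=\sum_{\alpha\in\Z^N} f_\alpha(t,x)\,\mathrm{e}^{2i\pi\,\alpha\cdot\theta/\Theta}$ (here $\alpha\cdot\theta:=\sum_m \alpha_m\theta_m$), the identity
\begin{equation*}
\cL(\partial_\theta)\,f=\sum_{\alpha\in\Z^N}\frac{2i\pi}{\Theta}\,L(\mathrm{d}(\alpha\cdot\Phi))\,f_\alpha(t,x)\,\mathrm{e}^{2i\pi\,\alpha\cdot\theta/\Theta}\,,
\end{equation*}
using $L(\mathrm{d}\varphi_m)\,\partial_{\theta_m}\mathrm{e}^{2i\pi\,\alpha\cdot\theta/\Theta}=\tfrac{2i\pi}{\Theta}\alpha_m L(\mathrm{d}\varphi_m)\,\mathrm{e}^{2i\pi\,\alpha\cdot\theta/\Theta}$ and summing over $m$. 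This makes clear that $\cL(\partial_\theta)$ maps $\PP^{\rm nc}$ into itself (it cannot create modes, and multiplication by a constant matrix preserves ${\mathcal C}^\infty$-in-$t$, $H^{+\infty}$-in-$(x,\theta)$ regularity), so only bijectivity onto $\PP^{\rm nc}$ remains.

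Next I would use Assumption \ref{assumption5} to invert mode by mode. For $\alpha\in\Z^N_{\mathcal I}\setminus\Z^{N;1}$, at least two coordinates of $\alpha$ are nonzero, so $\alpha\in\Z^N\setminus\Z^{N;1}$ and the nonresonance part of Assumption \ref{assumption5} gives $\det L(\mathrm{d}(\alpha\cdot\Phi))\neq0$; hence $L(\mathrm{d}(\alpha\cdot\Phi))$ is invertible and we may \emph{define}
\begin{equation*}
\cL(\partial_\theta)^{-1}g:=\sum_{\alpha\in\Z^N_{\mathcal I}\setminus\Z^{N;1}}\frac{\Theta}{2i\pi}\,\big(L(\mathrm{d}(\alpha\cdot\Phi))\big)^{-1}\,g_\alpha(t,x)\,\mathrm{e}^{2i\pi\,\alpha\cdot\theta/\Theta}
\end{equation*}
for $g\in\PP^{\rm nc}$. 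One checks immediately that this is a genuine two-sided inverse of $\cL(\partial_\theta)$ on $\PP^{\rm nc}$ at the formal (mode-by-mode) level; the content is to verify that the right-hand side is again an element of $\PP^{\rm nc}$, i.e.\ that $\sum_\alpha\langle\alpha\rangle^{2s}\|(L(\mathrm{d}(\alpha\cdot\Phi)))^{-1}g_\alpha(t,x)\|^2$ is finite for every $s$ (and similarly after applying any number of $\partial_t$ and $\partial_{x,y}$, uniformly on $(-\infty,T]\times\R^d_+$ locally). This is where the small divisor bound in Assumption \ref{assumption5} enters, but with a twist: that bound is stated only for $\alpha$ supported in the incoming indices, and in $\PP^{\rm nc}$ the spectrum lies in $\Z^N_{\mathcal I}$, i.e.\ $\alpha_m=0$ for $m\in{\mathcal O}$ — exactly the hypothesis under which the bound $|\det L(\mathrm{d}(\alpha\cdot\Phi))|\ge c\,|\alpha|^{-\gamma}$ holds. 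Combined with a crude polynomial-in-$|\alpha|$ upper bound on the comatrix of $L(\mathrm{d}(\alpha\cdot\Phi))$ (whose entries are polynomials in $\alpha$ of degree $\le N-1$), one gets $\|(L(\mathrm{d}(\alpha\cdot\Phi)))^{-1}\|\le C\,|\alpha|^{\gamma+N-1}$ for all $\alpha$ in the relevant index set. Hence $\cL(\partial_\theta)^{-1}$ loses only finitely many (namely $\gamma+N-1$) derivatives in $\theta$, which is harmless since the target is $H^{+\infty}$: applying it costs a fixed finite shift in Sobolev index that is absorbed by the $+\infty$.

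To finish, I would record that $\cL(\partial_\theta)^{-1}$ commutes with $\partial_t$ and with $\partial_{x}$ (the matrices $L(\mathrm{d}(\alpha\cdot\Phi))$ are constant in $(t,x)$), so the bound above applied to each derivative $\partial_t^j\partial_{x,y}^\beta g$ shows $\cL(\partial_\theta)^{-1}g\in{\mathcal C}^\infty((-\infty,T];H^{+\infty}(\R^d_+\times(\R/\Theta\,\Z)^N))$ with spectrum still contained in $\Z^N_{\mathcal I}\setminus\Z^{N;1}$; thus $\cL(\partial_\theta)^{-1}g\in\PP^{\rm nc}$ and $\cL(\partial_\theta)$ is a bounded isomorphism of $\PP^{\rm nc}$, with $\cL(\partial_\theta)^{-1}$ bounded from $H^s$-type norms into $H^{s-(\gamma+N-1)}$-type norms (a tame/finite-loss estimate). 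The main obstacle is precisely this uniform-in-$\alpha$ control of $\|(L(\mathrm{d}(\alpha\cdot\Phi)))^{-1}\|$: without the small divisor condition the determinant could approach $0$ faster than any polynomial along a sequence of $\alpha$'s, and the series defining $\cL(\partial_\theta)^{-1}g$ would diverge even for smooth $g$; it is exactly to exclude this that the second clause of Assumption \ref{assumption5} is imposed, and exactly why it suffices to impose it only on multi-indices supported in ${\mathcal I}$, matching the spectrum constraint built into $\PP^{\rm nc}$.
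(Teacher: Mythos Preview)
Your proof is correct and follows exactly the approach sketched in the paper: invertibility of $L(\mathrm{d}(\alpha\cdot\Phi))$ on the relevant index set from nonresonance, polynomial control of the inverse norm from the small divisor bound combined with the comatrix estimate, and absorption of the finite derivative loss by $H^{+\infty}$. The paper's own justification is a single sentence recording that $\|L(\mathrm{d}(\alpha\cdot\Phi))^{-1}\|$ is polynomially bounded in $|\alpha|$; you have simply spelled out the details.
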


\noindent Indeed, for $\alpha \in \Z^N_{{\mathcal I}} \setminus \Z^{N;1}$, the matrix $L({\rm d} (\alpha \cdot \Phi))$ 
is invertible and the norm of its inverse is bounded polynomially in $|\alpha|$ (the degree of the polynomial being 
fixed). We shall feel free to write $\cL(\partial_{\theta})^{-1} \, f^{\rm nc}$ when $f^{\rm nc}$ is an element of 
$\PP^{\rm nc}$.

Unsurprisingly, the construction of the sequence $(\cU_k)_{k \in \N}$ is based on an induction process. We formulate 
our induction assumption.

\begin{description}

\item[$({\bf H}_n)$] There exist profiles $\cU_0,\dots,\cU_n$ in ${\mathcal C}^\infty ((-\infty,T]; H^{+\infty} (\R^d_+ 
\times (\R/\Theta \, \Z)^N))$ that vanish for $t<0$, whose $(\theta_1,\dots,\theta_N)$-spectrum is included in 
$\Z^N_{{\mathcal I}}$, and that satisfies
\begin{itemize}
 \item \eqref{BKWint}(a) and \eqref{BKWbord}(a) if $n=0$,
 \item \eqref{BKWint}(a)-(b) and \eqref{BKWbord}(a)-(b) if $n=1$,
 \item \eqref{BKWint}(a)-(b), \eqref{BKWbord}(a)-(b), \eqref{BKWint}(c) and \eqref{BKWbord}(c) up to order $n-2$ 
          if $n \ge 2$,
  \item Compatibility condition in the interior:
\begin{equation}
\label{eqintn}
\forall \, m \in {\mathcal I} \, ,\quad \ell_m \, F_n^m =0 \, ,\quad \text{\rm and } \underline{F_n}=0 \, ,
\end{equation}
 \item Compatibility condition on the boundary:
\begin{multline}
\label{eqbordn}
\underline{b} \, \Big( -\sum_{m \in {\mathcal I}} B \, R_m \, F_n^m|_{x_d=0,\theta_m=\theta_0} 
-B \, \partial_{\theta_0} ((\cL(\partial_{\theta})^{-1} \, F_n^{\rm nc})|_{x_d=0,\theta_1=\dots=\theta_N=\theta_0}) \\
+\partial_{\theta_0} G_n(t,y,\theta_0) \Big) =0 \, ,
\end{multline}
          where in \eqref{eqintn} and \eqref{eqbordn}, we have set:
\begin{align}
F_n &:= \begin{cases}
L(\partial) \, \cU_0 +\cM (\cU_0,\cU_0) \, ,&\text{\rm if } n=0 \, ,\\
L(\partial) \, \cU_n +\cM (\cU_0,\cU_n) +\cM (\cU_n,\cU_0) +\F_{n-1} \, ,&\text{\rm if } n \ge 1 \, ,
\end{cases} \label{defFn} \\
G_n &:= \begin{cases}
\dfrac{1}{2} \, {\rm d}^2b(0) \cdot (\cU_0,\cU_0)|_{x_d=0,\theta_1=\dots=\theta_N=\theta_0} -G(t,y,\theta_0) 
\, ,&\text{\rm if } n=0 \, ,\\
{\rm d}^2b(0) \cdot (\cU_0,\cU_n)|_{x_d=0,\theta_1=\dots=\theta_N=\theta_0} +\G_{n-1} \, ,&\text{\rm if } n \ge 1 \, .
\end{cases} \label{defGn}
\end{align}
\end{itemize}

\end{description}

\noindent Recall that $\F_k$ and $\G_k$ are defined in \eqref{defFk} and \eqref{defGk}, so the above source terms 
$F_n,G_n$ only depend on $\cU_0,\dots,\cU_n$. Several properties of these source terms are made precise below 
which, in particular, will justify why we can apply the operator $\cL(\partial_{\theta})^{-1}$ to $F_n^{\rm nc}$.

The reader can verify that our construction of the leading profile $\cU_0$ in Sections \ref{sect2} and \ref{sect3} 
proves that $({\bf H}_0)$ holds for some $T>0$. In that case, \eqref{eqbordn} reduces to \eqref{eqa3}, which 
was solved in Section \ref{sect3}. The compatibility conditions \eqref{eqintn} in the interior give the decoupled 
Burgers equations \eqref{eq:Burgersm} for each incoming amplitude $\sigma_m$.

Our goal is to show that $({\bf H}_n)$ implies $({\bf H}_{n+1})$ with the same time $T>0$, which will imply that 
there exists a sequence of profiles $(\cU_k)_{k \in \N}$ in ${\mathcal C}^\infty ((-\infty,T]; H^{+\infty}(\R^d_+ 
\times (\R/\Theta \, \Z)^N))$ that satisfies \eqref{BKWint}-\eqref{BKWbord}, and $\cU_k|_{t<0}=0$ for all $k$. 
We decompose the analysis in several steps, as in Section \ref{sect2}, assuming from now on that $({\bf H}_n)$ 
holds for some integer $n \in \N$.
\bigskip

\noindent \underline{Step 1: properties of $F_n,G_n$, and definition of $\cU_{n+1}^{\rm nc}$, $(I-P_m) \, \cU_{n+1}^m$.}

From assumption $({\bf H}_n)$, the profiles $\cU_0,\dots,\cU_n$ belong to ${\mathcal C}^\infty ((-\infty,T]; H^{+\infty} 
(\R^d_+ \times (\R/\Theta \, \Z)^N))$, vanish for $t<0$, and their spectrum is a subset of $\Z^N_{{\mathcal I}}$. The 
space of such functions is an algebra, and therefore, we can verify from \eqref{defFk} and \eqref{defFn} that $F_n$ 
belongs to ${\mathcal C}^\infty ((-\infty,T]; H^{+\infty} (\R^d_+ \times (\R/\Theta \, \Z)^N))$, vanishes for $t<0$, and 
its spectrum is a subset of $\Z^N_{{\mathcal I}}$. Consequently, $F_n^{\rm nc}$ belongs to the space of profiles 
$\PP^{\rm nc}$ defined in Lemma \ref{lem4}.

Our goal is to construct a profile $\cU_{n+1}$ that satisfies
\begin{equation}
\label{eqintUn+1}
\cL (\partial_\theta) \, \cU_{n+1} +F_n =0 \, .
\end{equation}
We first observe that if $\alpha \in \Z^N$ is a noncharacteristic mode, that is, $\alpha \in \Z^N \setminus \Z^{N;1}$, and 
if moreover $\alpha$ has one nonzero coordinate $\alpha_m$ with $m \in {\mathcal O}$, then the $\alpha$-Fourier 
coefficient of $\cU_{n+1}$ vanishes. This implies that any solution to \eqref{eqintUn+1} has its spectrum included 
in $\Z^N_{{\mathcal I}}$. We thus define $\cU_{n+1}^{\rm nc} :=-\cL (\partial_\theta)^{-1} \, F_n^{\rm nc}$, so 
$\cU_{n+1}^{\rm nc}$ belongs to $\PP^{\rm nc}$ and vanishes for $t<0$.

For all $m=1,\dots,N$, we define $(I-P_m) \, \cU_{n+1}^m$ as the unique mean zero solution to
\begin{equation*}
(I-P_m) \, \partial_{\theta_m} \cU_{n+1}^m =-R_m \, F_n^m \, .
\end{equation*}
In particular, we can write $\cU_{n+1}^m =\sigma^m_{n+1} \, r_m$ for $m \in {\mathcal O}$ since $F_n$ has no 
outgoing mode. Due to the compatibility condition \eqref{eqintn} for $m \in {\mathcal I}$, we have
\begin{equation*}
\forall \, m =1,\dots,N \, ,\quad L({\rm d}\varphi_m) \, (I-P_m) \, \partial_{\theta_m} \cU_{n+1}^m +F_n^m =0 \, ,
\end{equation*}
which means that the components of $\cU_{n+1}$ that we have already defined satisfy
\begin{equation*}
\cL (\partial_\theta) \, \left( \sum_{m=1}^N (I-P_m) \, \cU_{n+1}^m +\cU_{n+1}^{\rm nc} \right) +F_n =0 \, ,
\end{equation*}
because $F_n$ has zero mean value. It is clear that the nonpolarized components $(I-P_m) \, \cU_{n+1}^m$ 
belong to ${\mathcal C}^\infty ((-\infty,T]; H^{+\infty} (\R^d_+ \times (\R/\Theta \, \Z)^N))$ and vanish for $t<0$, 
because the $F_n^m$'s do so.
\bigskip

\noindent \underline{Step 2: $\cU_{n+1}$ has no outgoing mode.}

Let $m \in {\mathcal O}$. We focus on \eqref{BKWint}(c) for $k=n$. Since $\F_n$ has no outgoing mode, the profile 
$\cU_{n+1}$ must necessarily satisfy
\begin{equation*}
\ell_m \, L(\partial) \, (\sigma^m_{n+1} \, r_m) +\ell_m \, (\cM (\cU_0,\cU_{n+1}) 
+\cM (\cU_{n+1},\cU_0))^m =0 \, .
\end{equation*}
However, the leading profile $\cU_0$ is given in \eqref{decompU0}, and since the only noncharacteristic modes of 
$\cU_{n+1}$ belong to $\Z^N_{{\mathcal I}}$, we observe that it is not possible to generate a $\theta_m$-mode in 
$\cM (\cU_0,\cU_{n+1})$ nor in $\cM (\cU_{n+1},\cU_0)$. This means that the amplitude $\sigma^m_{n+1}$ satisfies 
the outgoing transport equation
\begin{equation*}
\partial_t \sigma^m_{n+1} +{\bf v}_m \cdot \nabla_x \sigma^m_{n+1} =0 \, ,
\end{equation*}
and therefore vanishes.

Since the profile $\cU_{n+1}$ has no outgoing mode, it reads
\begin{equation*}
\cU_{n+1} =\underline{\cU_{n+1}} +\sum_{m \in {\mathcal I}} P_m \, \cU_{n+1}^m 
+\sum_{m \in {\mathcal I}} (I-P_m) \, \cU_{n+1}^m +\cU_{n+1}^{\rm nc} \, ,
\end{equation*}
and it only remains to determine the mean value $\underline{\cU_{n+1}}$ and the polarized components $P_m \, 
\cU_{n+1}^m$, $m \in {\mathcal I}$. Let us observe that such components have no influence on the fulfillment of 
\eqref{eqintUn+1}, no matter how we define them because they belong to the kernel of $\cL(\partial_\theta)$. 
Hence we shall no longer focus on \eqref{eqintUn+1}, but rather on the boundary conditions for $\cU_{n+1}$ 
and the interior compatibility condition at the next order.
\bigskip

\noindent \underline{Step 3: determining $\underline{\cU_{n+1}}$.}

Let us first derive the interior equation for $\underline{\cU_{n+1}}$. We introduce the notation $P_m \, \cU_{n+1}^m 
=\sigma_{n+1}^m \, r_m$ for $m \in {\mathcal I}$. We consider \eqref{BKWint}(c) for $k=n$, and take its mean value 
on the torus, observing first that both terms $\cM(\cU_0,\cU_{n+1}^{\rm nc})$ and $\cM(\cU_{n+1}^{\rm nc},\cU_0)$ 
have zero mean value. Hence we derive the equation
\begin{align*}
L(\partial) \, \underline{\cU_{n+1}} &+\sum_{m \in {\mathcal I}} \int_0^\Theta \sigma_m \, \partial_{\theta_m} 
\sigma_{n+1}^m \, \dfrac{{\rm d}\theta_m}{\Theta} \, \partial_j \varphi_m \, ({\rm d}A_j(0) \cdot r_m) \, r_m \\
&+\sum_{m \in {\mathcal I}} \int_0^\Theta \sigma_{n+1}^m \, \partial_{\theta_m} \sigma_m \, 
\dfrac{{\rm d}\theta_m}{\Theta} \, \partial_j \varphi_m \, ({\rm d}A_j(0) \cdot r_m) \, r_m 
+{\mathcal F}_n (t,x)=0 \, ,
\end{align*}
where the source term ${\mathcal F}_n$ is defined by:
\begin{equation*}
{\mathcal F}_n := \underline{\F_n} +\sum_{m \in {\mathcal I}} \underline{\cM(\cU_0,(I-P_m) \, \cU_{n+1}^m)} 
+\sum_{m \in {\mathcal I}} \underline{\cM((I-P_m) \, \cU_{n+1}^m,\cU_0)} \, .
\end{equation*}
We observe that each sum of integrals
\begin{equation*}
\int_0^\Theta \sigma_m \, \partial_{\theta_m} \sigma_{n+1}^m \, \dfrac{{\rm d}\theta_m}{\Theta} 
+\int_0^\Theta \sigma_{n+1}^m \, \partial_{\theta_m} \sigma_m \, \dfrac{{\rm d}\theta_m}{\Theta} 
\end{equation*}
vanishes, and therefore $\underline{\cU_{n+1}}$ must satisfy the system
\begin{equation*}
L(\partial) \, \underline{\cU_{n+1}} =-{\mathcal F}_n \, ,
\end{equation*}
in $(-\infty,T] \times \R^d_+$. The source term ${\mathcal F}_n$ belongs to ${\mathcal C}^\infty ((-\infty,T]; 
H^{+\infty} (\R^d_+))$ and vanishes for $t<0$.

The boundary conditions for $\underline{\cU_{n+1}}$ are obtained by taking the mean value of \eqref{BKWbord}(b) 
if $n=0$ or \eqref{BKWbord}(c) for $k=n-1$ if $n \ge 1$. In any case we find that $\underline{\cU_{n+1}}|_{x_d=0}$ 
must satisfy
\begin{equation*}
B \, \underline{\cU_{n+1}}|_{x_d=0} +B \, \underline{(\cU_{n+1}^{\rm nc}|_{x_d=0,\theta_1=\dots=\theta_N=\theta_0})} 
+\underline{G_n}(t,y)=0 \, .
\end{equation*}
Since $\cU_{n+1}^{\rm nc}$ has already been determined, we can apply the well-posedness result of \cite{C}, 
supplemented with the regularity result in \cite{morandosecchi}, and construct a solution $\underline{\cU_{n+1}} 
\in {\mathcal C}^\infty ((-\infty,T]; H^{+\infty} (\R^d_+))$ to the above equations (in the interior and on the boundary).
\bigskip

\noindent \underline{Step 4: determining $P_m \, \cU_{n+1}^m$. Part I.}

We keep the notation $P_m \, \cU_{n+1}^m =\sigma_{n+1}^m \, r_m$ for $m \in {\mathcal I}$. The evolution of 
$\sigma_{n+1}^m$ is obtained by imposing the compatibility condition:
\begin{equation*}
\ell_m \, L(\partial) \, \cU_{n+1}^m +\ell_m \, \Big( \cM(\cU_0,\cU_{n+1}) +\cM(\cU_{n+1},\cU_0) \Big)^m 
=-\ell_m \, \F_n^m \, .
\end{equation*}
Keeping on the left hand side only what is still unknown, we end up with:
\begin{multline*}
\partial_t \sigma_{n+1}^m +{\bf v}_m \cdot \nabla_x \sigma_{n+1}^m +c_m \, \big( 
\sigma_m \, \partial_{\theta_m} \, \sigma_{n+1}^m +\sigma_{n+1}^m \, \partial_{\theta_m} \, \sigma_m \big) \\
=-\dfrac{\ell_m}{\ell_m \, r_m} \left[ 
\F_n^m +L(\partial) \, (I-P_m) \, \cU_{n+1}^m +\cM(\underline{\cU_{n+1}},\sigma_m \, r_m) \right. \\
\left. +\cM(\sigma_m \, r_m,(I-P_m) \, \cU_{n+1}^m) +\cM((I-P_m) \, \cU_{n+1}^m,\sigma_m \, r_m) 
+\Big( \cM(\cU_0,\cU_{n+1}^{\rm nc}) +\cM(\cU_{n+1}^{\rm nc},\cU_0) \Big)^m \right] \, ,
\end{multline*}
where the constant $c_m$ is the one defined in \eqref{eq:Burgersm}. We have thus found that the $\sigma_{n+1}^m$'s 
must satisfy decoupled incoming transport equations in $\R^d_+ \times (\R/\Theta \, \Z)$ with infinitely smooth coefficients 
and source terms (all vanishing for $t<0$). The only task left is therefore to determine the trace of each $\sigma_{n+1}^m$, 
$m \in {\mathcal I}$, on $x_d=0$.

We recall the decomposition \eqref{decomposition3} introduced in Section \ref{sect2}. We thus introduce a decomposition
\begin{equation}
\label{decompbord}
\sum_{m \in {\mathcal I}} \sigma_{n+1}^m(t,y,0,\theta_0) \, r_m =a_{n+1}(t,y,\theta_0) \, e +\check{\cU}_{n+1}(t,y,\theta_0) \, ,
\end{equation}
where $a_{n+1}$ is an unknown scalar function (with zero mean value with respect to $\theta_0$), and 
$\check{\cU}_{n+1}$ takes its values in $\check{\E}^s \tauetabar$. Thanks to the compatibility condition 
\eqref{eqbordn}, the function $\check{\cU}_{n+1}$ is uniquely determined by solving
\begin{multline*}
B \, \check{\cU}_{n+1} -\partial_{\theta_0}^{-1} \, \left( 
\sum_{m \in {\mathcal I}} B \, R_m \, F_n^m|_{x_d=0,\theta_m=\theta_0} \right) \\
-B \, \Big( (\cL(\partial_{\theta})^{-1} \, F_n^{\rm nc})|_{x_d=0,\theta_1=\dots=\theta_N=\theta_0} 
-\underline{(\cL(\partial_{\theta})^{-1} \, F_n^{\rm nc})|_{x_d=0,\theta_1=\dots=\theta_N=\theta_0}} \Big) 
+(G_n-\underline{G_n})(t,y,\theta_0) =0 \, ,
\end{multline*}
where $\partial_{\theta_0}^{-1}$ denotes the inverse of $\partial_{\theta_0}$ when restricted to zero mean value 
functions. We get $\check{\cU}_{n+1} \in {\mathcal C}^\infty ((-\infty,T]; H^{+\infty} (\R^{d-1} \times (\R/\Theta \, \Z)))$, 
and $\check{\cU}_{n+1}|_{t<0}=0$.

Before going on, we observe that no matter how we define the scalar function $a_{n+1}$, our definitions so far of all 
components of $\cU_{n+1}$ give the relation
\begin{equation}
\label{eqbordUn+1}
B \, \cU_{n+1}|_{x_d=0,\theta_1=\dots=\theta_N=\theta_0} +G_n =0 \, ,
\end{equation}
which means that \eqref{BKWbord}(b) will be satisfied if $n=0$, or \eqref{BKWbord}(c) will be satisfied for $k=n-1$ if 
$n \ge 1$. At this stage, it only remains to determine the scalar function $a_{n+1}$, and the trace of $\sigma_{n+1}^m$ 
on $\{ x_d=0\}$ will be obtained by computing
\begin{equation*}
\sigma_{n+1}^m(t,y,0,\theta_0) \, r_m =a_{n+1}(t,y,\theta_0) \, e_m +\check{\cU}_{n+1,m}(t,y,\theta_0) \, ,
\end{equation*}
where $\check{\cU}_{n+1,m}$ stands for the component of $\check{\cU}_{n+1}$ on $r_m$ in the decomposition 
\eqref{decomposition1}.
\bigskip

\noindent \underline{Step 5: determining $P_m \, \cU_{n+1}^m$. Part II. Evolution equation for $a_{n+1}$.}

This step mimics the analysis in Section \ref{sect2}, and more specifically Steps 4 and 5 there. Let us assume 
that $\cU_{n+2}$ has no outgoing mode, which will be fully justified at the next order of the induction process. 
The nonpolarized components of the corrector $\cU_{n+2}^m$ are obtained by looking at \eqref{BKWint}(c) 
for $k=n$. For $m \in {\mathcal I}$, we thus define $(I-P_m) \, \cU_{n+2}^m$ as the solution to
\begin{multline*}
(I-P_m) \, \partial_{\theta_m} \cU_{n+2}^m +R_m \, L(\partial) (\sigma_{n+1}^m \, r_m) 
+\big( \sigma_m \, \partial_{\theta_m} \sigma_{n+1}^m +\sigma_{n+1}^m \, \partial_{\theta_m} \sigma_m \big) \, 
\partial_j \varphi_m \, R_m \, ({\rm d}A_j(0) \cdot r_m) \, r_m \\
=-R_m \, \F_n^m -R_m \, L(\partial) (I-P_m) \, \cU_{n+1}^m -\cM(\underline{\cU_{n+1}},\sigma_m \, r_m) \\
-\cM(\sigma_m \, r_m,(I-P_m) \, \cU_{n+1}^m) -\cM((I-P_m) \, \cU_{n+1}^m,\sigma_m \, r_m) 
-\Big( \cM(\cU_0,\cU_{n+1}^{\rm nc}) +\cM(\cU_{n+1}^{\rm nc},\cU_0) \Big)^m \, ,
\end{multline*}
where we should keep in mind that $\sigma_{n+1}^m$ is still not fully determined, but the right hand side of the 
equality has already been constructed. Since the partial inverses $R_m$ satisfy $R_m \, A_d(0) \, P_m =0$, 
the term $R_m \, L(\partial) (\sigma_{n+1}^m \, r_m)$ only involves tangential differentiation with respect to 
the boundary $\{ x_d=0\}$, so we can take the trace of the latter equation on the boundary. We then substitute 
$a_{n+1}(t,y,\theta_m) \, e_m +\check{\cU}_{n+1,m}(t,y,\theta_m)$ for $\sigma_{n+1}^m|_{x_d=0} \, r_m$. 
These operations yield
\begin{multline}
\label{substitution1}
\underline{b} \, B \, \sum_{m \in {\mathcal I}} (I-P_m) \, (\partial_{\theta_m} \cU_{n+2}^m)|_{x_d=0,\theta_m=\theta_0} \\
=-X_{\rm Lop} a_{n+1} +(2\, \upsilon -\underline{b} \, {\rm d}^2b(0) \cdot (e,e) ) \, \big( a \, \partial_{\theta_0} a_{n+1} 
+a_{n+1} \, \partial_{\theta_0} a \big) +g_{1,n}(t,y,\theta_0) \, ,
\end{multline}
with $\upsilon$ and $X_{\rm Lop}$ defined in \eqref{defupsilonstrict}, and $g_{1,n}$ is explicitly computable 
from all previously determined quantities.

We now determine $\cU_{n+2}^{\rm nc}$ by using \eqref{BKWint}(c) for $k=n$, computing the noncharacteristic 
components and by taking the trace on $x_d=0$. All these operations lead to the equation
\begin{align*}
\cL(\partial_\theta) \, \cU_{n+2}^{\rm nc}|_{x_d=0} = &-\sum_{m \in {\mathcal I}} \Big[ 
\cM (\cU_0,P_m \, \cU_{n+1}^m) +\cM(P_m \, \cU_{n+1}^m, \cU_0) \Big]^{\rm nc}|_{x_d=0} \\
&-\sum_{m \in {\mathcal I}} \Big[ 
\cM (\cU_0,(I-P_m) \, \cU_{n+1}^m) +\cM((I-P_m) \, \cU_{n+1}^m, \cU_0) \Big]^{\rm nc}|_{x_d=0} \\
&-\Big[ \cM (\cU_0,\cU_{n+1}^{\rm nc}) +\cM(\cU_{n+1}^{\rm nc},\cU_0) \Big]^{\rm nc}|_{x_d=0} 
-\F_n^{\rm nc}|_{x_d=0} -L(\partial) \, \cU_{n+1}^{\rm nc}|_{x_d=0} \, .
\end{align*}
We then use the decomposition:
\begin{equation*}
P_m \, \cU_{n+1}^m|_{x_d=0} =a_{n+1}(t,y,\theta_m) \, e_m +\check{\cU}_{n+1,m}(t,y,\theta_m) \, ,
\end{equation*}
where $\check{\cU}_{n+1,m}$ has already been determined, and we thus obtain the expression:
\begin{equation}
\label{substitution2}
\cU_{n+2}^{\rm nc}|_{x_d=0,\theta_1=\cdots=\theta_N=\theta_0} =\B_{\rm per}(a,a_{n+1}) +g_{2,n} \, ,
\end{equation}
where
\begin{multline*}
\B_{\rm per}[u,v] :=-\sum_{\underset{m_1, m_2 \in {\mathcal I}}{m_1 \neq m_2}} \sum_{k \in \Z} \\
\left( \sum_{\underset{k_{m_1} \, k_{m_2} \neq 0}{k_{m_1}+k_{m_2}=k,}} 
(u_{k_{m_1}} \, v_{k_{m_2}} +u_{k_{m_2}} \, v_{k_{m_1}}) \, 
L(k_{m_1} \, {\rm d}\varphi_{m_1} +k_{m_2} \, {\rm d}\varphi_{m_2})^{-1} \, (k_{m_2} \, E_{m_2,m_1}) \right) 
\, {\rm e}^{2 \, i \, \pi \, k \, \theta_0/\Theta} \, ,
\end{multline*}
the vectors $E_{m_1,m_2}$ are defined in \eqref{defEm1m2}, and $g_{2,n}$ is explicitly computable 
from all previously determined quantities.

We now consider \eqref{BKWbord}(c) for $k=n$, which we rewrite equivalently as
\begin{align}
B\, \underline{\cU_{n+2}}|_{x_d=0} 
+B \, \sum_{m \in {\mathcal I}} P_m \, \cU_{n+2}^m|_{x_d=0,\theta_m=\theta_0} & \notag \\
+B \, \sum_{m \in {\mathcal I}} (I-P_m) \, \cU_{n+2}^m|_{x_d=0,\theta_m=\theta_0} 
+B \, \cU_{n+2}^{\rm nc}|_{x_d=0,\theta_1=\cdots=\theta_N=\theta_0} +a\, a_{n+1} \, {\rm d}^2b(0) \cdot (e,e) & 
\notag \\
+{\rm d}^2b(0) \cdot \left( a\, e,\underline{\cU_{n+1}} +\check{\cU}_{n+1}
+\sum_{m \in {\mathcal I}} (I-P_m) \, \cU_{n+1}^m +\cU_{n+1}^{\rm nc} \right) +\G_n &=0 \, .\label{substitution3}
\end{align}
We differentiate the latter equation with respect to $\theta_0$, apply the row vector $\underline{b}$ and use 
\eqref{substitution1} and \eqref{substitution2} to derive the governing equation for $a_{n+1}$:
\begin{equation}
\label{eqan+1}
2\, \upsilon \, \big( a \, \partial_{\theta_0} a_{n+1} +a_{n+1} \, \partial_{\theta_0} a \big) -X_{\rm Lop} a_{n+1} 
+\partial_{\theta_0} \, Q_{\rm per}[a,a_{n+1}] +\partial_{\theta_0} \, Q_{\rm per}[a_{n+1},a] =g_n \, ,
\end{equation}
where $g_n$ incorporates all contributions from the source terms $g_{1,n},g_{2,n}$ and the one obtained after 
differentiating the last line of \eqref{substitution3} and applying $\underline{b}$. Moreover, $Q_{\rm per}$ is the 
operator defined in \eqref{defQper'}.

Observe that the above governing equation \eqref{eqan+1} for $a_{n+1}$ is a linearized version of \eqref{eqa3}. 
Well-posedness for \eqref{eqan+1} follows from the same arguments as those we have used in Section \ref{sect3}, 
namely from Theorem \ref{propFper} which shows that \eqref{eqan+1} is a transport equation for $a_{n+1}$ 
that is perturbed by a nonlocal "zero order" term. We thus construct a solution $a_{n+1} \in {\mathcal C}^\infty 
((-\infty,T]; H^{+\infty} (\R^{d-1} \times (\R/\Theta \, \Z)))$ to \eqref{eqan+1} that vanishes for $t<0$.
\bigskip

\noindent \underline{Step 6: conclusion.}

We have now determined each component of $\cU_{n+1}$, which gives a profile in ${\mathcal C}^\infty ((-\infty,T]; 
H^{+\infty} (\R^d_+ \times (\R/\Theta \, \Z)^N))$ with its spectrum in $\Z^N_{{\mathcal I}}$. Moreover, $\cU_{n+1}$ 
vanishes for $t<0$, and it satisfies \eqref{eqintUn+1}, \eqref{eqbordUn+1}. It is also a simple exercise to verify that, 
if we define $F_{n+1}$ according to \eqref{defFn}, our construction of $\cU_{n+1}$ gives the compatibility conditions
\begin{equation*}
\forall \, m \in {\mathcal I} \, ,\quad \ell_m \, F_{n+1}^m =0 \, ,\quad \text{\rm and } \quad \underline{F_{n+1}}=0 \, ,
\end{equation*}
which is nothing but \eqref{eqintn} at the order $n+1$. Step 5 above also shows that, with $G_{n+1}$ defined as in 
\eqref{defGn}, we have the compatibility condition:
\begin{equation*}
\underline{b} \, \Big( -\sum_{m \in {\mathcal I}} B \, R_m \, F_{n+1}^m|_{x_d=0,\theta_m=\theta_0} 
-B \, \partial_{\theta_0} ((\cL(\partial_{\theta})^{-1} \, F_{n+1}^{\rm nc})|_{x_d=0,\theta_1=\dots=\theta_N=\theta_0}) \\
+\partial_{\theta_0} G_{n+1}(t,y,\theta_0) \Big) =0 \, ,
\end{equation*}
which is nothing but \eqref{eqbordn} at the order $n+1$. We have therefore proved that  $({\bf H}_{n+1})$ holds, 
which completes the induction.

\subsection{Proof of Theorem \ref{theowavetrains}}

We now quickly complete the proof of Theorem \ref{theowavetrains}. The analysis in Paragraph \ref{correctors-per} 
shows that there exists a sequence of profiles $(\cU_n)_{n \ge 0}$ in ${\mathcal C}^\infty ((-\infty,T]; H^{+\infty} 
(\R^d_+ \times (\R /(\Theta \, \Z))^N))$ that satisfies the WKB cascade \eqref{BKWint}, \eqref{BKWbord}, and 
$\cU_n|_{t<0}=0$ for all $n \in \N$. Moreover, each profile $\cU_n$ has its $\theta$-spectrum included in 
$\Z^N_{\mathcal I}$. The uniqueness of such a sequence also follows from an induction argument, where 
we use the regularity of each profile to justify all computations that we have made in order to construct the 
$\cU_n$'s (successive differentiations, identification of Fourier coefficients, substitution $(\theta_1,\dots,\theta_N) 
\rightarrow \theta_0$ on the boundary etc.). Uniqueness of the sequence $(\cU_n)_{n \ge 0}$ then follows from 
the uniqueness of smooth solutions to all amplitude equations we have had to solve: Equation \eqref{eqa3} and 
its linearized version on the boundary, Burgers equation \eqref{eq:Burgersm} and its linearized version in the 
interior. All other operations, such as the determination of the non-characteristic components, are of "algebraic" 
type and obviously admit a single smooth solution.

To complete the proof of Theorem \ref{theowavetrains}, we thus only need to show that the approximate 
solutions built from the sequence $(\cU_n)_{n \ge 0}$ satisfy the error estimates claimed in Theorem 
\ref{theowavetrains}. We thus consider
\begin{equation*}
u_\eps^{{\rm app},N_1,N_2}(t,x) := \sum_{n=0}^{N_1+N_2} \eps^{1+n} \, \cU_n \left( t,x,\dfrac{\Phi(t,x)}{\eps} \right) \, .
\end{equation*}
Let us first consider the boundary conditions in \eqref{0}. Setting $u_\eps^{{\rm app},N_1,N_2}|_{x_d=0} 
=\eps \, v_\eps (t,y)$ for simplicity, we compute
\begin{align*}
b(u_\eps^{{\rm app},N_1,N_2}|_{x_d=0}) =& \sum_{n=1}^{N_1+N_2+1} \dfrac{\eps^n}{n!} \, 
\sum_{\nu_1,\dots,\nu_n=0}^{N_1+N_2} {\rm d}^n b(0) \cdot (v_\eps,\dots,v_\eps) \\
&+\dfrac{\eps^{N_1+N_2+2}}{(N_1+N_2+1)!} \, \int_0^1 (1-s)^{N_1+N_2+1} \, {\rm d}^{N_1+N_2+2} b(\eps \, s\, v_\eps) 
\cdot (v_\eps, \dots, v_\eps)\, {\rm d}s \, .
\end{align*}
Collecting powers of $\eps$ and using the WKB cascade \eqref{BKWbord}, we get
\begin{equation*}
b(u_\eps^{{\rm app},N_1,N_2}|_{x_d=0}) =\eps^2 \, \int_0^1 (1-s) \, {\rm d}^2 b(\eps \, s\, \cU_0) \cdot (\cU_0,\cU_0) 
\left( t,y,0,\dfrac{\varphi_0(t,y)}{\eps},\dots,\dfrac{\varphi_0(t,y)}{\eps} \right) \, {\rm d}s \, ,
\end{equation*}
if $N_1=N_2=0$, and
\begin{multline*}
b(u_\eps^{{\rm app},N_1,N_2}|_{x_d=0}) -\eps^2 \, G \left( t,y,\dfrac{\varphi_0(t,y)}{\eps} \right) \\
= \sum_{n=N_1+N_2+2}^{(N_1+N_2+1)^2} \eps^n \, \sum_{\ell=1}^{N_1+N_2+1} 
\dfrac{1}{\ell !} 
\sum_{\underset{\nu_1,\dots,\nu_\ell \le N_1+N_2}{\nu_1 +\dots +\nu_\ell=n-\ell,}} 
{\rm d}^\ell b(0) \cdot (\cU_{\nu_1},\dots,\cU_{\nu_\ell}) 
\left( t,y,0,\dfrac{\varphi_0(t,y)}{\eps},\dots,\dfrac{\varphi_0(t,y)}{\eps} \right) \\
+\dfrac{\eps^{N_1+N_2+2}}{(N_1+N_2+1)!} \, \int_0^1 (1-s)^{N_1+N_2+1} \, {\rm d}^{N_1+N_2+2} b(\eps \, s\, v_\eps) 
\cdot (v_\eps, \dots, v_\eps)\, {\rm d}s \, ,
\end{multline*}
if $N_1+N_2>0$.

Each profile $\cU_n$ belongs to ${\mathcal C}^\infty ((-\infty,T]; H^{+\infty} (\R^d_+ \times (\R /(\Theta \, \Z))^N))$ and 
vanishes for $t<0$, hence it also belongs to $L^\infty ((-\infty,T] \times (\R /(\Theta \, \Z))^N; L^2(\R^{d-1})) \cap L^\infty 
((-\infty,T] \times \R^{d-1} \times (\R /(\Theta \, \Z))^N)$ when restricted to the boundary $\{ x_d=0\}$. We thus have 
uniform bounds with respect to $\eps$ of the type
\begin{equation*}
\| {\rm d}^\ell b(0) \cdot (\cU_{\nu_1},\dots,\cU_{\nu_\ell}) (t,y,0,\varphi_0/\eps,\dots,\varphi_0/\eps) 
\|_{L^\infty ((-\infty,T];L^2(\R^{d-1}))} \le C \, ,
\end{equation*}
and similarly for the above integral remainders in Taylor's formula. When $N_1+N_2$ is positive, we thus get
\begin{equation*}
\| b(u_\eps^{{\rm app},N_1,N_2}|_{x_d=0}) -\eps^2 \, G(t,y,\varphi_0/\eps) \|_{L^\infty ((-\infty,T];L^2(\R^{d-1}))} 
\le C \, \eps^{N_1+N_2+2} \, ,
\end{equation*}
and we can derive the exact same $O(\eps^{N_1+N_2+2})$ estimate for the $L^\infty$ norm of the error at the boundary. 
When differentiating with respect to $y$, each partial derivative gives rise to a factor $1/\eps$, which yields
\begin{equation*}
\| b(u_\eps^{{\rm app},N_1,N_2}|_{x_d=0}) -\eps^2 \, G(t,y,\varphi_0/\eps) \|_{L^\infty ((-\infty,T];H^{N_2}(\R^{d-1}))} 
\le C \, \eps^{N_1+2} \, .
\end{equation*}
The case $N_1=N_2=0$ is similar, except that the error is as large as the source term $\eps^2 \, G$, which is not 
so interesting from a practical point of view.

We leave to the interested reader the verification of the error estimate in the interior domain, which involves a 
little more algebra but no additional technical difficulty.

\subsection{Extension to hyperbolic systems with constant multiplicity}
\label{sect5}

\emph{\quad} The extension of the derivation of the leading amplitude equation \eqref{eqa3} to hyperbolic systems 
with constant multiplicity is not entirely straightforward for two reasons, one related to  the zero mean property of 
$\cU_0$ and the other to the solvability of the interior profile equations.

When we analyzed the WKB cascade \eqref{BKWint}, \eqref{BKWbord}, the first point was to prove that the leading 
amplitude $\cU_0$ necessarily has zero mean. This property does not extend obviously to the case of hyperbolic 
systems with constant multiplicity (similar issues arise in \cite{CGW1}). Let us recall that when the system is hyperbolic 
with constant multiplicity, Lemma \ref{lem1} and Lemma \ref{lem2} hold. We then use the notation introduced in 
Paragraph \ref{notation} for the projectors $P_m, Q_m$ and the partial inverses $R_m$. The only difference with 
the strictly hyperbolic framework is that we do not use the row vectors $\ell_m$ here. We now analyze the equations 
\eqref{3}, \eqref{4} in this more general framework.

Equation \eqref{3}(a) shows that the leading profile $\cU_0$ can be decomposed as
\begin{equation*}
\cU_0(t,x,\theta_1,\dots,\theta_M) = \underline{\cU}_0(t,x) +\sum_{m=1}^M \cU_0^m(t,x,\theta_m) \, ,\quad 
P_m \, \cU_0^m =\cU_0^m \, ,
\end{equation*}
where each $\cU_0^m$ is $\Theta$-periodic and has zero mean with respect to $\theta_m$. The mean value 
$\underline{\cU}_0$ satisfies the homogeneous boundary condition \eqref{eq:moyenne2}, and we are going to 
show that \eqref{eq:moyenne1} still holds. Indeed, the mean value $\underline{\cU}_0$ satisfies
\begin{equation*}
L(\partial) \, \underline{\cU}_0 +\underline{\cM(\cU_0,\cU_0)} =0 \, ,
\end{equation*}
which, in view of the decomposition of $\cU_0$ and the definition \eqref{3a} of $\cM$, can be first simplified into
\begin{equation*}
L(\partial) \, \underline{\cU}_0 +\sum_{m=1}^M \underline{\cM(\cU_0^m,\cU_0^m)} =0 \, .
\end{equation*}
We then compute
\begin{equation}\label{symm}
\cM(\cU_0^m,\cU_0^m) =\partial_j \varphi_m \, ({\rm d}A_j(0) \cdot \cU_0^m) \, \partial_{\theta_m} \cU_0^m 
=\partial_j \varphi_m \, {\rm d}^2f_j(0) \cdot (\cU_0^m,\partial_{\theta_m} \cU_0^m) 
=\dfrac{1}{2} \, \partial_j \varphi_m \, \partial_{\theta_m} {\rm d}^2f_j(0) \cdot (\cU_0^m,\cU_0^m) \, ,
\end{equation}
where we have used Assumption \ref{assumption1'} and the symmetry of ${\rm d}^2f_j(0)$. Each 
$\cM(\cU_0^m,\cU_0^m)$ therefore has mean zero and $\underline{\cU}_0$ satisfies both \eqref{eq:moyenne1} 
and \eqref{eq:moyenne2} as in the strictly hyperbolic case. The result of Step 1 in Paragraph \ref{sect2example} 
thus extends to conservative hyperbolic systems with constant multiplicity.

We derive the interior equation for each $\cU_0^m$ by retaining only the $\theta_m$-oscillations in \eqref{3}(b) 
and applying the projector $Q_m$. Using \cite[Lemma 3.3]{CG} and the absence of resonances, we get
\begin{equation}
\label{eq:Burgerssyst}
(\partial_t +{\bf v}_m \cdot \nabla _x) \, Q_m \, \cU_0^m +\dfrac{1}{2} \, \partial_j \varphi_m \, \partial_{\theta_m} 
Q_m \, {\rm d}^2f_j(0) \cdot (\cU_0^m,\cU_0^m) =0 \, .
\end{equation}
Let us also recall that the restriction of $Q_m$ to $\text{\rm Im } P_m$ is injective, so that $Q_m \, \cU_0^m$ 
uniquely determines $\cU_0^m$ with $\cU_0^m =P_m \, \cU_0^m$.   In spite of the symmetry of $d^2f(0)$, 
\eqref{eq:Burgerssyst} is not obviously a symmetric hyperbolic problem for the unknown $Q_m \, \cU_0^m$, 
so its solvability is not immediately obvious. We encountered a similar difficulty in \cite{CGW1}, and we resolve 
it here in a similar way using the  expression $\partial_j \varphi_m \, ({\rm d}A_j(0) \cdot \cU_0^m) \, 
\partial_{\theta_m} \cU_0^m$ for the nonlinear term and the following lemma.

\begin{lem}\label{sh}
Let $w\in\R^N$ and write ${\rm d}\varphi_m=(\utau,\ueta,\uomega_m)=(-\lambda_{k_m}(\uxi),\uxi)$, where 
$\lambda_{k_m}(\uxi)=\lambda_{k_m}(u,\uxi)|_{u=0}$. Then there holds
\begin{equation}
\label{syhy}
\left( Q_m \, \sum^d_{j=1} \uxi_j \, {\rm d}_u A_j(0) \cdot w \right) \, P_m =\big( 
-{\rm d}_u\lambda_{k_m}(0,\uxi) \cdot w \big) \, Q_m \, P_m \, .
\end{equation}
\end{lem}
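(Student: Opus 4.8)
The plan is to diagonalize the situation by restricting everything to the fixed eigenspace $\ker L(\mathrm{d}\varphi_m) = \mathrm{Im}\, P_m$, which is the eigenspace of $\mathcal{A}\tauetabar$ for the eigenvalue $i\,\uomega_m$, and to differentiate the constant-multiplicity dispersion relation along the flow of the eigenvalue $\lambda_{k_m}$. Concretely, start from the identity that holds for all $u$ near $0$ and all nonzero $\xi$ in a neighborhood of $\uxi$:
\begin{equation*}
L(u,-\lambda_{k_m}(u,\xi),\xi)\, \Pi_m(u,\xi) = 0 \, ,
\end{equation*}
where $\Pi_m(u,\xi)$ is the (smooth, by constant multiplicity and semisimplicity) spectral projector onto $\ker\big(\lambda_{k_m}(u,\xi) I + \sum_j \xi_j A_j(u)\big)$, reducing at $u=0$, $\xi=\uxi$ to $P_m$. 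Here I write $L(u,\tau,\xi) := \tau I + \sum_j \xi_j A_j(u)$, so $L(\mathrm{d}\varphi_m) = L(0,-\lambda_{k_m}(0,\uxi),\uxi)$.

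The key step is to apply the directional derivative $\mathrm{d}_u(\cdot)\cdot w$ at $u=0$, $\xi=\uxi$ to this identity. By the product rule this yields
\begin{equation*}
\Big( -\big({\rm d}_u\lambda_{k_m}(0,\uxi)\cdot w\big)\, I + \sum_j \uxi_j\, \big({\rm d}_u A_j(0)\cdot w\big) \Big)\, P_m
\;+\; L({\rm d}\varphi_m)\, \big({\rm d}_u\Pi_m(0,\uxi)\cdot w\big) = 0 \, .
\end{equation*}
Now multiply on the left by $Q_m$. Since $\mathrm{Im}\, L({\rm d}\varphi_m) = \ker Q_m$ by Lemma \ref{lem2}, the second term is annihilated, and I am left with
\begin{equation*}
Q_m\Big( \sum_j \uxi_j\, {\rm d}_u A_j(0)\cdot w \Big) P_m = \big({\rm d}_u\lambda_{k_m}(0,\uxi)\cdot w\big)\, Q_m\, P_m \, ,
\end{equation*}
which is exactly \eqref{syhy} after moving the scalar factor and the sign. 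The term $-({\rm d}_u\lambda_{k_m}\cdot w) Q_m P_m$ produced by differentiating the $\tau I$ part combines with the left-hand side; rearranging gives the stated form with the minus sign on the right.

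The only real subtlety — and the step I expect to require the most care — is justifying the smoothness of the spectral projector $\Pi_m(u,\xi)$ near $(0,\uxi)$, so that $\mathrm{d}_u\Pi_m(0,\uxi)\cdot w$ makes sense and the product rule above is legitimate. This is where Assumption \ref{assumption1'} (conservative structure with constant multiplicity) and the semisimplicity of the eigenvalues enter: the eigenvalue $\lambda_{k_m}(u,\xi)$ is a simple root of the reduced characteristic polynomial with locally constant multiplicity $\nu_{k_m}$ and constant geometric multiplicity, so the associated Riesz spectral projector $\Pi_m(u,\xi) = \frac{1}{2i\pi}\oint (\lambda I + \sum_j \xi_j A_j(u))^{-1}\,\mathrm{d}\lambda$ (contour enclosing only $-\lambda_{k_m}(u,\xi)$) depends smoothly on $(u,\xi)$ near $(0,\uxi)$ and reduces to $P_m$ at $u=0$. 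Alternatively one can avoid mentioning $\Pi_m$ explicitly: pick any smooth local frame $r_m^{(1)}(u,\xi),\dots,r_m^{(\nu_{k_m})}(u,\xi)$ of the eigenspace, write $L(u,-\lambda_{k_m}(u,\xi),\xi) r_m^{(i)}(u,\xi) = 0$, differentiate each in $u$, left-multiply by $Q_m$, and note that $\{r_m^{(i)}(0,\uxi)\}$ spans $\mathrm{Im}\,P_m$; this delivers \eqref{syhy} on a spanning set and hence in general. Once the smoothness point is settled, the computation is routine and the lemma follows.
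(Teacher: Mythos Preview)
Your proposal is correct and follows essentially the same route as the paper: differentiate the identity $\big(-\lambda_{k_m}(u,\uxi)\,I+\sum_j\uxi_j A_j(u)\big)P_m(u)=0$ in the direction $w$ at $u=0$ and left-multiply by $Q_m$, using $\mathrm{Im}\,L({\rm d}\varphi_m)=\ker Q_m$ to kill the term involving ${\rm d}_uP_m(0)\cdot w$. The paper's proof is just the three-line version of what you wrote; your extra discussion of the smoothness of the spectral projector (via the Riesz integral or a smooth local frame) is a welcome clarification but not a different argument.
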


\begin{proof}
For $u$ near $0$ let $P_m(u)$ be the projector on
\begin{equation*}
\mathrm{Ker} \left(-\lambda_{k_m}(u,\uxi) \, I +\sum^d_{j=1} \uxi_j \, A_j(u) \right) \, ,
\end{equation*}
in the obvious decomposition of $\C^N$ (that is the analogue of the first direct sum in \eqref{decomposition2} for $u$ 
close to the origin); thus the projector $P_m$ in Lemma \ref{lem2} is $P_m=P_m(0)$. Differentiate the equation
\begin{equation*}
\left(-\lambda_{k_m}(u,\uxi) \, I+\sum^d_{j=1} \uxi_j \, A_j(u) \right) \, P_m(u)=0 \, ,
\end{equation*}
with respect to $u$ in the direction $w$, evaluate at $u=0$, and apply $Q_m$ on the left to obtain \eqref{syhy}.
\end{proof}

Taking $w=\cU_0^m$ in \eqref{syhy} and using $\cU^m_0=P_m \, \cU^m_0$, we see that \eqref{eq:Burgerssyst} 
is a symmetric hyperbolic system (essentially scalar) for the unknown $Q_m \, \cU^m_0$, so it can be solved with 
appropriate boundary and initial conditions just like the corresponding equations in the strictly hyperbolic case.

Equation \eqref{eq:Burgerssyst} shows that all outgoing modes in $\cU_0$ vanish, and there holds
\begin{equation}\label{polar}
\cU_0(t,x,\theta_1,\dots,\theta_M) = \sum_{m \in {\mathcal I}} \cU_0^m(t,x,\theta_m) \, ,\quad 
P_m \, \cU_0^m =\cU_0^m \, .
\end{equation}
In particular, there exists a scalar function $a$ that is $\Theta$-periodic with zero mean, such that
\begin{equation}\label{bcs}
\forall \, m \in {\mathcal I} \, ,\quad \cU_0^m(t,y,0,\theta_0) =a(t,y,\theta_0) \, e_m \, ,
\, \text{ where }e_m=P_m \, e\, .
\end{equation}
Our goal is to derive the amplitude equation that governs the evolution of $a$ on the boundary.

At this stage, the analysis of Steps 3, 4, 5 in Paragraph \ref{sect2example} applies almost word for word, 
with obvious modifications in order to take into account the possibly many incoming and outgoing phases. 
Namely, we can show that the first corrector $\cU_1$ has no outgoing mode. We can also determine the 
non-characteristic component $\cU_1^{\rm nc}|_{x_d=0}$ and the non-polarized components $(I-P_m) \, 
\cU_1^m|_{x_d=0}$ on the boundary in terms of the amplitude $a$. We end up with the exact same equation 
as \eqref{eqa3}, with a real constant $\upsilon$ and a vector field $X_{\rm Lop}$ as in \eqref{defupsilonstrict}. 
The new expression of the bilinear operator $Q_{\rm per}$ reads (compare with \eqref{defQper'}):
\begin{multline}\label{qoper}
Q_{\rm per}[a,\widetilde{a}] :=-\sum_{m \in {\mathcal O}} \, \sum_{\underset{m_1, m_2 \in {\mathcal I}}{m_1<m_2}} \\
\sum_{k \in \Z} \left( \sum_{\underset{k_{m_1} \, k_{m_2} \neq 0}{k_{m_1}+k_{m_2}=k,}} 
\dfrac{k_{m_1} \, \underline{b} \, B \, A_d(0)^{-1} \, Q_m \, E_{m_1,m_2} 
+k_{m_2} \, \underline{b} \, B \, A_d(0)^{-1} \, Q_m \, E_{m_2,m_1}}{k_{m_1} \, 
(\underline{\omega}_{m_1}-\underline{\omega}_m) +k_{m_2} \, (\underline{\omega}_{m_2}-\underline{\omega}_m)} 
\, a_{k_{m_1}} \, \widetilde{a}_{k_{m_2}} \right) \, {\rm e}^{2 \, i \, \pi \, k \, \theta_0/\Theta} \, ,
\end{multline}
with vectors $E_{m_1,m_2}$ as in \eqref{defEm1m2}.

The analysis of the WKB cascade \eqref{BKWint}, \eqref{BKWbord} proceeds as before, taking into account 
that incoming amplitudes $\cU_j^m$ are propagated in the interior domain by vector-valued Burgers type 
equations \eqref{eq:Burgerssyst} and appropriate linearizations at $\cU_0^m$, which can be solved using 
Lemma \ref{sh}.

\newpage
\part{Pulses}
\label{part2}

\section{Construction of approximate solutions}
\label{sect6}

\emph{\quad}We follow the approach of Section \ref{sect2} and first deal with strictly hyperbolic systems of three 
equations. We keep the notation of Paragraph \ref{sect2example}, and make Assumption \ref{assumption6}. Let 
us now derive the WKB cascade for pulse-like solutions to \eqref{0}. The solution $u_\eps$ to \eqref{0} is assumed 
to have an asymptotic expansion of the form
\begin{equation}
\label{BKWp}
u_\eps \sim \eps \, \sum_{k \ge 0} \eps^k \, \cU_k \left(t,x,\dfrac{\varphi_0(t,y)}{\eps},\dfrac{x_d}{\eps} \right) \, .
\end{equation}
We use the notation $\theta_0$ as a placeholder for the fast variable $\varphi_0/\eps$, and $\xi_d$ for $x_d/\eps$. 
Plugging the ansatz \eqref{BKWp} in \eqref{0} and identifying powers of $\eps$, we obtain the following first three 
relations for the $\cU_k$'s (observe the slight differences with Paragraph \ref{sect2example}, though we keep the 
same notation):
\begin{equation}
\label{3p}
\begin{split}
&{\rm (a)}\quad \cL(\partial_{\theta_0},\partial_{\xi_d}) \, \cU_0 =0 \, ,\\
&{\rm (b)}\quad \cL(\partial_{\theta_0},\partial_{\xi_d}) \, \cU_1+L(\partial) \, \cU_0 +\cM (\cU_0,\cU_0) =0 \, ,\\
&{\rm (c)}\quad \cL(\partial_{\theta_0},\partial_{\xi_d}) \, \cU_2 +L(\partial) \, \cU_1 +\cM (\cU_0,\cU_1) +\cM (\cU_1,\cU_0) 
+\cN_1 (\cU_0,\cU_0) +\cN_2 (\cU_0,\cU_0,\cU_0) =0 \, ,\\
\end{split}
\end{equation}
where the differential operators $\cL,\cM,\cN_1,\cN_2$ are now defined by:
\begin{equation}
\label{3ap}
\begin{split}
&\cL(\partial_{\theta_0},\partial_{\xi_d}) := A_d(0) \, (\partial_{\xi_d} +i\, {\mathcal A} \tauetabar \, \partial_{\theta_0}) \, ,\\
&\cM (v,w) := \partial_j \varphi_0 \, ({\rm d}A_j(0) \cdot v) \, \partial_{\theta_0} w +({\rm d}A_d(0) \cdot v) \, \partial_{\xi_d} w \, ,\\
&\cN_1 (v,w) := ({\rm d}A_j(0) \cdot v) \, \partial_j w \, ,\\
&\cN_2 (v,v,w) := \dfrac{1}{2} \, \partial_j \varphi_0 \, ({\rm d}^2A_j(0) \cdot (v,v)) \, \partial_{\theta_0} w 
+\dfrac{1}{2} \, ({\rm d}^2A_d(0) \cdot (v,v)) \, \partial_{\xi_d} w \, .\\
\end{split}
\end{equation}
The equations \eqref{3p} in the domain $(-\infty,T] \times \R^d_+ \times \R_{\theta_0} \times \R^+_{\xi_d}$ are 
supplemented with the boundary conditions obtained by plugging \eqref{BKWp} in the boundary conditions of 
\eqref{0}, which yields (recall $B ={\rm d}b(0)$):
\begin{equation}
\label{4p}
\begin{split}
&{\rm (a)}\quad B\, \cU_0 =0 \, ,\\
&{\rm (b)}\quad B\, \cU_1 +\dfrac{1}{2} \, {\rm d}^2b(0) \cdot (\cU_0,\cU_0) =G(t,y,\theta_0) \, ,\\
\end{split}
\end{equation}
where functions on the left hand side of \eqref{4p} are evaluated at $x_d=\xi_d=0$. In order to get 
$u_\eps|_{t<0}=0$, as required in \eqref{0}, we also look for profiles ${\mathcal U}_k$ that vanish for $t<0$.

The construction of profiles in the wavetrain setting was accomplished by first assuming that solutions exist within 
the class of periodic functions of $\theta$, and then using that assumption to actually construct periodic solutions. 
The construction of profiles in the pulse setting has the difficulty that it is not so clear at first in what function space(s) 
solutions should be sought. Construction of each successive pulse corrector involves an additional integration over a 
noncompact set. Thus, each corrector $\cU_j$ is ``worse" than the previous one $\cU_{j-1}$, and successive correctors 
must be sought in successively larger spaces. In fact we will find that correctors beyond $\cU_2$ are useless; they 
grow at least linearly in $(\theta_0,\xi_d)$, and are thus too large to be considered correctors.

We now define spaces $\cV_F\subset\cV_H\subset \cC^1_b$, where $\cC^1_b$ is the space of $\cC^1$ functions 
$K(t,x,\theta_0,\xi_d)$, valued in $\R^3$, and bounded with their first-order derivatives. The variables $(t,x)$ lie in 
$(-\infty,T] \times \R^d_+$ while the variables $(\theta_0,\xi_d)$ lie in $\R \times \R^+$. In the subsequent discussion 
we will assume $\cU_0\in\cV_F$, $\cU_1\in \cV_H$, and $\cU_2\in \cC^1_b$ are solutions to \eqref{3p}, \eqref{4p}, 
and then construct profiles with those properties.

\begin{defn}
\label{functions}
(a)  Let $\cV_F$ denote the space of functions
\begin{equation*}
F(t,x,\theta_0,\xi_d)=\sum_{i=1}^3 F_i(t,x,\theta_0,\xi_d) \, A_d(0) \, r_i \, ,
\end{equation*}
where each function $F_i$ is a finite sum of real-valued functions of the form
\begin{align}
\label{aa}
\begin{split}
&f(t,x,\theta_0+\uomega_k \, \xi_d) \, ,\qquad 
f(t,x,\theta_0+\uomega_\ell \, \xi_d) \, g(t,x,\theta_0+\uomega_m \, \xi_d) \, ,\\
&\text{ and } \qquad f(t,x,\theta_0+\uomega_p \, \xi_d) \, g(t,x,\theta_0+\uomega_q \, \xi_d) 
\, h(t,x,\theta_0+\uomega_r \, \xi_d) \, ,
\end{split}
\end{align}
where the indices $k,\ell,\dots,r$ lie in $\{1,2,3\}$.\footnote{Elements of $\cV_F$ with terms involving no triple products 
were called functions of type $F$ in \cite{CW4,CW5}.} The functions $f(t,x,\theta)$, $g(t,x,\theta)$, etc., in \eqref{aa} 
are $\cC^1$ and decay with their first-order partials at the rate $O(\langle\theta\rangle^{-2})$ uniformly with respect 
to $(t,x)$. We refer to these functions as the ``constituent functions"  of $F \in \cV_F$.

(b) Define a \emph{transversal interaction integral} to be a function $I^i_{\ell,m}(t,x,\theta_0,\xi_d)$ of the form
\begin{equation}
\label{a0}
I^i_{\ell,m}(t,x,\theta_0,\xi_d) =-\int_{\xi_d}^{+\infty} \sigma(t,x,\theta_0+\uomega_i \, \xi_d 
+(\uomega_\ell-\uomega_i)\, s) \, \tau(t,x,\theta_0+\uomega_i \, \xi_d +(\uomega_m-\uomega_i)\, s) \, {\rm d}s \, ,
\end{equation}
where $\uomega_\ell$, $\uomega_m$, and $\uomega_i$ are mutually distinct. The functions $\sigma(t,x,\theta)$, 
$\tau(t,x,\theta)$ are real-valued, $\cC^1$ and decay with their first-order partials at the rate $O(\langle\theta\rangle^{-3})$ 
uniformly with respect to $(t,x)$.

(c) Let $\cV_H$ denote the space of functions 
\begin{equation*}
H(t,x,\theta_0,\xi_d)=\sum_{i=1}^3 H_i(t,x,\theta_0,\xi_d) \, A_d(0) \, r_i \, ,
\end{equation*}
where each $H_i$ is the sum of an element of $\cV_F$ plus a finite sum of terms of the form 
\begin{equation}
\label{a1}
I^i_{\ell,m} (t,x,\theta_0,\xi_d) \quad \text{ or } \quad \alpha(t,x,\theta_0+\uomega_k \, \xi_d) \, 
J^n_{p,q}(t,x,\theta_0,\xi_d) \, ,
\end{equation}
where $I^i_{\ell,m}$ and $J^n_{p,q}$ are transversal interaction integrals and the indices $i,l,\dots,q$ lie in $\{1,2,3\}$. 
The function $\alpha(t,x,\theta)$ is real-valued, $\cC^1$ and decays with its first-order partials at the rate 
$O(\langle\theta\rangle^{-3})$ uniformly with respect to $(t,x)$.

(d) For $H\in\cV_H$ we can write $H=H_F+H_{I}$,  where $H_F\in\cV_F$ and $H_I\notin \cV_F$ has components 
in the span of terms of the form \eqref{a1}. The ``constituent functions" of $H$ include those of $H_F$ as well as 
the functions like $\alpha$, $\sigma$, $\tau$ as in \eqref{a0}, \eqref{a1} which constitute $H_I$.
\end{defn}

\begin{rem}\label{a2}
The same spaces of functions $\cV_F$ and $\cV_H$ are obtained if one begins by writing
\begin{equation*}
F(t,x,\theta_0,\xi_d) =\sum^3_{i=1} \widetilde{F}_i(t,x,\theta_0,\xi_d) \, r_i \, ,\quad 
H(t,x,\theta_0,\xi_d) =\sum^3_{i=1} \widetilde{H}_i(t,x,\theta_0,\xi_d) \, r_i \, ,
\end{equation*}
and imposes the conditions in (a) (resp. (c)) on the $\widetilde{F}_i$ (resp. $\widetilde{H}_i)$.
\end{rem}

\subsection{Averaging and solution operators}

\emph{\quad}To construct the profiles $\cU_0, \cU_1,\cU_2$, we must solve equations of the form 
$\cL(\partial_{\theta_0},\partial_{\xi_d}) \, \cU=H$, see \eqref{3p}, where $H$ lies in $\cV_H$ and 
sometimes in $\cV_F$. In this section we define averaging operators $E_P$ and $E_Q$ and a 
solution operator $R_\infty$ (involving integration on a noncompact set) that provide a systematic 
way to study such equations. Henceforth we write $\cL:=\cL(\partial_{\theta_0},\partial_{\xi_d})$. The 
following simple lemma  implies the existence of most of the limits and integrals that appear below.

\begin{lem}\label{integrals}
Let $\sigma(t,x,\theta)$, $\tau(t,x,\theta)$ be continuous functions such that 
\begin{equation*}
|\sigma(t,x,\theta)| +|\tau(t,x,\theta)| \le C \, \langle\theta\rangle^{-N} \, ,
\end{equation*}
for some $N \ge 2$, uniformly with respect to $(t,x)$. Let $i,\ell,m,q$ lie in $\{ 1,2,3 \}$ and suppose $i,\ell,m$ 
are mutually distinct. Then the following estimates hold true uniformly with respect to all parameters:
\begin{align*}
&(a) \, \int^{+\infty}_{\xi_d} |\sigma(t,x,\theta_0+\uomega_i \, \xi_d +(\uomega_\ell-\uomega_i) \, s)| \, 
{\rm d}s \lesssim 1 \, ,\\
&(b) \, \int^{+\infty}_{\xi_d} |\sigma(t,x,\theta_0+\uomega_i \, \xi_d +(\uomega_\ell-\uomega_i) \, s) \, 
\tau(t,x,\theta_0+\uomega_i \, \xi_d +(\uomega_m-\uomega_i) \, s)| \, {\rm d}s \lesssim 
\langle \xi_d \rangle^{-N+1} \, ,\\
&(c) \text{ for } N \geq 3\, , \, \int^{+\infty}_{\xi_d} \! \! \int^{+\infty}_{s} 
|\sigma(t,x,\theta_0+\uomega_q \, \xi_d +(\uomega_i-\uomega_q) \, s +(\uomega_\ell-\uomega_i)\, r) \cdot \\
&\qquad \qquad \qquad \qquad 
\tau(t,x,\theta_0+\uomega_q \, \xi_d +(\uomega_i-\uomega_q) \, s +(\uomega_m-\uomega_i) \, r)| \, 
{\rm d}r \, {\rm d}s \lesssim \langle \xi_d \rangle^{-N+2} \, .
\end{align*}
\end{lem}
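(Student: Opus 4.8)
The plan is to reduce everything to a one-dimensional change of variables followed by the elementary estimate that $\int_{\R}\langle\theta\rangle^{-N}\,{\rm d}\theta<\infty$ when $N\ge 2$, together with the quantitative tail bound $\int_{a}^{+\infty}\langle\theta\rangle^{-N}\,{\rm d}\theta\lesssim\langle a\rangle^{-N+1}$. For part (a), since $i\ne\ell$ the map $s\mapsto\theta_0+\uomega_i\,\xi_d+(\uomega_\ell-\uomega_i)\,s$ is an affine bijection of $\R$ with nonzero slope $\uomega_\ell-\uomega_i$; substituting $\theta=\theta_0+\uomega_i\,\xi_d+(\uomega_\ell-\uomega_i)\,s$ turns the integral over $s\in[\xi_d,+\infty)$ (or $(-\infty,\xi_d]$, depending on the sign of the slope) into $\tfrac{1}{|\uomega_\ell-\uomega_i|}\int_{\R}|\sigma(t,x,\theta)|\,{\rm d}\theta\lesssim\int_{\R}\langle\theta\rangle^{-N}\,{\rm d}\theta$, which is finite and independent of all parameters because $N\ge 2$. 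That gives (a).

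For part (b), the key observation is that the two arguments $\theta_0+\uomega_i\,\xi_d+(\uomega_\ell-\uomega_i)\,s$ and $\theta_0+\uomega_i\,\xi_d+(\uomega_m-\uomega_i)\,s$ move at different (nonzero) speeds in $s$ because $i,\ell,m$ are mutually distinct, so at least one of the two factors decays in $s$ at a rate governed by its own affine argument. Concretely, I would bound the product of the two $\langle\cdot\rangle^{-N}$ factors by (say) $\langle\theta_0+\uomega_i\,\xi_d+(\uomega_\ell-\uomega_i)\,s\rangle^{-N}$ times a uniform constant, then split: near $s=\xi_d$ the argument equals $\theta_0+\uomega_\ell\,\xi_d$ up to the correction, and for $s\ge\xi_d$ one substitutes as in (a) and uses the tail estimate $\int_{\theta_0+\uomega_\ell\,\xi_d}^{\pm\infty}\langle\theta\rangle^{-N}\,{\rm d}\theta\lesssim\langle\theta_0+\uomega_\ell\,\xi_d\rangle^{-N+1}$. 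Since the stated bound is only $\langle\xi_d\rangle^{-N+1}$ and not $\langle\theta_0+\uomega_\ell\,\xi_d\rangle^{-N+1}$, a little care is needed: one uses that $\langle\theta_0+\uomega_i\,\xi_d\rangle$ appears in one factor and $\langle(\uomega_\ell-\uomega_i)(s-\xi_d)\rangle$ in the relevant shifted piece of the other, and a Peetre-type inequality $\langle a+b\rangle^{-N}\le 2^N\langle a\rangle^{-N}\langle b\rangle^{N}$ together with an elementary bookkeeping argument (or: integrate the larger of the two decaying factors, keeping the other one's value at the endpoint, and note the endpoint value is $\lesssim\langle\xi_d\rangle^{-N}\langle\theta_0\rangle^{N}$-free because the $\theta_0$ dependence can be absorbed by symmetry of the roles $\ell\leftrightarrow m$). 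The cleanest route is: bound the integrand by $C\langle\theta_0+\uomega_i\xi_d+(\uomega_\ell-\uomega_i)s\rangle^{-1}\langle\theta_0+\uomega_i\xi_d+(\uomega_m-\uomega_i)s\rangle^{-1}$ when $N\ge 2$ suffices for $L^1$ of the product in $s$ at rate $\langle\xi_d\rangle^{-1}$ via the standard two-speed lemma $\int_{\R}\langle\alpha s+c_1\rangle^{-1}\langle\beta s+c_2\rangle^{-1}\,{\rm d}s\lesssim\langle c_1\beta-c_2\alpha\rangle^{-1}\log(\cdots)$; but since we have genuine $\langle\cdot\rangle^{-N}$ with $N\ge 2$ there is room to spare and the logarithm is avoided by using one power to produce $\langle\xi_d\rangle^{-N+1}$ via the tail estimate and the remaining powers (at least one full power, since $N-1\ge 1$) to produce an integrable factor in $s$. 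I will write this out as: factor $N=1+(N-1)$ on the slower-moving argument, keep $\langle\cdot\rangle^{-1}$ worth on the faster one, substitute, and conclude.

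For part (c), the same mechanism is iterated: the inner $r$-integral is exactly an instance of (b) — indeed the two arguments differ by $(\uomega_\ell-\uomega_i)r$ versus $(\uomega_m-\uomega_i)r$ with $i,\ell,m$ distinct, and with $N\ge 3$ the inner integral is bounded by $\langle\,\theta_0+\uomega_q\xi_d+(\uomega_i-\uomega_q)s\,\rangle^{-N+1}$ (the endpoint value of the "pivot" variable at $r=s$). Then the outer $s$-integral over $[\xi_d,+\infty)$ is an instance of (a)-type reasoning applied to the function $\langle\cdot\rangle^{-N+1}$ with $N-1\ge 2$: substitute $\theta=\theta_0+\uomega_q\xi_d+(\uomega_i-\uomega_q)s$ (nonzero slope since $i\ne q$), giving a finite integral, but to extract the claimed decay $\langle\xi_d\rangle^{-N+2}$ one uses the tail bound $\int_{\text{endpoint}}^{\pm\infty}\langle\theta\rangle^{-N+1}\,{\rm d}\theta\lesssim\langle\text{endpoint}\rangle^{-N+2}$ where the endpoint at $s=\xi_d$ equals $\theta_0+\uomega_i\xi_d$ — and then, exactly as in (b), one redistributes one spare power to convert $\langle\theta_0+\uomega_i\xi_d\rangle^{-N+2}$ bookkeeping into $\langle\xi_d\rangle^{-N+2}$ uniformly in $\theta_0$. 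The main obstacle, and the only place where genuine thought (as opposed to change of variables) is required, is precisely this last redistribution step: making the decay come out in $\langle\xi_d\rangle$ rather than in $\langle\theta_0+(\text{something})\xi_d\rangle$, uniformly in $\theta_0$. This is handled by the elementary fact that for fixed nonzero $\uomega_i$ and any $\theta_0$, one has $\int_{\R}\langle\theta_0+\uomega_i\xi_d+(\text{speed})s\rangle^{-N}$-type integrals controlled after one substitution, combined with $\langle\xi_d\rangle\lesssim\langle\theta_0+\uomega_i\xi_d\rangle\langle\theta_0\rangle$ being the wrong direction — so instead one never isolates $\langle\theta_0+\uomega_i\xi_d\rangle$ alone but always pairs it against a factor still carrying $s$-dependence, integrating that pair by the two-speed lemma to land directly on $\langle\xi_d\rangle$. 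I would present the argument in the order (a), then (b) using (a)'s substitution plus one tail estimate, then (c) using (b) for the inner integral and (a)'s method for the outer one, remarking at each stage that all constants depend only on $\min_{i\ne j}|\uomega_i-\uomega_j|$, on $\max|\uomega_i|$, and on $C,N$.
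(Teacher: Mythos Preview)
Your treatment of part (a) is fine and matches the paper. But for parts (b) and (c) you are working much too hard, and the proposal as written does not actually close: you identify the $\theta_0$-uniformity as ``the main obstacle'' and offer several half-formed routes (tail estimates, a two-speed lemma with logarithms, a ``redistribution'' step) without carrying any of them to a conclusion. In particular, for (c) you claim the inner $r$-integral is bounded by $\langle\theta_0+\uomega_q\xi_d+(\uomega_i-\uomega_q)s\rangle^{-N+1}$; but that is neither what (b) says (it gives decay in the lower limit of integration, not in the ``pivot'' offset) nor what you need, and it reintroduces the very $\theta_0$-dependence you are trying to avoid.

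The paper's argument removes the whole difficulty in one stroke. You already have Peetre's inequality in hand, but you are pointing it the wrong way: instead of using it to peel $\theta_0$ off a single bracket, use it to \emph{combine} the two brackets. From $\langle A\rangle\lesssim\langle A-B\rangle\,\langle B\rangle$ one gets
\[
\langle A\rangle^{-N}\,\langle B\rangle^{-N}\;\lesssim\;\langle A-B\rangle^{-N}\,.
\]
In (b), with $A=\theta_0+\uomega_i\xi_d+(\uomega_\ell-\uomega_i)s$ and $B=\theta_0+\uomega_i\xi_d+(\uomega_m-\uomega_i)s$, the difference is $A-B=(\uomega_\ell-\uomega_m)\,s$, which carries \emph{no} $\theta_0$ and no $\xi_d$ offset at all. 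Hence the integrand is $\lesssim\langle(\uomega_\ell-\uomega_m)s\rangle^{-N}$ and integrating over $s\ge\xi_d\ge 0$ gives $\langle\xi_d\rangle^{-N+1}$ directly. For (c), the same trick on the inner integrand gives $\langle(\uomega_\ell-\uomega_m)r\rangle^{-N}$; integrating in $r$ over $[s,+\infty)$ yields $\langle s\rangle^{-N+1}$, and then integrating in $s$ over $[\xi_d,+\infty)$ yields $\langle\xi_d\rangle^{-N+2}$ (this last step uses $N\ge 3$). No two-speed lemma, no endpoint bookkeeping, and the $\theta_0$ ``obstacle'' never arises because the subtraction kills it.
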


\begin{proof}
Part (a) is immediate. To prove (b), we use the Peetre inequality:
\begin{multline*}
\int^{+\infty}_{\xi_d} |\sigma(t,x,\theta_0+\uomega_i \, \xi_d +(\uomega_\ell-\uomega_i) \, s) \, 
\tau(t,x,\theta_0+\uomega_i \, \xi_d +(\uomega_m-\uomega_i) \, s)| \, {\rm d}s \\
\lesssim \int^{+\infty}_{\xi_d} \langle \theta_0+\uomega_i \, \xi_d +(\uomega_\ell-\uomega_i) \, s \rangle^{-N} 
\, \langle \theta_0+\uomega_i \, \xi_d +(\uomega_m-\uomega_i) \, s \rangle^{-N} \, {\rm d}s 
\lesssim \int^{+\infty}_{\xi_d} \langle (\uomega_\ell-\uomega_m) \, s \rangle^{-N} \, {\rm d}s \, ,
\end{multline*}
and the result follows (recall $\xi_d \ge 0$). Part (c) is also proved by using the Peetre inequality:
\begin{multline*}
\int^{+\infty}_{\xi_d} \! \! \int^{+\infty}_{s} 
|\sigma(t,x,\theta_0+\uomega_q \, \xi_d +(\uomega_i-\uomega_q) \, s +(\uomega_\ell-\uomega_i)\, r) \cdot \\
\tau(t,x,\theta_0+\uomega_q \, \xi_d +(\uomega_i-\uomega_q) \, s +(\uomega_m-\uomega_i) \, r)| \, 
{\rm d}r \, {\rm d}s \lesssim \int^{+\infty}_{\xi_d} \! \! \int^{+\infty}_{s} 
\langle (\uomega_\ell-\uomega_m) \, r \rangle^{-N} \, {\rm d}r \, {\rm d}s \lesssim \langle \xi_d \rangle^{-N+2} \, .
\end{multline*}
%set $\theta=\theta_0+\uomega_i\xi_d$ and let $\chi_1(\theta,r)$, $\chi_2(\theta,r)$ be nonnegative, smooth 
%cutoffs summing to one and supported  on the sets where $|\theta+r(\uomega_l-\uomega_i)|$ is respectively 
%$\leq \frac{|(\uomega_l-\uomega_m)r|}{2}$, $\geq\frac{|(\uomega_l-\uomega_m)r|}{3}$.  With the cutoffs 
%inserted, in each case the integrand is $\lesssim \langle r\rangle^{-N}$.
%
%
%Part (c) is proved similarly using the same cutoffs, but now taking 
%$\theta=\theta_0+\uomega_q\xi_d+s(\uomega_i-\uomega_q)$.
\end{proof}

\noindent The definition of $\cV_H$ and Lemma \ref{integrals} imply that the limits and integrals in the next 
definition are well-defined.

\begin{defn}[$E_P$, $E_Q$, $R_\infty$]
\label{ops}
For $H\in \cV_H$, define averaging operators
\begin{align*}
&E_Q \, H(t,x,\theta_0,\xi_d) :=\sum_{j=1}^3 \left( \lim_{T\to\infty} \dfrac{1}{T} \, \int^T_0 
\ell_j \cdot H(t,x,\theta_0 +\uomega_j \, (\xi_d-s),s) \, {\rm d}s \right) \, A_d(0) \, r_j \, ,\\
&E_P \, H(t,x,\theta_0,\xi_d) :=\sum_{j=1}^3 \left( \lim_{T\to\infty} \dfrac{1}{T} \, \int^T_0 
\ell_j \cdot A_d(0) \, H(t,x,\theta_0 +\uomega_j \, (\xi_d-s),s) \, {\rm d}s \right) \, r_j \, .
\end{align*}
For $H\in \cV_H$ such that $E_Q \, H=0$, define the solution operator
\begin{equation*}
R_\infty \, H(t,x,\theta_0,\xi_d) :=-\sum_{j=1}^3 \left(\int_{\xi_d}^{+\infty} \ell_j \cdot 
H(t,x,\theta_0 +\uomega_j \, (\xi_d-s),s) \, {\rm d}s \right) \, r_j \, .
\end{equation*}
\end{defn}

\begin{rem}\label{a4}
(a) Suppose $F \in \cV_F$ reads
\begin{equation*}
F(t,x,\theta_0,\xi_d) =\sum_{i=1}^3 F_i(t,x,\theta_0,\xi_d) \, A_d(0) \, r_i \, ,
\end{equation*}
where each $F_i$ has the form
\begin{equation*}
F_i(t,x,\theta_0,\xi_d) =\sum_{k=1}^3 a^i_k \, f^i_k(t,x,\theta_0 +\uomega_k \, \xi_d) 
+\sum_{\ell,m=1}^3 b^i_{\ell,m} \, g^i_\ell (t,x,\theta_0 +\uomega_\ell \, \xi_d) \, 
h^i_m(x,\theta_0 +\uomega_m \, \xi_d),\\
\end{equation*}
Then
\begin{equation*}
E_Q \, F =\sum^3_{i=1} \Big( a^i_i \, f^i_i(t,x,\theta_0 +\uomega_i \, \xi_d) 
+b^i_{i,i} \, g^i_i(t,x,\theta_0 +\uomega_i \, \xi_d) \, h^i_i(t,x,\theta_0 +\uomega_i \, \xi_d) \Big) \, A_d(0) \, r_i \, .
\end{equation*}
The obvious analogues hold when $F\in\cV_F$ involves triple products, or when $E_P$ is used in place of $E_Q$.

(b) Let $H\in\cV_H$ and write $H=H_F+H_{I}$,  where $H_F\in\cV_F$ and $H_I\notin \cV_F$ as in Definition 
\ref{functions}(d). Then Lemma \ref{integrals} implies $E_Q \, H_I =E_P \, H_I=0$.
\end{rem}

\noindent The following Proposition gives the main properties of the operators $E_P$, $E_Q$, $R_\infty$.

\begin{prop}\label{a3}
Suppose $H\in\cV_H$ and recall the notation $\cL=\cL(\partial_{\theta_0},\partial_{\xi_d})$. Then there holds:

(a) $E_Q \, \cL \, H =\cL \, E_P \, H=0$.  

(b) If $E_Q \, H=0$, then $R_\infty \, H$ is bounded and $\cL \, R_\infty \, H =H =(I-E_Q) \, H$. 

(c) If $E_P \, H=0$ ,then $R_\infty \, \cL \, H =H =(I-E_P) \, H$. 
\end{prop}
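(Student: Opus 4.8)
The plan is to work from the explicit formulas in Definition \ref{ops}, exploiting the fact that $\cL=A_d(0)(\partial_{\xi_d}+i\,\cA\tauetabar\,\partial_{\theta_0})$ acts essentially as a directional derivative along the characteristic flow $s\mapsto(\theta_0+\uomega_j(\xi_d-s),s)$. The key observation is that if $\Psi_j(t,x,\theta_0,\xi_d):=\psi(t,x,\theta_0+\uomega_j\xi_d)$ is a function constant along the $j$-th characteristic in an appropriate sense, then $\cL$ applied to $\Psi_j\,r_j$ produces something lying in $A_d(0)\,\mathrm{Ker}\,L(\mathrm{d}\varphi_j)^{\perp}$ in a way that the $\ell_j$-components vanish. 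More precisely, I would first record the computational identity: for a scalar function $c(t,x,\theta_0,\xi_d)$,
\begin{equation*}
\cL(c\,r_j)=A_d(0)\,r_j\,(\partial_{\xi_d}c)+i\,A_d(0)\,\cA\tauetabar\,r_j\,(\partial_{\theta_0}c)
= A_d(0)\,r_j\,(\partial_{\xi_d}c)-\uomega_j\,A_d(0)\,r_j\,(\partial_{\theta_0}c)\,\cdot(-1)+\cdots,
\end{equation*}
using $\cA\tauetabar\,r_j=i\,\uomega_j\,r_j$ (the eigenvalue relation recalled at the start of Section \ref{sect2}), so that $\cL(c\,r_j)=A_d(0)\,r_j\,\big((\partial_{\xi_d}-\uomega_j\,\partial_{\theta_0})c\big)$ — wait, I need the sign from $i\cdot i\uomega_j=-\uomega_j$, giving $\cL(c\,r_j)=A_d(0)\,r_j\,(\partial_{\xi_d}c-\uomega_j\partial_{\theta_0}c)$. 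Actually the operator $(\partial_{\xi_d}-\uomega_j\partial_{\theta_0})$ is exactly the derivative transverse to the $j$-th characteristic, and it annihilates functions of $\theta_0+\uomega_j\xi_d$. This is the single algebraic fact from which everything follows.

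For part (a), I would decompose a general $H\in\cV_H$ into its constituent pieces using Remark \ref{a4}: writing $H=\sum_i H_i\,A_d(0)r_i$ and using $\ell_j\cdot A_d(0)r_i=\delta_{ij}$, one sees $E_Q H(t,x,\theta_0,\xi_d)=\sum_j\big(\mathrm{Mean}_s\,H_j(t,x,\theta_0+\uomega_j(\xi_d-s),s)\big)A_d(0)r_j$. To show $E_Q\cL H=0$ one applies $\cL$ first: by the identity above, the $j$-th component of $\cL H$ (after projecting with $\ell_j$) is the transverse derivative $(\partial_{\xi_d}-\uomega_j\partial_{\theta_0})$ of the $j$-th component plus cross terms from $R_i$-type vectors that are killed by $\ell_j$; then the Cesàro mean along the $j$-th characteristic of a transverse derivative is the mean of a total $s$-derivative of a bounded function, hence zero. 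For $\cL E_P H=0$: $E_P H$ has $j$-th component of the form $c_j(t,x,\theta_0+\uomega_j\xi_d)\,r_j$ (a function constant along the $j$-th characteristic, since the Cesàro average depends on $t,x,\theta_0+\uomega_j\xi_d$ only — this needs a short verification that the average $\lim\frac1T\int_0^T\phi(\theta_0+\uomega_j(\xi_d-s))\,ds$ depends only on $\theta_0+\uomega_j\xi_d$, which is immediate after the substitution $s\mapsto\xi_d-s$), and $\cL(c_j(\theta_0+\uomega_j\xi_d)r_j)=A_d(0)r_j(\partial_{\xi_d}-\uomega_j\partial_{\theta_0})c_j=0$.

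For part (b), assuming $E_Q H=0$: boundedness of $R_\infty H$ is exactly Lemma \ref{integrals}(a)--(c) applied termwise to the constituent functions of $H$ (the decay rates $O(\langle\theta\rangle^{-2})$ for $\cV_F$-parts and $O(\langle\theta\rangle^{-3})$ for the interaction-integral parts are precisely calibrated so the relevant integrals converge and stay bounded in $\xi_d$). Then $\cL R_\infty H=H$: compute $\cL$ on the $j$-th term $-\big(\int_{\xi_d}^\infty\ell_j\cdot H(t,x,\theta_0+\uomega_j(\xi_d-s),s)\,ds\big)r_j$; the transverse derivative $(\partial_{\xi_d}-\uomega_j\partial_{\theta_0})$ passes under the integral (killing the $\theta_0+\uomega_j(\xi_d-s)$-dependence of the integrand by the chain rule, since $\partial_{\xi_d}[\theta_0+\uomega_j(\xi_d-s)]=\uomega_j$ and $\partial_{\theta_0}$ of same is $1$) and acts only on the lower limit $\xi_d$, producing $+\ell_j\cdot H(t,x,\theta_0,\xi_d)$; summing over $j$ and using $\sum_j(\ell_j\cdot X)A_d(0)r_j=\sum_j Q_j X$... here I must be careful: $\sum_j(\ell_j\cdot H)A_d(0)r_j$ reconstructs $H$ only because $H\in\cV_F$-type means each $H_i A_d(0)r_i$ already has $\ell_j$-component $\delta_{ij}H_i$; more honestly, $\sum_j(\ell_j\cdot H)A_d(0)r_j=H$ holds for any $H\in\mathrm{range}$, since $\{A_d(0)r_j\}$ and $\{\ell_j\}$ are biorthogonal bases. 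Finally $(I-E_Q)H=H$ because $E_QH=0$ by hypothesis. Part (c) is the mirror image: $E_P R_\infty\cL H$ is computed the same way, one checks $E_P(\cL H)=0$ automatically (from part (a), $E_Q\cL H=0$, but we actually need $E_P$ applied to something — rather, the correct chain is that $\cL H$ satisfies $E_Q(\cL H)=0$ by (a) so $R_\infty\cL H$ is defined, then $\cL R_\infty\cL H=\cL H$ by (b), and since $\cL$ restricted to $(I-E_P)\cV_H$ is injective with the transverse-ODE uniqueness, $R_\infty\cL H=(I-E_P)H=H$). The injectivity of $\cL$ modulo $E_P$ is the one slightly delicate point: it amounts to saying that a bounded solution of the transverse ODE along each characteristic that has zero Cesàro mean must vanish, which follows since the homogeneous solutions are functions of $\theta_0+\uomega_j\xi_d$ alone and a nonzero such function has nonzero mean unless it decays, contradicting the $\cV_H$ structure.

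The main obstacle I anticipate is not any single estimate but the bookkeeping: verifying that each of the operators $E_P,E_Q,R_\infty$ maps $\cV_H$ into a space where the next operator is defined (e.g. that $R_\infty H\in\cC^1_b$ and in fact has enough structure for $\cL$ to be applied and land back appropriately), and confirming that the Cesàro limits in Definition \ref{ops} genuinely exist for \emph{all} constituent types — single functions give limit zero unless the characteristic index matches (then the limit is $\frac1T\int_0^T$ of something decaying, still zero), products and triple products and interaction integrals likewise. Remark \ref{a4}(b), which I would invoke, already handles the interaction-integral parts ($E_Q H_I=E_P H_I=0$), so the real content is the $\cV_F$ part, where Remark \ref{a4}(a) gives the explicit diagonal formula. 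Everything then reduces to the one-line fact $\cL(c\,r_j)=A_d(0)r_j(\partial_{\xi_d}-\uomega_j\partial_{\theta_0})c$ together with the elementary calculus of averages and improper integrals of decaying functions along the characteristic lines.
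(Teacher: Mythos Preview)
Your approach is essentially the paper's: the identity $\cL(c\,r_j)=A_d(0)\,r_j\,(\partial_{\xi_d}-\uomega_j\,\partial_{\theta_0})c$ is the same computational core, and parts (a) and (b) proceed exactly as in the text (Remark \ref{a4} for $\cL E_PH=0$; the integrand in $E_Q\cL H$ being a total $s$-derivative of a bounded function; Lemma \ref{integrals} for boundedness of $R_\infty H$; differentiation of the lower limit for $\cL R_\infty H=H$).

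For part (c) there is a small divergence worth flagging. The paper does a \emph{direct} computation: writing $H=\sum_j\widetilde H_j\,r_j$ one gets
\[
(R_\infty\cL H)_j \;=\; -\int_{\xi_d}^{\infty}\dfrac{d}{ds}\Big[\widetilde H_j\big(t,x,\theta_0+\uomega_j(\xi_d-s),s\big)\Big]\,ds
\;=\;\widetilde H_j(t,x,\theta_0,\xi_d),
\]
because the boundary term at $M\to\infty$ vanishes; this vanishing is exactly where $E_PH=0$ enters (for $H\in\cV_H$, the limit along the $j$-th characteristic picks out only the ``diagonal'' constituent terms, and these are what $E_P$ computes). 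Your indirect route --- apply (b) to $\cL H$ and then invoke injectivity of $\cL$ on $(I-E_P)\cV_H$ --- is valid, but the injectivity step is not quite the argument you sketch. The clean statement is: if $\cL U=0$ with $U$ bounded $\cC^1$, then $U=\sum_m\tau_m(t,x,\theta_0+\uomega_m\xi_d)\,r_m$, and for such $U$ the Ces\`aro averages in $E_PU$ are integrals of constants, giving $E_PU=U$; hence $E_PU=0$ forces $U=0$. To close your chain you still need $E_P\,R_\infty\cL H=0$, and verifying that amounts to the same boundary-term computation the paper performs directly. So both routes are correct, but the paper's one-line evaluation of the integral is shorter and makes the role of the hypothesis $E_PH=0$ more transparent.
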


\begin{proof}
(a) $\cL \, E_P \, H=0$ follows directly from Remark \ref{a4} (a),(b). To show $E_Q \, \cL \, H=0$, we write 
$H=\sum^3_{i=1}H_i \, r_i$, and note that 
\begin{equation*}
\cL \, (H_i \, r_i) =(\partial_{\xi_d}-\uomega_i \, \partial_{\theta_0}) \, H_i \, A_d(0) \, r_i \, .
\end{equation*}
Thus, the integrals $\int^T_0$ in the definition of $E_Q \, \cL \, H$ can be evaluated and are uniformly bounded 
with respect to $T$. After dividing by $T$, the limit as $T$ goes to infinity vanishes.

(b) Boundedness of $R_\infty H$ follows from Lemma \ref{integrals}. Writing $H =\sum^3_{i=1}H_i \, A_d(0) \, r_i$, 
a direct computation using $L({\rm d}\varphi_i) \, r_i=0$ shows $\cL \, R_\infty \, H=H$.

(c) With $\beta=(\utau,\ueta)$, define $A(\beta)$ by $L({\rm d}\varphi_k) =A(\beta) +\uomega_k \, A_d(0)$, 
and observe that
\begin{equation}
\label{a5}
\ell_k \, A(\beta) \, r_j =-\uomega_k \, \delta_{jk} \, .
\end{equation}
Writing $H =\sum^3_{j=1} \widetilde{H}_j \,  r_j$ and using \eqref{a5}, a direct computation yields
\begin{equation*}
R_\infty \, \cL \, H =-\sum^3_{j=1} \left( \int_{\xi_d}^{+\infty} (\partial_{\xi_d}-\uomega_j \, \partial_{\theta_0}) 
\, \widetilde{H}_j (t,x,\theta_0 +\uomega_j \, (\xi_d-s),s) \, {\rm d}s \right) \, r_j =\sum_j \widetilde{H}_j \, r_j \, ,
\end{equation*}
since $\lim_{M \to+\infty} \widetilde{H}_j(t,x,\theta_0 +\uomega_j \, (\xi_d-M),M)=0$ when $E_P \, H=0$.
\end{proof}

The next proposition, which extends \cite[Proposition 1.22]{CW4}, is used repeatedly in constructing 
$\cU_0$, $\cU_1$ and $\cU_2$ hereafter.

\begin{prop}\label{properties}
Suppose $H\in\cV_H$.

(a) The equation $\cL \, \cU=H$ has a bounded $\cC^1$ solution if and only if $E_Q \, H=0$.\footnote{The proof 
actually shows that $\cL \, U=H$ has a $\cC^1$ solution that is {\sl sublinear} with respect to $\xi_d$ if and only if 
$E_Q \, H=0$.}

(b) When $E_Q \, H=0$, every bounded $\cC^1$ solution to the equation $\cL \, \cU=H$ has the form
\begin{equation*}
\cU =\sum^3_{m=1} \tau_m(t,x,\theta_0+\uomega_m \, \xi_d) \, r_m +R_\infty H \, .
\end{equation*}

(c) When $H \in \cV_F$ satisfies $E_Q \, H=0$, solutions to $\cL \, \cU=H$ satisfy
\begin{equation*}
E_P \, \cU=\sum^3_{m=1} \tau_m (t,x,\theta_0 +\uomega_m \, \xi_d) \, r_m \, ,\quad (I-E_P) \, \cU=R_\infty H \, .
\end{equation*}
\end{prop}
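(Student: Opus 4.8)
The plan is to deduce Proposition \ref{properties} directly from Proposition \ref{a3} together with the structural description of $\cV_H$ in Definition \ref{functions} and the bracket identity \eqref{a5}. The three parts are intertwined, so I would treat them together rather than in isolation.

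\textbf{Part (a).} For the ``if'' direction, suppose $E_Q \, H = 0$. Then by Proposition \ref{a3}(b), $R_\infty H$ is a bounded $\cC^1$ function (one must check $\cC^1$ regularity, which follows since differentiating under the integral sign in the definition of $R_\infty$ produces integrands of the type controlled by Lemma \ref{integrals}, using that constituent functions of elements of $\cV_H$ decay like $\langle\theta\rangle^{-2}$ or better with their first derivatives) and satisfies $\cL \, R_\infty H = H$; so $\cU = R_\infty H$ is the desired solution. For the ``only if'' direction, suppose $\cL \, \cU = H$ has a bounded $\cC^1$ solution $\cU$. Writing $\cU = \sum_i \cU_i \, r_i$, we have $\cL(\cU_i \, r_i) = (\partial_{\xi_d} - \uomega_i \partial_{\theta_0}) \cU_i \, A_d(0) \, r_i$, so that $\ell_j \cdot H(t,x,\theta_0 + \uomega_j(\xi_d - s), s)$ equals $\frac{d}{ds}\big[-\cU_j(t,x,\theta_0 + \uomega_j(\xi_d - s), s)\big]$ along the characteristic; integrating from $0$ to $T$ and dividing by $T$, the boundedness of $\cU$ forces the limit defining $E_Q H$ to vanish. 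The footnote's sharper claim (sublinear suffices) follows from the same computation since $\cU(\cdot, s)/T \to 0$ whenever $\cU$ grows sublinearly in $\xi_d$.

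\textbf{Part (b).} Assume $E_Q \, H = 0$ and let $\cU$ be any bounded $\cC^1$ solution. Since $R_\infty H$ is also a bounded solution by (a), the difference $\cW := \cU - R_\infty H$ is a bounded $\cC^1$ solution of the homogeneous equation $\cL \, \cW = 0$. Writing $\cW = \sum_m \cW_m \, r_m$, the homogeneous equation reads $(\partial_{\xi_d} - \uomega_m \partial_{\theta_0}) \cW_m = 0$ for each $m$ (using the direct sum decomposition \eqref{decomposition2} and $A_d(0) r_m$ spanning the corresponding summand of the second decomposition), whence $\cW_m$ is constant along the characteristics $\theta_0 - \uomega_m \xi_d \mapsto$ (transported), i.e.\ $\cW_m = \tau_m(t,x,\theta_0 + \uomega_m \xi_d)$ for some $\cC^1$ function $\tau_m$. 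This gives exactly the stated form $\cU = \sum_m \tau_m(t,x,\theta_0 + \uomega_m \xi_d) r_m + R_\infty H$.

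\textbf{Part (c).} Now assume additionally $H \in \cV_F$ with $E_Q H = 0$, and let $\cU$ solve $\cL \, \cU = H$. By (b), $\cU = \sum_m \tau_m(t,x,\theta_0+\uomega_m\xi_d) r_m + R_\infty H$. Apply $E_P$: since $E_P$ annihilates any image $\cL(\cdot)$ by Proposition \ref{a3}(a), and $R_\infty H = \cL^{-1}$-type object satisfying $\cL(R_\infty H) = H$, we get $E_P(R_\infty H)$; more directly, one computes $E_P(R_\infty H) = 0$ from Remark \ref{a4} (the constituent functions of $R_\infty H$ for $H \in \cV_F$ are transversal interaction integrals, which by Remark \ref{a4}(b) satisfy $E_P(R_\infty H) = 0$ — here one uses that $H \in \cV_F$ ensures $R_\infty H$ has components genuinely of the ``interaction'' type so Lemma \ref{integrals}(b) gives decay in $\xi_d$, killing the time-average). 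Meanwhile $E_P$ applied to $\sum_m \tau_m r_m$ reproduces $\sum_m \tau_m r_m$ since these are already polarized and constant along characteristics. Hence $E_P \cU = \sum_m \tau_m(t,x,\theta_0+\uomega_m\xi_d) r_m$ and consequently $(I - E_P)\cU = R_\infty H$.

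The main obstacle I anticipate is not any single algebraic step but rather the bookkeeping needed to verify that $R_\infty$ genuinely maps $\cV_F$ into $\cV_H$ and that it produces $\cC^1$ (not merely continuous) functions with the right decay — in particular that differentiating the defining integral $\int_{\xi_d}^{+\infty}$ in $\xi_d$ and in $\theta_0$ stays within the class controlled by Lemma \ref{integrals}, so that the boundedness and regularity claims underlying all three parts are legitimate. A secondary subtlety is justifying the interchange of $E_P$ with the decomposition in part (c), i.e.\ confirming via Remark \ref{a4}(b) that the interaction-integral components of $R_\infty H$ have vanishing $E_P$-average; this rests on the extra decay ($O(\langle\theta\rangle^{-3})$) imposed on the constituent functions in Definition \ref{functions}(b), which is precisely why that stronger decay was built into the definition.
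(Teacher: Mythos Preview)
Your proof is correct and parts (b) and (c) follow the same line as the paper, but your argument for the ``only if'' direction of (a) is genuinely different from the paper's and in fact cleaner. The paper argues structurally: it splits $H=H_F+H_I$, notes $E_Q H_I=0$ automatically, and then reduces to showing that $\cL\,\cU=E_QH_F$ cannot have a bounded solution unless $E_QH_F=0$ (because the explicit form in Remark~\ref{a4}(a) forces linear growth). You instead integrate the identity $\ell_j\cdot H(t,x,\theta_0+\uomega_j(\xi_d-s),s)=\tfrac{d}{ds}\bigl[\cU_j(t,x,\theta_0+\uomega_j(\xi_d-s),s)\bigr]$ along characteristics (your minus sign is a typo; the computation gives $+\cU_j$, but this is immaterial), so that $\tfrac{1}{T}\int_0^T\ell_j\cdot H\,ds$ is a difference of two values of $\cU_j$ divided by $T$, hence tends to $0$ whenever $\cU$ is bounded (or merely sublinear). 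Your route avoids invoking the $\cV_F$/$\cV_H$ structure of $H$ for this direction entirely and makes the footnote's sublinear refinement transparent; the paper's route, on the other hand, makes more explicit \emph{why} the obstruction is precisely linear growth.

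One small clarification on (c): your claim that ``the constituent functions of $R_\infty H$ for $H\in\cV_F$ are transversal interaction integrals'' is not quite accurate --- $R_\infty H$ also contains terms coming from single-phase pieces $f(t,x,\theta_0+\uomega_k\xi_d)$ with $k\ne j$ and from two-factor products sharing one index with $j$, which integrate to functions of a single shifted phase rather than transversal integrals. Nonetheless your conclusion $E_P(R_\infty H)=0$ is correct: in every case the $j$-th component of $R_\infty H$, evaluated along the $j$-characteristic, tends to $0$ as the parameter $s'\to\infty$ (this is what Lemma~\ref{integrals} ultimately guarantees), and the Ces\`aro average of such a function vanishes. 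The paper's one-line ``Lemma~\ref{integrals} implies $E_PR_\infty H=0$'' is equally terse on this point.
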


\begin{proof}
(a) The direction $\Leftarrow$ is given by Proposition \ref{a3}(b). ($\Rightarrow$) Suppose there is a bounded 
$\cC^1$ solution to $\cL \, U=H$, and write $H=H_F+H_I$ as in Remark \ref{a4}(b). Since $E_Q \, H_I=0$, 
there is a bounded $\cC^1$ solution to $\cL \, U=H_I$, so we conclude there is a bounded $\cC^1$ solution 
to $\cL \, U=H_F$, and similarly to $\cL \, U=E_Q H_F$. From the explicit form of $E_Q \, H_F$ given in Remark 
\ref{a4}(a), we see that there is no bounded $\cC^1$ solution of $\cL \, U=E_Q \, H_F$ if $E_Q \, H_F\neq 0$ 
(for otherwise solutions display a linear growth with respect to $\xi_d$). Thus $E_Q \, H_F=0$ and hence 
$E_Q \, H=0$.

(b) By Proposition \ref{a3}(b), $R_\infty \, H$ is a bounded $\cC^1$ solution of $\cL \, U=H$; moreover, the 
general $\cC^1$ solution of $\cL \, U=0$ has the form $\sum^3_{m=1} \tau_m(t,x,\theta_0+\uomega_m \, 
\xi_d) \, r_m$.

(c) Lemma \ref{integrals} implies $E_P \, R_\infty \, H=0$ when $H \in \cV_F$ and $E_Q \, H=0$.
\end{proof}

\subsection{Profile construction and proof of Theorem \ref{theopulses}}
\label{pconstruction}

\emph{\quad}  For $\Omega_T:=(-\infty,T]\times \R^{d}_x \times \R_\theta$, we define the weighted Sobolev 
spaces:
\begin{equation*}
\Gamma^k(\Omega_T):= \left\{ u \in L^2(\Omega_T) \, : \, \theta^\alpha \, \partial_{t,x,\theta}^\beta u \in 
L^2(\Omega_T) \quad \text{\rm if }Ê\alpha +|\beta| \le k \right\} \, .
\end{equation*}
This is a Hilbert space for the norm
\begin{equation*}
\| u \|_{\Gamma^k(\Omega_T)}^2 :=
\sum_{\alpha +|\beta| \le k} \|Ê\theta^\alpha \, \partial_{t,x,\theta}^\beta u \|_{L^2 (\Omega_T)}^2 \, ,
\end{equation*}
and $\Gamma^k(\Omega_T)$ is an algebra for $k>(d+2)/2$. For $L\in\N$, when $k>\frac{d+2}{2}+L+1$, 
elements of $\Gamma^k(\Omega_T)$ are $\cC^1$ and decay with their first-order partials at the rate 
$\langle \theta \rangle^{-L}$ uniformly with respect to $(t,x)$.

\begin{defn}\label{VFK}
(a)  Suppose $k>\frac{d+2}{2}+3$.  Let $\cV^k_F\subset \cV_F$ be the subspace consisting of elements whose 
constituent functions lie in $\Gamma^k(\Omega_T)$.

(b) Suppose $k>\frac{d+2}{2}+4$.  Let $\cV^k_H\subset \cV_H$ be the subspace consisting of elements whose 
constituent functions lie in $\Gamma^k(\Omega_T)$. 
\end{defn}

For $K_0>8+\frac{d+2}{2}$, we assume now that the equations \eqref{3p}, \eqref{4p} have solutions 
$\cU_0 \in \cV_F$, $\cU_1 \in \cV_H$, $\cU_2 \in \cC^1_b$ such that 
\begin{align}\label{a9}
\begin{split}
&(a)\; \cU_0 \in \cV^{K_0}_F \, , \; \cU_0 =E_P \, \cU_0 =\sum^3_{m=1} 
\sigma_m(t,x,\theta_0 +\uomega_m \, \xi_d) \, r_m \, , \text{ where } 
\underline{\sigma}_m (t,x) :=\int_\R \sigma_m(t,x,\theta_m) \, {\rm d}\theta_m =0 \, ,\\
&(b)\; E_P \, \cU_1 \in \cV_F^{K_0-3} \text{ and } (I-E_P) \, \cU_1 \in \cV_H^{K_0-2} \, ,
\end{split}
\end{align}
and then construct solutions with those properties.
\bigskip

\noindent \underline{Step 1: the leading profile ${\mathcal U}_0$ has no outgoing component.}

This step justifies one of the causality arguments used in \cite{MR}. Equation \ref{3p}(a) and Proposition 
\ref{properties}(b) imply that the expression of ${\mathcal U}_0$ reduces to:
\begin{equation*}
%\label{decompU0p1}
{\mathcal U}_0(t,x,\theta_0,\xi_d) =\sum_{m=1}^3 \sigma_m (t,x,\theta_0 +\underline{\omega}_m \, \xi_d) \, r_m \,
\end{equation*}
for functions $\sigma_m$ to be determined.  The last variable of $\sigma_m$ is denoted $\theta_m$ in 
what follows.

Since $\cU_1$ is a bounded solution to $\cL \, \cU_1=\cF_0$, where $\cF_0=-L(\partial)\cU_0-\cM(\cU_0,\cU_0) 
\in \cV_F^{K_0-1}$, Proposition \ref{properties}(a) implies $E_Q \, \cF_0=0$; that is,
\begin{equation}
\label{Burgersmp}
\partial_t \sigma_m +{\bf v}_m \cdot \nabla_x \sigma_m +c_m \, \sigma_m \, \partial_{\theta_m} \sigma_m =0 \, ,\quad 
m=1,2,3 \, ,\quad c_m :=\dfrac{\partial_j \varphi_m \, \ell_m \, ({\rm d}A_j(0) \cdot r_m) \, r_m}{\ell_m \, r_m} \, .
\end{equation}
Since $\varphi_2$ is outgoing, this implies $\sigma_2 \equiv 0$, and the boundary condition \eqref{4p}(a) 
gives, as in Paragraph \ref{sect2example}, the existence of a scalar function $a$ such that
\begin{equation}
\label{defa}
\sigma_1 (t,y,0,\theta_0) \, r_1 =a(t,y,\theta_0) \, e_1 \, ,\quad \sigma_3 (t,y,0,\theta_0) \, r_3 =a(t,y,\theta_0) \, e_3 \, .
\end{equation}
\bigskip

\noindent\underline{Step 2: showing $(I-E_P) \, \cU_1 \in \cV_H^{K_0-2}$.}

At this stage, we know that the leading profile ${\mathcal U}_0$ reads
\begin{equation}
\label{decompU0p2}
{\mathcal U}_0(t,x,\theta_0,\xi_d) =\sigma_1 (t,x,\theta_0 +\underline{\omega}_1 \, \xi_d) \, r_1 
+\sigma_3 (t,x,\theta_0 +\underline{\omega}_3 \, \xi_d) \, r_3 \, ,
\end{equation}
where $\sigma_1,\sigma_3$ satisfy \eqref{Burgersmp} and their traces satisfy \eqref{defa}. We thus compute
\begin{equation}
\label{decompF0}
{\mathcal F}_0 =-L(\partial) \, (\sigma_1 \, r_1+\sigma_3 \, r_3) 
-\sigma_1 \, \partial_{\theta_1} \sigma_1 \, R_{1,1} -\sigma_3 \, \partial_{\theta_3} \sigma_3 \, R_{3,3} 
-\sigma_3 \, \partial_{\theta_1} \sigma_1 \, R_{1,3} -\sigma_1 \, \partial_{\theta_3} \sigma_3 \, R_{3,1} \, ,
\end{equation}
with
\begin{equation*}
\forall \, m_1,m_2=1,3 \, ,\quad R_{m_1,m_2} := \partial_j \varphi_{m_1} \, ({\rm d}A_j(0) \cdot r_{m_2}) \, r_{m_1} \, ,
\end{equation*}
and the functions $(\sigma_1,\partial_{\theta_1} \sigma_1)$, resp. $(\sigma_3,\partial_{\theta_3} \sigma_3)$, 
in \eqref{decompF0} are evaluated at $(t,x,\theta_0 +\underline{\omega}_1 \, \xi_d)$, resp. $(t,x,\theta_0 
+\underline{\omega}_3 \, \xi_d)$. By Proposition \ref{properties}(b), we have
\begin{align*}
\begin{split}
&\cU_1 =\sum^3_{m=1} \tau_m(t,x,\theta_0 +\uomega_m \, \xi_d) \, r_m +R_\infty \, \cF_0 \, ,\\
&(I-E_P) \, \cU_1 =R_\infty \, \cF_0 =-\sum^3_{m=1} \left( \int_{\xi_d}^{+\infty} 
F_m(t,x,\theta_0+\uomega_m \, (\xi_d-s),s) \, {\rm d}s \right) \, r_m \, .
\end{split}
\end{align*}
Since we have $E_Q \, \cF_0=0$, the latter integrand $F_m(t,x,\theta_0+\uomega_m \, (\xi_d-s),s)$ reads
\begin{align}\label{a6}
\begin{split}
&\quad \sum_{k\neq m} V^m_k \, \sigma_k (t,x,\theta_0+\uomega_m \, \xi_d +(\uomega_k-\uomega_m) \, s) +\\
&\quad \sum_{k\neq m} c^m_k \, \sigma_k (t,x,\theta_0+\uomega_m \, \xi_d +(\uomega_k-\uomega_m) \, s) \, 
\partial_{\theta_k} \sigma_k (t,x,\theta_0+\uomega_m \, \xi_d +(\uomega_k-\uomega_m) \, s) +\\
&\quad \sum_{\ell \neq m} d^m_{m,\ell} \, \sigma_m(t,x,\theta_0+\uomega_m \, \xi_d) \, 
\partial_{\theta_\ell} \sigma_\ell (t,x,\theta_0+\uomega_m \, \xi_d +(\uomega_\ell-\uomega_m) \, s) +\\
&\quad \sum_{\ell \neq m} d^m_{\ell,m} \, \sigma_\ell (t,x,\theta_0+\uomega_m \, \xi_d +(\uomega_\ell-\uomega_m) \, s) 
\, \partial_{\theta_m} \sigma_m (t,x,\theta_0+\uomega_m \, \xi_d) +\\
&\quad \sum_{\ell \neq k,\ell \neq m,k\neq m} d^m_{\ell,k} \, 
\sigma_\ell(t,x,\theta_0+\uomega_m \, \xi_d+(\uomega_\ell-\uomega_m) \, s) \, 
\partial_{\theta_k} \sigma_k(t,x,\theta_0+\uomega_m \, \xi_d +(\uomega_k-\uomega_m) \, s) \, ,
\end{split}
\end{align}
where the $c^m_k$, $d^m_{\ell,k}$ are real constants, and  $V^m_k$ is the tangential vector field given by
\begin{equation*}
V^m_k \, \sigma_k :=\ell_m \, L(\partial) \, (\sigma_k r_k) \, .
\end{equation*}
After integrating with respect to $s$ on $[\xi_d,+\infty[$ the second and third lines of \eqref{a6}, constituent functions 
in $\cV^{K_0}_F$ are obtained. The first and fourth lines can be integrated using the moment zero property of the 
$\sigma_m$'s to yield constituent functions in $\cV^{K_0-2}_F$ and $\cV^{K_0-1}_F$ respectively.\footnote{Without 
this moment zero property, these integrals and thus $\cU_1$ would be no better than bounded; consequently, 
$\cU_2$ would be unbounded.} The integral of the fifth line is a linear combination of  transversal interaction 
integrals with constituent functions in $\cV_F^{K_0-1}$.
\bigskip

\noindent \underline{Step 3: equations for $E_P \, \cU_1$.}

Since $\cU_2\in \cC^1_b$ is a solution to $\cL \, \cU_2=\cF_1$, where 
\begin{equation}
\label{F1}
{\mathcal F}_1 := -\Big[ L(\partial) \, \cU_1 +\cM (\cU_0,\cU_1) +\cM (\cU_1,\cU_0) 
+\cN_1 (\cU_0,\cU_0) +\cN_2 (\cU_0,\cU_0,\cU_0) \Big] \in \cV_H^{K_0-4} \, ,
\end{equation}
Proposition \ref{properties}(a) implies $E_Q \, \cF_1=0$.  We rewrite this and include the boundary condition 
on $\cU_1$ to obtain
\begin{align}\label{a7}
\begin{split}
&(a)\; E_Q \, \Big[ L(\partial) \, E_P \, \cU_1 +\cM(\cU_0,E_P \, \cU_1) +\cM(E_P \, \cU_1,\cU_0) \Big]=\\
&\qquad \qquad -E_Q \, \Big[ L(\partial) \, (I-E_P) \, \cU_1 +\cM(\cU_0,(I-E_P) \, \cU_1) 
+\cM((I-E_P) \, \cU_1,\cU_0) +\cN_1+\cN_2 \Big] \, ,\\
&(b)\; B \, E_P \, \cU_1 =G -\dfrac{1}{2} \, {\rm d}^2b(0) \cdot (\cU_0,\cU_0) -B \, (I-E_P) \, \cU_1 \, ,\quad 
\text{ on } x_d=\xi_d=0 \, .
\end{split}
\end{align}
Letting $V:=(I-E_P) \, \cU_1$ (which has been constructed from $\cU_0$ in Step 2 above), and decomposing 
$V=V_F+V_I$ as in Definition \ref{functions}(d), we can use Lemma \ref{integrals}(c) to see that
\begin{equation*}
E_Q \, \Big[ L(\partial) \, V_I +\cM(\cU_0,V_I) +\cM(V_I,\cU_0) \Big] =0 \, .
\end{equation*}
Thus, \eqref{a7}(a) simplifies to 
\begin{multline}
\label{a8}
E_Q \, \Big[ L(\partial) \, E_P \, \cU_1 +\cM(\cU_0,E_P \, \cU_1) +\cM(E_P \, \cU_1,\cU_0) \Big] \\
=-E_Q \, \Big[ L(\partial) \, V_F +\cM(\cU_0,V_F) +\cM(V_F,\cU_0) +\cN_1 +\cN_2 \Big] \in \cV_F^{K_0-3} \, .
\end{multline}
The components of $E_P \, \cU_1$ will be determined from equations \eqref{a8} and \eqref{a7}(b).
\bigskip

\noindent \underline{Step 4: determining the component of  $E_P \, {\mathcal U}_1$ on $r_2$.}

We now show $\tau_2=0$, thereby justifying another causality argument in \cite{MR}. Since $\cU_0$ is 
"purely incoming" (no $(\theta_0 +\uomega_2 \, \xi_d)$-dependence), using Remark \ref{a4}(a), we see 
that the component on $r_2$ of the right side of \eqref{a8} is zero. This is rather clear for the terms 
$E_Q \, (\cN_1 +\cN_2)$, because $\cN_1 +\cN_2$ is a linear combination of terms of the form
\begin{equation*}
f_1(t,x,\theta_0 +\uomega_1 \, \xi_d) \, ,\quad f_3(t,x,\theta_0 +\uomega_3 \, \xi_d) \, ,\quad 
g(t,x,\theta_0 +\uomega_1 \, \xi_d) \, h(t,x,\theta_0 +\uomega_3 \, \xi_d) \, ,
\end{equation*}
and it is also true for the terms arising in $E_Q \, [L(\partial) \, V_F +\cM(\cU_0,V_F) +\cM(V_F,\cU_0)]$ 
after examining the form of $V_F =(R_\infty \, \cF_0)_F$.

Similarly, the component on $r_2$ of $E_Q \, [\cM(\cU_0,E_P \, \cU_1) +\cM(E_P \, \cU_1,\cU_0)]$ is zero, 
whatever the value of the constituent functions $\tau_m$ in $E_P \, \cU_1$ (which remains to be determined). 
This forces the function $\tau_2$ in the decomposition of $E_P \, \cU_1$ to satisfy the homogeneous transport 
equation
\begin{equation*}
(\partial_t +{\bf v}_2 \cdot \nabla_x) \, \tau_2 =0 \, ,
\end{equation*}
Since $\varphi_2$ is an outgoing phase, $\tau_2$ is identically zero.

\begin{rem}
The justification of $\tau_2 =0$  relies on the assumption that \eqref{3p}(c) admits a bounded solution 
$\cU_2$. Boundedness of $\cU_2$ makes the expression $\eps^3 \, \cU_2$ meaningful as a corrector to the 
approximate solution $\eps \, \cU_0 +\eps^2 \, \cU_1$. However, we shall see in Appendix \ref{appB} that 
assuming boundedness of $\cU_2$ has a major consequence on the leading order profile $\cU_0$. In 
particular, the governing equation \eqref{eqMachStems'} below for the evolution of $a$ on the boundary 
will not coincide with the equation obtained by considering \eqref{eqa2} in Part \ref{part1} in the regime 
$\Theta \rightarrow +\infty$.
\end{rem}

\noindent \underline{Step 5: the Mach stem equation for $a$.}

From the previous step of the analysis, the trace of the first corrector $\cU_1$ satisfies
\begin{equation*}
\cU_1 (t,y,0,\theta_0,0) = \star \, r_1 -\int_0^{+\infty} \ell_2 \, {\mathcal F}_0 
(t,y,0,\theta_0 -\underline{\omega}_2 \, X,X) \, {\rm d}X \, r_2 +\star \, r_3 \, ,
\end{equation*}
where $\star$ denotes a coefficient whose expression is not useful for what follows, and ${\mathcal F}_0$ is given 
by \eqref{decompF0}. Plugging the latter expression in \eqref{4p}(b) and applying the row vector $\underline{b}$, 
we get
\begin{equation}
\label{ampli1}
-\underline{b} \, B \, r_2 \, \int_0^{+\infty} \ell_2 \, {\mathcal F}_0 (t,y,0,\theta_0 -\underline{\omega}_2 \, X,X) 
\, {\rm d}X +\dfrac{1}{2} \, \underline{b} \, {\rm d}^2b(0) \cdot (e,e) \, a^2 =\underline{b} \, G \, ,
\end{equation}
which is the solvability condition for $E_P \, \cU_1|_{x_d=\xi_d=0}$ in \eqref{a7}. It remains to differentiate 
\eqref{ampli1} with respect to $\theta_0$ and to identify the first term on the left hand side of \eqref{ampli1}. 
More precisely, we compute
\begin{align*}
\ell_2 \, {\mathcal F}_0 (t,y,0,\theta_0,\xi_d) =&-\ell_2 \, L_{\rm tan}(\partial) \, 
(a(t,y,\theta_0+\underline{\omega}_1 \, \xi_d) \, e_1 +a(t,y,\theta_0+\underline{\omega}_3 \, \xi_d) \, e_3) \\
&-\dfrac{1}{2} \, \ell_2 \, E_{1,1} \, \partial_{\theta_0} (a^2) (t,y,\theta_0+\underline{\omega}_1 \, \xi_d) 
-\dfrac{1}{2} \, \ell_2 \, E_{3,3} \, \partial_{\theta_0} (a^2) (t,y,\theta_0+\underline{\omega}_3 \, \xi_d) \\
&-\ell_2 \, E_{1,3} \, (\partial_{\theta_0} a) (t,y,\theta_0+\underline{\omega}_1 \, \xi_d) \, 
a(t,y,\theta_0+\underline{\omega}_3 \, \xi_d) \\
&-\ell_2 \, E_{3,1} \, a(t,y,\theta_0+\underline{\omega}_1 \, \xi_d) \, 
(\partial_{\theta_0} a) (t,y,\theta_0+\underline{\omega}_3 \, \xi_d) \, ,
\end{align*}
with $E_{1,1},E_{3,3},E_{1,3},E_{3,1}$ as in \eqref{defEm1m2}, and
\begin{equation*}
L_{\rm tan}(\partial) :=\partial_t +\sum_{j=1}^{d-1} A_j(0) \, \partial_j \, .
\end{equation*}
Using the expression of the matrices $R_1,R_3$ in Paragraph \ref{notation}, we thus find that \eqref{ampli1} 
reduces to
\begin{equation}
\label{eqMachStems'}
\upsilon \, \partial_{\theta_0} (a^2) -X_{\rm Lop} a +\partial_{\theta_0} Q_{\rm pul}[a,a] 
=\underline{b} \, \partial_{\theta_0} G \, ,
\end{equation}
with $\upsilon$ as in \eqref{defupsilon}, $X_{\rm Lop}$ as in \eqref{defXLop}, and\footnote{The variables $(t,y)$ enter 
as parameters in the definition of $Q_{\rm pul}$ so we omit them.}
\begin{multline}
\label{defQpul}
Q_{\rm pul}[a,\widetilde{a}] (\theta_0) := (\underline{b} \, B \, r_2) \, \ell_2 \, E_{1,3} \, \int_0^{+\infty} 
\partial_{\theta_0} a (\theta_0+(\underline{\omega}_1-\underline{\omega}_2)\, X) \, 
\widetilde{a}(\theta_0+(\underline{\omega}_3-\underline{\omega}_2)\, X) \, {\rm d}X \\
+(\underline{b} \, B \, r_2) \, \ell_2 \, E_{3,1} \, \int_0^{+\infty} a(\theta_0+(\underline{\omega}_1-\underline{\omega}_2)\, X) \, 
\partial_{\theta_0} \widetilde{a} (\theta_0+(\underline{\omega}_3-\underline{\omega}_2)\, X) \, {\rm d}X \, .
\end{multline}
\bigskip

\noindent \underline{Step 6:  completing the construction of $\cU_0$, $\cU_1$, $\cU_2$.}

It is proved in Corollary \ref{cor3} of Section \ref{sect7} that, with $K_0,K_1 \in \N$ and $G$ as in Theorem 
\ref{theopulses},  there exists $T>0$ and  a unique solution
\begin{equation*}
a \in 	\cap_{\ell =0}^{K_0} \, {\mathcal C}^\ell ((-\infty,T];\Gamma^{K_1-1-\ell}(\R^d_{y,\theta})) \, ,
\end{equation*}
to \eqref{eqMachStems'}, \eqref{defQpul}. From \eqref{defa}, this determines the boundary data of $\sigma_1$, 
$\sigma_3$. Corollary \ref{cor4} of section \ref{sect7} yields $\sigma_1, \sigma_3 \in \Gamma^{K_0} (\Omega_T)$ 
satisfying \ref{Burgersmp} (up to restricting the time $T>0$). Moreover, $a$ and thus $\sigma_1,\sigma_3$ are 
shown there to have moment zero. That completes the construction of $\cU_0$ with the properties in \eqref{a9}.

Assuming these results for now, we complete the construction of $\cU_1$ and $\cU_2$. With $\cU_0$ determined, 
$(I-E_P) \, \cU_1\in \cV_H^{K_0-2}$ is now constructed as in Step 2. To determine $E_P \, \cU_1$ we return to 
\eqref{a8} and \eqref{a7}(b), noting that the right side of \eqref{a8} is now determined. Writing $(I-E_P) \, \cU_1 
=V_F+V_I$ as before, we have\footnote{We use self-explanatory notation here.} $V_F|_{x_d=\xi_d=0} \in 
\Gamma^{K_0-3}$, and the same holds for the trace of $V_I$ as a consequence of  Corollary \ref{corproof}. 
The right hand side of \eqref{a7}(b) satisfies the required solvability condition, so \eqref{a7}(b) uniquely 
determines the trace $E_P \, \cU_1|_{x_d=\xi_d=0} \in \Gamma^{K_0-3}$, taking its value in $\check \E^s (\utau,\ueta)$; 
recall \eqref{decomposition3}. Equations \eqref{a8} and \eqref{a7}(b) determine decoupled transport equations 
for the components $\tau_1$, $\tau_3$ of $E_P \, \cU_1$, and we obtain $\tau_1,\tau_3 \in \Gamma^{K_0-3}$ 
and hence $E_P \, \cU_1\in \cV_F^{K_0-3}$.

The profile $\cU_2$ satisfies $\cL \, \cU_2 =\cF_1$, where $\cF_1$ as in \eqref{F1} satisfies $E_Q \, \cF_1=0$. 
Thus, Proposition \ref{properties} yields a solution $\cU_2 =R_\infty \, \cF_1 \in \cC^1_b$.

Apart from the results proved in section \ref{sect7} that were used in this step, this completes the proof of 
Theorem \ref{theopulses} in the $3\times 3$ strictly hyperbolic case. The profile $\cU_0$ satisfies \eqref{3p}(a), 
\eqref{4p}(a), and the correctors $\cU_1$, $\cU_2$ satisfy \eqref{3p}(b), (c) and \eqref{4p}(b).

\section{Analysis of the amplitude equation}
\label{sect7}

\subsection{Preliminary reductions}

\emph{\quad} Our goal in this section is to prove a well-posedness result for the ``Mach stem equation" 
\eqref{eqMachStems'}. We focus on the case of $3 \times 3$ strictly hyperbolic systems, and leave the 
minor modifications for the extension to $N \times N$ systems to the interested reader (see Paragraph 
\ref{extensionpulses} for the derivation of the corresponding amplitude equation). Up to dividing by nonzero 
constants, and using the shorter notation $\theta$ instead of $\theta_0$, Equation \eqref{eqMachStems'} 
takes the form
\begin{equation}
\label{eqMachStems1}
\partial_t a +{\bf w} \cdot \nabla_y a +c \, a \, \partial_\theta a +\partial_\theta Q_{\rm pul}[a,a] =g \, ,
\end{equation}
with ${\bf w} \in \R^{d-1}$, $c \in \R$, and the quadratic operator $Q_{\rm pul}$ is defined by
\begin{multline*}
Q_{\rm pul}[a,\widetilde{a}] (\theta) := \mu_1 \, \int_0^{+\infty} 
\partial_\theta a (\theta+(\underline{\omega}_1-\underline{\omega}_2)\, X) \, 
\widetilde{a}(\theta+(\underline{\omega}_3-\underline{\omega}_2)\, X) \, {\rm d}X \\
+\mu_3 \, \int_0^{+\infty} a(\theta+(\underline{\omega}_1-\underline{\omega}_2)\, X) \, \partial_\theta \widetilde{a} 
(\theta+(\underline{\omega}_3-\underline{\omega}_2)\, X) \, {\rm d}X \, ,
\end{multline*}
where $\mu_1,\mu_3 \in \R$ and the $\underline{\omega}_m$'s are pairwise distinct. The latter operator 
only acts on the $\theta$-variable, and $(t,y)$ only enter as parameters, which we do not write for simplicity.

We first reduce the expression of $Q_{\rm pul}$ by recalling that $\underline{\omega}_1,\underline{\omega}_3$ 
are the two incoming modes while $\underline{\omega}_2$ is the outgoing mode. There is no loss of generality 
in assuming $\underline{\omega}_3>\underline{\omega}_1$. Then we define the two nonzero parameters
\begin{equation*}
\delta_1 :=\dfrac{\underline{\omega}_1-\underline{\omega}_2}{\underline{\omega}_3-\underline{\omega}_1} \, ,\quad 
\delta_3 :=\dfrac{\underline{\omega}_3-\underline{\omega}_2}{\underline{\omega}_3-\underline{\omega}_1} \, ,
\end{equation*}
that satisfy $\delta_3=1+\delta_1$. Changing variables in the expression of $Q_{\rm pul}$ and redefining the 
constants $\mu_{1,3}$, we obtain
\begin{equation*}
Q_{\rm pul}[a,\widetilde{a}] (\theta) =\mu_1 \, \int_0^{+\infty} \partial_\theta a (\theta+\delta_1 \, X) \, 
\widetilde{a}(\theta+\delta_3 \, X) \, {\rm d}X 
+\mu_3 \, \int_0^{+\infty} a(\theta+\delta_1 \, X) \, \partial_\theta \widetilde{a} (\theta+\delta_3 \, X) \, {\rm d}X \, .
\end{equation*}

For later use, we define the following bilinear operator $\F_{\rm pul}$ acting on functions that depend on the 
variable $\theta \in \R$ (whenever the formula below makes sense):
\begin{equation}
\label{defFpul}
\F_{\rm pul}(u,v) (\theta) :=\int_0^{+\infty} u(\theta+\delta_1 \, X) \, v(\theta+\delta_3 \, X) \, {\rm d}X \, .
\end{equation}
The operator $\F_{\rm pul}$ satisfies the properties:
\begin{align}
\text{\rm (Differentiation)} \quad &\partial_\theta (\F_{\rm pul}(u,v)) =\F_{\rm pul}(\partial_\theta u,v) 
+\F_{\rm pul}(u,\partial_\theta v) \, ,\label{propF1} \\
\text{\rm (Integration by parts)} \quad &\F_{\rm pul}(u,\partial_\theta v) =-\dfrac{1}{\delta_3} \, u\, v 
-\dfrac{\delta_1}{\delta_3} \, \F_{\rm pul}(\partial_\theta u,v) \, .\label{propF2}
\end{align}
Using the properties \eqref{propF1}, \eqref{propF2}, we can rewrite Equation \eqref{eqMachStems1} as
\begin{equation}
\label{eqMachStems}
\partial_t a +{\bf w} \cdot \nabla_y a +c\, a \, \partial_\theta a +\mu \, \F_{\rm pul} (\partial_\theta a,\partial_\theta a) 
=g \, ,
\end{equation}
with suitable constants that are denoted $c$ and $\mu$ for simplicity and whose exact expression is useless. 
Our goal is to solve Equation \eqref{eqMachStems} by a standard fixed point argument. The main ingredient 
in the proof is to show that the nonlinear term $\F_{\rm pul} (\partial_\theta a, \partial_\theta a)$ acts as a 
{\sl semilinear} term in a suitable scale of Sobolev regularity.

\subsection{Boundedness of the bilinear operator $\F_{\rm pul}$}

The operator $\F_{\rm pul}$ is not symmetric but changing the roles of $\delta_1$ and $\delta_3$, the roles of the first 
and second argument of $\F_{\rm pul}$ in the estimates below can be exchanged. This will be used in several places.

Let us first recall the definition of weighted Sobolev spaces:
\begin{equation*}
\Gamma^k (\R^d):= \left\{ u \in L^2(\R^{d-1}_y \times \R_\theta) \, : \, \theta^\alpha \, \partial_{y,\theta}^\beta u \in 
L^2 (\R^d) \quad \text{\rm if }Ê\alpha +|\beta| \le k \right\} \, .
\end{equation*}
This is a Hilbert space for the norm
\begin{equation*}
\| u \|_{\Gamma^k(\R^d)}^2 := \sum_{\alpha +|\beta| \le k} \|Ê\theta^\alpha \, \partial_{y,\theta}^\beta u \|_{L^2 (\R^d)}^2 \, .
\end{equation*}
Following the same integration by parts arguments as in \cite[Proposition 3.3]{CW4}, there holds

\begin{lem}
For all integer $k$, the space $\Gamma^k(\R^d)$ coincides with
\begin{equation*}
\left\{ u \in H^k(\R^{d-1}_y \times \R_\theta) \, : \, \theta^k \, u \in L^2(\R^d) \right\} \, ,
\end{equation*}
and the norm of $\Gamma^k(\R^d)$ is equivalent to the norm
\begin{equation*}
\|Ê\theta^k \, u \|_{L^2(\R^d)} +\| u \|_{H^k (\R^d)} \, .
\end{equation*}
\end{lem}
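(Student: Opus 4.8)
The plan is to show the two norms are equivalent by establishing the inclusion of sets together with the corresponding norm bounds in both directions, using integration by parts as in \cite[Proposition 3.3]{CW4}. One direction is essentially trivial: if $u \in \Gamma^k(\R^d)$ then taking $\alpha = k$, $\beta = 0$ gives $\theta^k u \in L^2(\R^d)$, and taking $\alpha = 0$, $|\beta| \le k$ gives $u \in H^k(\R^d)$; moreover $\|\theta^k u\|_{L^2} + \|u\|_{H^k} \lesssim \|u\|_{\Gamma^k(\R^d)}$ since both terms appear among the summands defining the $\Gamma^k$ norm (up to fixed constants). So the real content is the reverse inclusion and the reverse norm bound: assuming $u \in H^k(\R^d)$ with $\theta^k u \in L^2(\R^d)$, one must control every mixed weighted derivative norm $\|\theta^\alpha \partial_{y,\theta}^\beta u\|_{L^2}$ with $\alpha + |\beta| \le k$ by $\|\theta^k u\|_{L^2} + \|u\|_{H^k}$.

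First I would reduce to controlling $\|\theta^\alpha \partial_\theta^j u\|_{L^2}$ (the $y$-derivatives commute with multiplication by $\theta^\alpha$ and are already accounted for by the $H^k$ norm, so it suffices to treat the one-dimensional weight-vs-$\theta$-derivative interplay, with $y$-derivatives of $u$ playing the role of $u$). Then the core one-dimensional claim is: for integers $\alpha, j \ge 0$ with $\alpha + j \le k$,
\begin{equation*}
\|\theta^\alpha \partial_\theta^j u\|_{L^2(\R)} \le C \big( \|\theta^{\alpha+j} u\|_{L^2(\R)} + \|\partial_\theta^{\alpha+j} u\|_{L^2(\R)} \big) \, .
\end{equation*}
This is proved by an interpolation/commutator argument: write $\theta^\alpha \partial_\theta^j u = \partial_\theta^j(\theta^\alpha u) - [\text{lower order in } \theta\text{-powers, same or fewer }\theta\text{-derivatives}]$ via the Leibniz rule, so that one gets a recursion reducing $(\alpha, j)$ either to a pure derivative $\partial_\theta^j$ of the weighted function $\theta^\alpha u$, or to strictly smaller weight exponent. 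Alternatively, and more cleanly, one uses the classical fact that the operator $(1 + \theta^2)^{\alpha/2}(1 - \partial_\theta^2)^{j/2}$ and the operator with the two factors interchanged differ by terms of lower total order, and a density argument (Schwartz functions) to justify the integrations by parts; the total order $\alpha + j \le k$ is then controlled by the two ``extreme'' terms $\|\theta^k u\|_{L^2}$ and $\|u\|_{H^k}$ via Young's inequality $ab \le \epsilon a^2 + C_\epsilon b^2$ after an interpolation inequality of the form $\|\theta^\alpha \partial_\theta^j u\|_{L^2}^2 \lesssim \|\theta^{\alpha+j}u\|_{L^2} \|\partial_\theta^{\alpha+j}u\|_{L^2} + (\text{lower order})$.

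The main obstacle is bookkeeping the commutator terms: each integration by parts moving a $\theta$-power past a $\partial_\theta$ generates a lower-order term, and one must check that the induction closes, i.e. that all such terms have total order strictly less than $\alpha + j$ and hence are covered by the inductive hypothesis (on $k$, or on $\alpha + j$). I would set up a double induction on $k$ and then on $j$ (number of $\theta$-derivatives) for fixed total order, with the base cases $j = 0$ (trivial, it is the weight norm) and $\alpha = 0$ (trivial, it is the Sobolev norm) both immediate. The density of $\mathcal{S}(\R^d)$ in $\Gamma^k(\R^d)$ — which itself should be noted, or taken as standard — legitimizes all the integrations by parts and the limiting argument. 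No genuinely hard analysis is involved; the proof is, as the text says, ``the same integration by parts arguments as in \cite[Proposition 3.3]{CW4},'' and the only care needed is in organizing the induction so that every commutator term is strictly subordinate.
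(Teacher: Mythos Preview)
The paper does not give its own proof; it simply cites \cite[Proposition 3.3]{CW4} for ``the same integration by parts arguments,'' and your sketch is precisely that approach. One small gap worth flagging: your reduction to the one-dimensional inequality by substituting $v=\partial_y^{\beta_y}u$ leaves on the right-hand side the term $\|\theta^{\alpha+j}\partial_y^{\beta_y}u\|_{L^2}$, which is itself a mixed weighted norm (weight in $\theta$, derivatives purely in $y$) and is not covered by your 1D claim---applying the 1D bound again with $j=0$ is circular. This base case has to be handled separately, for instance by taking the Fourier transform in $y$ and using the elementary pointwise inequality $|\theta|^{2\alpha}|\eta|^{2|\beta_y|}\lesssim |\theta|^{2k}+|\eta|^{2k}+1$ (Young's inequality, valid because $\alpha+|\beta_y|\le k$). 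With that in place your induction on the number of $\theta$-derivatives closes exactly as you describe.
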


\noindent Our main boundedness result for the operator $\F_{\rm pul}$ reads as follows.

\begin{prop}
\label{propFpul}
Let $k_0$ denote the smallest integer satisfying $k_0>(d+1)/2$. Then for all $k \ge 2\, k_0+1$, there 
exists a constant $C_k$ satisfying
\begin{equation}
\label{estimpropF}
\forall \, u,v \in \Gamma^k (\R^d)\, ,\quad \| \F_{\rm pul}(\partial_\theta u,\partial_\theta v) \|_{\Gamma^k(\R^d)} 
\le C_k\, \| u \|_{\Gamma^k(\R^d)} \, \| v \|_{\Gamma^k(\R^d)} \, .
\end{equation}
\end{prop}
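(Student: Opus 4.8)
The plan is to reduce the estimate \eqref{estimpropF} to a handful of one-dimensional bounds for the operator $\F_{\rm pul}$ acting on functions of $\theta$ alone, and then combine them with the algebra property of $\Gamma^k$ and the commutation relations \eqref{propF1}, \eqref{propF2}. The decisive structural feature is that, thanks to Fubini and to the fact that the tangential variables $y$ enter only as parameters, we may first freeze $y$ and work in the one-dimensional setting $\R_\theta$, controlling things in terms of the one-dimensional norms $\|u(y,\cdot)\|_{H^s(\R_\theta)}$ and $\|\langle\theta\rangle^s u(y,\cdot)\|_{L^2(\R_\theta)}$, and only at the end integrate the squares of these over $y$ and invoke the Sobolev embedding $H^{k_0}(\R^{d-1}_y)\hookrightarrow L^\infty(\R^{d-1}_y)$ (valid since $k_0>(d+1)/2>(d-1)/2$).

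First I would establish the core one-dimensional lemma: for $\F_{\rm pul}$ as in \eqref{defFpul}, with $\delta_3=1+\delta_1\neq 0,1$, one has
\begin{equation*}
\| \F_{\rm pul}(u,v) \|_{L^2(\R_\theta)} \le C \, \| u \|_{L^2(\R_\theta)} \, \| v \|_{L^\infty(\R_\theta)} \, , \qquad
\| \F_{\rm pul}(u,v) \|_{L^2(\R_\theta)} \le C \, \| u \|_{L^\infty(\R_\theta)} \, \| v \|_{L^2(\R_\theta)} \, ,
\end{equation*}
and, more importantly for the weighted spaces, a version with a weight: $\|\langle\theta\rangle^k\F_{\rm pul}(u,v)\|_{L^2}\lesssim \|\langle\theta\rangle^k u\|_{L^2}\|v\|_{W^{?,\infty}_{\rm weighted}} + (\text{symmetric})$. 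The mechanism is a change of variables: writing $\F_{\rm pul}(u,v)(\theta)=\int_0^\infty u(\theta+\delta_1 X)v(\theta+\delta_3 X)\,dX$ and substituting $X\mapsto$ one of the two arguments turns this into a Hardy/Hilbert-type one-dimensional operator, whose $L^2$ boundedness follows from Schur's test once one exploits that $\delta_1\neq\delta_3$, so the two shifted copies of $\theta$ cannot stay close over a long interval in $X$. This is the pulse analogue of Lemma \ref{lem3} plus Lemma \ref{lemRR} in the wavetrain case, and it is where the precise arithmetic of the $\underline\omega_m$ matters; here, though, since $\delta_1,\delta_3$ are merely nonzero and distinct with no small-divisor subtlety, the estimate is cleaner than in the periodic setting — the noncompact integration in $X$ being compensated exactly by the decay that $\Gamma^k$ provides through the lemma just stated (\,$\Gamma^k\subset\{u:\theta^k u\in L^2,\ u\in H^k\}$\,).

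Next I would bootstrap from $k=0$ to general $k\ge 2k_0+1$. To bound $\|\F_{\rm pul}(\partial_\theta u,\partial_\theta v)\|_{\Gamma^k}$ I distribute up to $k$ derivatives $\partial_{y,\theta}^\beta$ and up to $k$ powers $\theta^\alpha$ (with $\alpha+|\beta|\le k$) onto the bilinear form, using \eqref{propF1} to move $\theta$-derivatives inside, and a Leibniz-type identity for the weight — note $\theta\cdot\F_{\rm pul}(u,v)(\theta)=\F_{\rm pul}(\,(\theta+\delta_1 X)u,v\,)(\theta)-\delta_1\int_0^\infty X\,u(\theta+\delta_1X)v(\theta+\delta_3X)\,dX$, and the last integral is again handled by integrating by parts in $X$ (using $\delta_1\neq\delta_3$) to trade the growing factor $X$ for a derivative landing on $u$ or $v$, at the cost of a boundary term $\sim u(\theta)v(\theta)/(\delta_3-\delta_1)$. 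After this redistribution each resulting term is of the form $\F_{\rm pul}$ (or $\F_{\rm pul}$ composed with multiplication by a bounded function) applied to a pair where one factor carries $\le k_0$ derivatives-and-weights and the other carries the remaining $\le k$; I then put the low-order factor in $L^\infty_{y,\theta}$ via $\Gamma^{k_0}\hookrightarrow L^\infty$ (again $k_0>(d+1)/2$) and the high-order factor in the $L^2$-type norm, apply the one-dimensional lemma pointwise in $y$, square, and integrate in $y$. Summing over $\alpha,\beta$ gives \eqref{estimpropF} with the non-tame bilinear right-hand side $C_k\|u\|_{\Gamma^k}\|v\|_{\Gamma^k}$ stated in the proposition (the split into $2k_0+1$ is exactly so that both halves of each Leibniz term land in a space controlled by $\Gamma^k$, with $2k_0+1-k_0=k_0+1>k_0$).

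The main obstacle I anticipate is controlling the weights $\theta^\alpha$ against the noncompact $X$-integration: naively, $\theta^\alpha$ paired with $u(\theta+\delta_1 X)$ produces a factor growing like $X^\alpha$, which is not absorbable by decay alone. The remedy, as indicated above, is the repeated integration by parts in $X$ that converts $X^j$ into $j!$ applications of the "$\partial_\theta$ in $X$" vector field $\tfrac{1}{\delta_1}\partial_\theta$ on the first slot minus $\tfrac{1}{\delta_3}\partial_\theta$ on the second — legitimate precisely because $\delta_1\neq\delta_3$ — producing only well-behaved boundary terms at $X=0$; verifying that all boundary contributions are of the claimed form and that the procedure closes at level $k$ is the technical heart of the argument. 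Everything else — Schur's test for the base one-dimensional bound, the algebra property of $\Gamma^k$, and the Sobolev embedding — is routine and parallels \cite[Proposition 3.3]{CW4} and the wavetrain computation already carried out in the proof of Theorem \ref{propFper}.
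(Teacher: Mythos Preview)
Your proposal has two genuine gaps.

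First, the ``core one-dimensional lemma'' as you state it is false. The bound $\|\F_{\rm pul}(u,v)\|_{L^2(\R_\theta)} \lesssim \|u\|_{L^2}\|v\|_{L^\infty}$ fails: take $v\equiv 1\in L^\infty$ and, say, $\delta_1>0$; then $\F_{\rm pul}(u,1)(\theta)=\delta_1^{-1}\int_\theta^{+\infty} u$, the tail integral of $u$, which need not lie in $L^2$ for $u\in L^2$. What Schur's test actually gives (and what the paper uses) is an estimate requiring $L^1_\theta$-control on one factor, i.e.\ a \emph{weighted} $L^2$ norm. This is precisely why the paper's basic estimate \eqref{Festim1} reads $\|\F_{\rm pul}(u,v)\|_{L^2}\le C\|u\|_{\Gamma^{k_0}}\|v\|_{L^2}$, with $\Gamma^{k_0}$ (not $H^{k_0}$ or $L^\infty$) on the first slot: the weight $\theta u\in L^2$ is what produces $u(y,\cdot)\in L^1_\theta$ via Cauchy--Schwarz. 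Without this, the later Leibniz bookkeeping cannot close.

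Second, and more seriously, your handling of the weight $\theta^k$ does not work as written. You produce a growing factor $X$ via $\theta=(\theta+\delta_1 X)-\delta_1 X$ and then propose to ``integrate by parts in $X$ to trade $X$ for a derivative''. But integrating $\int_0^\infty X^j\,u(\theta+\delta_1 X)v(\theta+\delta_3 X)\,dX$ by parts either raises the power of $X$ (if you antidifferentiate $X^j$) or requires an $X$-primitive of the product $uv$, over which you have no useful control; there is no mechanism here that converts $X$ into a $\partial_\theta$. The paper bypasses this entirely with the algebraic identity
\[
\theta=\delta_3(\theta+\delta_1 X)-\delta_1(\theta+\delta_3 X),
\]
valid because $\delta_3-\delta_1=1$, which immediately yields $\theta^k\F_{\rm pul}(\partial_\theta u,\partial_\theta v)=\sum_{j=0}^k\binom{k}{j}\delta_3^{\,j}(-\delta_1)^{k-j}\F_{\rm pul}(\theta^j\partial_\theta u,\theta^{k-j}\partial_\theta v)$. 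No $X$-factor ever appears, and the resulting terms are handled exactly like the Leibniz terms for derivatives (extreme terms via one use of \eqref{propF2}, intermediate terms directly via \eqref{Festim1}/\eqref{Festim2} and the splitting $k\ge 2k_0+1$). This identity is the missing ingredient in your argument.
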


The estimate \eqref{estimpropF} of Proposition \ref{propFpul} is {\sl not tame}, but it will be sufficient for our 
purpose since we shall only construct finitely many terms in the WKB expansion of the solution $u_\eps$ 
to \eqref{0} (opposite to what we did in Part \ref{part1} where we constructed approximate solutions of 
arbitrarily high order).

\begin{proof}
Let us first observe that when $u,v$ belong to the Schwartz space ${\mathcal S}(\R^d)$, $\F_{\rm pul}(u,v)$ 
also belongs to ${\mathcal S}(\R^d)$. By a density/continuity argument, we are thus reduced to proving 
the estimate \eqref{estimpropF} for $u,v \in {\mathcal S}(\R^d)$. The decay and regularity of $u,v$ will 
justify all the manipulations below.

{\bf 1)} We start with the basic $L^2$ estimate of the function $\F_{\rm pul}(u,v)$. Using Cauchy-Schwartz inequality, 
we have
\begin{align*}
\Big| \int_0^{+\infty} u(y,\theta+\delta_1 \, X) \, & v(y,\theta+\delta_3 \, X) \, {\rm d}X \Big|^2 \\
&\le \int_0^{+\infty} |u(y,\theta+\delta_1 \, X)| \, {\rm d}X \, 
\int_0^{+\infty} |u(y,\theta+\delta_1 \, X)| \, |v(y,\theta+\delta_3 \, X)|^2 \, {\rm d}X \\
&\le \dfrac{1}{|\delta_1|} \, \int_\R |u(y,\theta')| \, {\rm d}\theta' \, 
\int_\R |u(y,\theta+\delta_1 \, X)| \, |v(y,\theta+\delta_3 \, X)|^2 \, {\rm d}X \, .
\end{align*}
Integrating with respect to $(y,\theta)$, and changing variables (use $\delta_3-\delta_1=1$), we get
\begin{align*}
\| \F_{\rm pul}(u,v) \|_{L^2(\R^d)}^2 &\le \dfrac{1}{|\delta_1|} \, \int_{\R^{d-1}} \left( \int_\R |u(y,\theta)| \, {\rm d}\theta \right)^2 
\, \left( \int_\R |v(y,\theta)|^2 \, {\rm d}\theta \right) \, {\rm d}y \\
&\le \dfrac{\pi}{|\delta_1|} \, \int_{\R^{d-1}} \left( \int_\R (1+\theta^2) \, |u(y,\theta)|^2 \, {\rm d}\theta \right) 
\, \left( \int_\R |v(y,\theta)|^2 \, {\rm d}\theta \right) \, {\rm d}y \\
&\le \dfrac{\pi}{|\delta_1|} \, \left( \sup_{y \in \R^{d-1}} \int_\R (1+\theta^2) \, |u(y,\theta)|^2 \, {\rm d}\theta \right) 
\, \| v \|_{L^2(\R^d)}^2 \, .
\end{align*}
Since $k_0-1>(d-1)/2$, we have
\begin{equation*}
|u(y,\theta)|^2 \le C \, \sum_{|\alpha| \le k_0-1} \int_{\R^{d-1}} |\partial_y^\alpha u(y,\theta)|^2 \, {\rm d}y \, ,
\end{equation*}
by Sobolev's inequality, and we thus get (with a possibly larger constant $C$)
\begin{equation}
\label{Festim1}
\| \F_{\rm pul}(u,v) \|_{L^2(\R^d)} \le C\, \| u \|_{\Gamma^{k_0}(\R^d)} \, \| v \|_{L^2(\R^d)} \, .
\end{equation}
The "symmetric" inequality
\begin{equation}
\label{Festim2}
\| \F_{\rm pul}(u,v) \|_{L^2(\R^d)} \le C\, \| u \|_{L^2(\R^d)} \, \| v \|_{\Gamma^{k_0}(\R^d)} \, ,
\end{equation}
is obtained by exchanging the roles of $\delta_1$ and $\delta_3$ as explained earlier.

{\bf 2)} Let us now estimate the $H^k$-norm of $\F_{\rm pul}(\partial_\theta u,\partial_\theta v)$ with $k \ge 2\, k_0+1$. 
We first apply the estimate \eqref{Festim1} for the $L^2$-norm:
\begin{equation*}
\| \F_{\rm pul}(\partial_\theta u,\partial_\theta v) \|_{L^2(\R^d)} \le C\, \| u \|_{\Gamma^{k_0+1}(\R^d)} \, \| v \|_{H^1(\R^d)} 
\le C \, \| u \|_{\Gamma^k(\R^d)} \, \| v \|_{\Gamma^k(\R^d)}\, .
\end{equation*}
Using Plancherel's Theorem, it is sufficient to estimate the $k$-th derivatives of $\F_{\rm pul}(\partial_\theta u,\partial_\theta v)$ 
in order to estimate all derivatives of order less than $k$. Let us therefore consider a multiinteger $\alpha$ of length 
$k$, and apply the Leibniz formula (this is justified because the differentiation formula \eqref{propF1} holds not only 
for the $\theta$-derivative but also for the $y$-derivatives):
\begin{equation}
\label{Festim3}
\partial^\alpha \, \F_{\rm pul}(\partial_\theta u,\partial_\theta v) =\sum_{\beta \le \alpha} \star \, \, 
\F_{\rm pul} (\partial^\beta \, \partial_\theta u,\partial^{\alpha-\beta} \, \partial_\theta v) \, ,
\end{equation}
where $\star$ denotes harmless numerical coefficients, and $\partial^\beta,\partial^{\alpha-\beta}$ stand for 
possibly mixed $y,\theta$ derivatives.

We begin with the extreme terms in \eqref{Festim3}. If $|\beta|=0$, we need to estimate the term 
$\F_{\rm pul} (\partial_\theta u,\partial^\alpha \, \partial_\theta v)$ which, using \eqref{propF2}, we write as
\begin{equation*}
-\dfrac{1}{\delta_3} \, \partial_\theta u \, \partial^\alpha v -\dfrac{\delta_1}{\delta_3} \, 
\F_{\rm pul} (\partial^2_{\theta \theta} u,\partial^\alpha v) \, .
\end{equation*}
We get the estimate
\begin{equation*}
\|Ê\F_{\rm pul} (\partial_\theta u,\partial^\alpha \, \partial_\theta v) \|_{L^2(\R^d)} \le C \, \| \partial_\theta u \|_{L^\infty(\R^d)} 
\, \| v \|_{H^k(\R^d)} +C \, \| u \|_{\Gamma^{k_0+2}(\R^d)} \, \| v \|_{H^k(\R^d)} \, ,
\end{equation*}
where we have used \eqref{Festim1}. Since $k>d/2+1$ (this follows from the assumption $k \ge 2\, k_0+1$), 
we can apply Sobolev's inequality and get
\begin{equation*}
\|Ê\F_{\rm pul} (\partial_\theta u,\partial^\alpha \, \partial_\theta v) \|_{L^2(\R^d)} \le C \, \| u \|_{\Gamma^k(\R^d)} \, 
\| v \|_{\Gamma^k(\R^d)}\, .
\end{equation*}
The second extreme term $\F_{\rm pul}(\partial^\alpha \, \partial_\theta u,\partial_\theta v)$ is dealt with in 
the same way.

Using the assumption $k \ge 2\, k_0+1$, we verify that for $\beta \le \alpha$, and $\beta \neq 0$, $\beta 
\neq \alpha$, one of the following two properties is satisfied
\begin{equation*}
(|\beta| \ge 1 \quad \text{\rm and } \quad 1+|\beta| \le k-k_0) \quad 
\text{\rm or } \quad (|\alpha|-|\beta| \ge 1 \quad \text{\rm and } \quad |\beta| \ge k_0 +1) \, .
\end{equation*}
In the first case, we use \eqref{Festim1} and get
\begin{equation*}
\|Ê\F_{\rm pul} (\partial^\beta \, \partial_\theta u,\partial^{\alpha-\beta} \, \partial_\theta v) \|_{L^2(\R^d)} \le C \, 
\|Ê\partial^\beta \, \partial_\theta u \|_{\Gamma^{k_0}(\R^d)} \, \|Ê\partial^{\alpha-\beta} \, \partial_\theta v  \|_{L^2(\R^d)} 
\le C \, \| u \|_{\Gamma^k(\R^d)} \, \| v \|_{\Gamma^k(\R^d)} \, ,
\end{equation*}
and the second case is dealt with in a symmetric way (using \eqref{Festim2} rather than \eqref{Festim1}).

{\bf 3)} It remains to estimate the $L^2$-norm of $\theta^k \, \F_{\rm pul}(\partial_\theta u,\partial_\theta v)$. We write 
(use $\delta_3-\delta_1=1$ again)
\begin{equation*}
\theta =\delta_3 \, (\theta +\delta_1 \, s) -\delta_1 \, (\theta +\delta_3 \, s) \, ,
\end{equation*}
which gives
\begin{equation}
\label{Festim4}
\theta^k \, \F_{\rm pul}(\partial_\theta u,\partial_\theta v) =\sum_{j=0}^k \star \, \, \F_{\rm pul} (\theta^j \, \partial_\theta u, 
\theta^{k-j} \, \partial_\theta v) \, ,
\end{equation}
with, again, harmless binomial coefficients that are denoted by $\star$. Let us first consider the extreme 
terms in the latter sum and focus on $\F_{\rm pul}(\partial_\theta u,\theta^k \, \partial_\theta v)$. We write
\begin{equation*}
\theta^k \, \partial_\theta v =\partial_\theta (\theta^k \, v) -k \, \theta^{k-1} \, v \, ,
\end{equation*}
and use the property \eqref{propF2} to get
\begin{equation*}
\F_{\rm pul}(\partial_\theta u,\theta^k \, \partial_\theta v) =-\dfrac{1}{\delta_3} \, \partial_\theta u \, (\theta^k \, v) 
-\dfrac{\delta_1}{\delta_3} \, \F_{\rm pul}(\partial^2_{\theta \theta} u,\theta^k \, v) 
-k \, \F_{\rm pul} (\partial_\theta u,\theta^{k-1} \, v) \, .
\end{equation*}
The $L^2$-estimate follows from \eqref{Festim1} and from the Sobolev imbedding Theorem:
\begin{equation*}
\| \F_{\rm pul}(\partial_\theta u,\theta^k \, \partial_\theta v) \|_{L^2(\R^d)} \le C \, \| u \|_{\Gamma^k(\R^d)} 
\, \| v \|_{\Gamma^k(\R^d)} \, .
\end{equation*}
The estimate for $\F_{\rm pul}(\theta^k \, \partial_\theta u,\partial_\theta v)$ is similar.

For $1 \le j \le k-1$, we observe again that there holds either $j \le k-k_0-1$ or $j \ge k_0+1$, so that we 
can directly estimate all intermediate terms in the sum \eqref{Festim4} by applying either \eqref{Festim1} 
or \eqref{Festim2}. The proof of Proposition \ref{propFpul} is now complete.
\end{proof}

\noindent As an immediate corollary of the proof we have:

\begin{cor}
\label{corproof}
Let $k_0$ denote the smallest integer satisfying $k_0>(d+1)/2$. Then for all $k \ge 2\, k_0$, there 
exists a constant $C_k$ satisfying
\begin{equation*}
\forall \, u,v \in \Gamma^k (\R^d)\, ,\quad \| \F_{\rm pul}(u, v) \|_{\Gamma^k(\R^d)} 
\le C_k\, \| u \|_{\Gamma^k(\R^d)} \, \| v \|_{\Gamma^k(\R^d)} \, .
\end{equation*}
\end{cor}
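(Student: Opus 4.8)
The plan is to repeat, essentially verbatim, the proof of Proposition \ref{propFpul}, simply dropping the two $\theta$-derivatives from the arguments of $\F_{\rm pul}$; the point is that all the ingredients used there — the basic $L^2$ bounds \eqref{Festim1}, \eqref{Festim2}, the differentiation rule \eqref{propF1}, and the weight identity underlying \eqref{Festim4} — are insensitive to the presence of those derivatives, so the argument goes through with one derivative to spare. First I would reduce to $u,v\in\mathcal{S}(\R^d)$ by the same density and continuity argument as at the start of the proof of Proposition \ref{propFpul}, and then invoke the norm equivalence stated in the lemma preceding that proposition to replace $\|\cdot\|_{\Gamma^k(\R^d)}$ by the equivalent quantity $\|\theta^k\,\cdot\,\|_{L^2(\R^d)}+\|\cdot\|_{H^k(\R^d)}$. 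Thus it suffices to bound $\|\F_{\rm pul}(u,v)\|_{L^2}$, the quantities $\|\partial^\alpha\F_{\rm pul}(u,v)\|_{L^2}$ for $|\alpha|=k$, and $\|\theta^k\,\F_{\rm pul}(u,v)\|_{L^2}$, exactly as in the three steps of the proof of Proposition \ref{propFpul}.

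The $L^2$ bound is immediate from \eqref{Festim1}: $\|\F_{\rm pul}(u,v)\|_{L^2}\le C\,\|u\|_{\Gamma^{k_0}}\,\|v\|_{L^2}\le C\,\|u\|_{\Gamma^k}\,\|v\|_{\Gamma^k}$, using $k\ge 2k_0\ge k_0$. For the top-order derivatives I would use the Leibniz-type expansion (legitimate since \eqref{propF1} holds for $y$- and $\theta$-derivatives alike)
\begin{equation*}
\partial^\alpha\F_{\rm pul}(u,v)=\sum_{\beta\le\alpha}\star\,\F_{\rm pul}(\partial^\beta u,\partial^{\alpha-\beta}v)\, ,
\end{equation*}
and estimate each summand by \eqref{Festim1} when $|\beta|\le k-k_0$ and by \eqref{Festim2} when $|\beta|\ge k_0$, so that in either case it is bounded by $C\,\|u\|_{\Gamma^k}\,\|v\|_{\Gamma^k}$ (here one uses $\|\partial^\beta u\|_{\Gamma^{k_0}}\le C\,\|u\|_{\Gamma^{|\beta|+k_0}}$ and the analogous bound for $v$). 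Since $k\ge 2k_0$ — in fact $k\ge 2k_0-1$ already suffices — the two ranges $\{|\beta|\le k-k_0\}$ and $\{|\beta|\ge k_0\}$ exhaust all $\beta\le\alpha$. In contrast with Proposition \ref{propFpul}, no integration by parts \eqref{propF2} is needed here: there the extreme terms $\F_{\rm pul}(\partial_\theta u,\partial^\alpha\partial_\theta v)$ forced one to trade a derivative via \eqref{propF2} in order to avoid $|\alpha|+1$ derivatives landing on $v$, whereas now there is nothing to trade. This is precisely the mechanism that lowers the threshold from $2k_0+1$ to $2k_0$.

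For the weighted estimate I would use $\delta_3-\delta_1=1$ to write $\theta=\delta_3(\theta+\delta_1X)-\delta_1(\theta+\delta_3X)$ inside the integral defining $\F_{\rm pul}(u,v)(\theta)$, expand by the binomial theorem, and obtain, as in \eqref{Festim4},
\begin{equation*}
\theta^k\,\F_{\rm pul}(u,v)=\sum_{j=0}^k\star\,\F_{\rm pul}(\theta^j u,\theta^{k-j}v)\, .
\end{equation*}
Each term is then controlled by \eqref{Festim1} if $j\le k-k_0$ and by \eqref{Festim2} if $j\ge k_0$, using $\|\theta^j u\|_{\Gamma^{k_0}}\le C\,\|u\|_{\Gamma^{j+k_0}}\le C\,\|u\|_{\Gamma^k}$ (resp. with the roles of $u$ and $v$ reversed), and once more $k\ge 2k_0$ guarantees that these ranges cover $\{0,\dots,k\}$. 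Collecting the $L^2$, $H^k$, and weighted bounds and summing yields the asserted inequality. I do not anticipate any genuine obstacle: the statement is read off from the proof of Proposition \ref{propFpul}, the only substantive verification being the trivial combinatorial fact that the intervals $\{|\beta|\le k-k_0\}$ and $\{|\beta|\ge k_0\}$ overlap as soon as $k\ge 2k_0-1$.
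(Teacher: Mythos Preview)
Your proposal is correct and is exactly the argument the paper has in mind: the paper gives no proof beyond the phrase ``As an immediate corollary of the proof we have,'' and your write-up fleshes out precisely that implicit argument. Your observation that the integration-by-parts step \eqref{propF2} is no longer needed for the extreme terms (since no extra $\partial_\theta$ is present), and that this is what lowers the threshold from $2k_0+1$ to $2k_0$, is spot on.
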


\subsection{The iteration scheme}

In view of the boundedness property proved in Proposition \ref{propFpul}, Equation \eqref{eqMachStems} is a 
semilinear perturbation of the Burgers equation (the transport term ${\bf w} \cdot \nabla_y$ is harmless), and 
it is absolutely not surprising that we can solve \eqref{eqMachStems} by using the standard energy method 
on a fixed point iteration. This well-posedness result can be summarized in the following Theorem.

\begin{theo}
\label{thmMachStems}
Let $k_0$ be defined as in Proposition \ref{propFpul}, and let $k \ge 2\, k_0+1$. Then for all $R>0$, there exists 
a time $T(k,R)>0$ such that for all data $a_0 \in \Gamma^k(\R^d)$ satisfying $\| a_0 \|_{\Gamma^k(\R^d)} \le R$, 
there exists a unique solution $a \in {\mathcal C}([0,T];\Gamma^k(\R^d))$ to the Cauchy problem:
\begin{equation*}
\begin{cases}
\partial_t a +{\bf w} \cdot \nabla_y a +c\, a \, \partial_\theta a +\mu \, 
\F_{\rm pul} (\partial_\theta a,\partial_\theta a) =0 \, ,& \\
a|_{t=0} =a_0 \, . &
\end{cases}
\end{equation*}
\end{theo}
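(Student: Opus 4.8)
The plan is to solve the Cauchy problem
\[
\partial_t a +{\bf w} \cdot \nabla_y a +c\, a \, \partial_\theta a +\mu \, \F_{\rm pul} (\partial_\theta a,\partial_\theta a) =0 \, ,\quad a|_{t=0} =a_0 \, ,
\]
by the classical energy method combined with a fixed point iteration, treating the equation as a semilinear perturbation of the Burgers equation. First I would set up the iteration scheme
\[
\begin{cases}
\partial_t a^{n+1} +{\bf w} \cdot \nabla_y a^{n+1} +c\, a^n \, \partial_\theta a^{n+1} +\mu \, \F_{\rm pul} (\partial_\theta a^n,\partial_\theta a^n) =0 \, ,& \\
a^{n+1}|_{t=0} =a_{0,n+1} \, , &
\end{cases}
\]
where $(a_{0,n})$ is a sequence of Schwartz functions converging to $a_0$ in $\Gamma^k(\R^d)$ and $a^0 \equiv a_{0,0}$. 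Each linear problem is a transport equation in $(t,y,\theta)$ with smooth coefficient $c\, a^n$ (the $\F_{\rm pul}$ term enters only as a source), so existence and $\Gamma^k$-regularity of $a^{n+1}$ follow from standard linear theory, using the characterization of $\Gamma^k$ from the Lemma preceding Proposition \ref{propFpul} (equivalently, one propagates $H^k$-regularity of $a^{n+1}$ and of $\theta^k a^{n+1}$, the latter obeying a transport equation with a bounded zeroth-order term coming from $\partial_\theta(c\, a^n \theta^k \cdot)$ and an extra source).

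The core of the argument is a uniform bound on the sequence. Differentiating, multiplying by the appropriate weight, and integrating, one obtains for the $\Gamma^k$-norm a differential inequality of the form
\[
\dfrac{{\rm d}}{{\rm d}t} \| a^{n+1}(t) \|_{\Gamma^k(\R^d)}^2 \le C \, \big( \| a^n(t) \|_{\Gamma^k(\R^d)} \big) \, \| a^{n+1}(t) \|_{\Gamma^k(\R^d)}^2 +C \, \| a^n(t) \|_{\Gamma^k(\R^d)}^3 \, ,
\]
where the commutator estimates for the transport term are exactly as in the Burgers case (Moser-type product estimates, valid since $\Gamma^k$ is an algebra for $k>(d+2)/2$, which holds here), and the contribution of $\mu \, \F_{\rm pul}(\partial_\theta a^n,\partial_\theta a^n)$ is controlled by Proposition \ref{propFpul}, namely $\| \F_{\rm pul}(\partial_\theta a^n,\partial_\theta a^n) \|_{\Gamma^k} \le C_k \| a^n \|_{\Gamma^k}^2$. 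A continuous-induction / bootstrap argument then yields a time $T=T(k,R)>0$, depending only on $k$ and $R \ge \| a_0 \|_{\Gamma^k}$, on which $\sup_{[0,T]} \| a^n \|_{\Gamma^k} \le 2R$ for all $n$.

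Next I would prove contraction in the low norm $\cC([0,T];L^2(\R^d))$. Setting $r^{n+1} := a^{n+1}-a^n$, the difference satisfies
\[
\partial_t r^{n+1} +{\bf w} \cdot \nabla_y r^{n+1} +c\, a^n \, \partial_\theta r^{n+1} =-c\, r^n \, \partial_\theta a^n -\mu \, \F_{\rm pul}(\partial_\theta r^n,\partial_\theta a^n) -\mu \, \F_{\rm pul}(\partial_\theta a^{n-1},\partial_\theta r^n) \, .
\]
Using the integration-by-parts identity \eqref{propF2} to move one $\theta$-derivative off $r^n$ (so that the right-hand side contains $r^n$ times smooth factors, plus $\F_{\rm pul}(r^n,\partial^2_{\theta\theta} a^n)$-type terms), together with an $L^2$ continuity estimate for $\F_{\rm pul}$ analogous to \eqref{FpercontL2}, and an $L^2$ energy estimate for the transport operator, one gets
\[
\sup_{[0,T]} \| r^{n+1} \|_{L^2}^2 \le \| r^{n+1}|_{t=0} \|_{L^2}^2 +C_0 T \sup_{[0,T]} \| r^{n+1} \|_{L^2}^2 +C_0 T \sup_{[0,T]} \| r^{n+1} \|_{L^2} \sup_{[0,T]} \| r^n \|_{L^2} \, ,
\]
with $C_0$ depending only on the uniform $\Gamma^k$-bound. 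Shrinking $T$ if necessary gives a contraction in $\cC([0,T];L^2)$, hence strong convergence of $(a^n)$ in $\cC([0,T];\Gamma^{k'})$ for $k'<k$ by interpolation with the uniform $\Gamma^k$-bound, and weak-$*$ convergence to a limit $a \in L^\infty([0,T];\Gamma^k)$ solving the equation. Continuity of $a$ with values in $\Gamma^k$ (not merely $\Gamma^{k'}$) is recovered by the standard Bona–Smith / mollification argument, as in \cite[Proposition 1.4]{taylor}. Uniqueness follows from the same $L^2$ difference estimate applied to two solutions. The main obstacle, and the only place where something beyond the Burgers template is needed, is controlling the nonlocal term $\F_{\rm pul}(\partial_\theta a,\partial_\theta a)$: in the high-norm estimate this is handled by Proposition \ref{propFpul}, and in the low-norm difference estimate by the weaker $L^2$ bound together with \eqref{propF2}; everything else is routine. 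Note that, in contrast with Part \ref{part1}, the estimate for $\F_{\rm pul}$ in Proposition \ref{propFpul} is not tame, so here $T$ genuinely depends on the full norm $\| a_0 \|_{\Gamma^k}$ and not merely on a low norm; this is harmless since for pulses only finitely many correctors are constructed.
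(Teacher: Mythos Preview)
Your proposal is correct and follows essentially the same approach as the paper: the same iteration scheme, the same uniform $\Gamma^k$ bound via Proposition~\ref{propFpul} (with the weighted part handled by writing the transport equation for $\theta^k a^{n+1}$), and the same $L^2$ contraction using \eqref{propF2} together with the $L^2$ continuity estimates \eqref{Festim1}--\eqref{Festim2} (the pulse analogues of \eqref{FpercontL2} that you invoke). The only cosmetic difference is that the paper cites \cite[chapter 10]{BS} rather than \cite{taylor} for the recovery of strong continuity in $\Gamma^k$.
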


Of course, one can also incorporate a nonzero source term $g \in L^2([0,T_0];\Gamma^k(\R^d))$ in the equation, 
and obtain a unique solution $a \in {\mathcal C}([0,T];\Gamma^k(\R^d))$ on a time interval $[0,T]$ with $T \le T_0$. 
We omit the details that are completely standard.

\begin{proof}
We follow the standard energy method for quasilinear symmetric systems, see for instance \cite[chapter 10]{BS}, 
and solve the Cauchy problem by the iteration scheme
\begin{equation*}
\begin{cases}
\partial_t a^{n+1} +{\bf w} \cdot \nabla_y a^{n+1} +c\, a^n \, \partial_\theta a^{n+1} +\mu \, \F_{\rm pul} 
(\partial_\theta a^n,\partial_\theta a^n) =0 \, ,& \\
a^{n+1}|_{t=0} =a_{0,n+1} \, , &
\end{cases}
\end{equation*}
where $(a_{0,n})$ is a sequence of, say, Schwartz functions that converges towards $a_0$ in $\Gamma^k(\R^d)$, 
and the scheme is initialized with the choice $a^0 \equiv a_{0,0}$. Given the radius $R$ for the ball in $\Gamma^k(\R^d)$, 
we can choose some time $T>0$, that only depends on $R$ and $k$, such that the sequence $(a^n)$ is bounded 
in ${\mathcal C}([0,T];\Gamma^k(\R^d))$. The uniform bound in ${\mathcal C}([0,T];H^k(\R^d))$ is proved by following 
the exact same ingredients as in the case of the Burgers equation, and the weighted $L^2$ bound is proved by 
computing
\begin{equation*}
\partial_t (\theta^k \, a^{n+1}) +{\bf w} \cdot \nabla_y (\theta^k \, a^{n+1}) +c \, a^n \, \partial_\theta (\theta^k \, a^{n+1}) 
=-\mu \, \theta^k \, \F_{\rm pul} (\partial_\theta a^n,\partial_\theta a^n) +k\, c \, a^n \, (\theta^{k-1} \, \partial_\theta a^{n+1}) \, ,
\end{equation*}
and performing the usual $L^2$-estimate for the transport equation.

It remains to show that the iteration contracts in the ${\mathcal C}([0,T];L^2(\R^d))$-topology for $T$ small enough. 
This is proved by defining $r^{n+1} :=a^{n+1}-a^n$ and computing
\begin{equation*}
\partial_t r^{n+1} +{\bf w} \cdot \nabla_y r^{n+1} +c \, a^n \, \partial_\theta r^{n+1} =-c \, r^n \, \partial_\theta a^n 
-\mu \, \F_{\rm pul}(\partial_\theta r^n,\partial_\theta a^n) -\mu \, \F_{\rm pul}(\partial_\theta a^{n-1},\partial_\theta r^n) \, . 
\end{equation*}
The error terms on the right hand-side are written as
\begin{align*}
\F_{\rm pul}(\partial_\theta r^n,\partial_\theta a^n) &=-\dfrac{1}{\delta_1} \, r^n \, \partial_\theta a^n 
-\dfrac{\delta_3}{\delta_1} \, \F_{\rm pul}(r^n,\partial^2_{\theta \theta} a^n) \, ,\\
\F_{\rm pul}(\partial_\theta a^{n-1},\partial_\theta r^n) &=-\dfrac{1}{\delta_3} \, r^n \, \partial_\theta a^{n-1} 
-\dfrac{\delta_1}{\delta_3} \, \F_{\rm pul}(\partial^2_{\theta \theta} a^{n-1},r^n) \, ,
\end{align*}
and we then apply the $L^2$-estimates \eqref{Festim1} and \eqref{Festim2}. This gives contraction of the iteration 
scheme in ${\mathcal C}([0,T];L^2(\R^d))$ and, passing to the limit, we obtain a solution $a \in L^\infty([0,T];\Gamma^k 
(\R^d))$ to the Cauchy problem. Continuity in $\Gamma^k(\R^d)$ is recovered by the same arguments as in 
\cite[chapter 10]{BS}, using the norm in $\Gamma^k(\R^d)$ rather than the $H^k$-norm. We feel free to skip 
the details.
\end{proof}

We do not claim that the regularity assumption in Theorem \ref{thmMachStems} is optimal as far as local existence 
of smooth solutions is concerned, but it is sufficient for our purpose. The global existence of weak and/or smooth 
solutions is, of course, a wide open problem. Numerical simulations reported in \cite{MR2} tend to indicate that 
finite time breakdown of smooth solutions should be expected, and Proposition \ref{propFpul} is clearly a first 
step towards a rigorous justification of this fact. We postpone the study of such finite time breakdown to a future 
work.

\subsection{Construction of the leading profile}

Corollary \ref{cor3} below is based on Theorem \ref{thmMachStems} and is entirely similar to Corollary \ref{cor1} 
for the wavetrain problem. The only difference is that we restrict here to some finite regularity since the estimate 
provided by Theorem \ref{thmMachStems} is not tame. Let us recall that the smoothness assumption for the 
source term $G$ in \eqref{0} was made in Theorem \ref{theopulses}.

\begin{cor}
\label{cor3}
With $K_0,K_1 \in \N$ and $G$ as in Theorem \ref{theopulses}, there exists $0<T \le T_0$, and there exists 
a unique 
\begin{equation*}
a \in 	\cap_{\ell =0}^{K_0} \, {\mathcal C}^\ell ((-\infty,T];\Gamma^{K_1-1-\ell}(\R^d_{y,\theta})) \, ,
\end{equation*}
solution to \eqref{eqMachStems}, with $a|_{t<0}=0$. Furthermore, the integral of $a$ with respect to the variable 
$\theta \in \R$ vanishes.
\end{cor}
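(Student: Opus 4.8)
The plan is to reduce Corollary \ref{cor3} to Theorem \ref{thmMachStems} by the same device that was used for Corollary \ref{cor1} in the wavetrain case: since the data for \eqref{eqMachStems} is the zero function at $t=0$ (or rather, $a$ vanishes identically in negative times) and the equation is driven purely by the smooth, compactly-supported-in-negative-times forcing $g$ coming from $\underline{b}\,\partial_{\theta_0} G$, we are solving an inhomogeneous Cauchy problem with trivial initial data. First I would invoke the remark following Theorem \ref{thmMachStems}, namely that a nonzero source term $g\in L^2([0,T_0];\Gamma^k(\R^d))$ can be incorporated into the fixed-point scheme without any change, yielding a unique solution $a\in\mathcal{C}([0,T];\Gamma^k(\R^d))$ on some interval $[0,T]$ with $T\le T_0$, where $T$ depends only on a fixed norm of $g$ (not on higher norms). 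Applied with $k=K_1-1$, this gives a local-in-time solution in $\mathcal{C}((-\infty,T];\Gamma^{K_1-1}(\R^d))$ that vanishes for $t<0$ (uniqueness forces $a\equiv 0$ on the interval where $g\equiv 0$, so the solution extends past $t=0$ by zero).

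Next I would bootstrap the time regularity. Given $a\in\mathcal{C}((-\infty,T];\Gamma^{K_1-1})$ and the hypothesis $G\in\cap_{\ell=0}^{K_0-1}\mathcal{C}^\ell((-\infty,T_0]_t;\Gamma^{K_1-\ell}(\R^d))$ from Theorem \ref{theopulses}, the equation \eqref{eqMachStems} itself expresses $\partial_t a$ in terms of $\nabla_y a$, $a\,\partial_\theta a$, $\F_{\rm pul}(\partial_\theta a,\partial_\theta a)$ and $g$; using Corollary \ref{corproof} for the boundedness of $\F_{\rm pul}$ on $\Gamma^k$ and the algebra property of $\Gamma^k$ for $k>(d+2)/2$, one reads off $\partial_t a\in\mathcal{C}((-\infty,T];\Gamma^{K_1-2})$, hence $a\in\mathcal{C}^1((-\infty,T];\Gamma^{K_1-2})$. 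Differentiating the equation in $t$ repeatedly and using the smoothness of $g$ in $t$ (which follows from the stated smoothness of $G$) propagates this: each extra $t$-derivative costs one order of $\Gamma$-regularity, giving $a\in\mathcal{C}^\ell((-\infty,T];\Gamma^{K_1-1-\ell})$ for $0\le\ell\le K_0$, exactly the claimed regularity class. The constraint $K_1-K_0\ge 2k_0+2$ ensures that at every stage of the bootstrap we remain above the threshold $k\ge 2k_0+1$ required for the $\F_{\rm pul}$ estimates to apply, so the argument never breaks down.

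Finally, the zero-moment statement: for each fixed $(t,y)$ set $\underline{a}(t,y):=\int_\R a(t,y,\theta)\,{\rm d}\theta$. Integrating \eqref{eqMachStems} over $\theta\in\R$, the Burgers term $c\,a\,\partial_\theta a=\tfrac{c}{2}\partial_\theta(a^2)$ integrates to zero (decay of $a$ in $\theta$), the term $\mu\,\F_{\rm pul}(\partial_\theta a,\partial_\theta a)$ also integrates to zero — it is a $\theta$-derivative of $\F_{\rm pul}(a,\partial_\theta a)$ by \eqref{propF1} up to the lower-order piece, or more directly one notes from \eqref{defFpul} that $\int_\R \F_{\rm pul}(\partial_\theta u,\partial_\theta v)\,{\rm d}\theta$ vanishes by Fubini and the fundamental theorem of calculus in $\theta$ — and $\int_\R g\,{\rm d}\theta=\int_\R\underline{b}\,\partial_{\theta_0}G\,{\rm d}\theta_0=0$. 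Hence $\underline{a}$ solves the homogeneous linear transport equation $\partial_t\underline{a}+{\bf w}\cdot\nabla_y\underline{a}=0$ with $\underline{a}|_{t<0}=0$, so $\underline{a}\equiv 0$. I do not expect a genuine obstacle here; the only point demanding care is the bookkeeping of the regularity loss in the bootstrap, i.e.\ checking that the index inequalities $K_0>8+(d+2)/2$ and $K_1-K_0\ge 2k_0+2$ are precisely what is needed to keep the nonlinear estimates (Corollary \ref{corproof} and the algebra property) valid throughout, and to leave enough room for the later profile constructions (Corollary \ref{cor4} and the trace arguments in Section \ref{sect6}).
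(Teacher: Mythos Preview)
Your proposal is correct and follows essentially the same strategy as the paper: invoke Theorem~\ref{thmMachStems} with the source term to get $a\in\mathcal{C}((-\infty,T];\Gamma^{K_1-1})$, bootstrap time regularity by differentiating the equation (using Proposition~\ref{propFpul} and the algebra property, with the index condition $K_1-K_0-1\ge 2k_0+1$ to keep the estimates valid at each step), and then integrate in $\theta$ to obtain a homogeneous transport equation for $\underline a$. The only cosmetic difference is that the paper integrates the original form \eqref{eqMachStemsstrict} (where every nonlinear term is visibly a $\theta_0$-derivative), whereas you integrate the reduced form \eqref{eqMachStems} and must separately check that $\int_\R \F_{\rm pul}(\partial_\theta a,\partial_\theta a)\,{\rm d}\theta=0$; your Fubini/fundamental-theorem argument for this is valid.
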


\begin{proof}
The proof is rather straightforward. Since $G \in  {\mathcal C}^0((-\infty,T];\Gamma^{K_1}(\R^d))$, Theorem 
\ref{thmMachStems} yields a unique solution $a \in  {\mathcal C}^0((-\infty,T];\Gamma^{K_1-1}(\R^d))$ to 
\eqref{eqMachStems}, with $a|_{t<0}=0$. Furthermore, \eqref{eqMachStems} automatically yields $a \in 
{\mathcal C}^1((-\infty,T];\Gamma^{K_1-2}(\R^d))$ thanks to Proposition \ref{propFpul} and the fact that 
$\Gamma^{K_1-2}(\R^d)$ is an algebra. Then the standard bootstrap argument yields
\begin{equation*}
a \in 	\cap_{\ell =0}^{K_0} \, {\mathcal C}^\ell ((-\infty,T];\Gamma^{K_1-1-\ell}(\R^d_{y,\theta})) \, ,
\end{equation*}
by differentiating \eqref{eqMachStems} sufficiently many times with respect to time.

Let us observe that the requirement $K_1 -K_0-1 \ge 2\, k_0+1$ in Theorem \ref{theopulses} is used here to 
apply Proposition \ref{propFpul} for the term $\partial_{\theta_0} Q_{\rm pul}[a,a]$ in \eqref{eqMachStems1} 
and its successive time derivatives.

Thanks to the property $a \in {\mathcal C}^1(\Gamma^{K_1-2})$, $a$ is integrable with respect to $\theta \in \R$, 
and integration of \eqref{eqMachStemsstrict} yields
\begin{equation*}
X_{\rm Lop} \, \underline{a} =0 \, ,\quad \underline{a}|_{t<0} =0 \, ,
\end{equation*}
where $\underline{a}$ denotes the integral of $a$ with respect to $\theta$. Hence $\underline{a}$ is identically 
zero.
\end{proof}

\begin{cor}
\label{cor4}
Up to restricting $T>0$ in Corollary \ref{cor3}, for all $m \in {\mathcal I}$, there exists a unique solution
\begin{equation*}
\sigma_m \in \cap_{\ell =0}^{K_0} \, {\mathcal C}^\ell ((-\infty,T];\Gamma^{K_0-\ell}(\R^d_+ \times \R_\theta))
\end{equation*}
to \eqref{Burgersmp} with $\sigma_m|_{t<0}=0$ and $\sigma_m|_{x_d=0} =\mathfrak{e}_m \, a$, where the 
real number $\mathfrak{e}_m$ is defined by $e_m =\mathfrak{e}_m \, r_m$. Furthermore, each $\sigma_m$ 
has zero integral with respect to the variable $\theta_m \in \R$.
\end{cor}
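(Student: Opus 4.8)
The plan is to treat Corollary \ref{cor4} as an essentially scalar boundary value problem for Burgers' equation in the half-space $\R^d_+$ with Dirichlet boundary data on $\{x_d=0\}$ prescribed by the already-constructed amplitude $a$, carried out in the weighted Sobolev scale $\Gamma^k$. First I would record that \eqref{Burgersmp} is, for each fixed incoming index $m$, a genuine Burgers equation $\partial_t \sigma_m + {\bf v}_m\cdot\nabla_x\sigma_m + c_m\,\sigma_m\,\partial_{\theta_m}\sigma_m = 0$ in the variables $(t,x,\theta_m)$, with the $\theta_m$-variable playing the role of a spatial variable with respect to which there is no boundary. Since ${\bf v}_m$ is incoming (its $x_d$-component is $\partial_{\xi_d}\lambda_{k_m}(\ueta,\uomega_m)>0$), the boundary $\{x_d=0\}$ is \emph{noncharacteristic and incoming} for the transport part, so prescribing $\sigma_m|_{x_d=0}=\mathfrak e_m\,a$ is the correct (maximally dissipative) boundary condition and the scalar quasilinear theory applies: see \cite[chapter 10]{BS}. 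The data $\mathfrak e_m\,a$ lies in $\cap_{\ell=0}^{K_0}\mathcal C^\ell((-\infty,T];\Gamma^{K_1-1-\ell}(\R^{d-1}_y\times\R_\theta))$ by Corollary \ref{cor3}, which, after the trace and compatibility bookkeeping, is more than enough regularity to produce a solution in $\cap_{\ell=0}^{K_0}\mathcal C^\ell((-\infty,T];\Gamma^{K_0-\ell}(\R^d_+\times\R_\theta))$; the compatibility conditions at the corner $\{t=0\}\cap\{x_d=0\}$ are trivially satisfied because everything vanishes for $t<0$.

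The main technical point, and the only place where this differs from the standard half-space Burgers result, is propagation of the \emph{weight} $\theta_m^k$. I would handle this exactly as in the proof of Theorem \ref{thmMachStems} (and \cite[Proposition 3.3]{CW4}): commuting $\theta_m^k$ through the equation produces the equation $\partial_t(\theta_m^k\sigma_m) + {\bf v}_m\cdot\nabla_x(\theta_m^k\sigma_m) + c_m\,\sigma_m\,\partial_{\theta_m}(\theta_m^k\sigma_m) = k\,c_m\,\sigma_m\,(\theta_m^{k-1}\partial_{\theta_m}\sigma_m)$, whose right-hand side is controlled in $L^2$ once $\sigma_m$ is bounded in $H^k$ (Sobolev with $k>(d+1)/2$, which holds since $K_0>8+(d+2)/2$). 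Thus the weighted and unweighted energy estimates close simultaneously on a common time interval $[0,T]$, with $T$ depending only on a fixed $\Gamma^k$-norm of the boundary data; since $a$ is fixed and already known on $(-\infty,T]$ for some $T>0$, one obtains $\sigma_m$ on the same $(-\infty,T]$ up to shrinking $T$. The $\mathcal C^\ell$-in-time regularity for $\ell\le K_0$ then follows by differentiating \eqref{Burgersmp} in time and using that $\Gamma^k$ is an algebra for $k>(d+2)/2$, exactly the bootstrap used in Corollary \ref{cor3}; uniqueness is the usual $L^2$ contraction estimate for the difference of two solutions.

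Finally, for the moment-zero statement: integrating \eqref{Burgersmp} with respect to $\theta_m\in\R$ and using that $\sigma_m$ and its $\theta_m$-derivative decay (which is exactly what membership in $\Gamma^{K_0}$ with $K_0$ large buys us, so that the flux term $\int_\R \sigma_m\,\partial_{\theta_m}\sigma_m\,\mathrm d\theta_m = \tfrac12\int_\R \partial_{\theta_m}(\sigma_m^2)\,\mathrm d\theta_m=0$ vanishes) gives
\begin{equation*}
\partial_t \underline{\sigma}_m + {\bf v}_m\cdot\nabla_x\underline{\sigma}_m = 0 \, ,\qquad \underline{\sigma}_m(t,x):=\int_\R \sigma_m(t,x,\theta_m)\,\mathrm d\theta_m \, .
\end{equation*}
The boundary trace of $\underline{\sigma}_m$ on $\{x_d=0\}$ is $\mathfrak e_m\int_\R a(t,y,\theta_0)\,\mathrm d\theta_0=0$ by Corollary \ref{cor3}, and $\underline{\sigma}_m|_{t<0}=0$; since the transport field ${\bf v}_m$ is incoming, this linear homogeneous boundary value problem has only the zero solution, so $\underline{\sigma}_m\equiv 0$. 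I expect no real obstacle here beyond careful bookkeeping of which weighted/temporal-regularity index survives each operation; the one point to state cleanly is the decay needed to kill the boundary terms in the integration by parts, which is guaranteed by the choice $K_0>8+(d+2)/2$ and the Sobolev embedding $\Gamma^{K_0}\hookrightarrow \mathcal C^1_b$ with $O(\langle\theta\rangle^{-L})$ decay for $L$ as large as $K_0-1-(d+2)/2$.
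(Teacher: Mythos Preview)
Your proposal is correct and follows essentially the same approach as the paper. The paper's own proof is terser: it first observes that the time-regularity of $a$ from Corollary \ref{cor3} together with $K_1-1\ge K_0$ yields $a\in\Gamma^{K_0}((-\infty,T]_t\times\R^d_{y,\theta})$ in the full space--time weighted sense, then simply invokes well-posedness of the incoming Burgers boundary value problem in the $\Gamma^{K_0}$ scale (``the theory is similar to that in the standard Sobolev space $H^{K_0}$''), and derives the moment-zero property by exactly your integration argument; your commutator computation for the weight $\theta_m^k$ is the detail the paper elides.
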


\begin{proof}
From Corollary \ref{cor3}, we get $a \in \Gamma^{K_0}((-\infty,T]_t \times \R^d_{y,\theta})$, with $K_0>1+(d+1)/2$ 
(here we use $K_1-1 \ge K_0$). Then we solve the Burgers equation \eqref{Burgersmp} with prescribed boundary 
condition $\sigma_m|_{x_d=0} =\mathfrak{e}_m \, a$. The theory is similar to that in the standard Sobolev space 
$H^{K_0}$, and we feel free to use well-posedness in this weighted Sobolev space framework. This yields a solution 
\begin{equation*}
\sigma_m \in \cap_{\ell =0}^{K_0} \, {\mathcal C}^\ell ((-\infty,T];\Gamma^{K_0-\ell}(\R^d_+ \times \R_\theta)) \, ,
\end{equation*}
to \eqref{Burgersmp} that vanishes in the past. Integration of \eqref{Burgersmp} with respect to $\theta_m$ shows 
that the integral of $\sigma_m$ with respect to the variable $\theta_m \in \R$ satisfies
\begin{equation*}
\partial_t \underline{\sigma}_m +{\bf v}_m \cdot \nabla_x \underline{\sigma}_m =0 \, ,\quad 
\underline{\sigma}_m|_{x_d=0} =0 \, ,
\end{equation*}
and therefore vanishes.
\end{proof}

\begin{rem}
\label{remsect7}
We observe that there has been a rather big loss of regularity in passing from the trace $a$ to the interior 
functions $\sigma_m$. This is due to the following fact: the trace $a$ is obtained by solving a Cauchy problem, 
where tangential regularity with respect to the time slices $\{ t=\text{\rm constant} \}$ is propagated. However, 
constructing a local in time smooth solution to \eqref{Burgersmp} requires a full regularity for the trace $a$, 
that is, regularity of both $(y,\theta)$ and time partial derivatives. This is the reason why we also need to control 
time derivatives of $a$, which means controlling time derivatives of $\partial_{\theta_0} Q_{\rm pul}[a,a]$. In 
Corollary \ref{cor3}, we have considered sufficiently smooth initial data so that such time derivatives are 
controlled by an easy application of Proposition \ref{propFpul}. Again, we do not aim at an optimal regularity 
result.
\end{rem}

\subsection{Extension to more general $N\times N$ systems}
\label{extensionpulses}

\emph{\quad} The extension of the definitions ($\cV_F, \cV_H, E_P, E_Q, R_\infty$, etc.) and results for pulses 
in the strictly hyperbolic $3\times 3$ case to the strictly hyperbolic $N\times N$ case is straightforward. We first 
show that the leading profile $\cU_0$ reads
\begin{equation*}
%\label{leadingprofp}
{\mathcal U}_0(t,x,\theta_0,\xi_d) =\sum_{m \in {\mathcal I}} \sigma_m (t,x,\theta_0 +\underline{\omega}_m \, \xi_d) 
\, r_m \, ,
\end{equation*}
with functions $\sigma_m$ that are expected to decay at infinity with respect to $\theta_m$. Moreover, the 
$\sigma_m$'s satisfy \eqref{Burgersmp} in the domain $\{ x_d >0\}$, and \eqref{4p}(a) yields
\begin{equation*}
\forall \, m \in {\mathcal I} \, ,\quad \sigma_m (t,y,0,\theta_0) \, r_m =a(t,y,\theta_0) \, e_m \, ,
\end{equation*}
for a suitable function $a$.

The existence of a bounded corrector $\cU_2$ solution to \eqref{3p}(c) implies that the first corrector reads
\begin{align*}
{\mathcal U}_1(t,x,\theta_0,\xi_d) =& \sum_{m \in {\mathcal O}} -\int_{\xi_d}^{+\infty} \ell_m \, {\mathcal F}_0 
(t,x,\theta_0 +\underline{\omega}_m \, (\xi_d-X),X) \, {\rm d}X \, r_m \\
&+\sum_{m \in {\mathcal I}} \left( \tau_m(t,x,\theta_0 +\underline{\omega}_m \, \xi_d) -\int_{\xi_d}^{+\infty} 
\ell_m \, {\mathcal F}_0 (t,y,0,\theta_0 +\underline{\omega}_m \, (\xi_d-X),X) \, {\rm d}X \right) \, r_m \, .
\end{align*}
Plugging these expressions in \eqref{4p}(b), we derive the following amplitude equation that governs the 
evolution of $a$ on the boundary:
\begin{equation}
\label{eqMachStemsstrict}
\upsilon \, \partial_{\theta_0} (a^2) -X_{\rm Lop} a +\partial_{\theta_0} Q_{\rm pul}[a,a] 
=\underline{b} \, \partial_{\theta_0} G \, ,
\end{equation}
with $\upsilon$ and $X_{\rm Lop}$ defined in \eqref{defupsilonstrict}, and $Q_{\rm pul}$ defined by:
\begin{align}
\label{defQpulstrict}
\begin{split}
&Q_{\rm pul}[a,\widetilde{a}] (\theta_0) :=\sum_{m \in {\mathcal O}} \underline{b} \, B \, r_m \, 
\sum_{\underset{m_1, m_2 \in {\mathcal I}}{m_1 <m_2}} \ell_m \, E_{m_1,m_2} \, \int_0^{+\infty} 
\partial_{\theta_0} a (\theta_0+(\underline{\omega}_{m_1}-\underline{\omega}_m)\, X) \, 
\widetilde{a}(\theta_0+(\underline{\omega}_{m_2}-\underline{\omega}_m)\, X) \, {\rm d}X \\
&\qquad +\sum_{m \in {\mathcal O}} \underline{b} \, B \, r_m \, 
\sum_{\underset{m_1, m_2 \in {\mathcal I}}{m_1 < m_2}} \ell_m \, E_{m_2,m_1} \, \int_0^{+\infty} 
a (\theta_0+(\underline{\omega}_{m_1}-\underline{\omega}_m)\, X) \, 
\partial_{\theta_0} \widetilde{a}(\theta_0+(\underline{\omega}_{m_2}-\underline{\omega}_m)\, X) \, {\rm d}X \, ,
\end{split}
\end{align}
with $E_{m_1,m_2}$ as in \eqref{defEm1m2}.
\bigskip

As in the wavetrain case some care is needed in the extension to hyperbolic systems with constant multiplicities. 
Instead of introducing bases $\{\ell_{m,k}\}$, $\{r_{m,k}\}$, $m=1,\dots, M$, $k=1,\dots, \nu_{k_m}$ of left and 
right eigenvectors, we now define the spaces $\cV_F$ and $\cV_H$ and operators $E_P$, $E_Q$, $R_\infty$ 
as follows.

\begin{defn}\label{functions2}
(a) Define the set of ``constituent functions" $\cC=\cup^M_{m=1} \cC^m$, where $\cC^m$ is the set of $\R^N$-valued 
functions of $(t,x,\theta_0,\xi_d)$ of the form $F(t,x,\theta_0+\uomega_m\xi_d)$,  where $F(t,x,\theta_m)$  is $\cC^1$ 
and decays with its first-order partials at the rate $O(\langle \theta \rangle^{-2})$ uniformly with respect to $(t,x)$. 
Setting $\cV_\cC :=\oplus^M_{m=1} \cC^m$, we can write any $F\in \cV_\cC$ as $F =\sum^M_{m=1} F^m$ with 
$F^m\in\cC^m$.

(b) Define $\cV_F$ to be the space of $\R^N$-valued functions of $(t,x,\theta_0,\xi_d)$ that may be written as a finite 
sum of terms of the form
\begin{equation*}
F \, ,\quad \cB_\alpha(G,H) \, ,\quad \cT_\beta(K,L,M) \, ,
\end{equation*}
where $F$, $G, \dots, M$ lie in $\cV_\cC$, $\cB_\alpha : \R^N \times \R^N \to \R^N$ is any bilinear map, and 
$\cT_\beta : \R^N \times \R^N \times \R^N \to \R^N$ is any trilinear map.

(c) Define a \emph{transversal interaction integral} to be a function $I^i_{\ell,m}$ of the form
\begin{equation}
\label{b1}
I^i_{\ell,m}(t,x,\theta_0,\xi_d) =-\int_{\xi_d}^{+\infty} A_d(0)^{-1} \, Q_i \, \cB_\gamma (F^\ell,G^m) 
(t,x,\theta_0+\uomega_i \, (\xi_d-s),s) \, {\rm d}s \, ,
\end{equation}
where $i,\ell,m$ lie in $\{1,\dots,M\}$ and are mutually distinct, $\cB_\gamma : \R^N \times \R^N \to \R^N$ is any 
bilinear map, and $F^\ell l\in \cC^\ell$, $G^m \in \cC^m$ are required to decay with their first-order partials at the 
rate $O(\langle \theta \rangle^{-3})$ uniformly with respect to $(t,x)$.

(d) Define $\cV_H$ to be the space of $\R^N$-valued functions of $(t,x,\theta_0,\xi_d)$  that may be written as 
the sum of an element of $\cV_F$ plus a finite sum of terms of the form
\begin{equation}
\label{b2}
I^i_{\ell,m} (t,x,\theta_0,\xi_d) \quad \text{ or } \quad \cB_\delta (A^j, J^n_{p,q}(t,x,\theta_0,\xi_d)) \, ,
\end{equation}
where $I^i_{\ell,m}$ and $J^n_{p,q}$ are transversal interaction integrals,  $\cB_\delta : \R^N \times \R^N \to \R^N$ 
is any bilinear map, and $A^j \in \cC^j$ is required to decay with its first-order partials at the rate $O(\langle \theta 
\rangle^{-3})$ uniformly with respect to $(t,x)$.

(e) For  $H \in \cV_H$ we can write $H=H_F+H_I$, where $H_F \in \cV_F$ and $H_I \notin \cV_F$ is a finite sum 
of terms of the form \eqref{b2}. The ``constituent functions" of $H$ include those of $H_F$ as well as the functions 
like $A^j$, $F^\ell$, $G^m$ as in \eqref{b1}, \eqref{b2} which constitute $H_I$.

(f) With these definitions of $\cV_F$, $\cV_H$, and ``constituent functions", the subspaces $\cV_F^k$ and $\cV_H^k$ 
may be defined just as in Definition \ref{VFK}.
\end{defn}

\begin{defn}[$E_P$,$E_Q$,$R_\infty$]
\label{ops2}
For $H\in \cV_H$, define averaging operators
\begin{align*}
\begin{split}
&E_Q \, H(t,x,\theta_0,\xi_d) :=\sum_{m=1}^M \lim_{T \to \infty} \dfrac{1}{T} \, \int^T_0 Q_m \, 
H(x,\theta_0+\uomega_m \, (\xi_d-s),s) \, {\rm d}s \, ,\\
&E_P \, H(t,x,\theta_0,\xi_d) :=\sum_{m=1}^M \lim_{T \to \infty} \dfrac{1}{T} \, \int^T_0 P_m \, 
H(x,\theta_0+\uomega_m \, (\xi_d-s),s) \, {\rm d}s \, .
\end{split}
\end{align*}
For $H\in \cV_H$ such that $E_Q \, H=0$, define the solution operator
\begin{equation*}
R_\infty \, H(t,x,\theta_0,\xi_d) :=-\sum_{m=1}^M \int_{\xi_d}^{+\infty} A_d(0)^{-1} \, Q_m \, 
H(t,x,\theta_0+\uomega_m \, (\xi_d-s),s) \, {\rm d}s \, .
\end{equation*}
\end{defn}

With these definitions, Propositions \ref{a3} and \ref{properties} hold with the obvious minor changes. 
For example, in Proposition \ref{properties}(c), we now have
\begin{equation*}
E_P \, \cU =\sum^M_{m=1} \cU^m \, , \text{ where } \; \cU^m \in \cC^m \; \text{ and } \; P_m \, \cU^m=\cU^m \, .
\end{equation*}
The construction of profiles is carried out assuming $\cU_2 \in \cC^1_b$ and that $\cU_0$, $\cU_1$ satisfy 
\eqref{a9}, where \eqref{a9}(a)  is modified to
\begin{equation*}
\cU_0 =\sum^M_{m=1} \cU^m_0 \, ,\quad \cU^m_0=P_m \, \cU_0^m \in \cC^m \cap \cV^{K_0}_F \, ,\quad 
\int_\R \cU_0^m (t,x,\theta_m) \, {\rm d}\theta_m=0 \, .
\end{equation*}
Again the interior leading profile equations for the $Q_m\cU^m_0$ take the form \eqref{eq:Burgerssyst}, 
which allows the moment zero property of the $\cU^m_0$ to be deduced from that of the amplitude $a$ 
as before. The solvability in $\Gamma^k$ spaces of \eqref{eq:Burgerssyst} with boundary conditions 
\eqref{bcs} follows from Lemma \ref{sh} via $L^2$ estimates proved in the standard way.\footnote{Such 
an argument is given in detail in Propositions 3.5 and 3.6 of \cite{CW4}.} One finds as before that $\cU_0$ 
is purely incoming \eqref{polar}.

The consequence \eqref{symm} of the conservative structure assumption is used again in step 2 of the 
profile construction when performing the integral that now replaces the integral on $[\xi_d,+\infty[$ of the 
second line of \eqref{a6}. This integral now reads
\begin{equation*}
-\sum_{k\neq m} \int_{\xi_d}^{+\infty} A_d(0)^{-1} \, Q_m \, \cM(\cU^k_0,\cU^k_0) \, (t,x,\theta_0+\uomega_m 
\, (\xi_d-s),s) \, {\rm d}s \, .
\end{equation*}
The replacements for the other lines of \eqref{a6} are treated essentially as before; the last line now yields 
transversal interaction integrals of the form \eqref{b1}.

Parallel to \eqref{qoper} the nonlocal operator in the equation for $a$ now has the form
\begin{multline*}
%\label{defQpulstrict2}
Q_{\rm pul}[a,\widetilde{a}] (\theta_0) := \\
\sum_{m \in {\mathcal O}}
\sum_{\underset{m_1, m_2 \in {\mathcal I}}{m_1 <m_2}} \underline{b} \, B \, A_d(0)^{-1} \, Q_m \, E_{m_1,m_2} \, 
\int_0^{+\infty} \partial_{\theta_0} a (\theta_0+(\underline{\omega}_{m_1}-\underline{\omega}_m)\, X) \, 
\widetilde{a}(\theta_0+(\underline{\omega}_{m_2}-\underline{\omega}_m)\, X) \, {\rm d}X \\
+\sum_{m \in {\mathcal O}} \sum_{\underset{m_1, m_2 \in {\mathcal I}}{m_1 < m_2}} 
\underline{b} \, B \, A_d(0)^{-1} \, Q_m \, E_{m_2,m_1} \, \int_0^{+\infty} 
a (\theta_0+(\underline{\omega}_{m_1}-\underline{\omega}_m)\, X) \, 
\partial_{\theta_0} \widetilde{a}(\theta_0+(\underline{\omega}_{m_2}-\underline{\omega}_m)\, X) \, {\rm d}X \, ,
\end{multline*}
with $E_{m_1,m_2}$ as in \eqref{defEm1m2}.

Repetition of step 6 of the profile construction yields $\cU_0$, $\cU_1$, $\cU_2$ with the same regularity 
and decay properties as before.

%\begin{multline*}
%Q_{\rm per}[a,\widetilde{a}] :=-\sum_{m \in {\mathcal O}} \, \sum_{\underset{m_1, m_2 \in {\mathcal I}}{m_1<m_2}} \\
%\sum_{k \in \Z} \left( \sum_{\underset{k_{m_1} \, k_{m_2} \neq 0}{k_{m_1}+k_{m_2}=k,}} 
%\dfrac{k_{m_1} \, \underline{b} \, B \, A_d(0)^{-1} \, Q_m \, E_{m_1,m_2} 
%+k_{m_2} \, \underline{b} \, B \, A_d(0)^{-1} \, Q_m \, E_{m_2,m_1}}{k_{m_1} \, 
%(\underline{\omega}_{m_1}-\underline{\omega}_m) +k_{m_2} \, (\underline{\omega}_{m_2}-\underline{\omega}_m)} 
%\, a_{k_{m_1}} \, \widetilde{a}_{k_{m_2}} \right) \, {\rm e}^{2 \, i \, \pi \, k \, \theta_0/\Theta} \, ,
%\end{multline*}
%with vectors $E_{m_1,m_2}$ as in \eqref{defEm1m2}.

\newpage
\part{Appendices}
\appendix
\section{Example: the two-dimensional isentropic Euler equations}
\label{appA}

In this first appendix, we discuss how our main results apply to the two-dimensional Euler equations in a fixed half-plane. 
Once again, we refer to \cite{MR,AM,wangyu} for the derivation of such amplified high frequency expansions in various 
free boundary problems. Our discussion here will mainly deal with the verification of the small divisor condition, that is, 
Assumption \ref{assumption5}. In quasilinear form, the isentropic Euler equations read
\begin{equation}
\label{euler}
\begin{cases}
\partial_t v +u_1\, \partial_1 v +u_2\, \partial_2 v -v \, (\partial_1 u_1 +\partial_2 u_2) =0\, ,& \\
\partial_t u_1 +u_1\, \partial_1 u_1 +u_2\, \partial_2 u_1 -\dfrac{c(v)^2}{v} \, \partial_1 v =0 \, ,& \\
\partial_t u_2 +u_1\, \partial_1 u_2 +u_2\, \partial_2 u_2 -\dfrac{c(v)^2}{v} \, \partial_2 v =0 \, ,&
\end{cases}
\end{equation}
where $v>0$ denotes the specific volume of the fluid, $c(v)>0$ denotes the sound speed and $(u_1,u_2)$ the velocity field. 
We consider a fixed reference volume $\underline{v}$, and a fixed velocity field $(0,\underline{u})$ with
\begin{equation*}
\underline{v}>0 \, ,\quad 0<\underline{u}<\underline{c} :=c(\underline{v}) \, ,
\end{equation*}
which corresponds to an {\sl incoming subsonic} fluid. The above theory applies when looking for WKB solutions to 
\eqref{euler} of the form
\begin{equation*}
\begin{pmatrix}
v \\
u_1 \\
u_2 \end{pmatrix}_\eps \sim \begin{pmatrix}
\underline{v} \\
0 \\
\underline{u} \end{pmatrix} +\eps \, \sum_{n \ge 0} \eps^n \, {\mathcal U}_n \left( t,x,\dfrac{\Phi (t,x)}{\eps} \right) \, ,
\end{equation*}
provided that the linearization of \eqref{euler} (with appropriate boundary conditions) at the reference state $(\underline{v}, 
0,\underline{u})$ satisfies all assumptions of Section \ref{intro}. Let us therefore consider the linearization of \eqref{euler} 
at $(\underline{v},0,\underline{u})$, which corresponds, in the notation of Section \ref{intro}, to
\begin{align*}
&A_1(0) := \begin{pmatrix}
0 & -\underline{v} & 0 \\
-\underline{c}^2/\underline{v} & 0 & 0 \\
0 & 0 & 0 \end{pmatrix} \, ,\quad 
A_2(0) := \begin{pmatrix}
\underline{u} & 0 & -\underline{v} \\
0 & \underline{u} & 0 \\
-\underline{c}^2/\underline{v} & 0 & \underline{u} \end{pmatrix} \, ,\\
&{\rm d}A_1(0) \cdot \begin{pmatrix}
v \\
u_1 \\
u_2 \end{pmatrix} :=\begin{pmatrix}
u_1 & -v & 0 \\
(-2 \, \underline{c} \, \underline{c}'/\underline{v}+\underline{c}^2/\underline{v}^2) \, v & u_1 & 0 \\
0 & 0 & u_1 \end{pmatrix} \, ,\\
&{\rm d}A_2(0) \cdot \begin{pmatrix}
v \\
u_1 \\
u_2 \end{pmatrix} := \begin{pmatrix}
u_2 & 0 & -v \\
0 & u_2 & 0 \\
(-2 \, \underline{c} \, \underline{c}'/\underline{v}+\underline{c}^2/\underline{v}^2) \, v & 0 & u_2 \end{pmatrix} \, ,
\end{align*}
where $\underline{c}'$ stands for $c'(\underline{v})$. The operator $\partial_t +A_1(0)\, \partial_1 +A_2(0) \, \partial_2$ 
is strictly hyperbolic with eigenvalues
\begin{equation*}
\lambda_1(\xi_1,\xi_2) :=\underline{u} \, \xi_2 -\underline{c} \, \sqrt{\xi_1^2+\xi_2^2} \, ,\quad
\lambda_2(\xi_1,\xi_2) :=\underline{u} \, \xi_2 \, ,\quad
\lambda_3(\xi_1,\xi_2) := \underline{u} \, \xi_2 +\underline{c} \, \sqrt{\xi_1^2+\xi_2^2} \, ,
\end{equation*}
so Assumptions \ref{assumption1} and \ref{assumption1'} are satisfied. There are two incoming characteristics and 
one outgoing characteristic, so the matrix $B$ encoding the boundary conditions for the linearized equations should 
be a $2 \times 3$ matrix of maximal rank. One possible choice for $B$ is made precise below. The hyperbolic region 
${\mathcal H}$ is given by
\begin{equation*}
{\mathcal H} = \left\{ (\tau,\eta) \in \R \times \R \, / \, |\tau| > \sqrt{\underline{c}^2-\underline{u}^2} \, |\eta| \right\} \, .
\end{equation*}
We focus on the verification of Assumption \ref{assumption5}. For concreteness, let $(\tau,\eta) \in {\mathcal H}$ 
with $\tau>0$ and $\eta>0$, and consider the boundary phase
\begin{equation*}
\varphi_0(t,x_1) :=\tau \, t+\eta \, x_1 \, .
\end{equation*}
The three (distinct) eigenvalues of ${\mathcal A}(\tau,\eta)$ are
\begin{equation*}
\omega_1 :=\dfrac{\underline{u} \, \tau -\underline{c} \, \sqrt{\tau^2 -(\underline{c}^2-\underline{u}^2) \, \eta^2}}
{\underline{c}^2-\underline{u}^2} \, ,\quad 
\omega_2 :=\dfrac{\underline{u} \, \tau +\underline{c} \, \sqrt{\tau^2 -(\underline{c}^2-\underline{u}^2) \, \eta^2}}
{\underline{c}^2-\underline{u}^2} \, ,\quad 
\omega_3 := -\dfrac{\tau}{\underline{u}} \, ,
\end{equation*}
and they satisfy 
\begin{equation*}
\tau +\lambda_1 (\eta,\omega_1) =\tau +\lambda_1 (\eta,\omega_2) =\tau +\lambda_2 (\eta,\omega_3) = 0 \, .
\end{equation*}
The associated group velocities are
\begin{equation*}
{\bf v}_1 :=\dfrac{1}{\tau +\underline{u} \, \omega_1} \, \begin{pmatrix}
-\underline{c}^2 \, \eta \\
\underline{c} \, \sqrt{\tau^2 -(\underline{c}^2-\underline{u}^2) \, \eta^2} \end{pmatrix} \, ,\quad {\bf v}_2 := 
\dfrac{1}{\tau +\underline{u} \, \omega_2} \, \begin{pmatrix}
-\underline{c}^2 \, \eta \\
-\underline{c} \, \sqrt{\tau^2 -(\underline{c}^2-\underline{u}^2) \, \eta^2} \end{pmatrix} \, ,\quad {\bf v}_3 :=\begin{pmatrix}
0 \\
\underline{u} \end{pmatrix} \, ,
\end{equation*}
hence the phase $\varphi_2$ is outgoing while $\varphi_1,\varphi_3$ are incoming (as in the framework of Paragraph 
\ref{sect2example}). The nonresonance condition in Assumption \ref{assumption5}, that is,
\begin{equation*}
\forall \, \alpha \in \Z^3 \setminus \Z^{3;1} \, ,\quad \det L({\rm d} (\alpha \cdot \Phi)) \neq 0 \, ,
\end{equation*}
holds if and only if the (dimensionless) quantity
\begin{equation*}
\dfrac{\underline{u}}{\underline{c}} \, \sqrt{1- (\underline{c}^2-\underline{u}^2) \, \dfrac{\eta^2}{\tau^2}} \, ,
\end{equation*}
is not a rational number, as follows from a straightforward calculation. We thus introduce a positive irrational parameter 
$\zeta$ defined by
\begin{equation}
\label{defzeta}
\dfrac{\underline{u}}{\underline{c}} \, \sqrt{\tau^2 -(\underline{c}^2-\underline{u}^2) \, \eta^2} =\zeta \, \tau \, .
\end{equation}
Our choice of parameters gives $\zeta \in (0,1)$. For all $\alpha_1,\alpha_3 \in \Z \setminus \{ 0\}$, we compute
\begin{align*}
\det L({\rm d} (\alpha_1 \, \varphi_1+\alpha_3 \, \varphi_3)) &=-\alpha_1 \, \alpha_3 \, \underline{c}^2 \, 
(\tau +\underline{u} \, \omega_1) \, \big[ 2 \, \alpha_1 \, (\eta^2 +\omega_1 \, \omega_3) +\alpha_3 \, 
(\eta^2 +\omega_3^2) \big] \\
&= \alpha_1 \, \alpha_3 \, (\tau +\underline{u} \, \omega_1) \, \dfrac{\underline{c}^4 \, \tau^2}{\underline{u}^2 \, 
(\underline{c}^2-\underline{u}^2)} \, (\zeta -1) \, \big( 2 \, \alpha_1 \, \zeta +\alpha_3 \, (\zeta+1) \big) \, .
\end{align*}
It appears from the latter expression that the verification of Assumption \ref{assumption5} only depends on the arithmetic 
properties of the parameter $\zeta$ in \eqref{defzeta}. In particular, we have the following result.

\begin{lem}
\label{lemeuler}
For all $\gamma>1$, there exists a set of full measure ${\mathcal M}_\gamma$ in $(0,1)$ such that for all $M \in 
{\mathcal M}_\gamma$ and $\underline{c}>0$, if $\underline{u} =M \, \underline{c}$ and $\tau=\underline{c} \, \eta$, 
then Assumption \ref{assumption5} is satisfied for some constant $c>0$.
\end{lem}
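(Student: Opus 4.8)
The plan is to reduce Assumption \ref{assumption5} to a statement about the arithmetic properties of the single parameter $\zeta$ defined in \eqref{defzeta}, and then to invoke a classical Diophantine result from metric number theory. As computed in the preceding paragraph, under the normalization $\tau = \underline{c} \, \eta$ (which is admissible: it places $(\tau,\eta)$ in $\mathcal{H}$ since $\underline{c} > \sqrt{\underline{c}^2-\underline{u}^2}$), the relation \eqref{defzeta} becomes $\zeta = \underline{u}/\underline{c} \cdot \sqrt{1-(\underline{c}^2-\underline{u}^2)/\underline{c}^2} = (\underline{u}/\underline{c})^2 = M^2$ when $\underline{u} = M\underline{c}$. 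Thus $\zeta$ depends only on $M$, through $\zeta = M^2$, and ranges over $(0,1)$ as $M$ does. The key determinant identity already displayed shows that, for $\alpha_1,\alpha_3 \in \Z \setminus \{0\}$,
\begin{equation*}
|\det L({\rm d}(\alpha_1 \varphi_1 + \alpha_3 \varphi_3))| = |\alpha_1 \, \alpha_3| \cdot |\tau+\underline{u}\,\omega_1| \cdot \frac{\underline{c}^4 \, \tau^2}{\underline{u}^2(\underline{c}^2-\underline{u}^2)} \cdot |\zeta-1| \cdot |2\alpha_1 \zeta + \alpha_3(\zeta+1)| \, ,
\end{equation*}
so the whole small divisor condition in Assumption \ref{assumption5} comes down to a lower bound of the form $|2\alpha_1\zeta + \alpha_3(\zeta+1)| \gtrsim |(\alpha_1,\alpha_3)|^{-\gamma'}$ for some $\gamma'$, valid for all nonzero integer pairs. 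Note that since $M\in(0,1)$ forces $\zeta\in(0,1)$, the factors $|\zeta-1|$ and $|\tau+\underline{u}\omega_1|$ are bounded away from $0$, and $|\alpha_1\alpha_3| \ge 1$, so they do not interfere.

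First I would rewrite $2\alpha_1\zeta + \alpha_3(\zeta+1) = (2\alpha_1 + \alpha_3)\zeta + \alpha_3 = \alpha_3\bigl(\zeta + 1 + 2\alpha_1/\alpha_3\bigr)$ — more usefully, set $p = -(2\alpha_1+\alpha_3)$ and $q = \alpha_3$, so the quantity is $|q\zeta - p \cdot(-1) \cdot \ldots|$; cleaner is to observe that $2\alpha_1\zeta + \alpha_3(\zeta+1) = 0$ would require $\zeta = -\alpha_3/(2\alpha_1+\alpha_3) \in \Q$, which is why irrationality of $\zeta$ already gives nonvanishing. For the quantitative bound, I would note that $|2\alpha_1\zeta + \alpha_3(\zeta+1)|$ is, up to the bounded factor $|2\alpha_1+\alpha_3|$ (which is $\lesssim |(\alpha_1,\alpha_3)|$ and could in principle be zero — handled separately, since then the expression is $|\alpha_3| \ge 1$), comparable to the distance from $\zeta$ to the rational $-\alpha_3/(2\alpha_1+\alpha_3)$, whose denominator is $\lesssim |(\alpha_1,\alpha_3)|$. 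Hence it suffices that $\zeta$ satisfy a Diophantine condition: there exist $c_0 > 0$ and $\gamma_0 > 1$ with $|\zeta - p/q| \ge c_0 \, q^{-\gamma_0}$ for all $p/q \in \Q$. By the classical Khinchin–Jarník theorem (or the elementary Borel–Cantelli argument for $\sum_q q^{1-\gamma_0} < \infty$ when $\gamma_0 > 2$), for every $\gamma_0 > 2$ the set of $\zeta \in (0,1)$ failing this condition has Lebesgue measure zero; equivalently, the set of $\zeta$ satisfying it has full measure in $(0,1)$.

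Therefore, given $\gamma > 1$, I would fix $\gamma_0 = \gamma + 2$ (say), let $\mathcal{Z}_{\gamma_0} \subset (0,1)$ be the corresponding full-measure set of Diophantine $\zeta$, and define $\mathcal{M}_\gamma := \{ M \in (0,1) : M^2 \in \mathcal{Z}_{\gamma_0} \}$. Since $M \mapsto M^2$ is a $C^1$ diffeomorphism of $(0,1)$ onto itself with nonvanishing derivative, it preserves the class of full-measure sets, so $\mathcal{M}_\gamma$ has full measure in $(0,1)$. For any $M \in \mathcal{M}_\gamma$, any $\underline{c} > 0$, and $\underline{u} = M\underline{c}$, $\tau = \underline{c}\,\eta$, the displayed determinant formula together with the Diophantine bound on $\zeta = M^2$ yields $|\det L({\rm d}(\alpha_1\varphi_1 + \alpha_3\varphi_3))| \ge c \, |(\alpha_1,\alpha_3)|^{-\gamma}$ for a constant $c > 0$ depending on the data but not on $\alpha$; since $\mathcal{I} = \{1,3\}$ and $\mathcal{O} = \{2\}$ here, these are exactly the $\alpha$'s for which the small divisor condition is required, and the nonresonance condition for general $\alpha \in \Z^3 \setminus \Z^{3;1}$ follows as well since $\zeta$ is irrational. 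The main obstacle — though it is more bookkeeping than genuine difficulty — is making the reduction from $|2\alpha_1\zeta + \alpha_3(\zeta+1)|$ to a Diophantine inequality completely rigorous, in particular handling the degenerate case $2\alpha_1 + \alpha_3 = 0$ (where the expression equals $|\alpha_3|$ directly) and controlling the relation between $|(\alpha_1,\alpha_3)|$ and the denominator of the approximating rational; and, of course, citing the appropriate metric-number-theory statement (Khinchin, Jarník) rather than reproving it.
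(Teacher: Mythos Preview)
Your proposal is correct and follows essentially the same route as the paper: both reduce Assumption~\ref{assumption5} to a Diophantine condition on $\zeta=M^2$ and verify it for almost every $M$, the paper by a direct Borel--Cantelli measure estimate on the bad set $\mathcal{N}_c\subset(0,1)$ (in the variable $m=M^2$), you by invoking Khinchin--Jarn\'{\i}k and then pushing forward along the diffeomorphism $M\mapsto M^2$. One small bookkeeping slip: with $\gamma_0=\gamma+2$ and only the trivial bound $|\alpha_1\alpha_3|\ge 1$ you land on exponent $-(\gamma+1)$ rather than $-\gamma$; either take $\gamma_0=\gamma+1$ (still $>2$ since $\gamma>1$), or retain the factor $|\alpha_1\alpha_3|\ge\max(|\alpha_1|,|\alpha_3|)$, to recover the stated exponent.
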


Of course, one could also fix the parameters $\underline{u},\underline{c}$ and make $\tau$ vary in the hyperbolic 
region, and obtain a similar result (meaning that Assumption \ref{assumption5} would be satisfied except possibly for 
$\tau$ in a negligible set).

\begin{proof}
Choosing $\tau=\underline{c} \, \eta$ with $\eta>0$, one has $(\tau,\eta) \in {\mathcal H}$, and the parameter $\zeta$ 
in \eqref{defzeta} equals $M^2$ with $M :=\underline{u}/\underline{c}$. Using the above expression for the determinant 
of $L({\rm d} (\alpha_1 \, \varphi_1+\alpha_3 \, \varphi_3))$, we see that the small divisors condition of Assumption 
\ref{assumption5} will be satisfied provided that there exists a constant $c>0$ such that
\begin{equation}
\label{smalldivisors}
\forall \, (p,q) \in \Z^2 \setminus \{ (0,0) \} \, ,\quad |p+q \, M^2| \ge c \, |(p,q)|^{-\gamma} \, .
\end{equation}

For $c \in (0,1)$ and $\gamma>1$, let us define
\begin{equation*}
{\mathcal N}_c := \Big\{ m \in (0,1) \, / \, \exists \, (p,q) \in \Z^2 \, ,\quad q \neq 0 \quad \text{\rm and } \quad 
|p+q\, m| <\dfrac{c}{|(p,q)|^\gamma} \Big\} \, .
\end{equation*}
The set ${\mathcal N}_c$ is the countable union, as $p$ varies in $\Z$ and $q$ in $\Z^*$, of intervals of width 
at most $2\, c/(|q| \, |(p,q)|^\gamma)$. Hence the Lebesgue measure of ${\mathcal N}_c$ is $O(c)$, which 
means that the intersection $\cap_{c>0} {\mathcal N}_c$ is negligible. Consequently, for every fixed $\gamma>1$, 
the set of parameters $M^2$ such that \eqref{smalldivisors} is satisfied for some constant $c>0$ has full measure 
$1$. The claim of Lemma \ref{lemeuler} follows.
\end{proof}

We assume from now on that $\tau,\eta$ are fixed such that $\tau=\underline{c} \, \eta$, and the Mach number $M$ 
is chosen in such a way that Assumption \ref{assumption5} is satisfied (which is some kind of a generic condition on 
$M$). Then we compute
\begin{equation*}
r_1 :=\begin{pmatrix}
\underline{v} \\
\underline{c} \\
0 \end{pmatrix} \, ,\quad r_2 :=\begin{pmatrix}
\dfrac{1+M^2}{1-M^2} \, \underline{v} \\
\underline{c} \\
\dfrac{2 \, \underline{u}}{1-M^2} \end{pmatrix} \, ,\quad r_3 :=\begin{pmatrix}
0 \\
\underline{c} \\
\underline{u} \end{pmatrix} \, ,
\end{equation*}
and
\begin{align*}
\ell_1 &:=\begin{pmatrix}
\dfrac{1}{2\, \underline{v} \, \underline{u}} & \dfrac{1}{2\, \underline{u} \, \underline{c}} & 0 \end{pmatrix} \, ,\quad 
\ell_2 :=-\dfrac{1}{1+M^2} \, \begin{pmatrix}
\dfrac{1+M^2}{2\, \underline{v} \, \underline{u}} & \dfrac{1-M^2}{2\, \underline{u} \, \underline{c}} & 
\dfrac{1}{\underline{c}^2} \end{pmatrix} \, ,\\
\ell_3 &:=\dfrac{1}{1+M^2} \, \begin{pmatrix}
0 & \dfrac{1}{\underline{u} \, \underline{c}} & \dfrac{1}{\underline{c}^2} \end{pmatrix} \, .
\end{align*}

We now make the choice of $B$ precise. As in \cite[Appendix B]{CGW3}, we choose
\begin{equation*}
B := \begin{pmatrix}
0 & \underline{v} & 0 \\
\underline{u} & 0 & \underline{v} \end{pmatrix} \, ,
\end{equation*}
which does not have any physical interpretation but makes the following calculations rather easy. The reader 
can check that Assumptions \ref{assumption2} and \ref{assumption3} are satisfied, and we can choose $e :=r_1 -r_3$ 
as the vector that spans $\ker B \, \cap \, \E^s \tauetabar$. The one-dimensional space $B \, \E^s \tauetabar$ 
can be written as the orthogonal of the kernel of the linear form $\underline{b}:=(\underline{u},-\underline{c})$. 
We can then compute the bilinear Fourier multiplier $Q_{\rm per}$ defined in \eqref{defQper}, and get:
\begin{equation*}
Q_{\rm per}[a,\widetilde{a}] =\underline{v} \, \underline{u} \, \underline{c} \, \sum_{k \in \Z} \left( 
\sum_{\underset{k_1 \, k_3 \neq 0}{k_1+k_3=k,}} \left( 1+\dfrac{2\, (1-M^2) \, k_1}{k_3 +(2\, k_1 +k_3) \, M^2} \right) 
\, a_{k_1} \, \widetilde{a}_{k_3} \right) \, {\rm e}^{2 \, i \, \pi \, k \, \theta_0/\Theta} \, ,
\end{equation*}
whose kernel is unbounded and depends, as expected, on the arithmetic properties of $M^2$.

To conclude this Appendix, let us observe that in three space dimensions, the isentropic Euler equations are no longer 
strictly hyperbolic but they enjoy a conservative structure (in the physical variables $\rho, \rho \, \vec{u}$). Similarly, 
the full Euler equations with the energy conservation law also have a conservative structure. Hence Assumption 
\ref{assumption1'} is satisfied, and one can perform a similar derivation as above for the leading amplitude equation.

\section{Formal derivation of the large period limit: from wavetrains to pulses}
\label{appB}

\subsection{The large period limit of the amplitude equation \eqref{eqa2}}

In this Appendix, we study the relationship between the quadratic operators arising in the leading amplitude 
equations for the wavetrains and pulses problems which we have considered. For the sake of clarity, we focus 
on the easiest case $N=3$, $p=2$, that was considered in paragraph \ref{sect2example} and section \ref{sect6}. 
The Fourier multiplier $Q_{\rm per}$ is then defined by \eqref{defQper}, while the bilinear operator $Q_{\rm pul}$ 
is defined by \eqref{defQpul}. From these expressions, we can decompose both operators as
\begin{align*}
&Q_{\rm per}[a,a] =(\underline{b} \, B \, r_2) \, \ell_2 \, E_{1,3} \, \widetilde{\F}_{\rm per}[\partial_\theta a,a] 
+(\underline{b} \, B \, r_2) \, \ell_2 \, E_{3,1} \, \widetilde{\F}_{\rm per}[a,\partial_\theta a] \, ,\\
&Q_{\rm pul}[a,a] =(\underline{b} \, B \, r_2) \, \ell_2 \, E_{1,3} \, \widetilde{\F}_{\rm pul}[\partial_\theta a,a] 
+(\underline{b} \, B \, r_2) \, \ell_2 \, E_{3,1} \, \widetilde{\F}_{\rm pul}[a,\partial_\theta a] \, ,
\end{align*}
with (observe the slightly different normalizations with respect to \eqref{defFper} and \eqref{defFpul}):
\begin{align}
\widetilde{\F}_{\rm per}[u,v](\theta) &:= \dfrac{i\, \Theta}{2\, \pi} \, \sum_{k \in \Z} \left( 
\sum_{\underset{k_1 \, k_3 \neq 0}{k_1+k_3=k,}} 
\dfrac{u_{k_1} \, v_{k_3}}{k_1 \, (\underline{\omega}_1-\underline{\omega}_2) 
+k_3 \, (\underline{\omega}_3-\underline{\omega}_2)} \right) \, 
{\rm e}^{2 \, i \, \pi \, k \, \theta/\Theta} \, ,\label{defF1per} \\
\widetilde{\F}_{\rm pul}[u,v] (\theta) &:=\int_0^{+\infty} u(\theta+(\underline{\omega}_1-\underline{\omega}_2)\, s) 
\, v(\theta+(\underline{\omega}_3-\underline{\omega}_2)\, s) \, {\rm d}s \, .\label{defF1pul}
\end{align}
In \eqref{defF1per}, both functions $u,v$ are assumed to be $\Theta$-periodic and $u_k,v_k$ stand for their $k$-th 
Fourier coefficient, while in \eqref{defF1pul}, $u,v$ are defined on $\R$ and have sufficiently fast decay at infinity 
(so that the integral makes sense).

Our goal is to explain how one can compute the (formal) limit of $\widetilde{\F}_{\rm per}$ when the period $\Theta$ 
becomes infinitely large and to make the link with the expression of $\widetilde{\F}_{\rm pul}$ in \eqref{defF1pul}. The 
additional variables $(t,y)$ play the role of parameters here, so we focus on $\widetilde{\F}_{\rm per},\widetilde{\F}_{\rm pul}$ 
as operators acting on functions that depend on a single variable $\theta$. We pick two functions $u,v$ in the Schwartz 
class ${\mathcal S}(\R)$, and define
\begin{equation*}
\forall \, \theta \in \R \, ,\quad u_\Theta (\theta) := \sum_{n \in \Z} u(\theta +n\, \Theta) \, ,\quad 
v_\Theta (\theta) := \sum_{n \in \Z} v(\theta +n\, \Theta) \, .
\end{equation*}
The functions $u_\Theta,v_\Theta$ are $\Theta$-periodic and converge, uniformly on compact sets, towards 
$u,v$ as $\Theta$ tends to infinity. Moreover, the Poisson summation formula gives the Fourier coefficients of 
$u_\Theta,v_\Theta$ in terms of the Fourier transform of $u,v$:
\begin{equation*}
u_\Theta (\theta) =\dfrac{1}{\Theta} \, \sum_{k \in \Z} \widehat{u} \left( \dfrac{2\, k \, \pi}{\Theta} \right) \, 
{\rm e}^{2 \, i \, \pi \, k \, \theta/\Theta} \, ,\quad 
v_\Theta (\theta) =\dfrac{1}{\Theta} \, \sum_{k \in \Z} \widehat{v} \left( \dfrac{2\, k \, \pi}{\Theta} \right) \, 
{\rm e}^{2 \, i \, \pi \, k \, \theta/\Theta} \, .
\end{equation*}
We compute
\begin{align*}
\widetilde{\F}_{\rm per}[u_\Theta,v_\Theta] (\theta) &=\dfrac{i}{2\, \pi \, \Theta} \, \sum_{k \in \Z} \left( 
\sum_{\underset{k_1 \, k_3 \neq 0}{k_1+k_3=k,}} 
\dfrac{\widehat{u} (2\, k_1 \, \pi/\Theta) \, \widehat{v} (2\, k_3 \, \pi/\Theta)}{k_1 \, (\underline{\omega}_1-\underline{\omega}_2) 
+k_3 \, (\underline{\omega}_3-\underline{\omega}_2)} \right) \, {\rm e}^{2 \, i \, \pi \, k \, \theta/\Theta} \\
&=\dfrac{i}{4\, \pi^2} \, \dfrac{4\, \pi^2}{\Theta^2} \, \sum_{k \in \Z} \sum_{\underset{k_1 \, k_3 \neq 0}{k_1+k_3=k,}} 
\dfrac{\widehat{u} (2\, k_1 \, \pi/\Theta) \, \widehat{v} (2\, k_3 \, \pi/\Theta)}{(2\, k_1 \, \pi/\Theta) \, 
(\underline{\omega}_1-\underline{\omega}_2) +(2\, k_3 \, \pi/\Theta) \, (\underline{\omega}_3-\underline{\omega}_2)} 
\, {\rm e}^{2 \, i \, \pi \, k \, \theta/\Theta} \, .
\end{align*}
The latter expression suggests that $\widetilde{\F}_{\rm per}[u_\Theta,v_\Theta] (\theta)$ is the approximation by a 
Riemann sum, of the double integral\footnote{It is not obvious at first sight that this double integral makes any sense, 
but our goal here is to identify formally the large period limit so let us pretend that all manipulations on the integrals 
and limits are valid.}
\begin{equation*}
\dfrac{i}{4\, \pi^2} \, \int_\R \int_\R \dfrac{\widehat{u} (\eta) \, \widehat{v} (\xi-\eta)}{\eta \, 
(\underline{\omega}_1-\underline{\omega}_2) +(\xi-\eta) \, (\underline{\omega}_3-\underline{\omega}_2)} \, 
{\rm e}^{i \, \xi \, \theta} \, {\rm d}\eta \, {\rm d}\xi \, .
\end{equation*}
Formally, this means that, as $\Theta$ tends to infinity, $\widetilde{\F}_{\rm per}[u_\Theta,v_\Theta]$ tends towards 
a function whose Fourier transform is given by
\begin{equation}
\label{Fourierlimit}
\xi \in \R \longmapsto -\dfrac{1}{2\, i\, \pi} \, \int_\R \dfrac{\widehat{u} (\eta) \, \widehat{v} (\xi-\eta)}{\eta \, 
(\underline{\omega}_1-\underline{\omega}_2) +(\xi-\eta) \, (\underline{\omega}_3-\underline{\omega}_2)} \, {\rm d}\eta \, ,
\end{equation}
assuming of course that the latter expression makes any sense. We wish to compare \eqref{Fourierlimit} with the 
Fourier transform of $\widetilde{\F}_{\rm pul}[u,v]$, whose expression is given by the following (rigorous!) result.

\begin{lem}
\label{lemFourier}
Let $u,v \in {\mathcal S}(\R)$. Then \eqref{defF1pul} defines a function $\widetilde{\F}_{\rm pul}[u,v] \in {\mathcal S}(\R)$ 
whose Fourier transform is given by
\begin{multline}
\label{expressionFourier}
\xi \in \R \longmapsto \dfrac{1}{2\, |\underline{\omega}_3-\underline{\omega}_1|} \, \widehat{u} 
\left( \dfrac{\underline{\omega}_3-\underline{\omega}_2}{\underline{\omega}_3-\underline{\omega}_1} \, \xi \right) \, \widehat{v} 
\left( \dfrac{\underline{\omega}_2-\underline{\omega}_1}{\underline{\omega}_3-\underline{\omega}_1} \, \xi \right) \\
-\dfrac{1}{2\, i\, \pi} \, \text{\rm p.v.} \int_\R 
\dfrac{\widehat{u} (\eta) \, \widehat{v} (\xi-\eta)}{\eta \, (\underline{\omega}_1-\underline{\omega}_2) 
+(\xi-\eta) \, (\underline{\omega}_3-\underline{\omega}_2)} \, {\rm d}\eta \, ,
\end{multline}
where $\text{\rm p.v.}$ stands for the principal value of the integral.
\end{lem}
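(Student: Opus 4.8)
The goal is to compute the Fourier transform of $\widetilde{\F}_{\rm pul}[u,v]$ as given by \eqref{defF1pul}. First I would change variables in the defining integral so that the two arguments of $u$ and $v$ become independent linear functions. Writing $\sigma_1 := \underline{\omega}_1-\underline{\omega}_2$ and $\sigma_3 := \underline{\omega}_3 - \underline{\omega}_2$ (both nonzero, with $\sigma_3 - \sigma_1 = \underline{\omega}_3 - \underline{\omega}_1 \neq 0$), the plan is to insert the Fourier inversion formula for $u$ and $v$ into \eqref{defF1pul}, so that
\begin{equation*}
\widetilde{\F}_{\rm pul}[u,v](\theta) = \frac{1}{4\pi^2} \int_0^{+\infty}\!\!\int_\R\!\!\int_\R \widehat{u}(\eta)\,\widehat{v}(\xi') \, {\rm e}^{i\eta(\theta + \sigma_1 s)} \, {\rm e}^{i\xi'(\theta+\sigma_3 s)} \, {\rm d}\eta \, {\rm d}\xi' \, {\rm d}s \, ,
\end{equation*}
and then carry out the $s$-integral. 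The $s$-integral is $\int_0^{+\infty} {\rm e}^{is(\sigma_1\eta + \sigma_3\xi')}\,{\rm d}s$, which is not absolutely convergent, so it must be interpreted as a distribution in the variable $\omega := \sigma_1\eta + \sigma_3\xi'$: one has $\int_0^{+\infty} {\rm e}^{is\omega}\,{\rm d}s = \pi\,\delta(\omega) + i\,\mathrm{p.v.}\frac{1}{\omega}$ (the Sokhotski–Plemelj formula, obtained as the limit of $\int_0^{+\infty}{\rm e}^{is\omega - \epsilon s}\,{\rm d}s = \frac{1}{\epsilon - i\omega}$ as $\epsilon \downarrow 0$).

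Second, I would substitute this distributional identity and split the resulting expression into the $\delta$-contribution and the principal-value contribution. For the $\delta$-term, the constraint $\sigma_1 \eta + \sigma_3 \xi' = 0$ together with $\xi = \eta + \xi'$ (the Fourier variable of the product structure after taking $\widehat{\cdot}$ in $\theta$) pins down $\eta$ and $\xi'$ uniquely in terms of $\xi$: solving $\eta + \xi' = \xi$, $\sigma_1\eta + \sigma_3\xi' = 0$ gives $\eta = \frac{\sigma_3}{\sigma_3 - \sigma_1}\,\xi = \frac{\underline{\omega}_3 - \underline{\omega}_2}{\underline{\omega}_3 - \underline{\omega}_1}\,\xi$ and $\xi' = \frac{-\sigma_1}{\sigma_3 - \sigma_1}\,\xi = \frac{\underline{\omega}_2 - \underline{\omega}_1}{\underline{\omega}_3 - \underline{\omega}_1}\,\xi$, and the Jacobian factor $\frac{1}{|\sigma_3 - \sigma_1|} = \frac{1}{|\underline{\omega}_3 - \underline{\omega}_1|}$ appears from the delta function. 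After taking the Fourier transform in $\theta$ (which converts ${\rm e}^{i(\eta+\xi')\theta}$ into the evaluation at $\xi = \eta + \xi'$) and accounting for the factors of $2\pi$, this produces exactly the first term $\frac{1}{2|\underline{\omega}_3 - \underline{\omega}_1|}\,\widehat{u}\!\left(\frac{\underline{\omega}_3 - \underline{\omega}_2}{\underline{\omega}_3 - \underline{\omega}_1}\xi\right)\widehat{v}\!\left(\frac{\underline{\omega}_2 - \underline{\omega}_1}{\underline{\omega}_3 - \underline{\omega}_1}\xi\right)$ of \eqref{expressionFourier}. For the principal-value term, the change of variables $\xi' = \xi - \eta$ directly gives $-\frac{1}{2i\pi}\,\mathrm{p.v.}\int_\R \frac{\widehat{u}(\eta)\,\widehat{v}(\xi-\eta)}{\eta\sigma_1 + (\xi-\eta)\sigma_3}\,{\rm d}\eta$, which is the second term of \eqref{expressionFourier}.

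Third, I would establish the claim $\widetilde{\F}_{\rm pul}[u,v] \in \mathcal{S}(\R)$, which legitimizes the manipulations above. For this, note directly from \eqref{defF1pul} that if $u,v$ are Schwartz then the integrand decays rapidly in $s$ uniformly for $\theta$ in compact sets (since at least one of the two arguments is large when $s$ is large, as $\sigma_1 \neq \sigma_3$), so the integral converges and defines a smooth function; rapid decay in $\theta$ and of all derivatives follows by differentiating under the integral sign and using the integration-by-parts identity \eqref{propF2} (together with a Peetre-type estimate as in Lemma \ref{integrals}) to trade powers of $\theta$ for derivatives landing on $u$ or $v$. Alternatively, one can deduce Schwartz-ness a posteriori from the explicit formula \eqref{expressionFourier}: the first term is manifestly Schwartz, and the principal-value term is the Hilbert transform (in a rescaled variable) of a Schwartz function, hence smooth, with the required decay obtained by a more careful examination near $\xi = 0$ where the two terms' potential singularities cancel. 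I expect the main obstacle to be the rigorous justification of the Sokhotski–Plemelj step and the interchange of the $s$-integral with the $\eta,\xi'$-integrals — i.e., making precise the sense in which the non-absolutely-convergent oscillatory $s$-integral is handled. The clean way is to insert a convergence factor ${\rm e}^{-\epsilon s}$ from the start, compute everything for $\epsilon > 0$ (where Fubini applies without difficulty), and then pass to the limit $\epsilon \downarrow 0$, checking that the limit of the $\delta$-part and the p.v.-part are as claimed; the dominated convergence needed in this last passage is where the decay hypotheses on $u,v$ must be used carefully.
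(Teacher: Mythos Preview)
Your approach is correct and closely related to the paper's, but the paper organizes the computation differently. Rather than inserting Fourier inversion for $u,v$ and invoking Sokhotski--Plemelj on the oscillatory $s$-integral, the paper first splits the half-line integral via $\mathbf{1}_{[0,\infty)}(s)=\tfrac{1}{2}(1+\operatorname{sgn}(s))$, writing $\widetilde{\F}_{\rm pul}[u,v]=F_1+F_2$ with $F_1$ the integral over all of $\R$ and $F_2$ the $\operatorname{sgn}$-weighted integral. The Fourier transform of $F_1$ is then computed by an elementary change of variables plus Fubini (no distributions needed, since the double integral is absolutely convergent), yielding the first term of \eqref{expressionFourier}. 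For $F_2$, the paper integrates in $\theta$ first (Fubini), obtaining $\widehat{F_2}(\xi)=\tfrac{1}{4\pi}\int_\R \operatorname{sgn}(s)\,\widehat{S_\xi}(s)\,{\rm d}s$ for a Schwartz function $S_\xi$, and then applies the known Fourier transform of $\operatorname{sgn}$ to produce the principal-value term.

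The two routes are essentially dual: your Sokhotski--Plemelj identity $\int_0^\infty {\rm e}^{is\omega}\,{\rm d}s=\pi\,\delta(\omega)+i\,\mathrm{p.v.}\tfrac{1}{\omega}$ is exactly the Fourier-side statement of the decomposition $\mathbf{1}_{[0,\infty)}=\tfrac{1}{2}(1+\operatorname{sgn})$. The advantage of the paper's ordering is that it sidesteps the regularization $\epsilon\downarrow 0$ you correctly flag as the delicate point: the $F_1$ piece needs no limiting argument at all, and the $F_2$ piece becomes a pairing of $\operatorname{sgn}$ against a genuine Schwartz function, so the principal value appears cleanly. Your approach is perhaps more conceptually direct but requires you to justify the interchange of the $s$-limit with the $(\eta,\xi')$-integrals, which you have correctly identified and for which your proposed $\epsilon$-regularization would work.
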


\begin{proof}
That $\widetilde{\F}_{\rm pul}[u,v]$ belongs to ${\mathcal S}(\R)$ is done in two steps. One first proves differentiability 
by applying the classical differentiation Theorem for integrals with a parameter. Then the formula
\begin{equation*}
\widetilde{\F}_{\rm pul}[u,v]'=\widetilde{\F}_{\rm pul}[u',v] +\widetilde{\F}_{\rm pul}[u,v'] \, ,
\end{equation*}
and a straightforward induction argument yields infinite differentiability. Given an integer $N$, we can apply the 
Peetre inequality and get
\begin{equation*}
(1+\theta^2)^N \, |\widetilde{\F}_{\rm pul}[u,v](\theta)| \le C \, \sup_{t \in \R} \, ((1+t^2)^N \, |v(t)|) \, 
\int_0^{+\infty} (1+(\theta+(\underline{\omega}_1-\underline{\omega}_2)\, s)^2)^N \, 
|u(\theta+(\underline{\omega}_1-\underline{\omega}_2)\, s)| \, {\rm d}s \, ,
\end{equation*}
so $(1+\theta^2)^N \, \widetilde{\F}_{\rm pul}[u,v]$ is bounded. The previous formula for $\widetilde{\F}_{\rm pul}[u,v]'$ 
shows again by induction that all functions $(1+\theta^2)^{N_1} \, \widetilde{\F}_{\rm pul}[u,v]^{(N_2)}$ are bounded. 
We may thus compute the Fourier transform of $\widetilde{\F}_{\rm pul}[u,v]$.

Let us first write
\begin{equation}
\label{decompF1pul}
\widetilde{\F}_{\rm pul}[u,v](\theta) =F_1(\theta) +F_2(\theta) \, ,
\end{equation}
with
\begin{align*}
F_1(\theta) &:= \dfrac{1}{2} \, \int_\R u(\theta+(\underline{\omega}_1-\underline{\omega}_2)\, s) \, 
v(\theta+(\underline{\omega}_3-\underline{\omega}_2)\, s) \, {\rm d}s \, ,\\
F_2(\theta) &:= \dfrac{1}{2} \, \int_\R \text{\rm sgn}(s) \, u(\theta+(\underline{\omega}_1-\underline{\omega}_2)\, s) \, 
v(\theta+(\underline{\omega}_3-\underline{\omega}_2)\, s) \, {\rm d}s \, .
\end{align*}
Here $\text{\rm sgn}$ denotes the sign function. The Fourier transform of $F_1$ is computed by applying an 
elementary change of variables and the Fubini Theorem:
\begin{align*}
\widehat{F_1}(\xi) &=\dfrac{1}{2} \, \int_{\R^2} {\rm e}^{-i\, \xi \, \theta} \, 
u(\theta+(\underline{\omega}_1-\underline{\omega}_2)\, s) \, 
v(\theta+(\underline{\omega}_3-\underline{\omega}_2)\, s) \, {\rm d}s \, {\rm d}\theta \\
&=\dfrac{1}{2} \, \int_{\R^2} 
{\rm e}^{-i\, \xi \, \dfrac{X\, (\underline{\omega}_3-\underline{\omega}_2)-Y\, 
(\underline{\omega}_1-\underline{\omega}_2)}{\underline{\omega}_3-\underline{\omega}_1}} 
\, u(X) \, v(Y) \, \dfrac{{\rm d}X \, {\rm d}Y}{|\underline{\omega}_3-\underline{\omega}_1|} \, .
\end{align*}
The contribution of $\widehat{F_1}(\xi)$ therefore gives the term in the first line of \eqref{expressionFourier}. 
It remains to compute the Fourier transform of $F_2$.

The Fubini Theorem gives
\begin{equation*}
\widehat{F_2}(\xi) =\dfrac{1}{2} \int_\R \text{\rm sgn}(s) \left( \int_\R {\rm e}^{-i\, \xi \, \theta} \, 
u(\theta+(\underline{\omega}_1-\underline{\omega}_2)\, s) \, v(\theta+(\underline{\omega}_3-\underline{\omega}_2)\, s) \, 
{\rm d}\theta \right) {\rm d}s 
=\dfrac{1}{4\, \pi} \int_\R \text{\rm sgn}(s) \, (\widehat{U_s} \star \widehat{V_s})(\xi) \, {\rm d}s ,
\end{equation*}
with
\begin{equation*}
U_s(\theta) := u(\theta+(\underline{\omega}_1-\underline{\omega}_2)\, s) \, ,\quad 
V_s(\theta) := v(\theta+(\underline{\omega}_3-\underline{\omega}_2)\, s) \, .
\end{equation*}
We thus get
\begin{equation*}
\widehat{F_2}(\xi) =\dfrac{1}{4\, \pi} \, \int_\R \text{\rm sgn}(s) \left( 
{\rm e}^{i\, (\underline{\omega}_1-\underline{\omega}_2) \, s \, \xi} 
\, \int_\R {\rm e}^{i\, (\underline{\omega}_3-\underline{\omega}_1) \, s \, \eta} \, \widehat{u} (\xi-\eta) \, 
\widehat{v} (\eta) \, {\rm d}\eta \right) \, {\rm d}s \, ,
\end{equation*}
which is an expression of the form
\begin{equation*}
\dfrac{1}{4\, \pi} \, \int_\R \text{\rm sgn}(s) \, \widehat{S_\xi}(s) \, {\rm d}s \, ,
\end{equation*}
for a suitable Schwartz function $S_\xi$ ($\xi$ is a parameter here). We can therefore transform the expression of 
$\widehat{F_2}(\xi)$ by using the Fourier transform of the sign function, which yields
\begin{equation*}
\widehat{F_2}(\xi) =\dfrac{1}{2\, i\, \pi} \, \text{\rm p.v.} \int_\R \dfrac{1}{\eta} \, S_\xi (\eta) \, {\rm d}\eta \, .
\end{equation*}
The function $S_\xi$ is given by
\begin{equation*}
S_\xi (\eta) =\dfrac{1}{|\underline{\omega}_3-\underline{\omega}_1|} \, 
\widehat{u} \left( \dfrac{\eta +(\underline{\omega}_3-\underline{\omega}_2) \, \xi}{\underline{\omega}_3-\underline{\omega}_1} 
\right) \, 
\widehat{v} \left( \dfrac{\eta +(\underline{\omega}_1-\underline{\omega}_2) \, \xi}{\underline{\omega}_1-\underline{\omega}_3} 
\right) \, ,
\end{equation*}
and a last change of variable gives, as claimed in \eqref{expressionFourier}:
\begin{equation*}
\widehat{F_2}(\xi) =-\dfrac{1}{2\, i\, \pi} \, \text{\rm p.v.} \int_\R 
\dfrac{\widehat{u} (\eta) \, \widehat{v} (\xi-\eta)}{\eta \, (\underline{\omega}_1-\underline{\omega}_2) 
+(\xi-\eta) \, (\underline{\omega}_3-\underline{\omega}_2)} \, {\rm d}\eta \, .
\end{equation*}
\end{proof}

In view of the decomposition \eqref{decompF1pul}, and the previous computations of Fourier transforms, 
we observe that the formal limit \eqref{Fourierlimit} for the Fourier transform of $\widetilde{\F}_{\rm per} 
[u_\Theta,v_\Theta]$ does not coincide with the Fourier transform of $\widetilde{\F}_{\rm pul}[u,v]$ but rather 
coincides with the Fourier transform of the function $F_2$. In other words, we have {\sl formally} obtained that 
in the large period limit $\Theta \rightarrow +\infty$, $\widetilde{\F}_{\rm per}[u_\Theta,v_\Theta]$ tends towards
\begin{equation}
\label{symappB}
\dfrac{1}{2} \, \int_\R \text{\rm sgn}(s) \, u(\theta+(\underline{\omega}_1-\underline{\omega}_2)\, s) \, 
v(\theta+(\underline{\omega}_3-\underline{\omega}_2)\, s) \, {\rm d}s \, ,
\end{equation}
which is a "symmetrized" version of the operator $\widetilde{\F}_{\rm pul}$ in \eqref{defF1pul}. In particular, 
for any function $a \in {\mathcal S}(\R)$, the $\Theta$-periodic function $\widetilde{\F}_{\rm per}[\partial_\theta 
a_\Theta,a_\Theta]$ formally converges, as $\Theta$ tends to infinity, towards
\begin{equation*}
%\label{appB1}
\dfrac{1}{2} \, \int_\R \text{\rm sgn}(s) \, \partial_\theta a(\theta+(\underline{\omega}_1-\underline{\omega}_2)\, s) \, 
a(\theta+(\underline{\omega}_3-\underline{\omega}_2)\, s) \, {\rm d}s \, .
\end{equation*}

\begin{rem}
It might be surprising that the small divisor condition in Assumption \ref{assumption5} does not seem to 
play any role in the analysis of pulses. However, there remains some kind of trace of this Assumption but 
it is hidden in the functional framework. More precisely, we have shown that for some large enough index 
$\nu$, the bilinear operator $\F_{\rm pul}$ satisfies a bound of the form
\begin{equation*}
\| \F_{\rm pul} (\partial_\theta u,\partial_\theta v) \|_{\Gamma^\nu} \le C \, \| u \|_{\Gamma^\nu} \, \| v \|_{\Gamma^\nu} \, ,
\end{equation*}
where $\Gamma^\nu$ is a weighted Sobolev space. Omitting the variables $(t,y)$ for simplicity, it is also possible 
to prove that for all integer $\nu$, there exists a bounded sequence $(a_n)_{n \in \N}$ in $H^\nu(\R)$ such that
\begin{equation*}
\left\langle a_n,\F_{\rm pul} (\partial_\theta a_n,\partial_\theta a_n) \right\rangle_{H^\nu(\R)} \rightarrow +\infty \, .
\end{equation*}
In other words, the space $\Gamma^\nu$ is well-suited for studying boundedness of $\F_{\rm pul}$ and the 
standard Sobolev space $H^\nu$ is not. This is not so surprising because on the Fourier side, the kernel of 
$\F_{\rm pul}$ is unbounded, which is a trace of the small divisors in the wavetrains problem. Unboundedness 
of the kernel requires introducing a principal value in Lemma \ref{lemFourier}, which relies on some continuity 
of the integrand. It is therefore not surprising that continuity of $\F_{\rm pul}$ holds in a functional space where 
Fourier transforms have some extra regularity properties (which is another way to formulate polynomial decay 
in the physical variable).
\end{rem}

As repeatedly claimed, this paragraph does not aim at giving a {\sl rigorous} justification of the large period 
limit. An open question that is raised by the formal arguments given above is: assuming that the small 
divisor condition in Assumption \ref{assumption5} holds, proving that the $\Theta$-periodic function 
$\widetilde{\F}_{\rm per}[u_\Theta,v_\Theta]$ does indeed converge (for instance, uniformly on compact 
sets), as $\Theta$ tends to infinity, towards \eqref{symappB} for $u,v$ in the Schwartz class. It is also likely 
that the amplitude equation \eqref{eqMachStems1} is ill-posed in $H^\nu(\R^d)$ for any large integer $\nu$, 
though it is well-posed in $\Gamma^\nu$. We refer to \cite{BCT} for similar ill-posedness issues on nonlocal 
versions of the Burgers equation.

\subsection{What is the correct amplitude equation for Mach stem formation ?}

It is surprising that the formal limit of the amplitude equation \eqref{eqa2} as $\Theta$ tends to infinity does 
not coincide with \eqref{eqMachStems'} if the operator $Q_{\rm pul}$ is defined by \eqref{defQpul}. In several 
problems of nonlinear geometric optics, namely
\begin{itemize}
 \item Quasilinear hyperbolic Cauchy problems \cite{HMR,JMR1,AR1,GR},
 
 \item Uniformly stable quasilinear hyperbolic boundary value problems \cite{williams4,W1,CGW1,CW4},
 
 \item Weakly stable semilinear hyperbolic boundary value problems \cite{CGW3,CW5},
\end{itemize}
the limit of the amplitude equation for wavetrains does coincide with the amplitude equation for pulses. 
Of course, nonlinear geometric optics has received a more complete description for these three problems 
than for the one we consider here because in all three above problems, {\sl exact} solutions on a {\sl fixed} 
time interval have been shown to exist and to be close to {\sl approximate} WKB solutions (which justifies 
the relevance of the corresponding evolution equation for the leading order amplitudes).

In view of all available references in the literature, it is therefore natural to wonder whether the amplitude 
equation \eqref{eqMachStems'}, \eqref{defQpul}, which is the one derived in \cite{MR}, does give an accurate 
description for pulse-like solutions to \eqref{0}. Rephrasing the question, is it possible to construct another 
family of approximate solutions that would be determined by solving an amplitude equation that is obtained 
as the large period limit of \eqref{eqa2}, \eqref{defQper} ? The answer is yes, but the price to pay is to 
forget about boundedness of the second corrector $\cU_2$.

Let us quickly review the analysis of the WKB cascade \eqref{3p}, \eqref{4p} for pulses. In Step 1 of Paragraph 
\ref{pconstruction}, we have derived the expression \eqref{decompU0p2} of the leading order profile $\cU_0$ 
by assuming that the {\sl first} corrector $\cU_1$ is bounded. The amplitudes $\sigma_1,\sigma_3$ solve the 
decoupled Burgers equations \eqref{Burgersmp} and satisfy the trace relation \eqref{defa}. In Step 4 of 
Paragraph \ref{pconstruction}, we have obtained the expression of the component of $E_P \, \cU_1$ on $r_2$ 
by assuming that the second corrector $\cU_2$ is bounded. However, this boundedness assumption is not 
necessarily supported by a clear physical interpretation, and it might very well be that $\cU_2$ is not uniformly 
bounded in the stretched variables $(\theta_0,\xi_d)$.

One can indeed construct (infinitely) many families of approximate solutions to \eqref{0}. Let us choose a 
parameter $s \in [0,1]$. Then, given the leading order profile $\cU_0$ in \eqref{decompU0p2} with functions 
$\sigma_1,\sigma_3$ that are expected to decay sufficiently fast at infinity, one can construct the first 
corrector $\cU_1$ as a particular solution to \eqref{3p}(b). One possible choice is:
\begin{multline*}
\widetilde{\cU}_{1,s} :=\sum_{m=1}^3 \left\{ -s\, \int_{\xi_d}^{+\infty} 
\ell_m \, {\mathcal F}_0 (t,x,\theta_0+\underline{\omega}_m \, (\xi_d-X),X) \, {\rm d}X \right. \\
\left. +(1-s) \, \int_{-\infty}^{\xi_d} \ell_m \, {\mathcal F}_0 (t,x,\theta_0+\underline{\omega}_m \, (\xi_d-X),X) 
\, {\rm d}X  \right\} \, r_m \, ,
\end{multline*}
and the general solution to \eqref{3p}(b) is of the form:
\begin{equation*}
\widetilde{\cU}_{1,s} +\sum_{m=1}^3 \tau_m^s(t,x,\theta_0 +\underline{\omega}_m \, \xi_d) \, r_m \, .
\end{equation*}
The choice in \cite{MR} corresponds, as in Paragraph \ref{pconstruction}, to $s=1$ and causality is invoked 
to set the function $\tau^1_2$ equal to zero (see Equation (4.10) in \cite{MR}). In Paragraph \ref{pconstruction}, 
we have explained why $\tau^1_2=0$ can de deduced from the assumption that there exists a bounded corrector 
solution to \eqref{3p}(c).

If we do not assume existence of a bounded corrector to \eqref{3p}(c), there does not seem to be a clear option 
for constructing the outgoing part $\tau_2^s$, and therefore, given $s \in [0,1]$, the corrector we have at our 
disposal satisfies
\begin{multline*}
\widetilde{\cU}_{1,s} (t,y,0,\theta_0,0) = \star \, r_1 -s\, \int_0^{+\infty} \ell_2 \, {\mathcal F}_0 
(t,y,0,\theta_0 -\underline{\omega}_2 \, X,X) \, {\rm d}X \, r_2 \\
+(1-s) \, \int^0_{-\infty} \ell_2 \, {\mathcal F}_0 (t,y,0,\theta_0 -\underline{\omega}_2 \, X,X) \, {\rm d}X \, r_2 
+\star \, r_3 \, .
\end{multline*}
Plugging the latter expression in \eqref{4p}(b), applying the row vector $\underline{b}$ and differentiating 
with respect to $\theta_0$, we obtain Equation \eqref{eqMachStems'} with the following new definition of 
the operator $Q_{\rm pul}$:
\begin{align}
Q_{\rm pul}[a,\widetilde{a}] (\theta_0) := \, &(\underline{b} \, B \, r_2) \, \ell_2 \, E_{1,3} \, s \, \int_0^{+\infty} 
\partial_{\theta_0} a (\theta_0+(\underline{\omega}_1-\underline{\omega}_2)\, X) \, 
\widetilde{a}(\theta_0+(\underline{\omega}_3-\underline{\omega}_2)\, X) \, {\rm d}X \notag \\
&+(\underline{b} \, B \, r_2) \, \ell_2 \, E_{3,1} \, s \, \int_0^{+\infty} 
a(\theta_0 +(\underline{\omega}_1-\underline{\omega}_2)\, X) \, 
\partial_{\theta_0} \widetilde{a} (\theta_0+(\underline{\omega}_3-\underline{\omega}_2)\, X) \, {\rm d}X \notag \\
&-(\underline{b} \, B \, r_2) \, \ell_2 \, E_{1,3} \, (1-s) \, \int^0_{-\infty} 
\partial_{\theta_0} a (\theta_0+(\underline{\omega}_1-\underline{\omega}_2)\, X) \, 
\widetilde{a}(\theta_0+(\underline{\omega}_3-\underline{\omega}_2)\, X) \, {\rm d}X \notag \\
&-(\underline{b} \, B \, r_2) \, \ell_2 \, E_{3,1} \, (1-s) \, \int^0_{-\infty} 
a(\theta_0 +(\underline{\omega}_1-\underline{\omega}_2)\, X) \, 
\partial_{\theta_0} \widetilde{a} (\theta_0+(\underline{\omega}_3-\underline{\omega}_2)\, X) \, {\rm d}X \, .
\label{defQpuls}
\end{align}
Unlike all other terms in \eqref{eqMachStems'}, the bilinear term $Q_{\rm pul}$ does depend on the value of 
$s$ that is chosen for constructing the particular solution $\widetilde{\cU}_{1,s}$ to \eqref{3p}(b), and therefore 
invoking causality to discard the $\tau_2^s$ term has an impact on the leading order amplitude equation 
\eqref{eqMachStems'}. The choice $s=1/2$ is the only that is compatible with the large period limit 
$\Theta \rightarrow +\infty$.
\bigskip

Our overall assessment is the following. In the wavetrain problem, the construction of approximate WKB 
solutions in Part \ref{part1} is well-understood. In particular Theorem \ref{theowavetrains} shows that 
the full WKB cascade has a {\sl unique} solution $(\cU_n)_{n \in \N}$ in a suitable functional space. There 
remains of course the problem of understanding the lifespan of exact solutions to \eqref{0} and whether 
approximate and exact solutions are close to each other. In the pulse problem, the situation is worse 
since the construction of approximate solutions in Part \ref{part2} may be questionable. There are basically 
two options (the infinitely many other ones seem to be a mathematical artifact). Either one hopes that the 
expansion of exact solutions will yield a second corrector $\cU_2$ that is bounded, and in that case the 
appropriate leading amplitude equation is given, as in \cite{MR}, by \eqref{eqMachStems'}, \eqref{defQpul}. 
Or one rather expects that the amplitude equation for pulses should coincide with the large period limit of the 
corresponding wavetrain equation, and in that case the appropriate leading amplitude equation is still given 
by \eqref{eqMachStems'} but with the bilinear operator $Q_{\rm pul}$ as in \eqref{defQpuls} with $s=1/2$. 
The two corresponding leading profiles differ in each of the two options. Without any precise understanding 
of the behavior of exact solutions, deciding between the two possible expansions seems hardly possible.

\section{Some remarks on the resonant case}
\label{appC}

In this Appendix, we explain why the analysis in Part \ref{part1} breaks down when resonances occur. Let us 
consider for simplicity the $3 \times 3$ strictly hyperbolic case of Paragraph \ref{sect2example}. A resonance 
corresponds to the existence of a triplet $(n_1,n_2,n_3) \in \Z^3$, with $n_1 \, n_2 \, n_3 \neq 0$, $\text{\rm gcd } 
(n_1,n_2,n_3) =1$, and
\begin{equation}
\label{resonance}
n_1 \, \varphi_1 =n_2 \, \varphi_2 +n_3 \, \varphi_3 \, .
\end{equation}
Incoming waves associated with the phases $\varphi_1,\varphi_3$ may then interact to produce an outgoing wave 
for the phase $\varphi_2$. More precisely, the source term $\eps^2 \, G(t,y,\varphi_0/\eps)$ in \eqref{0} is expected 
to give rise to incoming waves associated with the phases $\varphi_1,\varphi_3$ of amplitude $O(\eps)$. Then the 
products $A_j(u_\eps) \, \partial_j u_\eps$ in \eqref{0} ignites oscillations associated with the outgoing phase $\varphi_2$ 
of amplitude\footnote{This is a major scaling difference with our previous work \cite{CGW3} where nonlinear interaction 
was due to a zero order term so the outgoing oscillations produced by resonances had amplitude $O(\eps^2)$, 
instead of $O(\eps)$ here.} $O(\eps)$. These $\varphi_2$-oscillations are then expected to be amplified when 
reflected on the boundary, as in the linear analysis of \cite{CG}, but this would give rise to $\varphi_1,\varphi_3$-oscillations 
of amplitude $O(1)$ and would thus completely ruin the ansatz \eqref{BKW}. Resonances are therefore expected to 
produce dramatically different phenomena from the ones that are studied here.

Another hint that resonances should make the ansatz \eqref{BKW} break down is to try to solve the WKB cascade 
\eqref{BKWint}, \eqref{BKWbord} when the resonance \eqref{resonance} occurs between two incoming phases 
$\varphi_1,\varphi_3$ and one outgoing phase $\varphi_2$. Let us recall that in the $3 \times 3$ strictly hyperbolic 
case considered in Paragraph \ref{sect2example}, $B$ is a $2 \times 3$ matrix of maximal rank. Its kernel has 
dimension $1$ and is therefore spanned by the vector $e=e_1+e_3 \in \E^s \tauetabar$. In other words, we can 
choose a vector $\check{e} \in \E^s \tauetabar$ such that
\begin{equation*}
\R^2 =\text{\rm Span } (B \, \check{e},B\, r_2) \, .
\end{equation*}

We now follow the analysis of Paragraph \ref{sect2example} and try to analyze the WKB cascade in the presence 
of a resonance. Equation \eqref{3}(a) shows again that the leading profile can be decomposed as
\begin{equation*}
\cU_0 (t,x,\theta_1,\theta_2,\theta_3) =\underline{\cU}_0(t,x) +\sum_{m=1}^3 \sigma_m(t,x,\theta_m) \, r_m \, .
\end{equation*}
Then Equation \eqref{3}(b) shows that the mean value $\underline{\cU}_0$ satisfies \eqref{eq:moyenne1}, 
\eqref{eq:moyenne2}, and therefore vanishes. The interior equation satisfied by each $\sigma_m$ exhibits 
the resonance between the phases, see \cite{rauch}. Namely, the $\sigma_m$'s satisfy the coupled Burgers-type 
equations
\begin{equation}
\label{systresonant}
\begin{cases}
\partial_t \sigma_1 +{\bf v}_1 \cdot \nabla_x \sigma_1 +c_1 \, \sigma_1 \, \partial_{\theta_1} \sigma_1 
=B_1 (\sigma_2,\sigma_3) \, ,& \\
\partial_t \sigma_2 +{\bf v}_2 \cdot \nabla_x \sigma_2 +c_2 \, \sigma_2 \, \partial_{\theta_2} \sigma_2 
=B_2 (\sigma_1,\sigma_3) \, ,& \\
\partial_t \sigma_3 +{\bf v}_3 \cdot \nabla_x \sigma_3 +c_3 \, \sigma_3 \, \partial_{\theta_3} \sigma_3 
=B_3 (\sigma_1,\sigma_2) \, ,
\end{cases}
\end{equation}
where the constants $c_m$ are defined as in \eqref{eq:Burgersm} and, for instance:
\begin{align*}
B_1 (\sigma_2,\sigma_3) (t,x,\theta_1) :=& \dfrac{2\, i \, \pi \, \ell_1 \, \partial_j \varphi_3 \, ({\rm d}A_j(0) \cdot r_2) 
\, r_3}{\Theta \, \ell_1 \, r_1} \sum_{k \in \Z^*} c_{k\, n_2}(\sigma_2) (t,x) \, (k\, n_3) \, c_{k\, n_3}(\sigma_3) (t,x) \, 
{\rm e}^{2\, i \, \pi \, k \, n_1 \, \theta_1/\Theta} \\
&+\dfrac{2\, i \, \pi \, \ell_1 \, \partial_j \varphi_2 \, ({\rm d}A_j(0) \cdot r_3) \, r_2}{\Theta \, \ell_1 \, r_1} 
\sum_{k \in \Z^*} (k\, n_2) \, c_{k\, n_2}(\sigma_2) (t,x) \, c_{k\, n_3}(\sigma_3) (t,x) \, 
{\rm e}^{2\, i \, \pi \, k \, n_1 \, \theta_1/\Theta} \, .
\end{align*}
The definitions of $B_2(\sigma_1,\sigma_3)$ and $B_3 (\sigma_1,\sigma_2)$ are similar, and correspond to 
the so-called {\sl interaction integrals} in \cite{CGW1}.

The boundary conditions for the $\sigma_m$'s are given by \eqref{4}(a). Using the basis $(e,\check{e})$ of 
$\text{\rm Span } (r_1,r_3)$, we decompose
\begin{equation*}
\cU_0(t,y,0,\theta_0,\theta_0,\theta_0) =a(t,y,\theta_0) \, e +\check{a}(t,y,\theta_0) \, \check{e} 
+\sigma_2(t,y,0,\theta_0) \, r_2 \, ,
\end{equation*}
so \eqref{4}(a) gives
\begin{equation*}
\check{a} \equiv 0 \, ,\quad \sigma_2|_{x_d=0} \equiv 0 \, .
\end{equation*}
However, this boundary condition on  $\sigma_2$ does not seem to be compatible with \eqref{systresonant} 
because $\sigma_2$ satisfies an outgoing transport equation with a nonzero source term (it is proved in 
\cite{CGW1} that the operators $B_1,B_2,B_3$ in \eqref{systresonant} act like {\sl semilinear} terms and 
do not contribute to the leading order part of the differential operators in \eqref{systresonant}). Even if we 
manage to isolate an amplitude equation for determining the traces of the incoming amplitudes $\sigma_1, 
\sigma_3$, the overall cascade seems to give rise to an overdetermined problem for $\cU_0$.

We do not claim of course that the above arguments give a rigorous justification of the ill-posedness 
of \eqref{BKWint}, \eqref{BKWbord} when a resonance occurs, but it clearly indicates that the ansatz 
\eqref{BKW} does not seem to be appropriate anylonger.

When there is only one incoming phase, all the above discussion becomes irrelevant, and this may again 
be one reason why any discussion on resonances is absent from \cite{AM} or \cite{wangyuJDE,wangyu}.

\newpage
\bibliographystyle{alpha}
\bibliography{MachStems}
\end{document}